\providecommand{\U}[1]{\protect\rule{.1in}{.1in}}
\newtheorem{theorem}{Theorem}[section]
\newtheorem{corollary}[theorem]{Corollary}
\newtheorem{lemma}[theorem]{Lemma}
\newtheorem{proposition}[theorem]{Proposition}
\newenvironment{proof}[1][Proof]{\noindent\textbf{#1.} }{\ \rule{0.5em}{0.5em}}
\begin{document}

\title{Generators and commutators in finite groups; abstract quotients of compact groups}
\author{Nikolay Nikolov and Dan Segal}
\maketitle

\section{Introduction}

Let $G$ be a group and $\{g_{1},\ldots,g_{r}\}$ a finite subset of $G$. If $G
$ is \emph{nilpotent}, then

\begin{description}
\item[(A)] $G=G^{\prime}\left\langle g_{1},\ldots,g_{r}\right\rangle $ implies
$G=\left\langle g_{1},\ldots,g_{r}\right\rangle $;

\item[(B)] $G=\left\langle g_{1},\ldots,g_{r}\right\rangle $ implies
$G^{\prime}=[G,g_{1}]\ldots\lbrack G,g_{r}]$,
\end{description}

\noindent where $G^{\prime}$ denotes the derived group of $G$ and
$\left\langle g_{1},\ldots,g_{r}\right\rangle $ the subgroup generated by
$\{g_{1},\ldots,g_{r}\}$; for $g\in G$ we write%
\[
\lbrack G,g]=\left\{  [x,g]\mid x\in G\right\}
\]
where $[x,g]=x^{-1}g^{-1}xg$ is the usual commutator.

(A) is an easy folklore result; (B) is also well known, and first appeared in
the unpublished 1966 PhD thesis of Peter Stroud; it is a key element in
Serre's proof that subgroups of finite index are open in a finitely generated
pro-$p$ group. Neither (A) nor (B) is true in general for groups that are not
nilpotent. Rather surprisingly, however, similar results hold without assuming
nilpotency, as long the group $G$ is assumed to be \emph{finite}. These are
very much harder, relying in their most general form on the classification of
finite simple groups. The main technical results of the paper \cite{NS}, which
enabled us to generalize Serre's theorem to all finitely generated profinite
groups, imply the following for a finite $d$-generator group $G$:

\begin{description}
\item[(C)] every element of $G^{\prime}$ is equal to a product of $f_{1}(d)$ commutators;

\item[(D)] if $G=G_{\ast}\left\langle g_{1},\ldots,g_{r}\right\rangle $ then
$G=\left\langle g_{ij}\mid i=1,\ldots,r,~j=1,\ldots,f_{2}(d,\alpha
)\right\rangle $ with $g_{ij}$ conjugate to $g_{i}$ for all $i$ and $j$;
\end{description}

\noindent here $G_{\ast}$ is a certain characteristic subgroup of $G$ with the
property that $G/G_{\ast}$ is semisimple-by-soluble, and $\alpha=\alpha(G)$ is
a certain measure of the complexity of $G$ (the largest $n$ such that $G$ has
$\mathrm{Alt}(n)$ as a section). In \cite{NS} we left open the question of
whether $f_{2}$ can be made independent of $\alpha(G)$; it appears as Problem
4.7.1 in the book \cite{S2}, where further background may be found.

The primary purpose of this paper is to answer that question, and more general
versions of it, positively. Although at first glance this may seem a mere
technical improvement, we shall see that it has diverse applications. These
are described in more detail below; among them are the new theorems:

\begin{itemize}
\item \emph{If }$G$\emph{\ is any compact Hausdorff topological group, then
every finitely generated (abstract) quotient of }$G$\emph{\ is finite}.

\item \emph{Let }$G$\emph{\ be a compact Hausdorff group such that }$G/G^{0}$
\emph{is (topologically) finitely generated. Then }$G$ \emph{has a countably
infinite (abstract) quotient if and only if }$G$\emph{\ has an infinite
virtually-abelian (continuous) quotient.}
\end{itemize}

\noindent(Here, $G^{0}$ denotes the connected component of $1$ in $G$).

Indeed, what motivated the present work was the need to develop machinery
powerful enough to establish results of this kind for profinite groups, for
which the methods of \cite{NS} are insufficient; the extension to more general
compact groups was then a relatively natural step.

Our second purpose is to provide a new and more streamlined route to the
results of \cite{NS} and \cite{NS2} -- including the solution of Serre's
problem on finite-index subgroups in finitely generated profinite groups --
and of \cite{NSP}, where it is proved that in those groups the power subgroups
are open. In setting out to prove stronger results, we have found an approach
that is both more unified and in some respects simpler than the original
proofs. Thus in a sense the present paper is a `mark 2' version of \cite{NS} +
\cite{NS2} + \cite{NSP}.

In the course of the proofs, we shall quote a few self-contained propositions
from \cite{NS}. Apart from these, this work is independent of \cite{NS}. In
particular, we shall not be needing the difficult structural results about
finite simple groups that form the substance of \cite{NS2}; these are replaced
by the material of Subsection \ref{quasi}. Some discussion of the new ideas
that we use instead appears at the end of this introduction.

\subsection{Main results on finite groups}

In this subsection all groups are assumed to be \emph{finite}. The minimal
size of a generating set for $G$ is denoted $\mathrm{d}(G)$. To a finite group
$G$ we associate the characteristic subgroup%
\begin{align}
G_{0}  &  =\bigcap\left\{  T\vartriangleleft G\mid G/T\text{ is almost-simple}%
\right\} \label{defG0}\\
&  =\bigcap_{M\in\mathcal{S}}\mathrm{C}_{G}(M)\nonumber
\end{align}
where $\mathcal{S}$ is \emph{the set of all non-abelian simple chief factors
of} $G$ (a group $H$ is \emph{almost-simple }if $S\vartriangleleft
H\leq\mathrm{Aut}(S)$ for some non-abelian simple group $S$). We remark that
$G/G_{0}$ is an extension of a semisimple group by a soluble group of derived
length at most $3$, because the outer automorphism group of any simple group
is soluble of derived length at most $3$ (strong form of the Schreier
conjecture, see Subsection \ref{facts}). (Note that $G_{0}=G$ if $\mathcal{S}$
is empty, by the usual convention.)

\subsubsection{Generators}

In Section \ref{gensec} we prove

\begin{theorem}
\label{ThmA}Let $G$ be a group and $K\leq G_{0}$ a normal subgroup of $G$.
Suppose that $G=K\left\langle y_{1},,\ldots,y_{r}\right\rangle =G^{\prime
}\left\langle y_{1},,\ldots,y_{r}\right\rangle $. Then there exist elements
$x_{ij}\in K$ such that%
\[
G=\left\langle y_{i}^{x_{ij}}\mid i=1,\ldots,r,~j=1,\ldots,f_{0}\right\rangle
\]
where $f_{0}=f_{0}(r,\mathrm{d}(G))=O(r\mathrm{d}(G)^{2})$.
\end{theorem}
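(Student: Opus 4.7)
My plan is induction on $|K|$. The base case $K=1$ gives $G=R:=\langle y_1,\ldots,y_r\rangle$, so the $r$ trivial conjugates $y_i^1=y_i$ suffice. For the inductive step, pick a minimal normal subgroup $M$ of $G$ with $M\le K$. The first key observation is that $M$ must be \emph{abelian}: if $M$ were non-abelian, then by definition of $G_0$ we have $K\le G_0\le C_G(M)$, so $M\le K$ would centralize itself, contradicting non-abelianness. Hence $M$ is a simple $\mathbb{F}_p[G]$-module. Apply the inductive hypothesis to $(G/M,\,K/M,\,\{y_iM\})$---the hypotheses $G/M=(K/M)\langle y_iM\rangle=(G/M)'\langle y_iM\rangle$ descend, $K/M\le(G/M)_0$ since $G_0M/M\le(G/M)_0$, and $\mathrm{d}(G/M)\le\mathrm{d}(G)$---to obtain $H_0:=\langle y_i^{x_{ij}}\rangle\le G$ (with $x_{ij}\in K$) satisfying $H_0M=G$. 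I arrange $x_{i,1}=1$ at every level of the induction, so that $y_i\in H_0$ and $R\le H_0$ throughout.

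It remains to absorb $M$ into the subgroup. If $M\le Z(G)$, then $G'=[H_0M,H_0M]=H_0'$ (all brackets involving the central $M$ vanish), so $G=G'R\le H_0$ and already $H_0=G$, with no extra conjugate needed. Otherwise $[G,M]=M$. I first observe that some $y_i$ must act non-trivially on $M$: if instead $R\le C_G(M)$, then since $C_G(M)\vartriangleleft G$ it contains each $y_i^{x_{ij}}$, so $H_0\le C_G(M)$, giving the contradiction $G=H_0M\le C_G(M)$. Now pick $i$ and $m\in M$ with $[y_i,m]\ne 1$ and adjoin the single conjugate $y_i^m$; since $y_i\in H_0$, this adjoins $[y_i,m]\in M$ to $H_0$. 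The $H_0$-submodule of $M$ so generated is automatically $G$-invariant (as $H_0M=G$ and $M$ is abelian), hence equals $M$ by simplicity. Thus one extra conjugate closes this step.

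The running total is $r$ trivial conjugates plus one for each non-central abelian $G$-chief factor inside $K$. The main obstacle is to show this count is $O(r\,\mathrm{d}(G)^2)$, which reduces to bounding the number of non-central chief factors by $O(\mathrm{d}(G)^2)$. I expect this to follow from Gasch\"utz-type multiplicity results---any non-trivial simple $\mathbb{F}_p[G]$-module appears as a chief factor at most $O(\mathrm{d}(G))$ times in a $d$-generator finite group---paired with a bound of $O(\mathrm{d}(G))$ on the number of distinct non-trivial simple types that can arise among chief factors of $K$, using the hypothesis $G=G'R$ and the structural description of $G/G_0$ as semisimple-by-soluble. This combinatorial bookkeeping is where I anticipate the main technical difficulty to lie, and I expect one may need to treat several chief factors per inductive layer simultaneously (rather than strictly one at a time) to obtain a clean bound.
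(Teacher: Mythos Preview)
Your argument has two genuine gaps.

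\textbf{First, the claim that $M$ must be abelian is false.} By definition $G_0=\bigcap_{M\in\mathcal{S}}C_G(M)$ where $\mathcal{S}$ consists only of the \emph{simple} non-abelian chief factors. A minimal normal subgroup $M\le K\le G_0$ that is a product of two or more isomorphic simple groups is not in $\mathcal{S}$, so nothing forces $G_0\le C_G(M)$. For instance, if $G=(S\times S)\rtimes C_2$ with $C_2$ swapping the factors, then $G$ has no simple non-abelian chief factor, $G_0=G$, and $M=S\times S$ is a perfectly good non-abelian minimal normal subgroup inside $K=G_0$. The paper has to work hard to handle such $M$: see Proposition~\ref{D-lifting} (for two simple factors) and Proposition~\ref{prop5.1new}(ii) (for three or more).

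\textbf{Second, and more seriously, the counting is unbounded.} Take $G_n=\bigl(\bigoplus_{i=1}^n M_{p_i}\bigr)\rtimes S_3$ where $p_1,\ldots,p_n>3$ are distinct primes and $M_{p_i}$ is the $2$-dimensional simple $\mathbb{F}_{p_i}[S_3]$-module. Then $G_n$ is soluble (so $(G_n)_0=G_n$), is $2$-generated for every $n$ (the $M_{p_i}$ are cyclic modules over coprime primes), and has $n$ non-central abelian chief factors inside $K=\bigoplus M_{p_i}$. With $r=2$ and $\mathrm{d}(G)=2$ your scheme produces $n+2$ conjugates, while the theorem asserts a bound independent of $n$. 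Your hoped-for bound on the number of distinct simple types therefore cannot hold.

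The paper's approach is structurally different: rather than adding conjugates one chief factor at a time, it fixes from the outset a set of $k=O(r\,\mathrm{d}(G))$ conjugates of each $y_i$ and, at each chief factor $N$, \emph{modifies the conjugating elements within $N$} without increasing their number. The key input is the $\varepsilon$-fixed-point property (Theorem~\ref{fppThm}), which guarantees that enough of the $y_i$ act with large displacement on $N$; this feeds into a counting argument (Proposition~\ref{prop5.1new}) showing that a positive proportion of choices of conjugators in $N^{(m)}$ yield a generating set. Induction then lifts through all chief factors with the same $k$.
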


\noindent It is clear that the $y_{i}$ must be assumed to generate $G$ modulo
$G^{\prime}$; the definition of $G_{0}$ serves to exclude obvious
counterexamples of the form $G=K\times\left\langle y_{1},,\ldots
,y_{r}\right\rangle $ where $K$ is simple or $G=\mathrm{Sym}(n)$ with $y_{1}$
a transposition.

Recall that $G_{0}=G$ if every non-abelian chief factor of $G$ has composition
length at least $2$, in particular if $G$ is soluble; the result in the
soluble case was established in \cite{S1}.

\subsubsection{Commutators}

For a subset $X$ of a group $G$, we write%
\[
X^{\ast f}=\left\{  x_{1}x_{2}\ldots x_{f}\mid x_{1},x_{2},\ldots,x_{f}\in
X\right\}  .
\]
The subset $X$ is \emph{symmetric} if $x\in X$ implies $x^{-1}\in X$.

For subgroups $H,K$ of $G$,%
\[
\lbrack H,K]=\left\langle [x,y]\mid x\in H,~y\in K\right\rangle .
\]

\begin{theorem}
\label{ThmB}Let $G$ be a group and $\{y_{1},\ldots,y_{r}\}$ a symmetric
generating set for $G$. If $H$ is a normal subgroup of $G$ then%
\[
\lbrack H,G]=\left(
{\displaystyle\prod\limits_{i=1}^{r}}
[H,y_{i}]\right)  ^{\ast f_{1}}%
\]
where $f_{1}=f_{1}(r,\mathrm{d}(G))=O(r^{2}\mathrm{d}(G))=O(r^{3})$.
\end{theorem}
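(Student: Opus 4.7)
The plan is to induct on $|G|$, the base case $H=1$ being trivial. If $H\neq 1$, pick a minimal normal subgroup $N$ of $G$ contained in $H$ and apply the inductive hypothesis to $\bar G = G/N$ with the symmetric generating set $\{\bar y_1,\ldots,\bar y_r\}$. Writing $P := \prod_{i=1}^r[H,y_i]$, every $z\in[H,G]$ then has the form $z = p\cdot n$ for some $p \in P^{\ast f}$ (with $f$ bounded via the inductive hypothesis) and some $n \in N\cap[H,G]$. The inductive step thus reduces to showing that every element of $N\cap[H,G]$ is a product of $O(1)$ further $P$-blocks.

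If $N$ is an elementary abelian $p$-group, view $N$ additively as an $\mathbb{F}_{p}G$-module. Then $[n,y_i]=(y_i-1)n$, so $[N,y_i]$ is an $\mathbb{F}_{p}$-subspace; since $\{y_1,\ldots,y_r\}$ generates $G$ one gets $[N,G]=\sum_i[N,y_i]$, and because $N$ is abelian this coincides with the single set-theoretic block $\prod_i[N,y_i]\subseteq P$. The inclusion $N\cap[H,G]\subseteq[N,G]$ is immediate unless $N$ is central, in which case an auxiliary argument is needed to rewrite the central $N$-component of $[h,y_i]$ ($h\in H$) in terms of further commutators modulo $N$; in all abelian sub-cases only $O(1)$ extra $P$-blocks are needed.

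If instead $N\cong S^k$ with $S$ non-abelian simple (and $G$ permuting the $k$ simple factors), then $[N,G]=N$ by minimality, and we need every element of $N$ to be a bounded product of commutators of the form $[h,y_i]$, $h\in H$. This is the principal obstacle: Ore's theorem gives every element of $S$ as a single commutator in $S$, but here the second arguments are constrained to lie in $\{y_1,\ldots,y_r\}$. To handle this, one combines a ``twisted commutator'' surjectivity result for semisimple groups with the transitive $G$-action on the simple direct factors; this is presumably what Subsection \ref{quasi} provides as a streamlined substitute for the deep CFSG-based machinery of \cite{NS2}.

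Finally, matching the announced quantitative bound $f_1=O(r^2\mathrm{d}(G))$ is the most delicate point. A naive chief-factor-by-chief-factor induction yields a count scaling with the chief length, which may be $\Omega(\log|G|)$. The fix is to batch whole layers per inductive step---say the soluble radical $R(G)$ and then the semisimple quotient $G/R(G)$---and to use Theorem \ref{ThmA} to choose, at each layer, $O(\mathrm{d}(G))$ conjugates of the $y_i$ that already generate across the layer. This should contribute $O(r\mathrm{d}(G))$ $P$-blocks per layer with $O(r)$ layers in total, giving $O(r^2\mathrm{d}(G))$. Pinning down this counting---rather than merely proving that some bound exists---is where the real combinatorial effort lies.
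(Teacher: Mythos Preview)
Your outline correctly identifies the inductive skeleton and the central difficulty: a naive chief-factor induction accumulates a bound proportional to the chief length. But your proposed repair does not work. Batching into the soluble radical $R(G)$ and the quotient $G/R(G)$ is too coarse: $R(G)$ itself can have unboundedly many abelian chief factors, and nothing in your sketch explains why, say, an $R$ with $R=[R,G]$ should lie in $P^{\ast O(1)}$ rather than $P^{\ast O(\text{chief length})}$. Invoking Theorem \ref{ThmA} is also misplaced: in Theorem \ref{ThmB} the $y_i$ already generate $G$, so there is nothing to lift; in the paper, Theorem \ref{ThmA} enters only in the proof of the relative result Theorem \ref{ThmC}. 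Your central-abelian sub-case is a genuine gap too, not a formality: when $N\leq\mathrm{Z}(G)$ one can have $[N,G]=1$ while $N\cap[H,G]=N$, and a concrete mechanism is required.

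The paper's actual mechanism is different. Proposition \ref{accept} first separates off the ``bad'' chief factors (simple or a product of two simples), which are handled directly by a bounded-width argument (Lemma \ref{L4}); what remains are \emph{acceptable} normal subgroups, to which the Key Theorem \ref{KT} applies. The proof of Theorem \ref{KT} does proceed through one quasi-minimal normal subgroup at a time, but it does not accumulate: the inductive hypothesis is strengthened to carry the generating condition (\ref{G-gen}) on certain conjugates of the $g_j$. At each step one must simultaneously lift both the product decomposition (\ref{hprod}) and this generating condition; the set of admissible lifts is large by Proposition \ref{prop5.1new} (via the fixed-point results of Subsection \ref{fix}), and a suitable lift exists by direct counting in the abelian case, by Proposition \ref{NS6.2} when $1<N'<N$, and by Theorem \ref{comm20} together with the Gowers trick in the semisimple case. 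Because the generating condition is preserved, the \emph{same} value $m=r\cdot k(d,r)$ works at every step and no accumulation occurs. This strengthening of the inductive hypothesis---not a coarser layer decomposition---is the essential idea missing from your sketch.
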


This is proved in Section \ref{commsection}, together with the following
`relative' version, our main result on finite groups:

\begin{theorem}
\label{ThmC}Let $G$ be a group, $H\leq G_{0}$ a normal subgroup of $G$, and
$\{y_{1},\ldots,y_{r}\}$ a symmetric subset of $G$. If $H\left\langle
y_{1},\ldots,y_{r}\right\rangle =G^{\prime}\left\langle y_{1},\ldots
,y_{r}\right\rangle =G$ then%
\[
\lbrack H,G]=\left(
{\displaystyle\prod\limits_{i=1}^{r}}
[H,y_{i}]\right)  ^{\ast f_{2}}%
\]
where $f_{2}=f_{2}(r,\mathrm{d}(G))=O(r^{6}\mathrm{d}(G)^{6})$.
\end{theorem}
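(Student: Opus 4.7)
The plan is to deduce Theorem~\ref{ThmC} from Theorems~\ref{ThmA} and~\ref{ThmB}, combined with a commutator identity that converts conjugated commutators back into unconjugated ones. Since $H \leq G_{0}$ is normal in $G$ and $G = H\langle y_1,\ldots,y_r\rangle = G'\langle y_1,\ldots,y_r\rangle$, Theorem~\ref{ThmA} applied with $K = H$ yields elements $x_{ij} \in H$ such that the conjugates
\[
Y := \{\, y_i^{x_{ij}} : 1 \leq i \leq r,\ 1 \leq j \leq f_0 \,\}
\]
generate $G$. Since $\{y_i\}$ is symmetric, so is $Y$, and $|Y| = r f_0 = O(r^2 \mathrm{d}(G)^2)$.

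Next, applying Theorem~\ref{ThmB} to the symmetric generating set $Y$ of $G$ gives
\[
[H, G] = \Bigl(\prod_{z \in Y}[H, z]\Bigr)^{*f_1},
\]
where $f_1 = O(|Y|^2\, \mathrm{d}(G))$. Every element of $[H, G]$ is thus a product of $f_1 \cdot |Y|$ ``basic'' commutators of the form $[h, y_i^{x_{ij}}]$ with $h \in H$.

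The crux is to rewrite each such factor in terms of the original $y_i$'s, via the universal identity
\[
[h, y^x] = [xh, y] \cdot [x^y, y^{-1}],
\]
valid in any group (check by direct expansion). When $x = x_{ij} \in H$ and $H$ is normal in $G$, both $xh$ and $x^y$ lie in $H$, so $[xh, y_i] \in [H, y_i]$ and $[x^y, y_i^{-1}] \in [H, y_i^{-1}]$. Since $\{y_k\}$ is symmetric, both factors already belong to our list of allowed sets $[H, y_k]$. Substituting this into the Theorem~\ref{ThmB} expression turns each element of $[H, G]$ into a product of at most $2 f_1 |Y|$ basic commutators drawn from $\bigcup_{k=1}^r [H, y_k]$.

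Finally, because $1 = [1, y_k] \in [H, y_k]$ for every $k$, each single basic commutator can be embedded in one ``round'' of $\prod_{k=1}^r [H, y_k]$ by padding the remaining slots with $1$. This yields $[H, G] \subseteq \bigl(\prod_{k=1}^r [H, y_k]\bigr)^{*f_2}$ with $f_2$ polynomial in $r$ and $\mathrm{d}(G)$ of the asserted order $O(r^6 \mathrm{d}(G)^6)$ after tightening the count (e.g.\ by packing several compatible basic factors into the same round instead of one factor per round). The main obstacle is the commutator identity in the third paragraph: it is precisely the normality of $H$ that forces $x^y \in H$ and thereby keeps the ``error'' term $[x^y, y^{-1}]$ inside $[H, y^{-1}]$; without this, one is left with factors like $[x, y]^{-1}$ or conjugates thereof that refuse to fit cleanly into the rigid structured product $\prod_k [H, y_k]$, and the reduction to Theorem~\ref{ThmB} would break down.
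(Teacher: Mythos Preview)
Your strategy is the paper's: apply Theorem~\ref{ThmA} to get a generating set of $H$-conjugates of the $y_i$, feed that into Theorem~\ref{ThmB}, and then collapse conjugates back using normality of $H$. Two points need repair, however.

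First, the assertion that $Y$ is symmetric is false: $(y_i^{x_{ij}})^{-1}=(y_i^{-1})^{x_{ij}}=y_{i'}^{x_{ij}}$, and there is no reason $x_{ij}$ should occur among the $x_{i'j'}$. This is harmless---replace $Y$ by $Y\cup Y^{-1}$, still a set of $H$-conjugates of the $y_k$ of size at most $2rf_0$---but it must be said.

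Second, your ``padding'' gives $f_2=2f_1\,|Y|$, and with $|Y|=O(r^2 d^2)$ and $f_1=O(|Y|^2 d)$ this is $O(r^6 d^7)$, a factor of $d$ too large; the vague appeal to ``packing several compatible basic factors'' does not by itself close the gap. The paper sidesteps this by using Theorem~\ref{abs} (the version with $[H,g_i]^H$) rather than Theorem~\ref{ThmB}: since $x_{ij}\in H$ one has $[H,g_i^{x_{ij}}]^H=[H,g_i]^H$ \emph{as sets}, so $\prod_{i,j}[H,g_i^{x_{ij}}]^H$ collapses immediately to $\bigl(\prod_i[H,g_i]^H\bigr)^{*f_0}$ with no reordering, and Lemma~\ref{L1} then gives $[H,g_i]^H\subseteq[H,g_i][H,g_i^{-1}]$. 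Your identity $[h,y^x]=[xh,y]\,[x^y,y^{-1}]$ is a correct alternative to that lemma, but to recover the stated exponent you must order $Y$ so that the underlying indices cycle through $1,\ldots,r$, and then absorb an entire cycle (not a single factor) into one copy of $\prod_k[H,y_k]$.
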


\noindent This is in effect `Key Theorem C' of \cite{NS}, with the fundamental
improvement that $f_{2}$ no longer depends on $\alpha(G)$. In fact Theorem
\ref{ThmC} simultaneously generalizes all three versions of the said `Key
Theorem' (and strengthens them, with our new definition of $G_{0}$).

A variant of Theorem \ref{ThmB} also holds, where $\{y_{1},\ldots,y_{r}\}$ is
merely assumed to generate $G$ modulo $\mathrm{C}_{G}(H)$ and $f_{1}=O(r^{3})$
is independent of $\mathrm{d}(G)$; the proof is a little more involved and
will appear elsewhere.

Sharper estimates for the functions $f_{0},~f_{1},~\,f_{2}$ will appear in the
course of the proofs.

\subsubsection{Verbal subgroups}

A group word $w$ has \emph{width} $m$ in a group $G$ if every product of
$w$-values in $G$ is equal to such a product of length $m$; here, by
$w$\emph{-value} we mean an element of the form $w(\mathbf{g})^{\pm1}$ with
$\mathbf{g}\in G^{(k)}$, where $w$ is a word on $k$ variables. In Subsection
\ref{vbl} we show how the following theorem, originally established in
\cite{NSP}, easily follows from the above results:

\begin{theorem}
\label{ThmV}Let $w$ be a non-commutator word and $G$ a finite $d$-generator
group. Then $w$ has width $f(w,d)$ in $G$, where $f(w,d)$ depends only on $w$
and $d$.
\end{theorem}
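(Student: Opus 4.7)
The plan is to reduce the problem in stages, applying Theorems \ref{ThmA}, \ref{ThmB}, and \ref{ThmC} in turn. Write $w(x_1,\ldots,x_k) = x_1^{a_1}\cdots x_k^{a_k}\cdot c(x_1,\ldots,x_k)$ with $c$ in the derived subgroup of the free group on $x_1,\ldots,x_k$, and let $e = \gcd(a_1,\ldots,a_k)$, which is positive because $w$ is not a commutator word. By B\'ezout, fix integers $b_i$ with $\sum b_i a_i = e$. The first observation is that, for any $g \in G$, the single $w$-value $u(g) := w(g^{b_1},\ldots,g^{b_k})$ equals $g^e$ modulo $G'$; since the image of $w(G)$ in the abelianization $G/G'$ coincides with the subgroup $(G/G')^e$ of $e$-th powers, each coset of $G'$ inside $w(G)$ contains some $u(g)$. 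Hence every $v \in w(G)$ factors as $v = u(g) \cdot v'$ with $v' \in w(G) \cap G'$, which reduces Theorem \ref{ThmV} to bounding the $w$-width on $w(G) \cap G'$ by a function of $w$ and $d$ alone.

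Let $N = w(G)$ and $M = [G, N]$, and fix a symmetric generating set $\{y_1,\ldots,y_r\}$ of $G$ with $r \leq 2d$. Theorem \ref{ThmB} applied to the normal subgroup $N$ writes every element of $M$ as a product of $f_1 r = O(d^3)$ commutators of the form $[h_j, y_{i_j}]$ with $h_j \in N$. Two elementary identities make this useful for controlling $w$-width: the conjugation-invariance $w(\mathbf{g})^y = w(\mathbf{g}^y)$ of the set $W$ of $w$-values, together with $[u, y] = u^{-1} u^y$, gives $[u, y] \in W \cdot W$ whenever $u \in W$; and the commutator expansion $[h_1 h_2, y] = [h_1, y]^{h_2} [h_2, y]$ shows that the $w$-width of $[h, y]$ is at most twice the $w$-width of $h$. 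A uniform bound on the $w$-width of elements of $M$ therefore follows as soon as each $h_j$ can be chosen of $w$-width bounded by a function of $w$ and $d$.

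The principal obstacle is precisely this circularity: Theorem \ref{ThmB} supplies $h_j \in N$ that may a priori have unbounded $w$-width. I would break the circularity by appealing to Theorems \ref{ThmA} and \ref{ThmC} applied to the characteristic subgroup $N \cap G_0$ (which satisfies $N \cap G_0 \leq G_0$, as required). These results allow one to rewrite any $h \in N \cap G_0$ as a bounded product of conjugates of a fixed finite set of $w$-values; since $W$ is conjugation-invariant, $h$ thereby acquires $w$-width bounded in terms of $w$ and $d$ alone. The residual contributions from the central abelian quotient $(N \cap G')/M$ and from the semisimple-by-soluble quotient $G/G_0$ are controlled analogously via Theorem \ref{ThmA}, by extracting bounded generating sets consisting of images of $w$-values. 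Assembling these pieces with the first-paragraph reduction yields the desired uniform bound $f(w, d)$. The uniformity of Theorem \ref{ThmC} is essential here: had $f_2(r, d)$ still depended on $\alpha(G)$ as in the original `Key Theorem C' of \cite{NS}, the $w$-width would inherit a dependence on the non-abelian simple-section complexity of $G$, contradicting the statement of Theorem \ref{ThmV}.
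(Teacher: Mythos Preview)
Your third paragraph does not actually break the circularity, and this is where the argument fails. Theorem~\ref{ThmA} produces generators of $G$ as conjugates of the given $y_i$, and Theorem~\ref{ThmC} produces elements of $[H,G]$ as products of commutators $[h,y_i]$ with $h\in H$; neither result rewrites an arbitrary element of $N\cap G_0$ as a bounded product of conjugates of $w$-values. In your setup the $y_i$ are generators of $G$, not $w$-values, so the outputs of these theorems carry no $w$-width information. Conversely, if you try to take the $y_i$ to be $w$-values, the hypothesis $G=G'\langle y_1,\ldots,y_r\rangle$ of Theorems~\ref{ThmA} and~\ref{ThmC} fails in general, since $w(G)G'/G'=(G/G')^{e}$ need not equal $G/G'$. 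The sentence ``these results allow one to rewrite any $h\in N\cap G_0$ as a bounded product of conjugates of a fixed finite set of $w$-values'' is therefore unsupported.

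The missing idea is that one must work inside $W=w(G)$ rather than $G$. Because $w$ is a non-commutator word, $|G:W|\leq\beta_w(d)$ is bounded (this is where the solution of the Restricted Burnside Problem enters, and you never invoke it), whence $\mathrm{d}(W)\leq d_0$ is bounded by Schreier. The paper then constructs, in several steps, a bounded generating set for $W$ whose elements are themselves bounded products of $w$-values: Lemma~\ref{AF} handles a suitable virtually-abelian top quotient, Proposition~\ref{simplew} (relying on Martinez--Zelmanov/Saxl--Wilson) handles the semisimple layer $D/W_0$, and Theorem~\ref{ThmA} together with the Gasch\"utz lemma lifts these to a generating set $Y$ of $W$ with $Y\subseteq G_w^{\ast(m_0+m_1)}$. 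Only then does Theorem~\ref{ThmB}, applied with $W$ in the role of $G$, give $W'\subseteq G_w^{\ast m_2}$ via your identity $[h,t]=(t^{-1})^{h}t$. Note, incidentally, that the paper's proof uses only Theorems~\ref{ThmA} and~\ref{ThmB}; Theorem~\ref{ThmC} is not needed here, so your final remark about its essential role is inaccurate.
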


\subsection{Algebraic properties of compact groups\label{algprop}}

A compact group (which we take to mean a compact Hausdorff topological group)
is an extension $G$ of a compact connected group $G^{0}$, its identity
component, by a profinite group $G/G^{0}$. The Levi-Mal'cev Theorem shows that
the connected component is essentially a product of compact Lie groups; this
makes it relatively tractable, and most of our attention will be focused on
the profinite case.

\subsubsection{Finitely generated profinite groups\label{introfgpg}}

The significance of uniform bounds relating to all $d$-generator finite groups
is that they reflect qualitative properties of $d$-generator profinite groups.
Thus (C) implies that the derived group is closed in every finitely generated
profinite group; and the main `finite' results of \cite{NS} were used to show
that every subgroup of finite index in a finitely generated profinite group
$G$ is open. A more roundabout argument, using results from \cite{NS} related
to (D), was used in \cite{NSP} to show that the `power subgroups' $G^{q}$ are
open in $G$. The sharper results now at our disposal yield further dividends
when applied in the profinite context.

Routine compactness arguments (recalled in Subsection \ref{routine}) transform
Theorems \ref{ThmA}, \ref{ThmB} and \ref{ThmC} into the following.

\begin{theorem}
\label{prof-gen}Let $G$ be a finitely generated profinite group and $K\leq
G_{0}$ a closed normal subgroup of $G$. Suppose that $G=K\overline
{\left\langle y_{1},,\ldots,y_{r}\right\rangle }=\overline{G^{\prime
}\left\langle y_{1},,\ldots,y_{r}\right\rangle }$. Then there exist elements
$x_{ij}\in K$ such that%
\[
G=\overline{\left\langle y_{i}^{x_{ij}}\mid i=1,\ldots,r,~j=1,\ldots
,f_{0}\right\rangle }%
\]
where $f_{0}=f_{0}(r,\mathrm{d}(G))$.
\end{theorem}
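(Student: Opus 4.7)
The plan is to deduce Theorem \ref{prof-gen} from Theorem \ref{ThmA} by the standard inverse-limit / compactness reduction; the only point that merits a brief separate check is the behaviour of the characteristic subgroup $G_0$ under passage to finite quotients.

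Fix an open normal subgroup $N$ of $G$ and write $\bar G = G/N$, $\bar K = KN/N$. First I would verify that $\bar G$, $\bar K$ and the images $y_iN$ satisfy the hypotheses of Theorem \ref{ThmA}. The two generation equalities follow from the profinite hypotheses: if $\overline{H\langle y_1,\ldots,y_r\rangle} = G$, then $H\langle y_1,\ldots,y_r\rangle \cdot N = G$ for every open normal $N$, applied in turn to $H \in \{K, G'\}$. The inclusion $\bar K \leq \bar G_0$ comes from $K \leq G_0$ together with the easy observation $G_0 N/N \leq \bar G_0$: every non-abelian simple chief factor of $\bar G$ lifts to such a chief factor of $G$ (between two $G$-normal subgroups containing $N$), which $G_0$ centralizes by definition. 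Finally $\mathrm{d}(\bar G) \leq \mathrm{d}(G)$, and since $f_0(r, \mathrm{d}) = O(r\mathrm{d}^2)$ is monotone in $\mathrm{d}$, the single value $f_0 := f_0(r, \mathrm{d}(G))$ suffices uniformly in $N$.

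For the compactness step, define
\[
X_N = \bigl\{ (x_{ij}) \in K^{rf_0} : \langle y_i^{x_{ij}} \mid i=1,\ldots,r,\ j=1,\ldots,f_0 \rangle \cdot N = G \bigr\}.
\]
Theorem \ref{ThmA} applied in $\bar G$ produces conjugators in $\bar K$, which lift coordinate-wise to $K$ via the surjection $K \twoheadrightarrow KN/N$, so $X_N$ is non-empty. The defining condition depends only on each $x_{ij}$ modulo the open subgroup $K \cap N$ of $K$, so $X_N$ is clopen in the compact Hausdorff space $K^{rf_0}$ (compact because $K$ is closed in $G$). Since $X_{N_1 \cap \cdots \cap N_k} \subseteq X_{N_1} \cap \cdots \cap X_{N_k}$, the family $\{X_N\}$ has the finite intersection property, and compactness gives $\bigcap_N X_N \neq \emptyset$. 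Any $(x_{ij})$ in this intersection provides elements of $K$ with $\overline{\langle y_i^{x_{ij}} \mid i,j \rangle} = G$, as required.

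The only step that deserves real attention is the chief-factor comparison $G_0 N / N \leq (G/N)_0$; once that is in hand the rest is exactly the routine profinite formalism that the paper flags as `compactness arguments'.
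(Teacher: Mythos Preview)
Your proposal is correct and follows essentially the same compactness argument as the paper: define the closed sets $X_N \subseteq K^{rf_0}$, invoke Theorem \ref{ThmA} in each finite quotient $G/N$ to see they are non-empty, and intersect. You make explicit two points the paper leaves implicit---the inclusion $G_0 N/N \leq (G/N)_0$ and the monotonicity of $f_0$ in $\mathrm{d}$---but these are exactly the checks needed, and your justification of both is correct.
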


\noindent Here, $G_{0}$ is defined by (\ref{defG0}) with $T$ ranging over
\emph{open} normal subgroups; and $\overline{X}$ denotes the closure of a
subset $X$ in $G$. As in the finite case, $G/G_{0}$ is an extension of a
semisimple group by a soluble group of derived length at most $3$ (a
semisimple profinite group is a Cartesian product of finite simple groups).

\begin{theorem}
\label{ThB-prof}Let $G$ be a profinite group and $\{y_{1},\ldots,y_{r}\}$ a
symmetric (topological) generating set for $G$. If $H$ is a closed normal
subgroup of $G$ then%
\begin{equation}
\lbrack H,G]=\left(
{\displaystyle\prod\limits_{i=1}^{r}}
[H,y_{i}]\right)  ^{\ast f_{1}} \tag{$\dagger$}\label{rhsB}%
\end{equation}
where $f_{1}=f_{1}(r,\mathrm{d}(G))$.
\end{theorem}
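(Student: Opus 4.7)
The plan is to deduce Theorem \ref{ThB-prof} from its finite counterpart Theorem \ref{ThmB} by the standard inverse-limit/compactness technique, exploiting the fact that the bound $f_1=f_1(r,\mathrm{d}(G))$ depends only on $r$ and $\mathrm{d}(G)$ and not on the individual finite quotient. Set
\[
A \;=\; \Bigl(\prod_{i=1}^{r}[H,y_{i}]\Bigr)^{\ast f_{1}}\quad\text{with }f_{1}=f_{1}(r,\mathrm{d}(G)).
\]
The inclusion $A\subseteq[H,G]$ is immediate. For the reverse, the two key ingredients are: (i) $A$ is closed in $G$; and (ii) the image of $A$ in every finite continuous quotient $G/N$ already exhausts $[H,G]\cdot N/N$ by the finite theorem.

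Point (i) is a soft compactness argument. Each $[H,y_{i}]$ is the image of the compact set $H$ under the continuous map $h\mapsto [h,y_{i}]$, hence is compact; the $r$-fold product $\prod_{i}[H,y_{i}]$ is then the continuous image of a compact product, and so is $A$. Being a compact subset of a Hausdorff group, $A$ is closed. For point (ii), fix an open normal subgroup $N\trianglelefteq G$ and write $\overline{X}$ for the image of $X$ in $\overline G=G/N$. Then $\overline H:=HN/N$ is normal in $\overline G$, the images $\overline{y_{1}},\ldots,\overline{y_{r}}$ form a symmetric generating set of $\overline G$, and $\mathrm{d}(\overline G)\le\mathrm{d}(G)$. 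One verifies the small identity $[\overline H,\overline{y_{i}}]=\overline{[H,y_{i}]}$: for $h\in H$, $n\in N$ one has $[hn,y_{i}]=[h,y_{i}]^{n}[n,y_{i}]$, and since $N$ is normal the element $[[h,y_{i}],n^{-1}]$ lies in $N$, so $[hn,y_{i}]N=[h,y_{i}]N$. Theorem \ref{ThmB} applied inside $\overline G$, padded with identity commutators if $f_{1}(r,\mathrm{d}(\overline G))<f_{1}(r,\mathrm{d}(G))$, therefore gives $[\overline H,\overline G]=\overline A$, i.e.\ $[H,G]\cdot N=AN$.

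Finally, by standard compactness in a profinite group, for any closed set $A$ one has $\bigcap_{N\trianglelefteq_{o} G}AN=A$. Hence every $g\in[H,G]$ lies in $AN$ for every open normal $N$, so $g\in A$, completing the proof. The main delicate point — really the only one that requires checking rather than being purely formal — is the identity $\overline{[H,y_{i}]}=[\overline H,\overline{y_{i}}]$, because one must replace $H$ by $HN$ in the finite quotient in order to have a subgroup containing $N$, and it is not \emph{a priori} obvious that enlarging $H$ to $HN$ does not enlarge the single-commutator set modulo $N$; the short conjugation computation above removes this worry. Everything else is routine profinite bookkeeping.
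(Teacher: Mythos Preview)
Your argument is correct and is exactly the route taken in the paper: show that the right-hand side is closed (as a finite product of continuous images of the compact set $H$), apply Theorem~\ref{ThmB} in each finite quotient $G/N$ to get $[H,G]N=AN$, and then intersect over all open normal $N$. The paper compresses all of this into a few lines in Subsection~\ref{routine} and does not pause over the identity $\overline{[H,y_i]}=[\overline H,\overline{y_i}]$; your explicit verification of that point is a harmless (and correct) elaboration of what the paper leaves implicit.
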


This implies that $[H,G]$ is closed in $G$, a result already established in
\cite{NS}.

\begin{theorem}
\label{ThmC-prof}Let $G$ be a finitely generated profinite group, $H\leq
G_{0}$ a closed normal subgroup of $G$, and $\{y_{1},\ldots,y_{r}\}$ a
symmetric subset of $G$. If $H\overline{\left\langle y_{1},\ldots
,y_{r}\right\rangle }=\overline{G^{\prime}\left\langle y_{1},\ldots
,y_{r}\right\rangle }=G$ then%
\begin{equation}
\lbrack H,G]=\left(
{\displaystyle\prod\limits_{i=1}^{r}}
[H,y_{i}]\right)  ^{\ast f_{2}} \tag{$\ddagger$}\label{rhsC}%
\end{equation}
where $f_{2}=f_{2}(r,\mathrm{d}(G))$.
\end{theorem}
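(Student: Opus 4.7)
My plan is to deduce Theorem~\ref{ThmC-prof} from its finite counterpart Theorem~\ref{ThmC} by the same inverse-limit procedure promised (``routine compactness'') for Theorems~\ref{prof-gen} and \ref{ThB-prof}. The constant $f_{2}$ is taken to be precisely the one supplied by Theorem~\ref{ThmC}. Setting $P=\bigl(\prod_{i=1}^{r}[H,y_{i}]\bigr)^{*f_{2}}$, I would first observe that $H$ is compact and commutation is continuous, so each $[H,y_{i}]$ is compact; hence $P$ is a continuous image of the compact set $\prod_{i}[H,y_{i}]^{f_{2}}$ under iterated multiplication, and is therefore closed in $G$. The inclusion $P\subseteq[H,G]$ is free, and because $P$ is closed, the reverse inclusion reduces to showing $[H,G]N\subseteq PN$ for every open normal subgroup $N\vartriangleleft G$.

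Fix such an $N$ and pass to $\bar G=G/N$, writing $\bar x$ for the image of $x\in G$. Three hypotheses of Theorem~\ref{ThmC} must descend to $\bar G$. Two are immediate: $\bar H\langle\bar y_{1},\ldots,\bar y_{r}\rangle=\bar G'\langle\bar y_{1},\ldots,\bar y_{r}\rangle=\bar G$ follows from the topological generation assumption (using that $G'$ is closed and $G'N/N=(G/N)'$), while $\mathrm{d}(\bar G)\le\mathrm{d}(G)$ is standard. The third, $\bar H\le\bar G_{0}$ in the sense of the finite definition, uses the convention recorded just after Theorem~\ref{prof-gen}: for any $T\vartriangleleft G$ with $N\le T$ and $G/T$ almost-simple, $T$ is automatically \emph{open} in $G$ (since $G/N$ is finite), and so $G_{0}\le T$ by the profinite definition of $G_{0}$. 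Intersecting over such $T$ gives $G_{0}N/N\le\bar G_{0}$, whence $\bar H\le\bar G_{0}$.

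Theorem~\ref{ThmC} applied to $\bar G$ then yields $[\bar H,\bar G]=\bigl(\prod_{i}[\bar H,\bar y_{i}]\bigr)^{*f_{2}(r,\mathrm{d}(\bar G))}$; the exponent may be replaced by $f_{2}=f_{2}(r,\mathrm{d}(G))$ either by appealing to the monotonicity of $f_{2}$ in its second argument (evident from the $O(r^{6}\mathrm{d}(G)^{6})$ bound) or, failing that, by padding with the identity element, which lies in each $[\bar H,\bar y_{i}]$. Pulling back via $G\to\bar G$ gives $[H,G]N\subseteq PN$, and intersecting over all open normal $N$ produces $[H,G]\subseteq P$, as required. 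The one step that is not purely formal is the descent $\bar H\le\bar G_{0}$, and this is where the reduction genuinely uses the convention---recorded just after Theorem~\ref{prof-gen}---that $G_{0}$ in the profinite setting is defined using \emph{open} normal $T$. Without that matching of definitions, a finite quotient could acquire new almost-simple quotients (through non-open normal subgroups of $G$) that the profinite $G_{0}$ had not needed to avoid, and the whole descent would collapse; once this point is secured the remainder is routine compactness.
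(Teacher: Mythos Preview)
Your proof is correct and follows essentially the same route as the paper's own argument in Subsection~\ref{routine}: show the right-hand side $P$ is closed, apply Theorem~\ref{ThmC} in each finite quotient $G/N$, and intersect over open normal $N$. You are simply more explicit than the paper on two points it leaves implicit---why $P$ is closed, and why $\bar H\le\bar G_{0}$ descends (your observation that every $T\vartriangleleft G$ with $N\le T$ and $G/T$ almost-simple is automatically open, hence contains the profinite $G_{0}$, is exactly the missing detail).
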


Why is this important? Suppose that $N$ is a proper normal subgroup in a group
$G$. If $G$ is \emph{finite}, then $N$ is contained in some maximal normal
subgroup $M$ of $G$. If $G/M$ is abelian, then $NG^{\prime}\leq M<G$; if not,
then $G/M$ is a simple chief factor of $G$, so $M\geq G_{0}$ and $NG_{0}\leq
M<G$. So far, so trivial. Now suppose that $G$ is a \emph{profinite} group:
unless we assume that $N$ is \emph{closed} in $G$, we have no grounds to
assert that $N$ is contained in a maximal open normal subgroup -- indeed $N$
could be dense in $G$. If $G$ is a \emph{finitely generated} profinite group,
however, we claim that at least one of $NG^{\prime}$, $NG_{0}$ is necessarily
properly contained in $G$. For suppose that $NG^{\prime}=NG_{0}=G$. If $G$ is
topologically generated by $d$ elements, we can find $2d$ elements
$y_{1},\ldots,y_{2d}\in N$ such that $G_{0}\overline{\left\langle y_{1}%
,\ldots,y_{2d}\right\rangle }=\overline{G^{\prime}\left\langle y_{1}%
,\ldots,y_{2d}\right\rangle }=G$, and Theorem \ref{ThmC-prof} (with $H=G_{0})$
then implies that
\[
\lbrack G_{0},G]\leq\left\langle \lbrack G_{0},y_{i}],~[G_{0},y_{i}^{-1}%
]\mid1\leq i\leq2d\right\rangle \leq N.
\]
But then%
\[
G=NG^{\prime}=N[NG_{0},G]=N.
\]
Thus we may state

\begin{corollary}
\label{normal_mod_G_0}Let $G$ be a finitely generated profinite group and $N$
a normal subgroup of (the underlying abstract group) $G$. If $NG^{\prime
}=NG_{0}=G$ then $N=G$.
\end{corollary}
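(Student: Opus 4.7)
Since the hypotheses of Theorem \ref{ThmC-prof} applied with $H=G_{0}$ force the conclusion $[G_{0},G]\leq N$ as soon as the symmetric generating subset lies in $N$, my plan is to manufacture such a subset out of a topological generating set of $G$, apply Theorem \ref{ThmC-prof}, and then collapse $G=NG^{\prime}$ by an elementary commutator computation.

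To build the subset, I would start with topological generators $g_{1},\ldots,g_{d}$ of $G$, where $d=\mathrm{d}(G)$. The hypothesis $NG_{0}=G$ lets me write $g_{i}=n_{i}h_{i}$ with $n_{i}\in N$ and $h_{i}\in G_{0}$; the hypothesis $NG^{\prime}=G$ lets me write $g_{i}=n_{i}^{\prime}h_{i}^{\prime}$ with $n_{i}^{\prime}\in N$ and $h_{i}^{\prime}\in G^{\prime}$. Since $n_{i}\equiv g_{i}\pmod{G_{0}}$ and $n_{i}^{\prime}\equiv g_{i}\pmod{G^{\prime}}$, the symmetric set
\[
Y=\{n_{i}^{\pm 1},(n_{i}^{\prime})^{\pm 1}\colon 1\leq i\leq d\}\subseteq N
\]
satisfies both $G_{0}\overline{\langle Y\rangle}=G$ and $\overline{G^{\prime}\langle Y\rangle}=G$, which are precisely the joint hypotheses of Theorem \ref{ThmC-prof}.

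Next I would apply that theorem with $H=G_{0}$ to obtain $[G_{0},G]=\bigl(\prod_{y\in Y}[G_{0},y]\bigr)^{\ast f_{2}}$. Each individual commutator $[g_{0},y]=g_{0}^{-1}y^{-1}g_{0}y$ with $y\in Y\subseteq N$ lies in $N$, because $N\trianglelefteq G$ forces $g_{0}^{-1}y^{-1}g_{0}\in N$. Hence $[G_{0},G]\leq N$. Combining this with $[N,G]\leq N$ (normality of $N$) and the identity $[ab,g]=[a,g]^{b}[b,g]$, I deduce $G^{\prime}=[G,G]=[NG_{0},G]=[N,G]\cdot[G_{0},G]\leq N$. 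Finally $G=NG^{\prime}\leq N$, as required.

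The argument is essentially mechanical once Theorem \ref{ThmC-prof} is available; there is no genuine obstacle. The one point that requires a little care is that the \emph{same} symmetric subset $Y\subseteq N$ must simultaneously satisfy $G_{0}\overline{\langle Y\rangle}=G$ and $\overline{G^{\prime}\langle Y\rangle}=G$, which is the reason for taking roughly $4d$ elements (rather than $2d$) obtained by doubling up on the two decompositions of the $g_{i}$.
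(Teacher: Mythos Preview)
Your proof is correct and follows essentially the same approach as the paper's own argument: construct a finite symmetric subset of $N$ that topologically generates $G$ both modulo $G_{0}$ and modulo $G^{\prime}$, invoke Theorem~\ref{ThmC-prof} with $H=G_{0}$ to obtain $[G_{0},G]\leq N$, and then use $G^{\prime}=[NG_{0},G]\leq N$ to conclude. The paper phrases the construction more tersely (``we can find $2d$ elements $y_{1},\ldots,y_{2d}\in N$'' and then implicitly passes to the symmetric closure), whereas you spell out explicitly the two decompositions $g_{i}=n_{i}h_{i}$ and $g_{i}=n_{i}^{\prime}h_{i}^{\prime}$; but the substance is identical.
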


\noindent This is the key to understanding `abstract' normal subgroups. For
example, it quickly reduces Serre's problem on finite-index subgroups ((E)
stated below) to the special cases of abelian groups and semisimple groups,
where the answer has long been known: see Subsection \ref{Finite-index-sec}.
More generally, it shows that if $G$ has a \emph{dense} proper normal
subgroup, then at least one of $G/G^{\prime}$ or $G/G_{0}$ has a dense proper
normal subgroup; the point is that each of these quotients has relatively
transparent structure. This is exploited to good effect in Subsections
\ref{fgquot} and \ref{dense}.

\bigskip

In Subsection \ref{vbl} we discuss the profinite version of Theorem \ref{ThmV}:

\begin{theorem}
\label{prof-V}\emph{\cite{NSP} }Let $G$ be a finitely generated profinite
group and $w$ a non-trivial non-commutator word. Then the verbal subgroup
$w(G)$ is open in $G$.
\end{theorem}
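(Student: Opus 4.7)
My plan is to derive openness of $w(G)$ in the profinite group $G$ from two ingredients: the finite width bound of Theorem \ref{ThmV}, which forces $w(G)$ to be closed; and the presence inside $w(G)$ of a power subgroup $\overline{G^{e}}$ already known to be open (by the power-subgroup theorem of \cite{NSP}, which this paper re-derives by the same unified machinery).

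For closedness, set $d=\mathrm{d}(G)$ and $f=f(w,d)$, and let $W\subseteq G$ denote the (symmetric, containing the identity) set of $w$-values. Since $w\colon G^{k}\to G$ and inversion are continuous, $W$ is closed; hence $W^{\ast f}$, the image of the compact space $W^{f}$ under multiplication, is closed in $G$. Theorem \ref{ThmV} applied to each finite continuous quotient $G/U$ (noting $\mathrm{d}(G/U)\le d$) yields $w(G)U=W^{\ast f}U$ for every open normal subgroup $U$ of $G$; intersecting over $U$ gives $w(G)\subseteq W^{\ast f}$, and the reverse inclusion is immediate. Hence $w(G)=W^{\ast f}$, so $w(G)$ is closed.

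For the power subgroup, I would write $w(x_{1},\ldots,x_{k})=x_{1}^{a_{1}}\cdots x_{k}^{a_{k}}\,c(x_{1},\ldots,x_{k})$ with $c$ in the derived subgroup of the free group $F_{k}$; the non-commutator hypothesis says that $e:=\gcd(a_{1},\ldots,a_{k})$ is a positive integer. Substituting $x_{j}=1$ for every $j\ne i$ annihilates $c$ (commutators vanish once any argument is trivial), leaving $w(1,\ldots,g,\ldots,1)=g^{a_{i}}$. Hence $g^{a_{i}}\in w(G)$ for every $i$ and every $g\in G$, and since $w(G)$ is a subgroup, $g^{e}\in w(G)$ as well. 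Thus the abstract power subgroup $G^{e}$ lies in $w(G)$, and the closedness of $w(G)$ forces $\overline{G^{e}}\subseteq w(G)$.

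Finally, invoking the power-subgroup theorem, $\overline{G^{e}}$ is open in $G$, so $w(G)\supseteq\overline{G^{e}}$ is open as required. The genuine obstacle is thus not Theorem \ref{prof-V} itself -- the reduction above is a short compactness-plus-substitution argument -- but the openness of $\overline{G^{e}}$, whose re-derivation within the unified framework (via Corollary \ref{normal_mod_G_0} and the quantitative Theorems \ref{ThmA}--\ref{ThmC}, applied to the abstract normal subgroup $G^{e}$ and exploiting the tractable structure of $G/\overline{G^{\prime}}$ and $G/G_{0}$) is the substantive task.
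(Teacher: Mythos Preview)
Your closedness argument via Theorem \ref{ThmV} and compactness is exactly the paper's. For openness, though, the paper does not reduce to the power-word case: it observes directly that for a $d$-bounded word $w$ (and any non-commutator word is $d$-bounded, via the Restricted Burnside Problem), the index $|Q:w(Q)|\le\beta_w(d)$ is uniformly bounded over all finite continuous quotients $Q$ of $G$; since $w(G)$ is closed, choosing an open normal $M$ with $|G:w(G)M|$ maximal yields $w(G)=\bigcap_N w(G)N=w(G)M$, which is open (Theorem \ref{maindresthm}). This uniform argument already covers the case $w=x^e$, so in this paper the power-subgroup theorem is not a separate input but simply the special case of the same proof. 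Your reduction is therefore valid if one imports the power-subgroup theorem from \cite{NSP} as a black box, but it does not shorten anything within the paper's own development; and your sketch of how the paper would re-derive that theorem---via Corollary \ref{normal_mod_G_0} applied directly to the normal subgroup $G^e$---does not match the actual route, which goes through the uniform finite-width bound Theorem \ref{resthm}, not through the $NG'=NG_0=G$ dichotomy.

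A minor point: the parenthetical ``commutators vanish once any argument is trivial'' is not the right justification---the correct reason $c\in F_k'$ maps to $1$ under the substitution $x_j\mapsto 1$ for $j\ne i$ is simply that the image lies in the cyclic, hence abelian, group $\langle g\rangle$.
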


Such results also imply certain \emph{rigidity} properties for profinite
groups, that is, conditions under which abstract group homomorphisms are
forced to be continuous. Let $G$ be a profinite group, $Q\neq1$ an abstract
group, and $f:G\rightarrow Q$ a surjective homomorphism, with kernel $N$.

We can restate the main result of \cite{NS} (re-proved in Subsection
\ref{Finite-index-sec}) as:

\begin{description}
\item[(E)] \emph{If }$G$\emph{ is finitely generated (topologically) and }%
$Q$\emph{ is finite, then }$N$\emph{ is open.}
\end{description}

\noindent This is also true if $G$ is a \emph{connected} compact group instead
of profinite: indeed, such a group is divisible, hence has no nontrivial
finite quotients at all (\cite{HM1}, Theorem 9.35).

An immediate consequence of (E) is

\begin{description}
\item[(F)] \emph{If }$G$\emph{ is finitely generated and }$Q$\emph{ is
residually finite, then }$N$\emph{ is closed, so }$Q$\emph{ is profinite (with
topology inherited from }$G/N$\emph{ via }$f$\emph{); hence }$Q$\emph{ cannot
be countably infinite.}
\end{description}

\noindent Rather surprisingly, it is easy to find countably infinite
non-(residually finite) images (if using the axiom of choice counts as
`finding'): if $\phi:\mathbb{Q}_{p}\rightarrow\mathbb{Q}$ is any $\mathbb{Q}%
$-vector space epimorphism then $\mathbb{Z}_{p}\phi$ is a countably infinite
image of $\mathbb{Z}_{p}$ (in fact it is an exercise, given (F), to show that
$\mathbb{Z}_{p}\phi=\mathbb{Q}$). This suggests the question: can $Q$ be
\emph{finitely generated} and infinite? This is answered below.

\subsubsection{Compact groups}

Many of the above results hold more generally for compact groups $G$, assuming
usually that the profinite quotient $G/G^{0}$ is finitely generated ($G^{0}$
denotes the connected component of the identity in $G$). The structure of a
connected compact group is relatively straightforward: it is semisimple modulo
its centre (where by a \emph{connected compact semisimple group} we mean a
Cartesian product of compact connected simple Lie groups). In Subsection
\ref{simplequot} we prove:

\begin{theorem}
\label{ssquotients}Let $G$ be a semisimple compact group that is either
finitely generated profinite or connected. If $Q$ is an infinite quotient of
$G$ then $\left\vert Q\right\vert \geq2^{\aleph_{{0}}}$\emph{.}
\end{theorem}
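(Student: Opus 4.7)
The plan is to analyse an abstract normal subgroup $N\lhd G$ via the projections to the simple factors of $G$. Decompose $G=\prod_{i\in I}L_i$, where each $L_i$ is finite non-abelian simple (and $I$ is countable) in the profinite case, or a compact connected simple Lie group in the connected case. Since each $L_i$ is perfect and $L_i/Z(L_i)$ is abstractly simple, each $\pi_i(N)\lhd L_i$ satisfies $\pi_i(N)=L_i$ or $\pi_i(N)\subseteq Z(L_i)$. Set $J=\{i:\pi_i(N)\subseteq Z(L_i)\}$.

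If $J$ contains an index of a Lie factor then $G/N$ surjects onto $L_i/Z(L_i)$, of cardinality $2^{\aleph_0}$; and if $J$ is infinite in the profinite case then $G/N$ surjects onto $\prod_{i\in J}L_i$, of cardinality $\ge 60^{\aleph_0}=2^{\aleph_0}$. So we may assume $J=\emptyset$ in the Lie case and $J$ finite in the profinite case, and hence that $\pi_i(N)=L_i$ for every $i$ in the cofinite set $I'=I\setminus J$. For such an $i$ we pick $n\in N$ with $\pi_i(n)=s\notin Z(L_i)$ and commute with $c\in L_i$ embedded at coordinate $i$: the commutator $[n,c]$ equals $[s,c]$ at coordinate $i$ and $1$ elsewhere, and as $c$ varies, the $L_i$-normal closure of these $[s,c]$ is $[L_i,L_i]=L_i$. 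Hence $L_i\subseteq N$ and so $\bigoplus_{i\in I'}L_i\subseteq N$. In the profinite case one further has $N\subseteq\prod_{i\in I'}L_i$ (because $\pi_i(N)=1$ for $i\in J$), yielding $|G/N|=|\prod_{i\in J}L_i|\cdot|\prod_{i\in I'}L_i/N|$ with the first factor finite.

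It remains to classify quotients of $\prod_{i\in I'}L_i$ by abstract normal subgroups containing $\bigoplus_{i\in I'}L_i$. Using the commutator trick uniformly -- the required bound on normal-closure width being supplied by Theorem \ref{ThB-prof} applied to the factors -- every such $N$ equals $N_{\mathcal I}:=\{(g_i):\mathrm{supp}(g)\in\mathcal I\}$ for an ideal $\mathcal I$ on $I'$ extending the Fr\'echet ideal of finite sets. For maximal $\mathcal I$, dual to a necessarily non-principal ultrafilter $\mathcal U$, the corresponding quotient is the ultraproduct $\prod L_i/\mathcal U$: in the Lie case $|L_i|=2^{\aleph_0}$ already forces this to have size $\ge 2^{\aleph_0}$; in the profinite case, finite generation of $G$ forces each isomorphism type of simple factor to occur with bounded multiplicity, so $|L_i|\to\infty$ along $\mathcal U$, and the ultraproduct has size $2^{\aleph_0}$ by the standard diagonalisation over growing alphabets. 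For non-maximal $\mathcal I$, the quotient surjects onto some maximal extension, giving the same lower bound. The only finite possibility is $N=\prod_{i\in I'}L_i$, which gives $|G/N|$ finite. This yields the desired dichotomy: $|Q|\ge 2^{\aleph_0}$ whenever $Q$ is infinite.

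The main obstacle is establishing the ideal correspondence used in the third paragraph: one must know that every abstract normal subgroup of $\prod_iL_i$ containing $\bigoplus_iL_i$ is of the form $N_{\mathcal I}$. This requires a uniform bound, across all simple factors, on the number of conjugates needed to express an arbitrary element in terms of any fixed non-trivial one -- precisely the kind of quantitative commutator control provided by Theorem \ref{ThB-prof} together with standard width results on finite simple groups. In the profinite case with repeated isomorphic factors one also needs that abstract normal subgroups of $S_n^{m_n}$ are subproducts, using that $S_n\times S_n$ has no proper normal diagonal when $S_n$ is non-abelian simple.
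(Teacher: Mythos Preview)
Your reduction in the first two paragraphs is fine, and indeed the paper begins the same way. The gap is in the third paragraph: the claimed ideal correspondence is \emph{false}. Not every abstract normal subgroup of $\prod_{i\in I'}L_i$ containing $\bigoplus_{i\in I'}L_i$ has the form $N_{\mathcal I}=\{(g_i):\mathrm{supp}(g)\in\mathcal I\}$. Concretely, take $G=\prod_{n\ge 5}\mathrm{Alt}(n)$ and let $g=(g_n)$ with each $g_n$ a $3$-cycle. For any fixed $k$, a product of $k$ conjugates of $g^{\pm1}$ has $n$th coordinate moving at most $3k$ points, so the normal closure $\langle g^G\rangle$ is proper; yet $g$ has full support, so $\langle g^G\rangle$ (together with $\bigoplus L_i$) cannot equal any $N_{\mathcal I}$. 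The same phenomenon occurs in the connected case: in $\mathrm{SO}(3)$ the number of conjugates of a rotation of angle $\theta$ needed to cover the group tends to infinity as $\theta\to 0$ (cf.\ Lemma~\ref{su}), so a sequence of small rotations generates a proper normal subgroup of full support.

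The uniform bound you need---``any nontrivial element of any simple factor yields the whole factor in a bounded number of conjugates''---simply does not exist once ranks are unbounded, and Theorem~\ref{ThB-prof} does not supply it: that theorem bounds the width of $[H,G]$ in terms of a \emph{generating set} of $G$, not the width of the normal closure of a single arbitrary element. What the paper does instead is introduce, for each factor $L$, a function $\lambda_L:L\to[0,1]$ measuring (essentially) the logarithmic size of the conjugacy class, and then replaces your set-theoretic ultraproduct kernel by the ``metric'' kernel $K_{\mathcal U}=\{(g_i):\lim_{\mathcal U}\lambda_{L_i}(g_i)=0\}$. One shows that any proper $N$ lies inside some $K_{\mathcal U}$ (this is where the Liebeck--Shalev width bound, Proposition~\ref{conjclass}, is actually used, in the quantitative form $\lambda(s)\ge\varepsilon\Rightarrow(s^L\cup s^{-L})^{*f(\varepsilon)}=L$), and then that $G/K_{\mathcal U}$ has size $\ge 2^{\aleph_0}$---trivially in the bounded-rank case where your ideal picture \emph{is} correct, and by exhibiting continuum many values of $h_{\mathcal U}=\lim_{\mathcal U}\lambda_{L_i}$ in the unbounded-rank and Lie cases.
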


\noindent In the \emph{profinite} case, we also give a complete classification
of the \emph{maximal }normal subgroups of $G$. Both results depend on
associating to each normal subgroup an ultrafilter on the underlying index set
of the Cartesian product.

The main results on quotients of compact groups are established in Section
\ref{fgquot}, using Corollary \ref{normal_mod_G_0} and Theorem
\ref{ssquotients}:

\begin{theorem}
\label{countablefabthm}Let $G$ be a compact group such that $G/G^{0}$ is
(topologically) finitely generated. Let $N$ be a normal subgroup of (the
underlying abstract group) $G$. If $G/N$ is countably infinite then $G/N$ has
an infinite virtually-abelian quotient.
\end{theorem}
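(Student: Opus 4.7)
The plan is to reduce to the finitely generated profinite setting via Baire category and the structure of connected compact groups, and then apply Corollary~\ref{normal_mod_G_0} together with Theorem~\ref{ssquotients} and Serre's theorem (E).

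\textbf{Reduction to $N$ dense.} Since $G/N$ is countable, the compact Hausdorff quotient $G/\overline{N}$ is countable and hence finite by Baire category. Thus $\overline{N}$ is an open finite-index subgroup of $G$, sharing the identity component $G^0$ and with $\overline{N}/G^0$ an open (hence finitely generated profinite) subgroup of $G/G^0$. A standard core-in-$G$ argument (take the intersection $K_0 = \bigcap_{g \in G} K^g$ of the conjugates of the kernel $K$ of any hypothetical infinite virtually-abelian quotient of $\overline{N}/N$; then $K_0 \triangleleft G$ contains $N$, and $\overline{N}/K_0$ embeds into a finite direct product of copies of the virtually-abelian $\overline{N}/K$, hence is itself virtually abelian and infinite) shows that it suffices to prove the theorem for the pair $(\overline{N}, N)$, in which $N$ is dense. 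Replacing $G$ by $\overline{N}$, we assume $N$ is dense in $G$.

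\textbf{Peeling off the connected component.} Let $Q = G/N$ and $Q^0 = G^0 N/N$. The image of $(G^0)'$ in $Q$, being a countable quotient of the connected semisimple compact group $(G^0)'$, is finite by Theorem~\ref{ssquotients}; call it $F$. Since $G^0$ is divisible (Hofmann--Morris), so is $Q^0$, and a divisible group admits no proper finite-index subgroup, forcing $\mathrm{C}_{Q^0}(F) = Q^0$ (because $Q^0/\mathrm{C}_{Q^0}(F)$ embeds in the finite group $\mathrm{Aut}(F)$). Thus $F$ is central in $Q^0$, and the commutator map induces a bilinear pairing $Q^0/F \times Q^0/F \to F$ from a divisible abelian group to a finite group; every such homomorphism is trivial, so $Q^0$ is in fact \emph{abelian}. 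The quotient $Q/Q^0 \cong G/G^0 N$ is a countable quotient of the finitely generated profinite group $G/G^0$. If $Q/Q^0$ is infinite, we are reduced to the profinite case below; otherwise $Q^0$ has finite index in $Q$, and since $Q$ is infinite $Q^0$ is an infinite abelian subgroup of finite index, making $Q$ itself infinite virtually abelian.

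\textbf{Profinite case and conclusion.} Now $G$ is finitely generated profinite, $N$ is proper and dense, and $Q = G/N$ is countably infinite. By Corollary~\ref{normal_mod_G_0}, at least one of $NG' \neq G$ or $NG_0 \neq G$ holds. Any finite-index subgroup of $G$ is open by (E), hence closed; a closed subgroup containing the dense set $N$ must equal $G$; so each proper product $NG'$ or $NG_0$ has infinite index in $G$. In the first case, $G/NG'$ is an infinite abelian quotient of $Q$, done. In the second, $G/G_0$ has a closed normal semisimple subgroup $S/G_0$ --- finitely generated profinite, as it is the kernel of the quotient of the finitely generated profinite group $G/G_0$ onto the soluble group $G/S$ of derived length at most $3$ --- and Theorem~\ref{ssquotients} applied to $S/G_0$ makes its countable image in $G/NG_0$ finite, so $G/NG_0$ is finite-by-soluble, virtually soluble, and infinite. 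Finally, any countably infinite virtually soluble group has an infinite virtually-abelian quotient: given a soluble normal subgroup $R$ of finite index, pick the minimal $i \geq 1$ for which the quotient by $R^{(i)}$ is infinite; then $R^{(i-1)}/R^{(i)}$ is abelian, normal (characteristic in $R$), infinite by minimality, and of finite index in that quotient.

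The step I expect to be the main obstacle is the divisibility-based argument forcing $Q^0$ to be genuinely abelian rather than merely nilpotent-of-class-$2$-by-finite; this is crucial because the finite case of $Q/Q^0$ then immediately makes $Q$ itself virtually abelian. A secondary subtlety, which must be handled carefully, is the verification that $S/G_0$ is finitely generated profinite so that Theorem~\ref{ssquotients} is applicable in the second sub-case.
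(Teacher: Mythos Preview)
Your argument is essentially correct and follows a route close to the paper's, though with some pleasant variations. Your divisibility argument in the second step --- showing directly that $Q^0$ is abelian by using that a bilinear pairing from a divisible group to a finite group vanishes --- is more elegant than the paper's approach, which first quotients out $P=(G^0)'$ to force $G^0$ abelian. In the profinite step you argue by direct dichotomy from Corollary~\ref{normal_mod_G_0}, whereas the paper argues by contradiction with the FAb hypothesis, establishing $K'H=K$, then $K^{(3)}K_0H=K$, then $HK_0=K$ in turn; the two routes are equivalent in substance.

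There is, however, one genuine flaw in your justification. You claim $S/G_0$ is finitely generated profinite ``as it is the kernel of the quotient of the finitely generated profinite group $G/G_0$ onto the soluble group $G/S$''. This reasoning is wrong: closed normal subgroups of finitely generated profinite groups need not be finitely generated (the derived subgroup of a nonabelian free profinite group is already a counterexample). What you actually need is condition~(a) of Theorem~\ref{unctble}: that for each $n$, only finitely many simple factors of $S/G_0$ have order at most $n$. This \emph{does} hold, because a $d$-generator profinite group has at most $|\mathrm{Aut}(S)|^d$ open normal subgroups with almost-simple quotient of socle $S$, hence only finitely many simple chief factors of each isomorphism type. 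With that observation, Theorem~\ref{unctble} applies and the rest of your argument goes through. (The paper's own proof also asserts that $K^{(3)}K_0/K_0$ is ``finitely generated'' without justification; the same remark applies there.)

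Your concluding derived-series argument for extracting an infinite virtually-abelian quotient from an infinite virtually-soluble group is correct; note only that ``finite-by-soluble'' implies ``virtually soluble'' via the centralizer of the finite normal subgroup, which you use implicitly.
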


\begin{corollary}
Let $G$ be a compact group such that $G/G^{0}$ is (topologically) finitely
generated. Then $G$ has a countably infinite (abstract) quotient if and only
if $G$ has an infinite virtually-abelian (continuous) quotient.
\end{corollary}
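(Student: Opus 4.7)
My plan is to handle the two implications separately, invoking Theorem~\ref{countablefabthm} in the substantive direction. For $(\Leftarrow)$, given an infinite continuous virtually-abelian quotient $Q$ of $G$, I pick a closed normal abelian subgroup $A\le Q$ of finite index; then $A$ is an infinite compact abelian group. The $\mathbb{Q}$-vector-space construction recalled earlier (the example $\mathbb{Z}_{p}\twoheadrightarrow\mathbb{Q}$, or more generally a countable-dimensional quotient of the discrete dual $\widehat{A}$) yields an abstract surjection $A\twoheadrightarrow C$ onto a countably infinite abelian group. Its kernel has only finitely many $Q$-conjugates (as $A$ is normal of finite index), and the $Q$-core $K_{0}$ of the kernel makes $A/K_{0}$ countable and still infinite (embedded in a finite power of $C$, and surjecting onto $C$); so $Q/K_{0}$ is a countably infinite abstract quotient of $G$.

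For $(\Rightarrow)$, suppose $G/N$ is countably infinite. Theorem~\ref{countablefabthm} produces an infinite virtually-abelian abstract quotient $V$ of $G/N$, which as a quotient of a countable group is itself countable. Choose a normal abelian subgroup $V_{1}\le V$ of finite index, and let $H\le G$ be its preimage. Then $[G:H]<\infty$; since $G^{0}$ is divisible, the finite abstract quotient $G/H$ factors through the finitely generated profinite group $G/G^{0}$, so statement (E) forces $H$ to be open (and normal) in $G$. Set $D:=\overline{[H,H]}$. From $gHg^{-1}=H$ for all $g\in G$ one gets $gDg^{-1}=\overline{[gHg^{-1},gHg^{-1}]}=D$, so $D$ is normal in $G$; consequently $G/D$ is a continuous quotient of $G$ in which $H/D$ is closed abelian of finite index, and $G/D$ is continuously virtually-abelian.

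The hard step --- and the main obstacle --- is showing $G/D$ is infinite, i.e.\ that the topological abelianization $H/\overline{[H,H]}$ is infinite. Since $V_{1}$ is abelian, the abstract map $H\to V_{1}$ kills $[H,H]$, so the abstract abelianization $H/[H,H]$ surjects onto the infinite $V_{1}$ and is itself infinite; the task is to promote this to infiniteness of the topological abelianization. In the profinite case, Theorem~\ref{ThB-prof} applied to the finitely generated profinite group $H$ gives that $[H,H]$ is already closed, so the two abelianizations coincide. In the general compact case one additionally uses Theorem~\ref{ssquotients}: restricting $H\to V_{1}$ to the closed connected semisimple subgroup $[H^{0},H^{0}]$ of $H^{0}$, that theorem forces the image to be finite (as $V_{1}$ is countable), and the divisibility of the connected compact group $[H^{0},H^{0}]$ then makes the image trivial. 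The map $H\to V_{1}$ therefore factors through $H/[H^{0},H^{0}]$; combining (C) applied to the finitely generated profinite group $H/H^{0}$ with the structure of the torus $T:=H^{0}/[H^{0},H^{0}]$ and its $H$-action, one shows that infiniteness of the abstract abelianization of $H/[H^{0},H^{0}]$ implies infiniteness of its topological abelianization, and hence of $H/\overline{[H,H]}$. Ruling out this last discrepancy --- between infinite abstract and finite topological abelianization in a compact group with finitely generated totally disconnected part --- is the crux of the argument.
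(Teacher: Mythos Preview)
Your overall strategy matches the paper's, and your $(\Leftarrow)$ direction is essentially the argument the paper gives (the ``remarks above'' preceding Theorem~\ref{FAb} in \S\ref{fgquot}). The $(\Rightarrow)$ direction is also set up correctly: you pass to an open normal $H\le G$ with an infinite abelian abstract quotient, and you correctly identify that the whole problem reduces to showing that $H/\overline{H'}$ is infinite.

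However, you leave precisely this ``crux'' unresolved, offering only a sketch (factor out $[H^{0},H^{0}]$, then appeal vaguely to the torus action and to (C) for $H/H^{0}$). The paper does not attack this step ad hoc: it is handled by Corollary~\ref{derived-cpct}, proved independently in \S5.4, which says that for \emph{any} compact group $H$ with $H/H^{0}$ finitely generated, the derived group $H'$ is already closed. Granting this, the step you call the crux is a triviality: $H/\overline{H'}=H/H'$ surjects onto your infinite $V_{1}$, hence is infinite, and $G/H'$ is the desired continuous virtually-abelian quotient. The paper's proof of the corollary is then two lines.

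The proof of Corollary~\ref{derived-cpct} follows the same reduction you sketch, but completes it: (i) $(H^{0})'$ is closed by the structure theory of connected compact groups; (ii) modulo $(H^{0})'$, one shows $[H^{0},H]$ is closed via Lemma~\ref{comm-lemma} (this is where the finite generation of $H/H^{0}$ enters, through a short direct argument, not via (C)); (iii) modulo $[H^{0},H]$, the connected part is central and one invokes the splitting $H=H^{0}D$ with $D$ profinite (\cite{HM1}, Theorem~9.41), reducing to the profinite case where Theorem~\ref{ThB-prof} applies. Your sketch gestures at (i) and (iii) but omits (ii) and the splitting, which are exactly the missing ingredients. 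So your proposal is not wrong in spirit, but as written the forward direction is incomplete; citing (or reproving) Corollary~\ref{derived-cpct} closes the gap immediately.
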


Using (F) in conjunction with Theorem \ref{countablefabthm} it is easy to deduce

\begin{theorem}
\label{ThmD}Let $G$ be a compact group and $N$ a normal subgroup of (the
underlying abstract group) $G$ such that $G/N$ is finitely generated. Then
$G/N$ is finite.
\end{theorem}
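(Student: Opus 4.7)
My plan is to reduce to the case where $G$ itself is topologically finitely generated, and then to play Theorem~\ref{countablefabthm} off against~(F): the former produces an infinite virtually abelian abstract quotient of $G/N$, while the latter forbids countably infinite residually finite abstract quotients of a topologically finitely generated profinite group.

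Since a finitely generated group is countable, I may assume $G/N$ is countably infinite and seek a contradiction. Pick lifts $y_{1},\ldots,y_{d}\in G$ of a generating set of $G/N$ and set $H=\overline{\langle y_{1},\ldots,y_{d}\rangle}$; this is a closed, hence compact, topologically $d$-generated subgroup of $G$, and
\[
H/(H\cap N)\cong HN/N=G/N.
\]
Replacing the pair $(G,N)$ by $(H,H\cap N)$, I may therefore assume $G$ is topologically finitely generated, so in particular $G/G^{0}$ is topologically finitely generated profinite.

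Next, Theorem~\ref{countablefabthm} applied to the countably infinite quotient $G/N$ yields an infinite virtually abelian quotient $Q$ of $G/N$, which is also an abstract quotient of $G$. As a quotient of the finitely generated group $G/N$, $Q$ is itself finitely generated, and a classical theorem of Mal'cev says every finitely generated virtually abelian group is residually finite. Since the connected compact group $G^{0}$ is divisible, its image in $Q$ is a divisible subgroup of a residually finite group, hence trivial. Thus $Q$ is in fact an abstract quotient of the topologically finitely generated profinite group $G/G^{0}$.

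Applying~(F) to $G/G^{0}$ and the residually finite abstract quotient $Q$ now shows that $Q$ cannot be countably infinite, contradicting the fact that $Q$ is an infinite quotient of the countable group $G/N$. Hence the original $G/N$ must be finite. I expect no serious obstacle here: once Theorem~\ref{countablefabthm} is in hand, everything else is essentially bookkeeping. The one point that needs a brief justification is the passage from $G$ to $G/G^{0}$ before invoking~(F), which is mediated by divisibility of $G^{0}$ together with residual finiteness of $Q$.
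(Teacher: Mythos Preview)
Your proof is correct and follows essentially the same route as the paper: reduce to $G$ topologically finitely generated by passing to the closure of a lift of a finite generating set, invoke Theorem~\ref{countablefabthm} (equivalently Theorem~\ref{FAb}) to obtain an infinite virtually abelian quotient $Q$, observe that $Q$ is finitely generated and hence residually finite, and derive a contradiction. The only cosmetic difference is in the endgame: the paper applies Corollary~\ref{resfinq} (the compact-group version of~(F)) directly to conclude that the kernel of $G\to Q$ is closed, so $Q$ is simultaneously compact and countably infinite; you instead kill $G^{0}$ by divisibility and then apply~(F) to the profinite quotient $G/G^{0}$. Since Corollary~\ref{resfinq} is itself proved via Corollary~\ref{cpctserre}, which in turn rests on the divisibility of $G^{0}$, the two arguments are really the same.
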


If $G/N$ is a countable quotient of $G$ then the closure of $N$ must be open
in $G$; in this case we say that $N$ is \emph{virtually dense} in $G$. More
generally, one might ask: \emph{under what conditions is it possible for a
normal subgroup of infinite index to be virtually dense?} The answer is
`always' in abelian groups -- for example, $\mathbb{Z}$ is dense in
$\mathbb{Z}_{p}$; and the results of Subsection \ref{simplequot} show that a
semisimple group can have uncountably many dense normal subgroups. When $G$ is
finitely generated profinite, Corollary \ref{normal_mod_G_0} shows that these
extreme cases essentially account for all possibilities; when $G$ is
connected, the proof of Theorem \ref{ssquotients} enables us to draw a similar
conclusion. Let us say that a semisimple compact group is \emph{strictly
infinite} if it is the product of an infinite set of simple connected Lie
groups or finite simple groups. In Subsection \ref{dense} we prove

\begin{theorem}
\label{vdn1}Let $G$ be a compact group such that $G/G^{0}$ is (topologically)
finitely generated. Then $G$ has a virtually dense normal subgroup of infinite
index if and only if\emph{ }some open normal subgroup of $G$ has an infinite
abelian quotient or a strictly infinite semisimple quotient.
\end{theorem}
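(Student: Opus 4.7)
The plan is to prove both directions, with necessity being the harder one.

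For sufficiency, suppose some open normal $V \vartriangleleft G$ has an infinite abelian or strictly infinite semisimple continuous quotient $Q = V/L$. I will construct a proper $G/V$-invariant dense subgroup $D$ of $Q$ of uncountable index, so that the preimage $N := \pi^{-1}(D)$ under $\pi\colon V \to Q$ is $G$-normal, dense in the open subgroup $V$, and of infinite index in $V$ (hence in $G$). In the abelian case, $D$ will be the countable subgroup of $Q$ generated by a $G/V$-invariant dense sequence (possible because $G/V$ is finite). In the semisimple case $Q = \prod_{i \in I} S_i$, $D$ will be the subgroup of elements of cofinite support, which is invariant under any permutation action of $G/V$ on $I$ and has uncountable index, as analysed in Subsection~\ref{simplequot}.

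For necessity, let $N \vartriangleleft G$ with $\overline{N}$ open in $G$ and $[G:N] = \infty$. I first replace $G$ by $\overline{N}$ (which is open normal in $G$, contains $G^0$, and so inherits the hypothesis that $G/G^0$ is finitely generated) so that $N$ is dense in $G$. Let $P := G/G^0$, a finitely generated profinite group with dense image $\bar N$. I split on whether $\bar N = P$ or not.

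Suppose first $\bar N \ne P$. By Corollary~\ref{normal_mod_G_0} applied to $P$, either $\bar N P' \ne P$ or $\bar N P_0 \ne P$. In the first case $P^{\mathrm{ab}}$ --- a continuous abelian quotient of $G$ --- contains a proper dense subgroup and so is infinite, whence $V := G$ witnesses the alternative (as $G^{\mathrm{ab}} \twoheadrightarrow P^{\mathrm{ab}}$ is then infinite). In the second case, let $R \vartriangleleft G$ be the preimage of the soluble radical of $P/P_0$, so that $G/R$ is the largest semisimple profinite continuous quotient of $G$. If $NR \ne G$ or $G/R$ is infinite, then $G/R$ admits a proper dense normal subgroup or is itself infinite, which in either case forces an infinite product of finite simples, i.e.\ $G/R$ is strictly infinite semisimple and $V := G$ works. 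Otherwise $G/R$ is finite, $R$ is open in $G$, and $N \cap R$ is dense in $R$ (openness of $R$ combined with density of $N$) and $G$-normal of infinite index in $R$; I recurse on $(R, N \cap R)$, descending through a strictly decreasing chain of open $G$-normal subgroups, which must terminate in one of the two non-recursive alternatives. $G$-normality of the open subgroup witnessing the alternative is preserved on ascent by coring (intersecting over finitely many $G$-conjugates, since each $R_i$ has finite index in $G$).

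If instead $\bar N = P$, i.e.\ $NG^0 = G$, then $N \cap G^0$ is a proper $G$-normal subgroup of $G^0$ with $G^0/(N \cap G^0) \cong G/N$ uncountable. Using the Levi--Mal'cev decomposition $G^0 = Z \cdot L$ with $Z$ the connected centre and $L$ a Cartesian product of compact connected simple Lie groups, either the image of $N \cap G^0$ in $Z$ has infinite index, yielding an infinite abelian continuous quotient of a suitable open normal subgroup of $G$ containing $Z$; or the image in $L$ is proper dense, which, by the ultrafilter argument from the proof of Theorem~\ref{ssquotients}, forces $L$ to be strictly infinite semisimple. The main obstacle will be twofold: first, proving termination of the recursion in the profinite case (the chain of open subgroups is strictly decreasing but in a compact group this alone is not enough, so one must track an additional invariant --- such as the soluble derived length of the quotient $R_i/(R_i)_0$, bounded by $3$ via the strong Schreier conjecture --- that controls how many descent steps can occur); second, in the connected case, packaging the conclusion as a quotient of an \emph{open} normal subgroup of $G$ itself rather than of $G^0$ (which need not be open), which requires combining the connected-group analysis with the profinite structure of $G/G^0$.
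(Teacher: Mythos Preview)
Your sufficiency sketch is essentially right, though note that an infinite compact abelian group need not be separable, so ``a countable $G/V$-invariant dense sequence'' is not automatic; the paper splits on whether $G/G^0$ is infinite (then the relevant abelian quotient is finitely generated profinite, hence second countable) or finite (then one works with a torus quotient of $G^0$).

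For necessity there is one error and one substantive gap. \emph{The error:} the quotient $P/P_0$ is an extension of a semisimple group by a soluble group of derived length $\le 3$, not the reverse; factoring out its soluble radical does \emph{not} yield a semisimple quotient (e.g.\ $S\rtimes C_2$ with $C_2$ an outer automorphism of a simple $S$ has trivial soluble radical but is not semisimple). So your $R$ does not satisfy the claimed property, and the recursion cannot proceed as described --- nor is termination established. The paper avoids recursion entirely: once Corollary~\ref{normal_mod_G_0} forces $L/L'$ or $L/L_0$ to be infinite, one sets $T=L^{(3)}L_0$ and reads off the answer structurally --- either $T/L_0$ is infinite semisimple, or $L/T$ is infinite soluble of length $\le 3$ and some $L^{(i)}/L^{(i+1)}$ ($i\le 2$) is infinite abelian. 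All the subgroups arising are characteristic in $L$, hence normal in $G$.

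\emph{The gap:} in the general case, even after showing that $G^0/\mathrm{Z}(G^0)=\prod_{i\in I}S_i$ with $I$ infinite, you must exhibit such a product as a continuous quotient of an \emph{open} subgroup of $G$; but $G$ permutes the $S_i$, so they are not individually $G$-normal. You correctly flag this obstacle but supply no mechanism. The paper's key device is Proposition~\ref{ultrafilterautos}: choosing generators $y_1,\dots,y_d\in N$ for $L/G^0$, one has $[G^0,y_l]\subseteq N\cap G^0$ for each $l$, and an ultrafilter argument then forces $\bigcap_l\mathrm{fix}(y_l^{\cdot})$ to lie in a non-principal ultrafilter on $I$, hence to be infinite. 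This produces an infinite $J\subseteq I$ with each $S_i$ ($i\in J$) normal in $L$; finitely many outer-automorphism constraints then cut $L$ down to a characteristic open subgroup $H$ splitting as $\mathrm{C}_H(X)\times X$ with $X=\prod_{i\in J}S_i$. This is the missing idea, and it is the heart of the proof.
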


An easy consequence is

\begin{corollary}
\label{J-I}Let $G$ be a finitely generated just-infinite profinite group that
is not virtually abelian. Then every normal subgroup of $G$ is closed.
\end{corollary}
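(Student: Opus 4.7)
The plan is to argue by contradiction. Suppose $N\trianglelefteq G$ is an abstract normal subgroup that is not closed. First, if $[G:N]$ is finite then statement (E) (Serre's theorem for finitely generated profinite groups) forces $N$ to be open, hence closed --- a contradiction. So $[G:N]=\infty$. A closed $N$ of infinite index would be $\{1\}$ by just-infiniteness, hence closed; so we may assume $N\neq 1$ and $N$ is not closed. Then $\overline{N}\supsetneq N$ is a nontrivial closed normal subgroup of $G$, which by just-infiniteness has finite index in $G$. In particular $N$ is a virtually dense normal subgroup of $G$ of infinite index.

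Next I would invoke Theorem~\ref{vdn1}: there exist an open normal subgroup $U\leq G$ and a closed normal subgroup $V$ of $U$ such that the continuous quotient $U/V$ is either infinite abelian or strictly infinite semisimple. The key reduction is via the normal core $V_G:=\bigcap_{g\in G}V^g$, a closed normal subgroup of $G$. Just-infiniteness leaves two options: $V_G=1$, or $[G:V_G]<\infty$ --- and the latter would force $V\supseteq V_G$ to have finite index in $U$, contradicting the infinitude of $U/V$. So $V_G=1$, and the diagonal map
\[
U\hookrightarrow\prod_{t=1}^{k}U/V^{g_t}\cong(U/V)^{k},
\]
where $V=V^{g_1},\ldots,V^{g_k}$ are the (finitely many) distinct $G$-conjugates of $V$, is an injection.

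In the abelian case $(U/V)^k$ is abelian, so $U$ is abelian; since $U$ has finite index in $G$, this makes $G$ virtually abelian --- contradicting the hypothesis. In the strictly infinite semisimple case, write $(U/V)^k=\prod_{\beta\in\mathcal{F}}T_{\beta}$ with $|\mathcal{F}|=\infty$ and each $T_{\beta}$ a non-abelian finite simple group. The embedding is equivariant for a natural action of $G$ on $(U/V)^k$ that permutes simple factors through the finite group $G/U$; hence every $G$-orbit on $\mathcal{F}$ is finite. For each $G$-orbit $\mathcal{O}$, the finite subgroup $M_{\mathcal{O}}:=\prod_{\beta\in\mathcal{O}}T_{\beta}$ is $G$-invariant, so $U\cap M_{\mathcal{O}}$ is a $G$-invariant closed normal subgroup of $U$, hence of $G$; being finite, it must be trivial by just-infiniteness (a finite nontrivial closed normal subgroup of the infinite group $G$ would have infinite index, contradicting just-infiniteness). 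The principal obstacle is finishing the semisimple case: when $V=1$ this is immediate, since then $k=1$ and $U\cong\prod_{i}T_i$, so any single $G$-orbit $\mathcal{O}$ yields $M_{\mathcal{O}}$ itself as a nontrivial finite closed normal subgroup of $G$ of infinite index, directly contradicting just-infiniteness. For general $V\neq 1$ the argument must be adapted by exploiting $V_G=1$ together with the triviality of $U\cap M_{\mathcal{O}}$ for every $G$-orbit $\mathcal{O}$ and the fact that $U$ surjects onto each $T_{\beta}$, to produce an analogous nontrivial closed normal subgroup of $G$ of infinite index.
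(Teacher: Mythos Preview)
Your overall strategy matches the paper's: a non-closed normal subgroup yields a virtually-dense one of infinite index, and then Theorem~\ref{vdn1} produces an open normal $U$ with an infinite abelian or strictly-infinite semisimple continuous quotient. Your treatment of the abelian branch is correct.

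The semisimple branch, however, has a real gap, which you flag but do not close. The route you sketch --- exploiting $U\cap M_{\mathcal{O}}=1$ together with the surjections of $U$ onto each $T_\beta$ --- does not lead to a contradiction: a closed subgroup of $\prod_\beta T_\beta$ can meet every finite block $M_{\mathcal{O}}$ trivially while still surjecting onto every factor (think of a diagonal). So that line stalls.

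The missing observation is that once $V_G=1$, the group $U$ is \emph{itself} semisimple. Your diagonal embedding realizes $U$ as a closed subgroup of $P=\prod_\beta T_\beta$ that surjects onto each simple factor. Writing $N_\beta=\ker(U\to T_\beta)$ and choosing one representative from each class $\{\gamma:N_\gamma=N_\beta\}$, the resulting kernels are pairwise comaximal open maximal normal subgroups of $U$ with trivial intersection; finite CRT together with compactness then gives an isomorphism of $U$ with the product of the corresponding simple quotients. Now $U$ is an open, strictly-infinite semisimple normal subgroup of $G$; the finite group $G/U$ permutes the simple factors of $U$ with finite orbits, and any orbit yields a nontrivial finite closed normal subgroup of $G$, contradicting just-infiniteness --- exactly your ``$V=1$'' argument.

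An alternative, closer to the paper's toolbox, avoids the core of $V$ altogether. Each simple factor of $U/V$ is a simple chief factor of $U$, so $U_0\le V$; thus $U_0$ is a closed characteristic subgroup of $U$ of infinite index, and just-infiniteness forces $U_0=1$. Then $U^{(3)}$ is closed, characteristic in $U$, and semisimple; it cannot be trivial (else $U$ is soluble, contradicting its nonabelian simple quotients), so it is open in $G$, and one finishes as above.
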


\noindent($G$ is \emph{just-infinite} if $G$ is infinite and every closed
non-identity normal subgroup is open. The corollary generalizes a result of A.
Jaikin \cite{JZ}, who proved it for pro-$p$ groups.)$\medskip$

If $G$ is connected, a virtually dense subgroup is the same thing as a dense
subgroup; if $G$ is profinite, however, the conditions for the existence of a
proper dense normal subgroup are more stringent. Their precise
characterization (which depends only on $G/G^{\prime}$ and $G/G_{0}$) is
stated in our final theorem, whose proof will appear elsewhere.

\subsection{Overview of the paper, conventions, remarks}

The basic idea is very simple. Suppose that $G=\left\langle g_{1},\ldots
,g_{r}\right\rangle $ is a finite group. If $M$ is a non-central chief factor
of $G$ then at least one of the generators $g_{i}$ must centralize a
relatively small proportion of the points of $M$, so the set of commutators
$[M,g_{i}]$ must be relatively large. Although we can't predict which value of
$i$ is the relevant one, we can in any case infer that the set%
\[
\prod_{i=1}^{r}[M,g_{i}]
\]
is relatively large: thus `many' of the elements of $M$ can be expressed as
products, of bounded length, of commutators with the original generators
$g_{i}$.

For this to be of any use, we need to replace `many' with `all'. The most
difficult parts of \cite{NS} and \cite{NS2} were devoted to that end; we can
now replace some of those arguments with the help of a new `portmanteau'
result, which we call `the Gowers trick'. This is explained below.

For many applications, one needs to have an analogous result for a subset
$\{g_{1},\ldots,g_{r}\}$ which may not generate the whole group. This was
achieved in \cite{NS} (`Key Theorem C') only under severe restrictions on the
structure of the group $G$. Somewhat to our surprise, these restrictions turn
out to be unnecessary: in Section \ref{gensec} we show that the $g_{i}$ have
the necessary `fixed-point' property on chief factors provided only that
$\{g_{1},\ldots,g_{r}\}$ satisfies the hypotheses of Theorem \ref{ThmA}. The
proof is in principle elementary, relying on the O'Nan-Scott Theorem to
analyse the action of $G$ on its chief factors.

In Section \ref{commsection} the main results on products of commutators are
reduced to Theorem \ref{comm20}: this technical result, the hard core of the
paper, concerns a (quasi-)semisimple group $N$ with operators $y_{ij}$, and
shows that every element of $N$ is equal to a certain product of `twisted
commutators' with the $y_{ij}$. The whole of Section \ref{semisimplesec} is
devoted to the proof of this theorem. While the combinatorial reduction
arguments are still quite complicated, the proofs in Subsection \ref{quasi} of
the necessary results about finite simple groups are relatively short and transparent.

The final Section \ref{profsec} can be read independently of the rest. Here we
derive all the above-stated applications to topological groups, using only the
statements of Theorems \ref{prof-gen} -- \ref{ThmC-prof} and Corollary
\ref{normal_mod_G_0}, with some additional material relating to connected
compact groups.

\bigskip

The main theorems stated above are not all stated in their sharpest form:
sharper, but less succinct, versions are formulated and proved in the body of
the paper.

We take as given the classification of finite simple groups. Some of the main
results depend on general consequences of CFSG, such as the facts that finite
simple groups can be generated by a bounded number of elements, have bounded
commutator width, and have soluble outer automorphism groups (the Schreier
conjecture). Others depend on specific properties of groups of Lie type, such
as the proportion of regular semisimple elements in these groups, and the
detailed structure of their automorphisms. Recent results such as the proof of
the Ore Conjecture \cite{LOST}, which says that simple groups have commutator
width equal to one, lead to sharper estimates for the implied constants in our
main theorems, but are not necessary if one is satisfied with qualitative
statements as given above.

\subsubsection{The `Gowers trick'}

A key tool in some of the proofs is a remarkable combinatorial result
discovered by Tim Gowers. The basic idea is this: to show that a finite group
is equal to the product of some of its subsets, it is enough to know merely
that the subsets have sufficiently big cardinalities. We will need the
following generalization of Gowers's result.

For a finite group $G$ let $l(G)$ denote \emph{the minimal dimension of any
non-trivial} $\mathbb{R}$\emph{-linear representation of} $G$.

\begin{theorem}
\emph{(\cite{BNP} Corollary 2.6)} Let $X_{1},\ldots,X_{t}$ be subsets of $G$,
where $t\geq3$. Then%
\[%
{\displaystyle\prod\limits_{i=1}^{t}}
\left\vert X_{i}\right\vert \geq\left\vert G\right\vert ^{t}\cdot
l(G)^{2-t}\text{ implies }X_{1}\cdot\ldots\cdot X_{t}=G.
\]

\end{theorem}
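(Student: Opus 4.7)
The plan is to prove this by noncommutative Fourier analysis on $G$. For each complex irreducible representation $\rho$ of $G$, define the Fourier coefficient $\hat{1}_{X_i}(\rho) = \sum_{g \in X_i}\rho(g)$. The number $N(g)$ of tuples $(x_1,\ldots,x_t) \in X_1\times\cdots\times X_t$ with $x_1\cdots x_t = g$ equals the iterated convolution $(1_{X_1}\ast\cdots \ast 1_{X_t})(g)$, and Fourier inversion gives
$$N(g) = \frac{1}{|G|}\sum_\rho (\dim\rho)\,\operatorname{tr}\!\bigl(\hat{1}_{X_1}(\rho)\cdots\hat{1}_{X_t}(\rho)\rho(g)^{*}\bigr).$$
The trivial representation contributes the main term $M := \prod_i|X_i|/|G|$, so it suffices to show that the contribution $E$ from the nontrivial representations satisfies $|E| \leq M$, with strict inequality forcing $N(g) > 0$ and hence $X_1\cdots X_t = G$.

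The error bound rests on two inputs. First, Parseval gives $\sum_\pi (\dim\pi)\|\hat{1}_X(\pi)\|_{\mathrm{HS}}^2 = |G||X|$, which yields the pointwise operator bound $\|\hat{1}_X(\rho)\|_{\mathrm{op}}^2 \leq |G||X|/l(G)$ for every nontrivial $\rho$. Second, for each such $\rho$ I would bound the trace by keeping the Hilbert-Schmidt norm on the two outer factors and switching to the operator norm on the middle $t-2$ factors:
$$\bigl|\operatorname{tr}\!\bigl(\hat{1}_{X_1}(\rho)\cdots\hat{1}_{X_t}(\rho)\rho(g)^{*}\bigr)\bigr| \leq \|\hat{1}_{X_1}(\rho)\|_{\mathrm{HS}}\|\hat{1}_{X_t}(\rho)\|_{\mathrm{HS}}\prod_{i=2}^{t-1}\|\hat{1}_{X_i}(\rho)\|_{\mathrm{op}}.$$
Plugging in the uniform bound on the middle factors and then summing over $\rho \neq 1$ using Cauchy-Schwarz, followed by one more application of Parseval to the two Hilbert-Schmidt factors, yields
$$|E| \leq \Bigl(\tfrac{|G|}{l(G)}\Bigr)^{(t-2)/2}\sqrt{\textstyle\prod_i|X_i|}.$$
A direct calculation shows $|E| \leq M$ precisely when $\prod_i|X_i| \geq |G|^t l(G)^{2-t}$, which is the hypothesis; so in particular $N(g) \geq 0$ for all $g$, and a small perturbation argument (or strict inequality in the hypothesis) upgrades this to $N(g) > 0$.

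The main obstacle is the combinatorial bookkeeping in the error estimate: to obtain the sharp exponent $2-t$ in $l(G)^{2-t}$, one must balance the two norms correctly across the $t$ factors, since using the operator norm on every factor loses a factor of $\sqrt{\dim\rho}$ from each and using Hilbert-Schmidt throughout loses a similar factor. The $2$-versus-$(t-2)$ split above is exactly what matches the hypothesis. A secondary technicality is that $l(G)$ in the statement refers to real representations whereas the Fourier machinery above uses complex irreducibles; one handles this either by pairing each complex irrep with its complex conjugate before bounding (the contribution of a conjugate pair to $E$ is real-valued and controlled by a real representation of twice the dimension) or by replaying Parseval for real irreducibles, either of which yields the bound with $l(G)$ as stated.
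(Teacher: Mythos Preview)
The paper does not give its own proof of this statement: it is simply quoted as \cite{BNP}, Corollary~2.6, and then applied. So there is no ``paper's proof'' to compare against. That said, your sketch is precisely the standard Fourier-analytic argument underlying \cite{BNP}, and it is essentially correct.

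Two small remarks. First, your ``small perturbation'' to upgrade $|E|\le M$ to strict inequality is unnecessary: in the Cauchy--Schwarz step you bound $\sum_{\rho\neq 1}$ by the full Parseval sum $\sum_{\rho}$, and the dropped trivial term $|X_1|^2$ (respectively $|X_t|^2$) is strictly positive, so you already have $|E|<M$ and hence $N(g)>0$ whenever the hypothesis holds with equality.

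Second, your handling of the real-versus-complex issue via pairing conjugate irreducibles is not quite watertight: it works for irreducibles of complex type but runs into trouble for quaternionic type, where $\rho\cong\bar\rho$ yet the associated real irreducible has dimension $2\dim_{\mathbb C}\rho$. The clean way to get $l(G)$ as the \emph{real} minimal degree is to bypass Fourier decomposition and argue directly on $\ell^2_0(G,\mathbb R)$: the convolution operator $T_i\colon\phi\mapsto 1_{X_i}\ast\phi$ commutes with the right regular representation, so every eigenspace of $T_i^*T_i$ on $\ell^2_0$ is a real $G$-module with no trivial summand, hence of dimension at least $l(G)$. Since the eigenvalues of $T_i^*T_i$ on $\ell^2_0$ sum to $\mathrm{tr}(T_i^*T_i)-|X_i|^2=|G||X_i|-|X_i|^2<|G||X_i|$, the top one is at most $|G||X_i|/l(G)$, giving $\|T_i|_{\ell^2_0}\|_{\mathrm{op}}^2\le |G||X_i|/l(G)$ with the correct $l(G)$. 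The rest of your argument then goes through unchanged.
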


This holds in particular if $\left\vert X_{i}\right\vert \geq\left\vert
G\right\vert \cdot l(G)^{-\mu}$ for each $i$, where $t\mu\leq t-2$.

\subsubsection{Facts about simple groups\label{facts}}

Here we list some frequently quoted results, for ready reference. Here
$S^{\ast}$ will denote a quasisimple group (see below) and $S=S^{\ast
}/\mathrm{Z}(S^{\ast})$ a finite (non-abelian) simple group.

\begin{proposition}
\label{2-gen}\emph{\cite{AG} }$S^{\ast}$ can be generated by $2$ elements.
\end{proposition}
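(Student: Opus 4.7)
The plan is to reduce the claim to the fact, proved using the classification of finite simple groups, that every finite non-abelian simple group is $2$-generated, and then to lift generators from $S$ to $S^{*}$ through the central extension $S^{*}\twoheadrightarrow S$ by a short Frattini-style argument. The main obstacle is entirely the simple-group statement; the lifting is essentially free from the quasisimple hypothesis.

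For the lifting step, I would first observe that $\mathrm{Z}(S^{*})\leq\Phi(S^{*})$, where $\Phi$ denotes the Frattini subgroup. Indeed, if $M$ is a maximal subgroup of $S^{*}$ with $\mathrm{Z}(S^{*})\not\leq M$, then $M\mathrm{Z}(S^{*})=S^{*}$; since $\mathrm{Z}(S^{*})$ is central, this forces $M\vartriangleleft S^{*}$ with abelian quotient $S^{*}/M$, contradicting the fact that $S^{*}$ is perfect (part of the definition of quasisimple). Hence $\mathrm{Z}(S^{*})$ lies in every maximal subgroup, as claimed. Now assuming $S=S^{*}/\mathrm{Z}(S^{*})=\langle\bar{a},\bar{b}\rangle$, pick lifts $a,b\in S^{*}$. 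The subgroup $H=\langle a,b\rangle$ satisfies $H\mathrm{Z}(S^{*})=S^{*}$; if $H\neq S^{*}$ then $H$ is contained in a maximal subgroup $M$, and since $\mathrm{Z}(S^{*})\leq M$ we would get $S^{*}=H\mathrm{Z}(S^{*})\leq M$, a contradiction. So $S^{*}=\langle a,b\rangle$.

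It remains to justify that $S$ itself is $2$-generated, which is the substantive content and where CFSG enters. I would proceed by cases according to the classification: for $\mathrm{Alt}(n)$ one exhibits explicit pairs (for example a $3$-cycle together with an $n$-cycle, with small adjustments for parity); for the $26$ sporadic groups one cites case-by-case verifications recorded in the ATLAS; for the groups of Lie type one uses Steinberg's generators, noting that a simply-connected simple algebraic group over a finite field is generated by a pair of root-subgroup elements suitably chosen, from which one extracts a $2$-element generating set of $S$. The reference \cite{AG} establishes the considerably stronger $3/2$-generation property (every non-identity element of $S$ extends to a generating pair), but for Proposition \ref{2-gen} only the weaker $\mathrm{d}(S)\leq 2$ is needed, and this is the only point at which CFSG is invoked.
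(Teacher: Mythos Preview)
Your proposal is correct and matches the paper's treatment: the paper does not give a proof but simply cites \cite{AG} for the simple case and remarks parenthetically that ``of course any generating set for $S$ lifts to a generating set of $S^{\ast}$''. Your Frattini argument makes precisely this lifting step explicit, and your discussion of the simple case is the content of the cited reference.
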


\noindent(This is usually stated for simple groups, but of course any
generating set for $S$ lifts to a generating set of $S^{\ast}$.)

\begin{proposition}
\label{schreier}\emph{(\cite{GLS}, Sections 7.1, 2.5)} The outer automorphism
group $\mathrm{Out}(S)$ is soluble of derived length at most $3$.
\end{proposition}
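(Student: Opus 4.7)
The proof depends on the classification of finite simple groups; the plan is to verify the bound separately for each of the three families that CFSG provides, namely alternating, sporadic, and Lie type.

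\emph{Alternating and sporadic cases.} If $S = \mathrm{Alt}(n)$ for some $n \ge 5$, then $\mathrm{Out}(S)$ has order $2$, except when $n = 6$, in which case it has order $4$; in either case $\mathrm{Out}(S)$ is abelian. For each of the $26$ sporadic simple groups, direct inspection of the \textsc{Atlas} tables gives $|\mathrm{Out}(S)| \le 2$, again abelian. So the derived length is at most $1$ in both of these families.

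\emph{Lie type.} Here the standard structure theorem presents $\mathrm{Out}(S)$ as an extension
\[
1 \longrightarrow D \longrightarrow \mathrm{Out}(S) \longrightarrow \Phi \rtimes \Gamma \longrightarrow 1,
\]
where $D$ is the abelian group of cosets of diagonal-and-inner automorphisms modulo inner ones, $\Phi$ is the cyclic group of field automorphisms induced by a Frobenius, and $\Gamma$ is the group of graph (or graph-field) automorphisms. In every case $|\Gamma| \le 2$ except when $S$ has type $D_4$, where $\Gamma \cong \mathrm{Sym}(3)$ is the triality. Hence $\Phi \rtimes \Gamma$ is metabelian, since its derived subgroup is contained in $\Phi \rtimes \Gamma'$ and both $\Phi$ and $\Gamma'$ are abelian.

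Because $\mathrm{Out}(S)$ has an abelian normal subgroup $D$ with metabelian quotient, the second derived subgroup $\mathrm{Out}(S)^{(2)}$ lies in $D$, and the third derived subgroup is trivial. Thus $\mathrm{Out}(S)$ is soluble of derived length at most $3$. The main obstacle in a fully self-contained proof is not the algebra but the bookkeeping: one must verify, for each family separately and including the twisted forms ${}^2 A_n$, ${}^2 D_n$, ${}^3 D_4$, ${}^2 E_6$ and the Suzuki--Ree groups, that $\Gamma$ really has the asserted shape and that the displayed semidirect decomposition of $\mathrm{Out}(S)$ is accurate. This case analysis is precisely what is carried out in \cite{GLS}.
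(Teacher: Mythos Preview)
The paper does not prove this proposition at all: it is listed among the ``frequently quoted results, for ready reference'' in Subsection~\ref{facts} and simply cites \cite{GLS}, Sections 7.1 and 2.5. Your sketch is correct and is essentially the argument that the cited reference carries out, so there is nothing to compare.

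One minor remark on your Lie-type step: for untwisted groups the field and graph automorphism groups actually commute, so the quotient $\mathrm{Out}(S)/D$ is a direct product $\Phi \times \Gamma$ rather than a genuine semidirect product; this only makes your metabelian claim easier. Your final chain---$D$ abelian, quotient metabelian, hence $\mathrm{Out}(S)^{(2)} \le D$ and $\mathrm{Out}(S)^{(3)} = 1$---is exactly right.
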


\begin{proposition}
\label{commwidth}\emph{(i) (\cite{W}, Proposition 2.4) }There exists
$\delta\in\mathbb{N}$ such that every element of $S$ is a product of $\delta$
commutators.\newline\emph{(ii)} There exists $\delta^{\ast}\in\mathbb{N}$ such
that every element of $S^{\ast}$ is a product of $\delta^{\ast}$ commutators.
\end{proposition}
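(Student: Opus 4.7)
Part (i) is quoted directly from \cite{W}. My plan for (ii) is to derive it from (i) together with the structure of the Schur multiplier. The key starting observation is that $[az_{1},bz_{2}]=[a,b]$ for all $z_{1},z_{2}\in\mathrm{Z}(S^{\ast})$, so the commutator map descends to a well-defined function $S\times S\to S^{\ast}$, and any product $\prod_{i=1}^{k}[a_{i},b_{i}]\in S^{\ast}$ depends only on the images $\bar a_{i},\bar b_{i}\in S$.

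The first step is a reduction to the centre. Given $x\in S^{\ast}$, apply (i) to $\bar x\in S=S^{\ast}/\mathrm{Z}(S^{\ast})$ to write $\bar x=\prod_{i=1}^{\delta}[\bar a_{i},\bar b_{i}]$. Choosing arbitrary lifts $a_{i},b_{i}\in S^{\ast}$, the element $c=\prod_{i=1}^{\delta}[a_{i},b_{i}]$ lies in the coset $x\mathrm{Z}(S^{\ast})$, so $x=c\cdot z$ for some $z\in\mathrm{Z}(S^{\ast})$. Hence it is enough to bound, by a uniform constant $C$, the number of commutators needed to express each central element; one then takes $\delta^{\ast}\leq\delta+C$.

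The second, and harder, step is the bound $C$. The invariance of commutators means that for every $\delta$-tuple $(\bar a_{i},\bar b_{i})$ satisfying $\prod_{i}[\bar a_{i},\bar b_{i}]=1$ in $S$, the lifted product is a well-defined element of $\mathrm{Z}(S^{\ast})$, giving a map $\Psi$ from length-$\delta$ ``commutator relations'' in $S$ into $\mathrm{Z}(S^{\ast})$. Surjectivity of $\Psi$ would immediately yield $C\leq\delta$, hence $\delta^{\ast}\leq 2\delta$. Conceptually this is a form of the Hopf formula: the Schur multiplier $M(S)=\mathrm{Z}(\tilde S)$ (for the universal central extension $\tilde S$ of $S$) is precisely the group of equivalence classes of such relations, and $\mathrm{Z}(S^{\ast})$ is a quotient of $M(S)$.

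The main obstacle is that Hopf's formula by itself provides commutator relations of unbounded length, not the bounded length $\delta$ that we need. Consistent with the paper's stated preference for transparent, CFSG-lite arguments, I would close the gap family-by-family: for $S$ alternating or sporadic the centre $\mathrm{Z}(S^{\ast})$ has uniformly bounded order, so a crude direct construction of short commutator expressions suffices; for $S$ of Lie type one uses explicit commutator identities for central scalars in the universal Chevalley cover, for instance exhibiting a central scalar $\zeta I\in\mathrm{SL}_{n}(q)$ as a commutator of suitable diagonal and monomial matrices in a maximal torus, uniformly in $\zeta$. Assembling these cases yields a uniform $C$, and combined with Step 1 this delivers the required $\delta^{\ast}$.
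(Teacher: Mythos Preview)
Your reduction step is exactly what the paper does: lift a commutator expression for $\bar x$ and reduce to bounding the commutator width of $\mathrm{Z}(S^{\ast})$ uniformly. Where you diverge is in handling the centre. The paper does not go via the Hopf formula or a family-by-family construction of short commutator expressions for central scalars; it simply invokes Blau's theorem \cite{B}, which asserts that every element of $\mathrm{Z}(S^{\ast})$ is a \emph{single} commutator unless $S^{\ast}$ is one of finitely many exceptional quasisimple groups. This gives $C=1$ generically, and the finitely many exceptions are absorbed into the constant, yielding $\delta^{\ast}\le\delta+O(1)$ (indeed $\delta^{\ast}=2$ once the Ore conjecture is used, as the paper records in Proposition~\ref{Ore}).

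Your proposed route is not wrong, but the Hopf-formula discussion is a detour: as you yourself note, it only produces relations of unbounded length, so the real work falls back on the case-by-case analysis you sketch at the end. That analysis can be carried out, but it is essentially a reproof of (a weak form of) Blau's result, and the Lie-type case in particular requires more care than ``a commutator of diagonal and monomial matrices'' suggests, since one must stay inside the universal cover rather than the ambient $\mathrm{GL}_n$. Citing Blau short-circuits all of this and gives the sharper bound $C=1$ almost everywhere.
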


\noindent((ii) follows from (i) by a theorem of Blau \cite{B}, which asserts
that every element of $\mathrm{Z}(S^{\ast})$ is a commutator unless $S^{\ast}$
is one of finitely many exceptions.)

\begin{corollary}
\label{quasigens}$S^{\ast}$ can be generated by $2\delta$ commutators.
\end{corollary}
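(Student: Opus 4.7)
The plan is to combine Propositions \ref{2-gen} and \ref{commwidth}(i), using the fact that a quasisimple group is perfect to push the relation $HZ(S^{\ast}) = S^{\ast}$ down to $H = S^{\ast}$.

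First I would invoke Proposition \ref{2-gen} to choose two generators $a, b$ of $S^{\ast}$, and pass to the simple quotient $S = S^{\ast}/\mathrm{Z}(S^{\ast})$ with images $\bar a, \bar b$. By Proposition \ref{commwidth}(i), I can write $\bar a = \bar c_{1} \cdots \bar c_{\delta}$ and $\bar b = \bar c_{\delta+1} \cdots \bar c_{2\delta}$, where each $\bar c_{i} = [\bar x_{i}, \bar y_{i}]$ is a commutator in $S$.

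Next I would lift: choose arbitrary preimages $x_{i}, y_{i} \in S^{\ast}$ of $\bar x_{i}, \bar y_{i}$, and set $c_{i} = [x_{i}, y_{i}]$. Since $\mathrm{Z}(S^{\ast})$ lies in the kernel of the commutator map in each variable, the commutator $c_{i}$ is a well-defined element of $S^{\ast}$ whose image in $S$ is $\bar c_{i}$. Let
\[
H = \langle c_{1}, c_{2}, \ldots, c_{2\delta} \rangle \leq S^{\ast}.
\]
By construction $H$ contains elements congruent to $a$ and to $b$ modulo $\mathrm{Z}(S^{\ast})$, hence $H \cdot \mathrm{Z}(S^{\ast}) \supseteq \langle a, b \rangle \cdot \mathrm{Z}(S^{\ast}) = S^{\ast}$.

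Finally, since $S^{\ast}$ is quasisimple it is perfect, so $S^{\ast} = [S^{\ast}, S^{\ast}]$. Because $\mathrm{Z}(S^{\ast})$ is central, $[H \cdot \mathrm{Z}(S^{\ast}), H \cdot \mathrm{Z}(S^{\ast})] = [H, H] \leq H$. Therefore $S^{\ast} = [S^{\ast}, S^{\ast}] \leq H$, giving $H = S^{\ast}$, as required. There is no serious obstacle here: the only subtlety is verifying that a commutator in $S$ genuinely lifts to a commutator (not merely to a product of commutators) in $S^{\ast}$, which is immediate from the definition of the commutator and centrality of $\mathrm{Z}(S^{\ast})$.
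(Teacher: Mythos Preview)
Your proof is correct and is precisely the argument the paper intends: the corollary is stated without proof, but the parenthetical remark after Proposition~\ref{2-gen} (``any generating set for $S$ lifts to a generating set of $S^{\ast}$'') is exactly the perfectness argument you spell out, and combining this with Proposition~\ref{commwidth}(i) gives the result. Your observation that commutators in $S$ lift to commutators in $S^{\ast}$ (not merely products of commutators) is the only point requiring care, and you handle it correctly.
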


For the record, we recall the validity of the \emph{Ore Conjecture} (not
strictly necessary for our results but yielding better values for the constants):

\begin{proposition}
\label{Ore}\emph{(\cite{LOST}, \cite{LOST2}) }$\delta=1,$ $\delta^{\ast}=2$.
\end{proposition}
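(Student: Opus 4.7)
\textbf{Proof plan for Proposition \ref{Ore}.} The equality $\delta=1$ is exactly the statement of the Ore Conjecture for finite non-abelian simple groups, so the plan is simply to cite its resolution in \cite{LOST} and \cite{LOST2}; there is no independent argument I would attempt here.

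For the refinement $\delta^{\ast}=2$ to quasisimple groups, my plan is to combine $\delta=1$ with Blau's theorem \cite{B}, along the lines already sketched in the parenthetical remark following Proposition \ref{commwidth}. Given $g\in S^{\ast}$, consider its image $\bar g\in S=S^{\ast}/\mathrm{Z}(S^{\ast})$. By the Ore Conjecture, $\bar g=[\bar x,\bar y]$ for some $\bar x,\bar y\in S$; choosing any lifts $x,y\in S^{\ast}$, the element $z:=g\cdot[x,y]^{-1}$ lies in $\mathrm{Z}(S^{\ast})$. Blau's theorem asserts that every element of $\mathrm{Z}(S^{\ast})$ is itself a single commutator in $S^{\ast}$, with finitely many explicit exceptions. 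Writing $z=[a,b]$ then gives $g=[a,b][x,y]$, a product of two commutators, as required.

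The only non-trivial point, and the main potential obstacle, is Blau's list of exceptional quasisimple groups: for each of these finitely many $S^{\ast}$ one must verify $\delta^{\ast}\leq 2$ separately, since the above deduction cannot be applied verbatim. However, these exceptions are an explicit short list of small central extensions (of certain small alternating and sporadic groups), all of bounded order, so the verification is a routine finite case-check — easily carried out by hand or by machine, and in any event covered by the statement of \cite{B}. Accordingly I would present the proof as the three-line deduction above, with the exceptional cases absorbed by a pointer to \cite{B} (or, if desired, by a brief table).
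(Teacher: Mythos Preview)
Your proposal is correct and matches the paper's treatment: Proposition~\ref{Ore} is stated in the paper purely as a citation of \cite{LOST} and \cite{LOST2}, with no further argument. Your additional sketch deriving $\delta^{\ast}=2$ from the Ore conjecture plus Blau's theorem is sound (and is indeed the mechanism behind Proposition~\ref{commwidth}(ii)); the paper simply defers the quasisimple case directly to \cite{LOST2} rather than spelling this out.
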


\begin{proposition}
\label{l(simple)}\emph{(\cite{LaS}; \cite{KlLi} Table 5.3A.)} Let $S^{\ast}$
be a quasisimple group of Lie type, of untwisted Lie rank $r$ over
$\mathbb{F}_{q}$ where $q^{r}>27$. Then $l(S^{\ast})\geq(q^{r}-1)/2$.
\end{proposition}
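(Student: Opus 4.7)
The plan is to reduce to bounds on complex projective representations of the simple quotient $S=S^{\ast}/\mathrm{Z}(S^{\ast})$ and then read off the estimate from the tables of Landazuri--Seitz and Kleidman--Liebeck. First I would observe that if $\rho\colon S^{\ast}\to\mathrm{GL}_n(\mathbb{R})$ is any non-trivial real representation, then its complexification $\rho\otimes_{\mathbb{R}}\mathbb{C}$ is a non-trivial complex representation of the same dimension $n$; so it suffices to bound from below the minimal degree $l_{\mathbb{C}}(S^{\ast})$ of a non-trivial complex representation of $S^{\ast}$.

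Next I would pass from $S^{\ast}$ to $S$ by a standard projective argument. Given a non-trivial complex representation $\rho$ of $S^{\ast}$, either $\mathrm{Z}(S^{\ast})\leq\ker\rho$, in which case $\rho$ descends to a non-trivial ordinary representation of the simple group $S$, or else $\rho$ has non-trivial central character, in which case composition with $\mathrm{GL}(V)\to\mathrm{PGL}(V)$ produces a non-trivial projective representation of $S$ of the same dimension. In either case one obtains a non-trivial projective complex representation of $S$ of dimension $\dim\rho$, so $l_{\mathbb{C}}(S^{\ast})$ is bounded below by the minimal degree $R(S)$ of such a projective representation.

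The Landazuri--Seitz theorem \cite{LaS}, as refined and tabulated by Kleidman and Liebeck \cite{KlLi} in Table~5.3A, provides explicit lower bounds for $R(S)$ for every finite simple group $S$ of Lie type. I would now verify case by case, running through the families $\mathrm{PSL}_{r+1}(q)$, $\mathrm{PSU}_{r+1}(q)$, $\mathrm{PSp}_{2r}(q)$, $\mathrm{P}\Omega^{\pm}_{n}(q)$ and the exceptional types, that the listed bound exceeds $(q^{r}-1)/2$ once $q^{r}>27$. Most cases are very loose (the bounds grow like $q^{r}$ or a higher power), so the extremal case driving the statement is essentially that of $\mathrm{PSL}_{2}(q)$, where $r=1$ and $R(S)=(q-1)/2$ for odd $q>9$; this is exactly why the restriction $q^{r}>27$ is imposed, as it excludes the finitely many small characteristic/rank exceptions where the Landazuri--Seitz bound degenerates.

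The main obstacle is precisely this case-by-case bookkeeping for the very small values of $q$ and $r$: for a handful of simple groups of Lie type the generic Landazuri--Seitz estimate is not sharp, and one has to either invoke the improved entries in Kleidman--Liebeck Table~5.3A or check the character tables directly. Since $q^{r}>27$ rules out all such exceptions, no further work is needed beyond reading off the tabulated values; the proof is therefore a verification rather than an argument.
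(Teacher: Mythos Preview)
Your proposal is correct and matches the paper's intent: this proposition is stated in the paper without proof, simply as a citation to \cite{LaS} and \cite{KlLi}, Table~5.3A, so the ``proof'' is exactly the verification you describe --- reduce from $\mathbb{R}$ to $\mathbb{C}$, pass to projective representations of the simple quotient, and read off the Landazuri--Seitz bounds from the Kleidman--Liebeck table, noting that $\mathrm{PSL}_2(q)$ is the extremal case giving $(q-1)/2$.
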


\begin{proposition}
\label{conjclass}\emph{\cite{LiSh}} There is an absolute constant $c^{\prime}$
such that: if $Y$ is a normal subset of $S$ then%
\[
\left\vert Y\right\vert ^{n}\geq\left\vert S\right\vert \Longrightarrow
Y^{\ast c^{\prime}n}=S.
\]

\end{proposition}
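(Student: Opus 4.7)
The plan is to prove this via Frobenius's character formula for convolution powers, combined with character-ratio estimates and bounds on the Witten zeta function.

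Since $Y$ is normal, the class sum $\sum_{y \in Y}y\in \mathbb{C}[S]$ is central and so acts on each irreducible representation of character $\chi$ by the scalar $\lambda_\chi := \chi(1)^{-1}\sum_{y \in Y}\chi(y)$. The number of ordered $m$-tuples in $Y^m$ with product $g$ is then
$$N_m(g) \;=\; \frac{|Y|^m}{|S|} \;+\; \frac{1}{|S|}\sum_{\chi \neq 1}\chi(1)\lambda_\chi^m\,\overline{\chi(g)},$$
so it suffices to establish the uniform inequality $\sum_{\chi \neq 1}\chi(1)^2|\lambda_\chi|^m<|Y|^m$ to force $N_m(g)>0$ for every $g\in S$ and conclude $Y^{*m}=S$.

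To estimate $|\lambda_\chi|$ I would use a Gluck-type bound $|\chi(y)|/\chi(1)\leq C|y^S|^{-1/2}$, valid with an absolute constant $C$ for every non-trivial irreducible $\chi$ and every non-identity $y$ in any non-abelian finite simple group. Decomposing $Y=\bigsqcup_{i=1}^k C_i$ into conjugacy classes and applying Cauchy--Schwarz together with the bound $k(S) = |S|^{o(1)}$ on the number of classes yields a uniform estimate $|\lambda_\chi|\leq C'|Y|^{1/2}|S|^{o(1)}$. Substituting this into the Frobenius formula and invoking the Liebeck--Shalev bounds for the Witten zeta function $\zeta_S^*(s) = \sum_{\chi\neq 1}\chi(1)^{-s}$---which is bounded by an absolute constant, tending to $0$ once $s$ exceeds some absolute $s_0$, uniformly over all non-abelian finite simple $S$---the error term is dominated by $|Y|^m$ provided $m$ exceeds an absolute constant multiple of $\log|S|/\log|Y|$. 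The hypothesis $|Y|^n\geq |S|$ forces $\log|S|/\log|Y|\leq n$, so taking $m = c'n$ with an appropriate absolute $c'$ completes the argument.

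The main obstacle is making all of the character estimates uniform across the full classification of finite simple groups. Gluck's bound itself depends on CFSG, and the Witten-zeta estimate is established family-by-family: the generic Lie-type case uses Deligne--Lusztig theory together with the low-dimensional representation bounds $l(S^{\ast})\geq (q^r-1)/2$ of Proposition \ref{l(simple)}; the alternating groups are handled via the Murnaghan--Nakayama rule; and small-rank Lie-type groups over small fields together with the sporadic groups require direct inspection. A separate minor adjustment is needed to handle the case $1\in Y$ (where the bound on $|\lambda_\chi|$ degrades by an additive term), which is absorbed into the final constant $c'$.
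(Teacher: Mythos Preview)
The paper does not give its own proof of this proposition; it is simply quoted from Liebeck--Shalev \cite{LiSh} as a known fact about simple groups. Your overall framework---Frobenius's formula combined with character-ratio estimates---is indeed the strategy of \cite{LiSh}, but the specific character bound you invoke is false, and this breaks the argument in two independent ways.

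First, the estimate $|\chi(y)|/\chi(1)\leq C\,|y^S|^{-1/2}$ does not hold. Take $S=\mathrm{Alt}(n)$, let $y$ be a $3$-cycle, and let $\chi$ be the standard $(n-1)$-dimensional character: then $\chi(y)/\chi(1)=(n-4)/(n-1)\to 1$, while $|y^S|^{-1/2}\sim n^{-3/2}$. In fact no uniform bound of the shape $|\chi(y)|/\chi(1)\leq C\,|y^S|^{-\epsilon}$ holds for any fixed $\epsilon>0$. Gluck's actual results for classical groups over $\mathbb{F}_q$ give only $|\chi(y)|/\chi(1)\leq Cq^{-1/2}$, which is far weaker and does not yield what you need.

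Second, even granting your bound, the resulting estimate $|\lambda_\chi|\leq C'|Y|^{1/2}|S|^{o(1)}$ is \emph{uniform in} $\chi$, so the Witten zeta function never enters: you are forced to use $\sum_{\chi}\chi(1)^2=|S|$, and the required inequality becomes $|S|\cdot\bigl(C'|Y|^{1/2}|S|^{o(1)}\bigr)^m<|Y|^m$. Unwinding this, one needs roughly $m\bigl(\tfrac{1}{2n}-o(1)\bigr)>1$, which fails once $n$ is comparable to the implicit $o(1)$ rate; for $3$-cycles in $\mathrm{Alt}(n')$ the relevant $n$ is of order $n'$ and the argument collapses. The Liebeck--Shalev proof instead establishes $\chi$-\emph{dependent} bounds---controlling $|\chi(y)|$ by a fractional power of $\chi(1)$ that reflects the cycle structure or centralizer of $y$---so that the sum $\sum_{\chi\neq 1}|\chi(y)|^m/\chi(1)^{m-2}$ becomes a genuine zeta-type sum. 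Those bounds are obtained separately for alternating groups (via Murnaghan--Nakayama combinatorics on partitions with long first rows or columns) and for groups of Lie type (via Deligne--Lusztig theory), and constitute the substantive content of \cite{LiSh}.
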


It is convenient to define the \emph{rank} of a simple group as follows: if
$S$ is of Lie type, $\mathrm{rank}(S)$ is the (untwisted) Lie rank of $S$; if
$S\cong\mathrm{Alt}(n)$, $\mathrm{rank}(S)=n$; if $S$ is sporadic,
$\mathrm{rank}(S)=0$. The next result is essentially a special case of the
main theorem of \cite{BCP}:

\begin{proposition}
\label{bcpbound}If $C$ is a proper subgroup of $S$ then $\left\vert
S:C\right\vert \geq\left\vert S\right\vert ^{\varepsilon(r)}$ where
$\varepsilon(r)>0$ depends only on $r=\mathrm{rank}(S)$.
\end{proposition}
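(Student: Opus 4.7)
The plan is to invoke CFSG to split into the sporadic, alternating, and Lie-type families, and to handle each separately. For sporadic groups ($r=0$) there are only finitely many $S$, so the quantity $\min \log|S:C|/\log|S|$, taken over all sporadic $S$ and all proper subgroups $C$, is a positive real number which we declare to be $\varepsilon(0)$. For alternating groups with $\mathrm{rank}(S)=n$ fixed, there is a unique such $S\cong\mathrm{Alt}(n)$ and only finitely many proper subgroups, so the analogous minimum supplies $\varepsilon(n)$.

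The substantive case is $S$ of Lie type of fixed Lie rank $r$ over $\mathbb{F}_q$. I would first dispose of bounded $q$: for any threshold $q_{0}(r)$ there are only finitely many Lie-type simple groups with $\mathrm{rank}(S)=r$ and $q\leq q_{0}(r)$, and these contribute a (positive) minimum to $\varepsilon(r)$. For $q>q_{0}(r)$ we may replace $C$ by a maximal overgroup in $S$. Both $|S|$ and the minimum index $|S:C|$ over maximal $C$ are polynomials in $q$: the former of degree bounded by some $g(r)$ depending only on $r$ (roughly $O(r^{2})$ for classical types, $O(1)$ for exceptional), the latter of degree at least some $h(r)>0$. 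Taking $\varepsilon(r) := h(r)/(2g(r))$ and $q_{0}(r)$ large enough yields the required bound.

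The crux, and the main obstacle, is establishing the positive lower bound $h(r)$ on the degree-in-$q$ of the index of a maximal subgroup of a Lie-type simple group of rank $r$; this is exactly what \cite{BCP} gives. Their theorem asserts that a primitive permutation group of degree $n$ whose alternating composition factors have size at most $t$ has order at most $n^{f(t)}$ for some function $f$. Applied to $S$ acting on the cosets $S/C$ with $C$ maximal, together with the CFSG-based fact that the alternating sections of a Lie-type simple group of rank $r$ have size bounded by some $t(r)$, this yields $|S|\leq |S:C|^{f(t(r))}$, so one can in fact take $\varepsilon(r) = 1/f(t(r))$ uniformly. In this sense the proposition is a repackaging of \cite{BCP} that absorbs the alternating and sporadic cases into the definition of $\mathrm{rank}$.
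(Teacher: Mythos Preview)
Your proposal is correct, and it actually supplies more detail than the paper does: the paper simply records this proposition among its ``facts about simple groups'' with the remark that it ``is essentially a special case of the main theorem of \cite{BCP}'', and gives no further argument. Your unpacking of how the BCP bound on primitive groups with restricted composition factors yields the stated inequality---together with the finiteness reductions for sporadic, alternating, and bounded-$q$ Lie-type groups---is exactly the intended derivation.

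One small remark on the final paragraph: when you apply \cite{BCP} to $S$ acting on $S/C$ with $C$ maximal, the only non-abelian composition factor of the acting group is $S$ itself, so what you need is merely that a Lie-type simple group of rank $r$ is not isomorphic to $\mathrm{Alt}(k)$ for $k$ larger than some $t(r)$ (the exceptional isomorphisms stop at $\mathrm{Alt}(8)\cong\mathrm{PSL}_4(2)$, so $t(r)=8$ works uniformly). You phrase this as bounding the alternating \emph{sections} of $S$, which is a stronger statement than needed; it is also true (the minimal faithful representation degree of $\mathrm{Alt}(k)$ grows with $k$), but the composition-factor version suffices and is immediate.
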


\subsubsection{Notation}

For a group $G$, the centre is $\mathrm{Z}(G)$ and the derived group is
$G^{\prime}$. For $n>1$, $G^{(n)}=(G^{(n-1)})^{\prime}$ where $G^{(1)}%
=G^{\prime}$.

For a subset $X$ and an element $y$ of $G$, $[X,y]$ denotes the \emph{set}
$\{[x,y]\mid x\in X\}$. When $X$ and $Y$ are both \emph{subgroups} of $G$,
$[X,Y]$ denotes the \emph{subgroup }$\left\langle [x,y]\mid x\in X,~y\in
Y\right\rangle $. In particular, the terms of the lower central series are
defined by $\gamma_{1}(G)=G$, $\gamma_{2}(G)=G^{\prime}$, and for $n>1$%

\[
\gamma_{n}(G)=[G,\gamma_{n-1}(G)].
\]

$\gamma_{\omega}(G)=%
{\textstyle\bigcap\nolimits_{n=1}^{\infty}}
\gamma_{n}(G)$ is the \emph{nilpotent residual} of $G$. If $G$ is finite, then
for some $n$ we have $\gamma_{\omega}(G)=\gamma_{n}(G)=[\gamma_{\omega}(G),G]$.

The notation $G^{(m)}$ is also used for the Cartesian power $G\times
\cdots\times G$ with $m$ factors; which meaning is intended should be clear
from the context. For $\mathbf{a},~\mathbf{b}\in G^{(m)}$ and $\alpha
\in\mathrm{Aut}(G)^{(m)},$%
\begin{align*}
\mathbf{a}\cdot\mathbf{b}  &  =(a_{1}b_{1},\ldots,a_{m}b_{m})\\
\lbrack\mathbf{a},\alpha]  &  =([a_{1},\alpha_{1}],\ldots,[a_{m},\alpha
_{m}])\\
\mathbf{c}(\mathbf{a},\alpha)  &  =%
{\displaystyle\prod\limits_{j=1}^{m}}
[a_{j},\alpha_{j}]
\end{align*}
where as usual $[a,\beta]=a^{-1}a^{\beta}$.$\medskip$

In sections \ref{gensec} \-- \ref{semisimplesec}, `group' means `finite
group', and `simple group' means `non-abelian simple group'.

A direct (or Cartesian) product of simple groups is called \emph{semisimple}.
A group $G$ is \emph{quasisimple} if $G$ is perfect (i.e. $G=G^{\prime}$) and
$G/\mathrm{Z}(G)$ is simple. A central product of quasisimple groups is called
\emph{quasi-semisimple}.

For a topological group $G$, the connected component of the identity is
denoted $G^{0}$ (not to be confused with $G_{0}$ defined above (\ref{defG0})).

For $m\in\mathbb{N}$ we write $[m]=\{1,\ldots,m\}$.

\bigskip

When, occasionally, a lemma is stated without proof, it can be verified by a
short direct calculation.

\section{Generators\label{gensec}}

\subsection{Fixed-point properties\label{fix}}

We begin by defining a key technical concept, in three flavours: the
\emph{fixed-point} property (fpp), the \emph{fixed-point} \emph{space
}property (fsp), and the \emph{fixed-group} property (fgp):$\medskip$

\textbf{Definition} Let $\Omega$ be a finite $G$-set, $V$ a finite-dimensional
$kG$-module ($k$ some field), and $M$ a $G$-group (group acted on by $G$). Let
$\varepsilon\in(0,1]$. An element $y\in G$ has the

\begin{itemize}
\item $\varepsilon$\emph{-fpp} on $\Omega$ if $y$ moves at least
$\varepsilon|\Omega|$ points of $\Omega$,

\item $\varepsilon$\emph{-fsp} on $V$ if $\dim V(y-1)\geq\varepsilon\dim V$.
\end{itemize}

\noindent If $Y$ is a subset of $G$, we say that $Y$ has the $\varepsilon
$\emph{-fpp }etc. if there exists $y\in Y$ having the given property.

\begin{itemize}
\item $Y$ has the $\varepsilon$\emph{-fgp} on $M$ if (i) $M=S_{1}\times
\cdots\times S_{n}$ with $n\geq2$ and the action of $G$ permutes the factors
$S_{i}$ transitively, and (ii) for each such decomposition of $M$, the set $Y$
has the $\varepsilon$-fpp on the set $\{S_{1},\ldots,S_{n}\}$.
\end{itemize}

\emph{Remarks}. (i) Each $\varepsilon$ property implies the corresponding
$\varepsilon^{\prime}$ property for any $\varepsilon^{\prime}\leq\varepsilon$.
If $Y$ acts non-trivially on $\Omega$, respectively $V$, then $y$ has the
$2/\left\vert \Omega\right\vert $-fpp on $\Omega$ and the $1/\dim(V)$-fsp on
$V$.

(ii) Suppose that $G$ is imprimitive on $\Omega$ and acts transitively on a
set $\overline{\Omega}$ of blocks. If $y$ has the $\varepsilon$-fpp on
$\overline{\Omega}$ then $y$ has the $\varepsilon$-fpp on $\Omega$.

(iii) If $M$ is a $G$-group and $y$ has the $\varepsilon$-fgp on $M$, then
$\left\vert \mathrm{C}_{M}(y)\right\vert \leq|M|^{1-\varepsilon/2}$.

(iv) Suppose that $G$ acts as an imprimitive linear group on $V$, permuting a
system of imprimitivity $\Omega$ transitively. If $y$ has the $\varepsilon
$-fpp on $\Omega$ then $y$ has the $\varepsilon/2$-fsp on $V$.

(v) Say $\left\vert Y\right\vert =r$. If $\mathrm{C}_{V}(\left\langle
Y\right\rangle )=0$ then $Y$ has the $1/r$-fsp on $V$; if $\left\langle
Y\right\rangle $ has no fixed points in $\Omega$ then $Y$ has the $1/r$-fpp on
$\Omega$.

These are all easy to see; for (iii) and (iv), suppose that $M=S_{1}%
\times\cdots\times S_{n}$ is a $G$-group and $y\in G$ permutes the factors
$S_{i}$, according to a permutation with $r$ cycles, including exactly $k$
cycles of length $1$. Choose representatives $i(1),\ldots,i(r)$ for these
cycles. Then any fixed point of $y$ in $M$ is determined by its projections to
$S_{i(1)},\ldots,S_{i(r)}$, so $\left\vert \mathrm{C}_{M}(y)\right\vert
\leq\left\vert S\right\vert ^{r}=\left\vert M\right\vert ^{r/n}$ if
$S_{1}\cong\ldots\cong S_{n}\cong S$. On the other hand, we have
\[
r\leq(n+k)/2\leq n(1-\varepsilon/2)
\]
if $y$ has the $\varepsilon$-fpp on $\{S_{1},\ldots,S_{n}\}$. This gives
(iii), and (iv) is similar, using $\dim V$ in place of $\left\vert
M\right\vert $.

$\medskip$

We recall the$\medskip$

\noindent\textbf{Definition.}%
\[
G_{0}=\bigcap_{M\in\mathcal{S}}\mathrm{C}_{G}(M)
\]
where $\mathcal{S}$ denotes the set of all (non-abelian) simple chief factors
of $G$.$\medskip$

Recall also that $\delta$ is a number such that every quasisimple group can be
generated by $2\delta$ commutators. Note that we can take $\delta=1$ (see
Subsection \ref{facts}).

\begin{theorem}
\label{fppThm}Suppose $G=G^{\prime}\left\langle Y\right\rangle =G_{0}%
\left\langle Y\right\rangle $. Then $Y$ has the $\varepsilon/2$-fsp on every
non-central abelian chief factor of $G$ and the $\varepsilon$-fgp on every
non-abelian chief factor of $G$ inside $G_{0}$, where%
\[
\varepsilon=\min\left\{  \frac{1}{1+6\delta},\frac{1}{|Y|}\right\}  .
\]

\end{theorem}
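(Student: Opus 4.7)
The theorem asserts a quantitative ``someone in $Y$ acts largely'' statement in two flavours, and my plan is to treat both via the induced action of $\overline{G} := G/\mathrm{C}_G(M)$ on $M$. For abelian $M$, I would view $M$ as a faithful irreducible $\mathbb{F}_{p}\overline{G}$-module; if $M$ is $\overline{G}$-imprimitive, decompose it into a block system $\{V_1,\ldots,V_m\}$ transitively permuted by $\overline{G}$ and invoke remark (iv) to reduce the $\varepsilon/2$-fsp claim to an $\varepsilon$-fpp on $\{V_i\}$. For a non-abelian $M\le G_0$, necessarily $M=S_1\times\cdots\times S_n$ with $n\ge 2$ (a simple non-abelian chief factor cannot sit inside $G_0$: $G_0$ would both centralise it and contain it, forcing the factor to be abelian), and the fgp claim is precisely the $\varepsilon$-fpp on $\{S_1,\ldots,S_n\}$; if this $\overline{G}$-action is imprimitive, remark (ii) lets me pass to a primitive quotient. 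So in both parts it suffices to exhibit an $\varepsilon$-fpp on a transitive $\overline{G}$-set $\Omega$.

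Under the hypotheses, $\overline{G} = \overline{G}'\cdot\overline{\langle Y\rangle} = \overline{G_0}\cdot\overline{\langle Y\rangle}$, where $\overline{G_0}$ denotes the image of $G_0$ in $\overline{G}$. The easy subcase is when $\overline{\langle Y\rangle}$ has no common fixed point on $\Omega$: remark (v) immediately gives the $1/|Y|$-fpp, which is one of the two terms defining $\varepsilon$. The hard subcase is when $\overline{\langle Y\rangle}$ fixes some $\omega_0\in\Omega$; then the normal closure of $Y$ in $G$ lies in the kernel $K_\Omega$ of the action on $\Omega$, so the hypotheses force $G=G'K_\Omega=G_0K_\Omega$, and hence the transitive permutation group $G/K_\Omega$ is both perfect and equal to the image of $G_0$. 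Passing via remark (ii) to a primitive quotient, I would apply the O'Nan--Scott theorem to classify the socle and point-stabiliser structure and either rule out the fixed point or directly extract a $y\in Y$ with the required fpp.

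The main obstacle is the quantitative $1/(1+6\delta)$ bound in the O'Nan--Scott cases of product or diagonal type: here the socle is itself a power $T^k$ of a non-abelian simple group, and an element of $Y$ could a priori fix many of the $k$ factors while $\overline{\langle Y\rangle}$ still supplements $\overline{G}'$ to give all of $\overline{G}$. I would use Corollary \ref{quasigens}---every quasisimple group is generated by $2\delta$ commutators---to bound how many $T$-factors a single $y$ can fix without collapsing $\overline{\langle Y\rangle}$ modulo $\overline{G}'$; the constant $1+6\delta$ emerges by combining this commutator count with the orbit structure on the socle factors forced by $\overline{G}=\overline{G}'\overline{\langle Y\rangle}$. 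This produces a $y\in Y$ with the $1/(1+6\delta)$-fpp on $\Omega$, which by remark (iv) translates into the $\varepsilon/2$-fsp for abelian $M$ and directly gives the $\varepsilon$-fgp for non-abelian $M$. Taking the minimum of the two bounds from the easy and hard subcases yields the stated $\varepsilon$.
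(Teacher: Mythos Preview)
Your reduction ``in both parts it suffices to exhibit an $\varepsilon$-fpp on a transitive $\overline{G}$-set $\Omega$'' breaks down when the abelian chief factor $M$ is a \emph{primitive} irreducible $\mathbb{F}_p\overline{G}$-module: there is then no nontrivial system of imprimitivity, so no permutation set $\Omega$ to which Remark~(iv) applies, and you have given no argument for this case. In the paper this is precisely the hard case, handled not by O'Nan--Scott but by an analysis of the (generalized) Fitting subgroup of $G_0$: if $[F,y]\neq 1$ one gets a $\tfrac{1}{4}$-fsp directly (Lemma~\ref{gss}); otherwise one finds a quasisimple component $S_1$ of $\mathrm{F}^*(G_0)$ moved by some $y\in Y$, and Lemma~\ref{comm} shows that $S_1$ is contained in the group generated by $1+6\delta$ conjugates of $y$. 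Since $\mathrm{C}_V(S_1)=0$, Remark~(v) then yields the $1/(1+6\delta)$-fsp. So the constant $1+6\delta$ enters through the primitive \emph{module} case, not through the permutation analysis as you suggest; in the primitive $G$-set case (diagonal, product action, regular socle) the paper obtains constants like $\tfrac{2}{5}$ or $\tfrac{1}{2}$, all comfortably exceeding $1/(1+6\delta)$.

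There is also a logical error in your ``hard subcase'': if $\overline{\langle Y\rangle}$ fixes a point $\omega_0\in\Omega$, this places $\langle Y\rangle$ in the \emph{stabiliser} of $\omega_0$, not in the kernel $K_\Omega$; the normal closure $\langle Y^G\rangle$ need not lie in $K_\Omega$ (its conjugates fix different points), so you cannot conclude $G=G'K_\Omega=G_0K_\Omega$. The paper does not attempt any such conclusion; instead, once reduced to a primitive $G$-set with $\langle Y\rangle$ having a fixed point, it uses $G=G_0\langle Y\rangle$ only to ensure $G_0\neq 1$, and then works case-by-case through the O'Nan--Scott types, exploiting the inductive hypothesis (built into Proposition~\ref{epsilon-prop}) that $\langle Y\rangle$ does not normalise every simple factor of a non-abelian chief factor.
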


\emph{Reductions. }Let $M=S_{1}\times\cdots\times S_{n}$ be a non-abelian
chief factor of $G$, where $n>1$ and $G$ permutes $\Omega=\{S_{1},\ldots
,S_{n}\}$ transitively. Let $\overline{\Omega}$ be a primitive quotient of the
$G$-set $\Omega$. Suppose that $\left\vert \overline{\Omega}\right\vert =2$.
Then $G^{\prime}$ acts trivially on $\overline{\Omega}$, so $\left\langle
Y\right\rangle $ acts transitively on $\overline{\Omega}$, and it follows by
Remarks (i) and (ii) that $Y$ has the $1$-fpp on $\Omega$. Thus $y$ has the
$\varepsilon$-fgp on $M$.

Let $V$ be a non-central abelian chief factor of $G$, so $G$ acts as an
irreducible $\mathbb{F}_{p}$-linear group on $V$.

(i) Suppose that this action is not primitive, so it induces a primitive
permutation action of $G$ on a system of imprimitivity $\Omega$. If
$\left\vert \Omega\right\vert =2$ then as above we may deduce that $Y$ has the
$1$-fpp on $\Omega$, hence the $1/2$-fsp on $V$, by Remark (iv).

(ii) Suppose that $V$ is not inside $G_{0}$. Then $G_{0}$ centralizes $V$, so
$V$ is a non-trivial simple $\mathbb{F}_{p}\left\langle Y\right\rangle
$-module, and then $Y$ has the $\varepsilon$-fsp on $V$ by Remark (v).

(iii) Suppose that $\dim_{\mathbb{F}_{p}}V=1$. Then $G^{\prime}$ centralizes
$V$, whence $V(y-1)=V$ for some $y\in Y$; thus $Y$ has the $\varepsilon$-fsp
on $V$.

Arguing by induction on the number of non-central factors in a chief series of
$G$ inside $G_{0}$, it will therefore suffice to prove the following proposition.

\begin{proposition}
\label{epsilon-prop}Let $G$ be a group and $Y$ a subset of $G$ of size
$r\geq1$ such that $G=G^{\prime}\left\langle Y\right\rangle =G_{0}\left\langle
Y\right\rangle $. Suppose that $\left\langle Y\right\rangle $ does not
centralize any non-central abelian chief factor of $G$, and that if
$M=S_{1}\times\cdots\times S_{n}$ is a non-abelian chief factor of $G$, with
each $S_{i}$ simple and $n\geq2$, then $\left\langle Y\right\rangle $ does not
normalize every $S_{i}$. Put $\varepsilon=\min\{1/(1+6\delta),1/r\}$. Then $Y$
has the $\varepsilon$-fsp on every primitive irreducible $\mathbb{F}_{p}%
G$-module of dimension at least two, and the $\varepsilon$-fpp on every
primitive $G$-set of size at least $3$.
\end{proposition}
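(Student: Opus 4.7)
The plan is to invoke the O'Nan--Scott theorem for primitive permutation groups and its analogue (via Clifford theory and the Aschbacher classification) for primitive linear groups, then treat each structural type separately. First I would quotient by the kernel $K$ of the action on $\Omega$ (respectively of the representation on $V$); the hypotheses $G=G^{\prime}\langle Y\rangle=G_{0}\langle Y\rangle$ and both exclusion clauses on chief factors descend to $G/K$, so I may assume $G$ acts faithfully. Let $M$ be a minimal normal subgroup of $G$; primitivity forces $M$ to be a chief factor of $G$ and to be either elementary abelian (the affine case) or $T_{1}\times\cdots\times T_{n}$ with the $T_{i}$ isomorphic non-abelian simple groups.

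In the affine case on $\Omega$, a complement $H$ of $M$ acts faithfully and irreducibly on $M$, and an element $y=m_{y}h_{y}\in Y$ has fixed-point set on $\Omega\cong M$ that is either empty or a coset of $C_{M}(h_{y})$. Since $\langle Y\rangle$ does not centralise $M$, some $y\in Y$ has $h_{y}\neq 1$, and faithful irreducibility forces $C_{M}(h_{y})$ to be a proper subspace, giving $|\mathrm{Fix}(y)|\leq|M|/p\leq|M|/2$ and hence an fpp well beyond $\varepsilon$. For a primitive irreducible $\mathbb{F}_{p}G$-module of dimension $\geq 2$, analogous Clifford arguments show that any abelian normal subgroup acts by (non-trivial) scalars, so it must be cyclic of order prime to $p$ and cannot by itself produce a $2$-dimensional example without lying in $\mathrm{Z}(G/K)$; in every remaining abelian sub-case irreducibility plus the non-centralisation hypothesis force $C_{V}(\langle Y\rangle)=0$, whereupon Remark~(v) supplies the $1/r$-fsp at once.

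In the non-abelian socle case with $n=1$, $G$ is almost simple with socle $T$; since $C_{G}(T)=1$ we have $G_{0}=1$ and $\langle Y\rangle=G$. A point stabiliser $H$ is maximal with $H\cap T$ proper in $T$, so Proposition~\ref{bcpbound} yields $|T:H\cap T|\geq|T|^{\varepsilon(r)}$, and Corollary~\ref{quasigens} combined with $\langle Y\rangle=G$ forces some $y\in Y$ to have conjugacy class meeting $H$ in proportion at most $1-\varepsilon$ of $y^{G}$, whence the fpp. When $n\geq 2$, the hypothesis prevents $\langle Y\rangle$ from stabilising every $T_{i}$, so some $y\in Y$ permutes the factors with at least one non-trivial cycle; Remark~(iii) then bounds $|C_{M}(y)|\leq|M|^{1-\varepsilon/2}$, and the coset description $\Omega=G/H$ transfers this, via Remarks~(ii) and~(iv), to the fpp on $\Omega$ and the fsp on $V$.

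The main obstacle is obtaining a \emph{quantitative} version of the last step. Merely knowing that one $y\in Y$ has a single non-trivial cycle on $\{T_{1},\ldots,T_{n}\}$ yields fpp proportion of order $1/n\to 0$, far weaker than $1/(1+6\delta)$. The fix is to exploit the full set $Y$: projecting $\langle Y\rangle$ into the image of $G$ in $\mathrm{Out}(T)\wr\mathrm{Sym}(n)$ (modulo $G_{0}$), and combining Proposition~\ref{schreier} (derived length at most $3$ of $\mathrm{Out}(T)$) with Corollary~\ref{quasigens} applied factor-wise, one forces some $y\in Y$ to move at least a $1/(1+6\delta)$-proportion of the $T_{i}$. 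The factor $6\delta$ absorbs the $2\delta$ commutator-generators needed per simple factor together with the three-step derived series of $\mathrm{Out}(T)$; balancing this against the cycle structure of a witnessing $y$ is the combinatorial heart of the argument.
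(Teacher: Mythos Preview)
Your outline has two genuine gaps, and your diagnosis of where the constant $1/(1+6\delta)$ comes from is wrong.

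\textbf{The primitive module case.} Your treatment only covers the situation where every normal subgroup acting non-trivially is abelian (whence $G_{0}\leq\mathrm{Z}(G)$, $C_{V}(\langle Y\rangle)=0$, and Remark~(v) gives the $1/r$-fsp). You omit the crucial case where the generalized Fitting subgroup $F^{\ast}(G_{0})$ has a non-trivial layer $E$. Here $G_{0}$ is \emph{not} central, so $C_{V}(\langle Y\rangle)$ need not be $G$-invariant and your argument collapses. The paper instead uses Lemma~\ref{gss} to reduce to $[F,Y]=\{1\}$, deduces $F=\mathrm{Z}(G)$, and then in the component case applies Lemma~\ref{comm}: one picks a quasisimple factor $S_{1}$ of $E$ moved by some $y\in Y$, so $[S_{1},S_{1}^{y}]=1$, and Lemma~\ref{comm} gives $S_{1}\leq\langle y^{\text{conj}}\rangle$ using $1+6\delta$ conjugates of $y$. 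Since $C_{V}(S_{1})=0$ by primitivity, Remark~(v) yields the $1/(1+6\delta)$-fsp for one of these conjugates, hence for $y$. \emph{This} is the origin of the constant $1/(1+6\delta)$; it has nothing to do with the derived length of $\mathrm{Out}(T)$ or with the permutation case.

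\textbf{The primitive $G$-set case, non-abelian socle with $n\geq 2$.} Your transfer ``via Remarks~(ii) and~(iv)'' from the fgp on $\{T_{1},\ldots,T_{n}\}$ to the fpp on $\Omega$ does not work: $\Omega$ is primitive, so there is no block system, and in diagonal type the action on the $T_{i}$ is \emph{not} a quotient of the action on $\Omega$. The paper does a direct fixed-point count in the diagonal case (identifying $\Omega$ with $T^{\ast}\backslash T^{(m)}$ and bounding fixed points by $|T|^{q(y)}$ where $q(y)$ is the cycle count of $y$ on the factors), and a separate direct count in the product-action case. The constants obtained there ($\tfrac{1}{2}$, $\tfrac{2}{5}$, $\tfrac{4}{5}$) are all comfortably above $\varepsilon$; no ``combinatorial heart'' argument about $\mathrm{Out}(T)\wr\mathrm{Sym}(n)$ is needed or used. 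Your final paragraph is attacking a difficulty that does not exist, while the real difficulty (the module case with components) is left untouched.
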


\subsubsection{Primitive modules}

Let $G$ be a group and $Y$ a subset of $G$ of size $r$ satisfying the
hypotheses of Proposition \ref{epsilon-prop}. Let $V$ be a primitive
irreducible $\mathbb{F}_{p}G$-module of dimension at least two; we may assume
that $G$ acts faithfully on $V$. Put $F=\mathrm{Fit}(G)$, the Fitting subgroup
of $G$.

\begin{lemma}
\label{comm}Let $S$ be a quasisimple subgroup of $G$ and $y$ an element of $G$
such that $[S,S^{y}]=1$. Then there exist $a_{j},b_{j}\in S$ such that
$S\leq\left\langle y,y^{a_{j}},y^{b_{j}},y^{a_{j}b_{j}}\mid1\leq j\leq
2\delta\right\rangle $.
\end{lemma}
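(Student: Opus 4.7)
The plan is to exploit Corollary \ref{quasigens}, which guarantees that $S$ is generated by $2\delta$ commutators $[a_{j},b_{j}]$ with $a_{j},b_{j}\in S$. I would fix such a generating set. The lemma then reduces to showing that for \emph{any} $a,b\in S$ the individual commutator $[a,b]$ already lies in the four-element subgroup $H(a,b):=\langle y, y^{a}, y^{b}, y^{ab}\rangle$; applying this to each pair $(a_{j},b_{j})$ and taking the union then gives $S\leq \langle y, y^{a_{j}}, y^{b_{j}}, y^{a_{j}b_{j}}\mid 1\leq j\leq 2\delta\rangle$.

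The containment $[a,b]\in H(a,b)$ rests on one key identity, which is the only place where the hypothesis $[S,S^{y}]=1$ enters: for all $a,b\in S$,
\[
[y,a]^{b} = [y,a]\cdot [a,b].
\]
To verify this I would write $[y,a] = (y^{-1}a^{-1}y)\cdot a$; conjugating by $b$ yields $[y,a]^{b} = b^{-1}(y^{-1}a^{-1}y)b\cdot b^{-1}ab$, and since $y^{-1}a^{-1}y\in S^{y}$ commutes with $b\in S$, the first factor is unchanged, giving $[y,a]^{b}=(y^{-1}a^{-1}y)\cdot b^{-1}ab = [y,a]\cdot[a,b]$. Rearranging, $[a,b]=[y,a]^{-1}\cdot[y,a]^{b}$.

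It remains to see that both factors on the right visibly lie in $H(a,b)$. First, $[y,a] = y^{-1}y^{a}\in H(a,b)$ immediately. For the second factor, a direct expansion shows
\[
(y^{b})^{-1}y^{ab} = (b^{-1}y^{-1}b)(b^{-1}a^{-1}yab) = b^{-1}(y^{-1}a^{-1}ya)b = [y,a]^{b},
\]
so $[y,a]^{b}\in H(a,b)$ as well. Hence $[a,b]=(y^{a})^{-1}y\cdot(y^{b})^{-1}y^{ab}\in H(a,b)$, as required. The only substantive step is the identity $[y,a]^{b}=[y,a]\cdot[a,b]$: it is where the commuting of $S$ and $S^{y}$ is essential, converting what a priori looks like a mere conjugate into a genuinely new element of $H(a,b)$ modulo $[y,a]$; the remainder of the argument is routine commutator bookkeeping.
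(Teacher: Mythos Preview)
Your proof is correct and follows essentially the same approach as the paper. Both arguments express a general commutator in $S$ as a word in $y,\,y^{a},\,y^{b},\,y^{ab}$ using the hypothesis $[S,S^{y}]=1$: the paper writes $[u^{-1},v]=[[u,y],v]=y^{-1}y^{u}y^{-uv}y^{v}$, while your identity $[y,a]^{b}=[y,a]\cdot[a,b]$ (equivalently $[[y,a],b]=[a,b]$) yields $[a,b]=(y^{a})^{-1}y\,(y^{b})^{-1}y^{ab}$; these are minor notational variants of the same computation, and both conclude by invoking Corollary~\ref{quasigens}.
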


\begin{proof}
For $u,v\in S$ we have%
\[
\lbrack u^{-1},v]=[[u,y],v]=y^{-1}y^{u}y^{-uv}y^{v}.
\]
The lemma follows since $S$ is generated by $2\delta$ commutators.
\end{proof}

\begin{lemma}
\label{gss}If $y\in G$ satisfies $[F,y]\neq\{1\}$ then $y$ has the $\frac
{1}{4}$-fsp on $V$.
\end{lemma}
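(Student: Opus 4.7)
The plan is to exploit the constraints that primitivity places on the Fitting subgroup $F$. First I observe that $F$ must be a $p'$-group: otherwise $O_p(G)\neq 1$ would give a non-zero proper $G$-submodule $V^{O_p(G)}$ (non-zero because any $p$-group acts with non-trivial fixed space on a non-zero $\mathbb{F}_p$-module, proper by the faithfulness of the action), contradicting irreducibility. Next, primitivity combined with Clifford's theorem forces every abelian normal subgroup of $G$ to act by scalars on $V$, and a standard induction on nilpotency class then shows that each Sylow $q$-subgroup $F_q$ of $F$ is a central product $\mathrm{Z}(F_q)\ast E_q$ with $E_q$ an extraspecial $q$-group of order $q^{2n_q+1}$, and $\mathrm{Z}(F)$ cyclic and central in $G$.

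Extending scalars to $\bar k = \overline{\mathbb{F}}_p$, the module $\bar V = V\otimes\bar k$ decomposes over $F$ as a sum of copies of the unique faithful simple $\bar k F$-module $\bar U$, which tensor-factors as $\bar U=\bigotimes_q \bar U_q$ with $\dim\bar U_q = q^{n_q}$. Writing $\bar V = \bar U\otimes W$ for the multiplicity space, the action of $y$ respects this decomposition projectively: in suitable coordinates, $y$ acts as $\bigl(\bigotimes_q y_q\bigr)\otimes t$, where $y_q\in\mathrm{GL}(\bar U_q)$ reflects the conjugation action of $y$ on $F_q/\mathrm{Z}(F_q)$ and $t\in\mathrm{GL}(W)$. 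The hypothesis $[F,y]\neq 1$ picks out some prime $q$ for which the induced map $\bar s$ on $E_q/\mathrm{Z}(E_q)\cong\mathbb{F}_q^{2n_q}$ is a non-trivial symplectic transformation, so $y_q$ is not a scalar on $\bar U_q$.

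The main obstacle is the explicit bound: for this particular $q$, the maximum multiplicity of any single eigenvalue of $y_q$ on $\bar U_q$ is at most $\tfrac34\dim\bar U_q$. Once this is in hand, writing $\bar V=\bar U_q\otimes(\text{rest})$ and computing eigenvalue multiplicities as ``tensor convolutions'' immediately gives that every eigenvalue of $y$ on $\bar V$ (in particular the eigenvalue $1$, whose multiplicity is $\dim V^y$) has multiplicity at most $\tfrac34\dim\bar V$, delivering $\dim V(y-1)\geq \tfrac14\dim V$ as required. The eigenvalue-multiplicity bound for $y_q$ splits into cases: if $\dim\bar U_q\leq 4$, then non-scalarity of $y_q$ already gives max multiplicity $\leq\dim\bar U_q - 1\leq \tfrac34\dim\bar U_q$; otherwise one uses the Weil-representation character bound $|\mathrm{tr}(y_q^k)| \leq q^{n_q - 1/2}$ (valid whenever $\bar s^k\neq 1$), the Frobenius formula for eigenspace dimensions, and a case analysis for those powers $y_q^k$ with $\bar s^k = 1$ (which lie over $\mathrm{Z}(E_q)$ and act as scalars on $\bar U_q$). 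This extraspecial/Weil computation is the technical heart of the lemma.
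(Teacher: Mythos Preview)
Your approach is substantially more elaborate than the paper's, and while the architecture (symplectic-type structure of $F$, tensor decomposition of $\bar V$, eigenvalue-multiplicity bounds via Weil-type characters) is plausible, several steps need more care than the sketch provides. First, $[F,y]\neq 1$ does not obviously force $\bar s\neq 1$ on some $E_q/\mathrm{Z}(E_q)$: conjugation by $y$ could induce a non-trivial \emph{inner} automorphism of $F_q$, centralizing $F_q/\mathrm{Z}(F_q)$ while not centralizing $F_q$; you then need a separate (easy) argument that in this case $y_q$ is a scalar multiple of some non-central $f_0\in F_q$, whose eigenvalue multiplicities on $\bar U_q$ are all $q^{n_q-1}$. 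Second, and more seriously, your tensor-convolution of eigenvalue multiplicities and your bound $|\mathrm{tr}(y_q^k)|\leq q^{n_q-1/2}$ implicitly work over $\mathbb{C}$: over $\bar{\mathbb{F}}_p$ the operator $y_q$ need not be semisimple, traces lie in $\bar{\mathbb{F}}_p$ so absolute values have no direct meaning, and converting trace information into multiplicity bounds requires passing to semisimple parts or to Brauer characters. These gaps are probably fillable, but the details are non-trivial and absent from your plan.

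The paper's proof, by contrast, is about ten lines of elementary linear algebra, following Gluck--Seress--Shalev. Set $c(x):=\dim\mathrm{C}_V(x)$; the inclusion $\mathrm{C}_V(a)\cap\mathrm{C}_V(a^{b})\subseteq\mathrm{C}_V([a,b])$ yields the commutator inequality $c([a,b])\geq 2c(a)-\dim V$. Primitivity forces every abelian normal subgroup of $G$ to be cyclic and to act freely on $V\setminus\{0\}$; P.~Hall's theorem then makes $F$ metabelian, so $F'\leq\mathrm{Z}(F)$ and every non-identity element of $\mathrm{Z}(F)$ satisfies $c(\cdot)=0$. Choose $x\in F$ with $[x,y]\neq 1$. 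If $[x,y]\in\mathrm{Z}(F)$ then $0=c([x,y])\geq 2c(y)-\dim V$, giving $c(y)\leq\tfrac12\dim V$; otherwise pick $h\in F$ with $1\neq[[x,y],h]\in F'$, and two applications of the inequality give $c(y)\leq\tfrac34\dim V$. No extension of scalars, no extraspecial representation theory, no character estimates.
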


\begin{proof}
(cf. \cite{GSS}, proof of Theorem 5.3) For $x\in G$ put $c(x)=\dim
\mathrm{C}_{V}(x)$. As $\mathrm{C}_{V}([x_{1},x_{2}])\geq\mathrm{C}_{V}%
(x_{1})\cap\mathrm{C}_{V}(x_{1})\cdot x_{2}$ we have%
\begin{equation}
c([x_{1},x_{2}])\geq2c(x_{1})-\dim(V). \label{c-formula}%
\end{equation}
The hypotheses imply that every abelian normal subgroup of $G$ is cyclic and
acts freely on $V$. It follows by a theorem of P. Hall (see \cite{As},
\textbf{23.9} or \cite{Go}, Theorem 5.4.9) that $F$ is metabelian. Thus if
$1\neq t\in F^{\prime}\cup\mathrm{Z}(F)$ then $c(t)=0$. Now there exists $x\in
F$ such that $[x,y]\neq1$. If $[x,y]\in\mathrm{Z}(F)$ we may infer using
(\ref{c-formula}) that $c(y)\leq\frac{1}{2}\dim(V)$. If $[x,y]\notin
\mathrm{Z}(F)$ then for some $h\in F$ we have $1\neq\lbrack\lbrack x,y],h]\in
F^{\prime}$. Then using (\ref{c-formula}) twice gives%
\[
c(y)\leq\frac{1}{2}\left(  c[x,y]+\dim(V)\right)  \leq\frac{1}{2}\left(
\frac{1}{2}\dim(V)+\dim(V)\right)  =\frac{3}{4}\dim(V).
\]
The result follows.
\end{proof}

\bigskip

In view of the preceding lemma, we may suppose for the rest of this subsection
that $[F,Y]=\{1\}$. Since $Y$ does not centralize any non-central abelian
chief factor of $G$, this implies that $F$ is contained in the hypercentre of
$G$, and hence that $[F,\gamma_{\omega}(G)]=1$. But $G=G^{\prime}\left\langle
Y\right\rangle $ implies $G=\gamma_{\omega}(G)\left\langle Y\right\rangle $;
therefore $[F,G]=1$, and so $F=\mathrm{Z}(G)$.

Now let $F^{\ast}=\mathrm{F}^{\ast}(G_{0})$ denote the generalized Fitting
subgroup of $G_{0}$ (see \cite{As}, Section 31).Then%
\[
\mathrm{C}_{G_{0}}(F^{\ast})=\mathrm{Z}(F^{\ast})=F\cap G_{0}.
\]

$\medskip$

\emph{Case 1.} Suppose that $F^{\ast}\leq F$. Then $F^{\ast}$ is central in
$G_{0}$ and it follows that $G_{0}=F\cap G_{0}\leq\mathrm{Z}(G)$. Hence
$\mathrm{C}_{V}(\left\langle Y\right\rangle )$ is a $G$-submodule of $V$; as
$V$ is faithful and irreducible for $G$ and $\left\langle Y\right\rangle
\neq1$ it follows that $\mathrm{C}_{V}(\left\langle Y\right\rangle )=0$. Hence
$Y$ has the $1/r$-fsp on $V$ by Remark (v).

$\medskip$

\emph{Case 2. }Suppose that $F^{\ast}\nleq F$. Then $F^{\ast}=E\cdot F_{0}$
where $E$ is a non-empty central product of quasisimple groups, $F_{0}=F\cap
G_{0}$, and $E$ is characteristic in $G_{0}$ with centre $Z_{0}=E\cap F$. Let
$\overline{N}=N/Z_{0}$ be a minimal normal subgroup of $G/Z_{0}$ contained in
$E/Z_{0}$. Then $\overline{N}=\overline{S}_{1}\times\cdots\times\overline
{S}_{n}$, where each $\overline{S}_{i}=S_{i}/Z_{0}$ is a simple group. By
hypothesis, there exists $y\in Y$ such that $y$ moves at least one of these
factors; say $y$ moves $\overline{S}_{1}$. Then $[S_{1},S_{1}^{y}]=1$, and
Lemma \ref{comm} now shows that $S_{1}\leq\left\langle y_{1},\ldots
,y_{t}\right\rangle $ where $t=1+6\delta$ and each $y_{j}$ is a conjugate of
$y$.

We claim that $\mathrm{C}_{V}(S_{1})=0$. Accepting the claim for now, it
follows by Remark (v) that some $y_{j}$ has the $1/t$-fsp on $V$; as $y_{j}$
is conjugate to $y$ we may conclude that $y$ has the $1/t$-fsp on $V$.

Since $V$ is a primitive irreducible $\mathbb{F}_{p}G$-module it is a direct
sum of copies of some simple $\mathbb{F}_{p}N$-module $W$. If $\mathrm{C}%
_{V}(S_{1})\neq0$ then $W$ is a composition factor of the $\mathbb{F}_{p}%
N$-module $\mathrm{C}_{V}(S_{1})$, so $W(S_{1}-1)=0$. But then $V(S_{1}-1)=0$,
a contradiction since $V$ is faithful for $G$. Thus $\mathrm{C}_{V}(S_{1})=0$
as claimed.$\medskip$

The first claim of Proposition \ref{epsilon-prop} clearly follows.

\subsubsection{Primitive $G$-sets}

Let $G$ be a group and $Y$ a subset of $G$ of size $r$ satisfying the
hypotheses of Proposition \ref{epsilon-prop}. Let $\Omega$ be a primitive
$G$-set of size $n\geq3$, on which $G$ acts faithfully.

If $\left\langle Y\right\rangle $ has no fixed points in $\Omega$ then $Y$ has
the $1/r$-fpp on $\Omega$, by Remark (v). We assume henceforth that
$\left\langle Y\right\rangle $ has at least one fixed point in $\Omega$; since
$G=G_{0}\left\langle Y\right\rangle $ is transitive this implies also that
$G_{0}\neq1$.

According to \cite{DM} Theorem 4.3B (part of the O'Nan-Scott Theorem), one of
the following holds:

\begin{description}
\item[(a)] $G$ has a unique minimal normal subgroup $N=\mathrm{C}_{G}(N)$ and
$N$ acts regularly on $\Omega$;

\item[(b)] $G$ has exactly two minimal normal subgroups $N$ and $\mathrm{C}%
_{G}(N)$, and each of them acts regularly on $\Omega$;

\item[(c)] $G$ has a unique minimal normal subgroup $N$ and $\mathrm{C}%
_{G}(N)=1$.
\end{description}

Since $G_{0}>1$, in cases (a) and (c) we have $N\leq G_{0}$; in case (b) at
least one of $N$ and $\mathrm{C}_{G}(N)$ must lie in $G_{0}$, and we choose to
call that one $N$.

$\medskip$

\emph{Case 1.} Suppose that the minimal normal subgroup $N$ of $G$ contained
in $G_{0}$ acts regularly on $\Omega$. Then $\left\vert N\right\vert =n$ and
$N$ is a non-central chief factor of $G$ (\cite{DM}, Theorem 4.3B). Let
$\alpha\in\Omega$ be a fixed point for $\left\langle Y\right\rangle $. Then
for $x\in N$ and $y\in Y$ we have $(\alpha x)y=\alpha x^{y}$, so $y$ has
exactly $\left\vert \mathrm{C}_{N}(y)\right\vert $ fixed points on $\alpha
N=\Omega$. By hypothesis, there exists $y\in Y$ such that $\mathrm{C}%
_{N}(y)\neq N$. The number of fixed points of $y$ in $\Omega$ is then at most%
\[
\left\vert \mathrm{C}_{N}(y)\right\vert \leq\frac{1}{2}\left\vert N\right\vert
=\frac{1}{2}n\text{,}%
\]
so $y$ has the $\frac{1}{2}$-fpp on $\Omega$.

$\medskip$

\emph{Case 2.} The unique minimal normal subgroup $N$ of $G$ is not regular on
$\Omega$. Then $N$ is not abelian, so $N=S_{1}\times\cdots\times S_{m}$ where
each $S_{i}$ is simple and $m\geq2$ since $N\leq G_{0}$. According to
\cite{DM} Theorem 4.6A there are now two possibilities.

$\medskip$

\emph{Subcase 2.1.} $G$ acts as a group of diagonal type on $\Omega$. Fixing
an identification of each $S_{i}$ with a group $T$, we identify $\Omega$ with
with the right coset space $T^{\ast}\smallsetminus T^{(m)}$ where $T^{\ast}$
denotes the diagonal subgroup. The action of $N$ is induced by the right
regular action, so%
\[
T^{\ast}(t_{1},\ldots,t_{m})\cdot s_{1}\ldots s_{m}=T^{\ast}(t_{1}s_{1}%
,\ldots,t_{m}s_{m})
\]
for $(t_{1},\ldots,t_{m})\in T^{(m)}$ and $s_{i}\in S_{i}$. Write
$k=\left\vert T\right\vert $, so that $n=k^{m-1}$.

Let $\alpha=T^{\ast}(t_{1},\ldots,t_{m})$ be a fixed point for $y\in G$. The
stabilizer of $\alpha$ in $N$ is%
\[
N_{\alpha}=\left\{  \left(  u^{t_{1}},\ldots,u^{t_{m}}\right)  \mid u\in
T\right\}  ,
\]
so for $x\in N$ we have%
\begin{equation}
(\alpha x)\cdot y=\alpha x\Longleftrightarrow\alpha x^{y}=\alpha
x\Longleftrightarrow x^{y}x^{-1}=\left(  u^{t_{1}},\ldots,u^{t_{m}}\right)
\text{, some }u\in T\text{.} \label{fix1}%
\end{equation}
Suppose that the conjugation action of $y$ permutes $S_{1},S_{2},\ldots,S_{e}
$ cyclically, and that (\ref{fix1}) holds with $x=s_{1}s_{2}\ldots s_{m}$
($s_{i}\in S_{i}$). Then $s_{2},\ldots,s_{e}$ are uniquely determined by $u$
and $s_{1}$. Thus if $y$ has $q=q(y)$ cycles in its action on $\{S_{1}%
,S_{2},\ldots,S_{m}\}$, then the number of $x\in N$ satisfying (\ref{fix1}) is
at most $k\cdot k^{q}$. The mapping $x\mapsto\alpha x$ from $N$ to $\Omega$ is
surjective and each fibre has size $k$. It follows that $y$ has at most
$k^{q}$ fixed points in $\Omega$.

Suppose that some $y\in Y$ moves at least $3$ of the $S_{i}$. Then $q(y)\leq
m-2$, and so the number of fixed points of $y$ in $\Omega$ is at most%
\[
k^{q(y)}\leq k^{m-2}=nk^{-1}.
\]
If this holds for no element $y\in Y$, then $Y$ must contain an element
$y_{1}$ that acts as a transposition $(12)$, say, on $\{S_{1},S_{2}%
,\ldots,S_{m}\}$.

Assume first that $m\geq3$. There exists $g\in G$ such that $S_{2}^{g}=S_{3}$;
then $y=[y_{1},g]$ moves at least $3$ of the $S_{i}$, and hence fixes at most
$nk^{-1}$ points in $\Omega$. It follows that $y_{1}$ has at most
$(n+nk^{-1})/2$ fixed points.

Suppose now that $m=2$. Set $y=[y_{1},a]$ where $1\neq a\in S_{1}$. Suppose
that $y$ fixes $\alpha\in\Omega$. Each element of $\Omega$ can be put uniquely
in the form $\alpha x$ with $x=(s,1),~s\in S_{1}$, and then (\ref{fix1}) gives%
\begin{align*}
(\alpha x)\cdot y=\alpha x  &  \Longleftrightarrow(s^{a}s^{-1},1)=(u^{t_{1}%
},u^{t_{2}})\\
&  \Longleftrightarrow s\in\mathrm{C}_{S_{1}}(a).
\end{align*}
Thus $y$ has at most $\left\vert \mathrm{C}_{S_{1}}(a)\right\vert \leq\frac
{1}{5}\left\vert S_{1}\right\vert =\frac{1}{5}n$ fixed points in $\Omega$. It
follows that $y_{1}$ has at most $\frac{3}{5}n$ fixed points.

\bigskip

Thus in any case, we may conclude (since $k\geq60$) that $y$ contains an
element with the $\varepsilon$-fpp as long as $\varepsilon\leq\frac{2}{5}$.

$\medskip$

\emph{Subcase 2.2.} $G$ is contained in a wreath product $W=H\wr\pi(G)$ where
$H\leq\mathrm{Sym}(\Gamma)$, $\pi:G\rightarrow\mathrm{Sym}(d)$ where $d>1$,
and $W$ acts on $\Omega=\Gamma^{(d)}$ by the product action. In this case
$N=N_{1}\times\cdots\times N_{d}\leq H^{(d)}$, and $G$ permutes the factors
$N_{i}$ via $\pi$. Put $k=\left\vert \Gamma\right\vert $, so $n=k^{d}$. Note
that $k\geq5$ since $N$ is not soluble.

Suppose that $y=b\cdot\pi(y)$ fixes $(\gamma_{1},\ldots,\gamma_{d})\in
\Gamma^{(d)}$, where $b\in H^{(d)}$. If $\pi(y)$ has a cycle $(1,2,\ldots,e)$
then $\gamma_{i+1}=\gamma_{i}^{b_{i}}$ for $i=1,\ldots,e-1$. Thus if $\pi(y)$
has $q=q(y)$ cycles then the number of fixed points of $y$ in $\Omega$ is at
most $k^{q}$.

By hypothesis, there exists $y\in Y$ such that $\pi(y)\neq1$. Then $q(y)\leq
d-1$ and so $y$ has at most $k^{d-1}\leq n/5$ fixed points in $\Omega$. Thus
$y$ has the $\frac{4}{5}$-fpp on $\Omega$.

\bigskip

The proof of Proposition \ref{epsilon-prop} is now complete.

\subsection{Small chief factors}

We quote a mild generalization of a well-known result due to Gasch\"{u}tz
\cite{Gch} ; the proof given (for example) in \cite{FJ}, Lemma 15.30 adapts
easily to yield this version:

\begin{lemma}
\label{Gasch}Let $Y_{1}\subseteq G$ and $D\vartriangleleft G$. Suppose that%
\[
G=D\left\langle y_{1},\ldots,y_{d},Y_{1}\right\rangle
\]
where $d\geq\mathrm{d}(G)$. Then there exist $h_{1},\ldots,h_{d}\in D$ such
that $G=\left\langle h_{1}y_{1},\ldots,h_{d}y_{d},Y_{1}\right\rangle $.
\end{lemma}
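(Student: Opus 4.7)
The plan is to adapt the classical proof of Gasch\"utz's lemma (as in \cite{FJ}, Lemma 15.30), treating $Y_{1}$ as a fixed subset carried unchanged through the argument. I proceed by induction on $|D|$, the case $D=1$ being trivial.

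\emph{Inductive step, reduction to $D$ minimal normal.} Suppose $D$ properly contains a nontrivial $G$-invariant subgroup $N$. Since $d\geq\mathrm{d}(G)\geq\mathrm{d}(G/N)$ and $G/N=(D/N)\langle\bar{y}_{1},\ldots,\bar{y}_{d},\bar{Y}_{1}\rangle$, the induction hypothesis (applied to $G/N$ and the normal subgroup $D/N$) yields $h_{1}',\ldots,h_{d}'\in D$ with $G=N\langle h_{1}'y_{1},\ldots,h_{d}'y_{d},Y_{1}\rangle$. A second application of the lemma --- now with $N$ in place of $D$ and $(h_{i}'y_{i})$ in place of $(y_{i})$ --- produces $h_{i}''\in N$ such that $G=\langle h_{1}''h_{1}'y_{1},\ldots,h_{d}''h_{d}'y_{d},Y_{1}\rangle$. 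The elements $h_{i}:=h_{i}''h_{i}'\in D$ complete this case.

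\emph{Base case: $D$ minimal normal in $G$.} The counting is standard. A $d$-tuple $(h_{1},\ldots,h_{d})\in D^{d}$ is \emph{bad} if the generated subgroup $\langle h_{1}y_{1},\ldots,h_{d}y_{d},Y_{1}\rangle$ is proper in $G$, hence lies in some maximal subgroup $M$ of $G$ with $Y_{1}\subseteq M$ and $MD=G$. For each such $M$, write $y_{i}=m_{i}d_{i}$ with $m_{i}\in M$, $d_{i}\in D$; then $h_{i}y_{i}\in M$ becomes $h_{i}\in d_{i}^{-1}(D\cap M)$, a single coset of $D\cap M$ in $D$. Thus $M$ captures exactly $|D\cap M|^{d}$ tuples, and it suffices to show
\[
\sum_{M}|D\cap M|^{d}<|D|^{d},
\]
summed over the relevant maximal subgroups $M$.

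The main obstacle is this final inequality, which is precisely where $d\geq\mathrm{d}(G)$ is used. The classical Gasch\"utz argument compares the number of $d$-tuples in $G^{d}$ that generate $G$ together with $Y_{1}$ and have a prescribed image in $(G/D)^{d}$ against the number of generating images in $(G/D)^{d}$, exploiting the uniformity that every valid coset prescription admits the same number of generating lifts. Since $d\geq\mathrm{d}(G)\geq\mathrm{d}(G/D)$ guarantees the existence of at least one generating image of $G/D$, uniformity propagates positivity to the image of $(y_{1},\ldots,y_{d})$ and hence gives the required bound. Incorporating $Y_{1}$ only further restricts which maximal subgroups contribute to the sum, so the estimate of \cite{FJ}, Lemma 15.30 goes through verbatim.
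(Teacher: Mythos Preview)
The paper does not supply its own proof of this lemma; it simply remarks that the proof in \cite{FJ}, Lemma~15.30 ``adapts easily to yield this version,'' and your write-up is precisely that adaptation, so the approach matches. One cosmetic point: the displayed inequality $\sum_{M}|D\cap M|^{d}<|D|^{d}$ is only a union bound and can fail, so that first paragraph of the base case is a false start --- the uniformity-of-lifts argument you invoke in the second paragraph is the actual content, and it works directly for arbitrary $D$ (so the reduction to $D$ minimal normal is not really needed either).
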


We have defined $\delta$ to be a number such that each element of every simple
group is a product of $\delta$ commutators, and observed that in fact one can
take $\delta=1$ (Subsection \ref{facts}).

\begin{lemma}
\label{4 factors}Suppose that $M=S_{1}\times S_{2}$ and $\alpha\in
\mathrm{Aut}(M)$ satisfies $S_{1}^{\alpha}=S_{2},~S_{2}^{\alpha}=S_{1}$. Let
$C=\left\{  [x,y]\mid x,y\in S_{1}\right\}  $. Then%
\[
C\subseteq\lbrack M,\alpha]^{\ast4}.
\]
If $S_{1}$ is simple then%
\[
M=[M,\alpha]^{\ast8\delta}.
\]

\end{lemma}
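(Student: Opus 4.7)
The plan is to introduce explicit coordinates on $M = S_1 \times S_2$, write $\alpha$ in those coordinates, and then exhibit a single three-term identity that realizes every commutator of $S_1$ as an element of $[M,\alpha]^{\ast 3}$ --- more than enough for the bound $[M,\alpha]^{\ast 4}$. The hypothesis on $\alpha$ provides isomorphisms $\sigma\colon S_1\to S_2$ and $\tau\colon S_2\to S_1$ via $s^{\alpha}=\sigma(s)$ for $s\in S_1$ and $t^{\alpha}=\tau(t)$ for $t\in S_2$; writing $m=(a,b)$ for $a\in S_1,\,b\in S_2$, one has $m^{\alpha}=(\tau(b),\sigma(a))$ and consequently
\[
[m,\alpha]=m^{-1}m^{\alpha}=(a^{-1}\tau(b),\,b^{-1}\sigma(a)).
\]
Because $S_1$ and $S_2$ centralize each other in $M$, products in $[M,\alpha]$ multiply coordinate by coordinate, which is what makes the whole argument transparent.

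The key identity I will verify is
\[
[x,y]\;=\;[x,\alpha]\cdot[y,\alpha]\cdot[y^{-1}x^{-1},\alpha]\qquad(x,y\in S_1).
\]
In pair notation the three factors on the right are $(x^{-1},\sigma(x))$, $(y^{-1},\sigma(y))$, and $(xy,\sigma(y)^{-1}\sigma(x)^{-1})$; their $S_1$-components multiply to $x^{-1}y^{-1}xy=[x,y]$ while their $S_2$-components telescope to $1$. This places $C\subseteq[M,\alpha]^{\ast 3}\subseteq[M,\alpha]^{\ast 4}$, settling the first assertion.

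For the second assertion, assume $S_1$ (hence also $S_2$) is simple. Proposition~\ref{commwidth}(i) then says every element of $S_1$ is a product of $\delta$ commutators in $S_1$, so $S_1\subseteq C^{\ast\delta}\subseteq[M,\alpha]^{\ast 4\delta}$. The same identity applies in $S_2$ (with the roles of the two coordinates, and of $\sigma$ and $\tau$, swapped), showing that every commutator in $S_2$ lies in $[M,\alpha]^{\ast 3}$ and hence $S_2\subseteq[M,\alpha]^{\ast 4\delta}$ by the same $\delta$-commutator estimate. Since $M=S_1S_2$, multiplication yields $M\subseteq[M,\alpha]^{\ast 8\delta}$, and the reverse inclusion is trivial.

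The only genuine obstacle is guessing the clean three-term identity; once it has been written down every remaining step is a single componentwise calculation in $S_1\times S_2$, and the stated constants ($4$ in place of $3$, and $8\delta$) merely give a little slack throughout.
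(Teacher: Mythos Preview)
Your proof is correct, and for the first claim it is in fact cleaner than the paper's argument. The paper does not work in explicit coordinates; instead it expands
\[
[y,x^{-1}]=[y,[x,\alpha]]=[y,\alpha]\cdot[y,\alpha^{-x}]\cdot[[y,\alpha^{-x}],\alpha]
\]
and then spends a separate computation showing that the middle factor $[y^{x^{-1}},\alpha^{-1}]^{x}$ lies in $[M,\alpha]^{\ast2}$, arriving at a genuine four-term expression. Your coordinate identity
\[
[x,y]=[x,\alpha]\,[y,\alpha]\,[y^{-1}x^{-1},\alpha]
\]
does the job in three terms, because the $S_{2}$-components telescope precisely since $\sigma$ is a homomorphism; the paper's route never exploits this and so needs the extra factor. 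Your approach buys a sharper constant and a more transparent argument; the paper's approach has the mild advantage of being coordinate-free commutator calculus.

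For the second claim the two arguments are essentially the same: both show $S_{1}\subseteq[M,\alpha]^{\ast4\delta}$ via $S_{1}=C^{\ast\delta}$ and then $S_{2}\subseteq[M,\alpha]^{\ast4\delta}$, and multiply. The paper reaches $S_{2}$ by the one-line observation that $[M,\alpha]^{\alpha}=[M,\alpha]$, so $C^{\alpha}\subseteq[M,\alpha]^{\ast4}$ and $C^{\alpha}$ is exactly the set of commutators in $S_{2}$; you instead re-run the coordinate identity with $\sigma$ and $\tau$ swapped. Both are fine.
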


\begin{proof}
Let $x,y\in S_{1}$. Then%
\begin{align*}
\lbrack y,x^{-1}]  &  =[y,[x,\alpha]]=[y,\alpha][y,\alpha^{-x}][[y,\alpha
^{-x}],\alpha]\\
&  =[y,\alpha]\cdot\lbrack y^{x^{-1}},\alpha^{-1}]^{x}\cdot\lbrack\lbrack
y,\alpha^{-x}],\alpha]
\end{align*}
and the middle factor lies in $[M,\alpha]^{\ast2}$ because for any $z\in
S_{1}$ we have%
\begin{align*}
\lbrack z,\alpha^{-1}]^{x}  &  =[zx,\alpha^{-1}][x^{\alpha^{-1}},\alpha]\\
&  =[(zx)^{-\alpha^{-1}},\alpha][x^{\alpha^{-1}},\alpha]
\end{align*}
(for the final equality note that $(zx)^{\alpha}$ commutes with $zx$). This
establishes the first claim.

If $S_{1}$ is simple, then $S_{1}=C^{\ast\delta}$, so $M=C^{\ast\delta}%
\cdot(C^{\ast\delta})^{\alpha}\subseteq\lbrack M,\alpha]^{\ast8\delta}$ since
$[M,\alpha]=[M,\alpha]^{\alpha}$.
\end{proof}

\bigskip

For technical reasons, we need to introduce a slightly smaller analogue of the
subgroup $G_{0}$:

$\medskip$

\textbf{Definition} For a group $G,$ let%
\begin{equation}
G_{1}=%
{\displaystyle\bigcap\limits_{M\in\mathcal{C}(G)}}
\mathrm{C}_{G}(M) \label{G012}%
\end{equation}
where $\mathcal{C}(G)$ denote the set of all non-abelian chief factors of $G$
that have composition length at most two. We shall call such chief factors `bad'.

\bigskip

\emph{Remarks.} (vi) A non-abelian chief factor belongs to $\mathcal{C}(G)$ if
and only if it is either simple or a product of two simple groups. Hence such
a factor that occurs inside $G_{1}$ is a product of at least $3$ simple groups.

(vii) $(G^{2})^{(3)}G_{1}/G_{1}$ is semisimple: for if $M\in\mathcal{C}(G)$
then $G/\mathrm{C}_{G}(M)$ is an extension of $M$ by $\mathrm{Out}(M)$,
$\mathrm{Out}(M)$ is isomorphic to $\mathrm{Out}(S)$ or $\mathrm{Out}(S)\wr
C_{2}$ where $S$ is simple, and $\mathrm{Out}(S)^{(3)}=1$ (Proposition
\ref{schreier}).

(viii) If $G>1$ then $G_{1}<G$ or $G^{\prime}<G$.

\begin{proposition}
\label{D-lifting}Let $G$ be a group and $W=\{w_{1},,\ldots,w_{s}\}$ a subset
such that $G=D\left\langle W\right\rangle $ where $D\leq G_{0}\cap
G^{(4)}G_{1}$. Then there exist elements $b_{ij}\in D$ such that%
\[
G=\left\langle w_{i}^{b_{ij}}\mid i=1,\ldots,s,~j=1,\ldots,m\right\rangle
(D\cap G_{1})
\]
where $m=1+8\delta\mathrm{d}(G)$.
\end{proposition}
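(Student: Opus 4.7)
\emph{Plan.} My strategy combines Gasch\"utz's lemma with the swap-commutator identity of Lemma~\ref{4 factors}.

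First, set $d = \mathrm{d}(G)$ and apply Lemma~\ref{Gasch}: since $G = D\langle W\rangle = D\langle 1,\ldots,1,W\rangle$ with $d$ copies of the identity, this yields $h_1,\ldots,h_d \in D$ with $G = \langle h_1,\ldots,h_d, W\rangle$. Setting $b_{i,1} = 1$ for every $i$ puts $w_i$ itself in our conjugate list, so the remaining task is: for each $k \in \{1,\ldots,d\}$, exhibit an index $i(k) \in \{1,\ldots,s\}$ and elements $c_{k,1},\ldots,c_{k,8\delta} \in D$ with
\[
h_k \in \bigl\langle w_{i(k)}^{c_{k,j}} : 1 \le j \le 8\delta\bigr\rangle \cdot (D \cap G_1).
\]
Summing over $k$ contributes at most $8\delta d$ extra conjugates per $w_i$, giving the required total $m = 1 + 8\delta d$.

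To establish this local membership I would pass to $\bar G = G/G_1$. The image $\bar h_k$ lies in $\bar D \le \bar G_0 \cap \bar G^{(4)}$. By Remark~(vii), $\bar E := \bar G^{(5)}$ is a normal semisimple subgroup $\prod_\lambda S_\lambda$ of $\bar G$, and $\bar G^{(4)}/\bar E$ is abelian. On each $\bar G$-orbit of the simple factors $S_\lambda$ of size $\ge 2$, the relation $\bar G = \bar D \langle \bar W \rangle$ together with $\bar D \le \bar G_0$ (hence $\bar D$ centralises each individual $S_\lambda$) forces some $\bar w_{i(k)} \in \bar W$ to swap a pair of factors $S_1, S_2$. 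Lemma~\ref{4 factors} then expresses the projection of $\bar h_k$ onto $M = S_1 \times S_2$ as a product $\prod_{j=1}^{8\delta}[\bar m_j, \bar w_{i(k)}]$, and each commutator equals $\bar w_{i(k)}^{-m_j} \cdot \bar w_{i(k)}$, i.e.\ a two-letter word in conjugates of $\bar w_{i(k)}$; lifting the $\bar m_j$ to elements of $D$ (using that the relevant $M$ can be arranged to lie inside $\bar D$) supplies the required $c_{k,j}$.

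The main obstacle is \emph{coherence}: one single $w_{i(k)}$ must handle the entire expression of $h_k$, not a different swap element for each chief factor visited. I would address this by an inductive reduction along a chief series of $\bar G$ inside $\bar G_0 \cap \bar G^{(4)}$: at each step, modify $h_k$ by an element of $D \cap G_1$ so that only one chief factor of $\bar G$ remains to be handled, apply Lemma~\ref{4 factors} locally with a single swap $\bar w_{i(k)}$, and iterate. The contributions coming from the abelian layer $\bar G^{(4)}/\bar E$ must be absorbed into $D \cap G_1$; this is precisely where the hypothesis $D \le G^{(4)}G_1$ (rather than some weaker containment) is essential, since it ensures that the abelian image of $\bar h_k$ in $\bar G^{(4)}/\bar E$ lies in a controlled portion that can be disposed of separately rather than surviving as a genuine obstruction to the commutator decomposition.
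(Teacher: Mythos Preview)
Your proposal has a genuine gap in the ``coherence'' step. You seek, for each $h_k$, a \emph{single} index $i(k)$ so that $h_k$ lies (modulo $D\cap G_1$) in the subgroup generated by $8\delta$ conjugates of that one $w_{i(k)}$. This is not attainable in general. After reducing modulo $D\cap G_1$, the (normal) subgroup $D$ decomposes as a direct product $\prod_\ell M_\ell$ of minimal normal subgroups $M_\ell = S_{\ell,1}\times S_{\ell,2}$ of $G$, and for different $\ell$ the swap element furnished by Lemma~\ref{4 factors} may be a \emph{different} member of $W$. A generic $h_k$ projects non-trivially to several $M_\ell$ requiring distinct $w_i$'s, so no single $w_{i(k)}$ suffices. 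Your proposed inductive fix does not rescue this: modifying $h_k$ by an element of $D\cap G_1$ leaves its image in $G/G_1$ (and hence in every $M_\ell$) unchanged, and ``iterating'' along a chief series either uses several $w_i$'s after all---contradicting the single-index claim---or accumulates a number of conjugates proportional to the number of chief factors inside $D$, which is not bounded in terms of $d$ and $s$.

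The paper's argument drops the single-index restriction and lets all the $w_i$ act at once. One groups the factors $M_\ell$ according to which $w_i$ swaps their two simple components, obtaining $D = D_1 \times \cdots \times D_s$ with $D_i = [D_i, w_i]^{\ast 8\delta}$ for each $i$; the direct-product structure lets the commutators in distinct $M_\ell$ be taken in parallel. Hence every $h_k\in D$ is a product $\prod_i \prod_{j=1}^{8\delta}[c_{k,i,j},w_i]$ and so lies in the subgroup generated by $W$ together with $8\delta$ $D$-conjugates of \emph{each} $w_i$. Over all $k=1,\ldots,d$ this still contributes at most $8\delta d$ new conjugates of each $w_i$, so the bound $m = 1+8\delta d$ is unchanged; only the unnecessary single-$i(k)$ constraint had to go.

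Two smaller points. First, your ``abelian layer'' $\bar G^{(4)}/\bar G^{(5)}$ is trivial: by Remark~(vii) the section $G^{(4)}G_1/G_1$ is already semisimple, hence perfect, so $\bar D$ is itself semisimple and no abelian obstruction arises. Second, for the decomposition of $D$ into minimal normal subgroups one needs $D$ normal in $G$; this is not stated in the Proposition but holds in its only application, where $D = K\cap G^{(4)}G_1$ with $K\vartriangleleft G$.
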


\begin{proof}
Note that $G_{1}=\bigcap_{M\in\mathcal{C}\smallsetminus\mathcal{S}}%
\mathrm{C}_{G_{0}}(M)$, where $\mathcal{S}$ denotes the set of all simple
chief factors of $G$. The section $G^{(4)}G_{1}/G_{1}$ is semisimple, and is a
product of minimal normal subgroups of $G/G_{1}$ belonging to $\mathcal{C}$.
We may suppose that $D\cap G_{1}=1$. In that case, $D$ is a product of minimal
normal subgroups of $G$ belonging to $\mathcal{C}\smallsetminus\mathcal{S}$.

Let $M=S_{1}\times S_{2}$ be one of these. Then $D$ normalizes $S_{1}$ and
$S_{2}$, so there exists $y\in W$ such that $S_{1}^{y}=S_{2}$ and $S_{2}%
^{y}=S_{1}$. Now Lemma \ref{4 factors} shows that%
\[
M=[M,y]^{\ast8\delta}.
\]
As $D$ is the direct product of such normal subgroups $M$ of $G$, it follows
that%
\[
D=[D,y]^{\ast8\delta}.
\]

If $r<d=\mathrm{d}(G)$ put $w_{r+1}=\ldots=w_{d}=w_{r}$. Now applying Lemma
\ref{Gasch} we find elements $h_{j}\in D$ such that $G=\left\langle h_{1}%
w_{1},\ldots,h_{d}w_{d},w_{d+1},\ldots,w_{r}\right\rangle $. Each $h_{j}$ lies
in the subgroup generated by $W$ and $8\delta$ $D$-conjugates of the $w_{i}$.
The result follows.
\end{proof}

\subsection{Lifting generators\label{liftigen}}

Recall that a chief factor of $G$ is \emph{bad} if it is either simple or the
product of two simple groups.

\begin{proposition}
\label{prop5.1new}Let $G=N\left\langle y_{1},\ldots,y_{m}\right\rangle $ be a
$d$-generator group where $N$ is a non-central minimal normal subgroup of $G$.
If $N$ is non-abelian, assume that $N$ is not bad. Let%
\[
\mathcal{X}=\left\{  \mathbf{a}\in N^{(m)}\mid\left\langle y_{1}^{a_{1}%
},\ldots,y_{m}^{a_{m}}\right\rangle =G\right\}  .
\]
\emph{(i)} Suppose that $N$ is abelian and that $y_{j}$ has the $\varepsilon
$-fsp on $N$ for at least $k$ values of $j$. Then%
\[
\left\vert \mathcal{X}\right\vert \geq\left\vert N\right\vert ^{m}%
(1-\left\vert N\right\vert ^{d-k\varepsilon}).
\]
\emph{(ii)} Suppose that $N$ is non-abelian and that $y_{j}$ has the
$\varepsilon$-fgp on $N$ for at least $k$ values of $j$, where $k\varepsilon
\geq\max\{2d+4,C\}$ for a certain absolute constant $C$. Then%
\[
\left\vert \mathcal{X}\right\vert \geq\left\vert N\right\vert ^{m}%
(1-2^{2-k\varepsilon}).
\]

\end{proposition}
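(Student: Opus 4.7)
The strategy is a union-bound estimate on the bad tuples $\mathbf{a} \in N^{(m)} \setminus \mathcal{X}$. The unifying observation is that for any $\mathbf{a}$, the subgroup $H(\mathbf{a}) := \langle y_1^{a_1},\ldots,y_m^{a_m}\rangle$ satisfies $H(\mathbf{a})N = G$ automatically, because $y_i^{a_i} \equiv y_i \pmod{N}$. Hence $\mathbf{a}$ is bad iff $H(\mathbf{a})$ is contained in some maximal subgroup $M$ with $MN = G$ and $M \neq G$. For each such $M$ and each $i$, the map $a \mapsto y_i^a$ sends $N$ onto $y_i^N$ with fibres of size $|\mathrm{C}_N(y_i)|$; since $y_i^N \subseteq y_iN$ and $y_iN \cap M$ is either empty or a single coset of $M\cap N$, we obtain the basic estimate $|\{a\in N: y_i^a\in M\}| \leq |M\cap N|\cdot |\mathrm{C}_N(y_i)|$.

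For part (i), $N$ abelian gives $H(\mathbf{a})\cap N \triangleleft G$, so by minimality $H\neq G$ forces $H$ to be a complement of $N$. Then $|M\cap N|=1$, so the per-$M$ count is $\prod_i|\mathrm{C}_N(y_i)|$, and the number of complements is at most $|N|^d$ (each being determined by lifting $d$ generators of $G/N$, via a $1$-cocycle argument). Using the $\varepsilon$-fsp hypothesis to get $|\mathrm{C}_N(y_i)| = |N|/|N(y_i-1)| \leq |N|^{1-\varepsilon}$ for the $k$ chosen indices, one concludes
\[
|N^{(m)}\setminus\mathcal{X}| \leq |N|^d\cdot |N|^{m-k\varepsilon},
\]
which is the claim.

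For part (ii), $N = S_1\times\cdots\times S_n$ with $n\geq 3$ (since $N$ is not bad). Remark (iii) converts $\varepsilon$-fgp to $|\mathrm{C}_N(y_i)|\leq |N|^{1-\varepsilon/2}$ for the $k$ chosen indices, giving $\prod_i|\mathrm{C}_N(y_i)| \leq |N|^{m-k\varepsilon/2}$. Thus
\[
|N^{(m)}\setminus\mathcal{X}| \leq |N|^{m-k\varepsilon/2}\sum_{M\in\mathcal{M}} |M\cap N|^{m}.
\]
I would group $M\in\mathcal{M}$ by $K := M\cap N$; the number of $M$ with $M\cap N=K$ equals the number of complements to $N/K$ in $G/K$, bounded by $(|N|/|K|)^d$. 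The contribution from complements ($K=1$) is then $|N|^d$, and using $k\varepsilon\geq 2d+4$ this piece is absorbed into the required bound. For the non-complement $M$, $K$ is a proper $M$-invariant subgroup of $N=S^n$ with $M$ transitive on the factors, so a structural dichotomy applies: either $K$ is subdirect and classical theory forces $|K| \leq |S|^{\lfloor n/2\rfloor}\leq |N|^{1/2}$, or $\pi_j(K)$ is proper in $S_j$ for all $j$, whence $|K|\leq (|S|/5)^n$ since non-abelian simple groups have minimum permutation degree $\geq 5$. Since $n\geq 3$, both subcases give $|K|/|N|$ bounded well below $1$, and the resulting geometric sum combined with the $(|N|/|K|)^d$ factor is controlled by the absolute constant $C$ in $k\varepsilon\geq C$.

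The main obstacle is the non-complement part of step (b): bounding simultaneously the number of maximal supplements $M$ and the size of $K=M\cap N$ when $K>1$. This is where the hypothesis that $N$ is not bad (giving $n\geq 3$) is essential, and where one must invoke the classification of subdirect products of non-abelian simple groups together with Proposition \ref{bcpbound}. The abelian case by contrast is essentially elementary, depending only on the Gaschütz-style count of complements.
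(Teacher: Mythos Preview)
Your argument for part (i) is correct and is the standard one: bad tuples $\mathbf a$ force $H(\mathbf a)$ to be a complement to $N$, the complements are at most $|N|^d$ in number, and for each complement $M$ the fibre count $|\{a:y_i^a\in M\}|\le|\mathrm C_N(y_i)|$ gives the stated bound. This is exactly the approach of \cite{NS}, Proposition~5.1(i), which the present paper simply quotes.

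Part (ii), however, has a real gap. Your displayed inequality
\[
|N^{(m)}\setminus\mathcal X|\;\le\;|N|^{\,m-k\varepsilon/2}\sum_{M\in\mathcal M}|M\cap N|^{\,m}
\]
is obtained by multiplying the bound $|\{a:y_i^a\in M\}|\le|K|\cdot|\mathrm C_N(y_i)|$ over all $m$ indices. But this produces a factor $|K|^m$, and that is fatal: for a diagonal-type $K$ with block size $\ell=2$ one has $|K|=|N|^{1/2}$, so a \emph{single} such $M$ already contributes $|N|^{m/2-k\varepsilon/2}$ to the density $|N^{(m)}\setminus\mathcal X|/|N|^m$. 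Since $k\le m$ and $\varepsilon\le 1$, this exponent is nonnegative and the bound is useless. No choice of the absolute constant $C$ rescues this, because $m$ is not bounded in terms of $k\varepsilon$. There is also a secondary issue: your sentence ``the number of $M$ with $M\cap N=K$ equals the number of complements to $N/K$ in $G/K$'' is not well-posed, because $K=M\cap N$ is not normal in $G$ (indeed $N_N(K)=K$ for diagonal-type $K$), so $G/K$ has no meaning.

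What the proof in \cite{NS} actually does is avoid the crude product $|K|\cdot|\mathrm C_N(y_i)|$ altogether. One organises the maximal supplements $M$ (with $K=M\cap N>1$) by the block size $\ell\ge2$ of the associated partition of the $n$ simple factors, and for each such $M$ estimates $|\{a:y_i^a\in M\}|/|N|$ directly in terms of $\ell$ and the cycle structure of $y_i$ on the factors, rather than via $|\mathrm C_N(y_i)|$. After also counting the $M$'s of each block size, the total contribution at level $\ell$ turns out to be at most $\ell^{-k\varepsilon}$; summing over $\ell\ge2$ yields $z\le\zeta(k\varepsilon)-1$, and the crude estimate $\zeta(t)-1\le 2^{2-t}$ gives the stated bound. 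The union-bound skeleton you set up is right; the missing ingredient is this finer per-$M$ estimate that ties the bad-tuple density to the block structure of $K$ rather than to $|K|$ and $|\mathrm C_N(y_i)|$ separately.
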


\begin{proof}
Part (i) is \cite{NS}, Proposition 5.1(i). In the situation of (ii), the proof
of \cite{NS}, Proposition 5.1(ii) shows that $\left\vert \mathcal{X}%
\right\vert \geq\left\vert N\right\vert ^{m}(1-z)$ where $z\leq\zeta
(k\varepsilon)-1$ (Riemann zeta function). A crude estimate gives
$\zeta(t)-1\leq2^{2-t}$ for $t>2$.
\end{proof}

\bigskip

The main result is now

\begin{theorem}
\label{gens}Let $G$ be a group and $K\leq G_{0}$ a normal subgroup of $G$. Let
$Y=\{y_{1},,\ldots,y_{r}\}$ be a subset of $G$ such that $G=G^{\prime
}\left\langle Y\right\rangle =K\left\langle Y\right\rangle $. Then there exist
elements $x_{ij}\in K$ such that%
\[
G=\left\langle y_{i}^{x_{ij}}\mid i=1,\ldots,r,~j=1,\ldots,k\right\rangle
\]
where%
\begin{align}
k=\max\{(1+2\mathrm{d}(G)\widehat{r})  &  (1+8\delta\mathrm{d}(G)),~\widehat
{r}C\}\label{k}\\
&  =f_{0}(r,\mathrm{d}(G))\leq C_{0}r\mathrm{d}(G)^{2},\nonumber
\end{align}
$\widehat{r}=\max\{r,~1+6\delta\}$, and $C$ and $C_{0}$ are absolute constants.
\end{theorem}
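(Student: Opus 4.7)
The plan is to build the conjugating elements $x_{ij}\in K$ iteratively along a $G$-chief series of $K$, applying Proposition \ref{prop5.1new} at each "non-bad" chief factor and Proposition \ref{D-lifting} at the end to handle the "bad" ones (non-abelian chief factors of composition length two). Since $K\le G_0$, the hypothesis $G = K\langle Y\rangle$ forces $G = G_0\langle Y\rangle$, so Theorem \ref{fppThm} applies with $\varepsilon = 1/\hat r$: each element of $Y$, and hence each of its $K$-conjugates, has the $\varepsilon/2$-fsp on every non-central abelian chief factor of $G$ and the $\varepsilon$-fgp on every non-abelian chief factor of $G$ inside $G_0$. Note also that no simple non-abelian chief factor of $G$ can sit inside $K$: such a factor would be centralized by $G_0\supseteq K$, hence by itself, and so would be abelian---absurd. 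Thus every non-abelian chief factor of $G$ in $K$ has composition length at least two, with the bad ones being exactly those of composition length two.

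\emph{Phase 1.} Choose $m_1$ so that $m_1\varepsilon$ exceeds both the threshold $d$ of Proposition \ref{prop5.1new}(i) and the threshold $\max\{2d+4,C\}$ of (ii); this can be arranged with $m_1 = O(d\hat r)$, in line with the first term of (\ref{k}). Refine $1\trianglelefteq K\cap G_1\trianglelefteq K$ to a $G$-chief series of $K$ and walk up it. At each factor $N$ which is either non-central abelian or non-abelian of composition length $\ge 3$, Proposition \ref{prop5.1new} yields a non-empty set $\mathcal{X}$ of $N$-tuples lifting the current generating set across $N$; its hypothesis is satisfied because the fsp/fgp is conjugation-invariant, so at least $m_1$ of the $rm_1$ current conjugates have the required property. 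Choosing such a tuple, lifting it to $K$, and absorbing it into the $x_{ij}$'s, we obtain after finitely many steps elements $x_{ij}^{(1)}\in K$ (for $i\le r$, $j\le m_1$) such that $W := \{y_i^{x_{ij}^{(1)}}\}$ generates $G$ modulo a normal subgroup $D\le K$ whose non-central chief factors are all bad non-abelian.

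\emph{Phase 2 and principal obstacle.} By the opening observations together with Remark (vii), $D\le G_0\cap G^{(4)}G_1$, so Proposition \ref{D-lifting} applies with $s = rm_1$: it yields $b_{l,j}\in D\subseteq K$ (for $l\le rm_1$, $j\le 1+8\delta d$) satisfying $G = \langle w_l^{b_{l,j}}\rangle\cdot(D\cap G_1)$. The residual $D\cap G_1$ consists only of central chief factors of $G$, because $G_1$ centralizes---and hence meets trivially---the bad non-abelian chief factors sitting inside $D$; so $G/\langle w_l^{b_{l,j}}\rangle$ is abelian, and combined with $\langle w_l^{b_{l,j}}\rangle G' = G$ (from $y_i^{x_{ij}}\equiv y_i\pmod{G'}$ and the hypothesis $G = G'\langle Y\rangle$) this forces $\langle w_l^{b_{l,j}}\rangle = G$. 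Writing $w_l^{b_{l,j}} = y_i^{x_{ij}^{(1)}b_{l,j}}$ exhibits the required number of $K$-conjugates of each $y_i$ generating $G$. The hardest points will be: the inductive bookkeeping in Phase 1, namely checking through each lift that the hypothesis of Proposition \ref{prop5.1new} is still satisfied by the running conjugated generating set; the clean separation of central versus non-central chief factors needed to absorb $D\cap G_1$ at the end; and a careful balancing of the constants in Phases 1 and 2 to match the sharp bound (\ref{k}).
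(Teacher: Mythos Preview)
Your proposal has a genuine gap in Phase~1: the normal subgroup $D$ you describe --- one contained in $K$ whose non-central $G$-chief factors are all bad non-abelian, and such that $K/D$ has only non-bad chief factors --- need not exist. Walking down a chief series and ``skipping'' bad factors does not work once a bad factor sits strictly above a non-bad one, because you cannot lift past the bad factor to reach the non-bad ones beneath it.

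Here is a concrete obstruction. Take $S$ non-abelian simple, set $G_1=S\wr C_2$ (so $B:=S\times S$ is minimal normal in $G_1$), let $V$ be an irreducible $\mathbb{F}_pG_1$-module on which $B$ acts faithfully (for instance $V=W\otimes W$ with $W$ any faithful irreducible $\mathbb{F}_pS$-module, the $C_2$ swapping tensor factors), and put $G=V\rtimes G_1$, $K=VB$. Then $G'=K$, $G/K\cong C_2$, there are no simple non-abelian chief factors so $G_0=G$, and the hypotheses of the theorem hold with $r=1$. One checks $C_K(V)=V$, so $V$ is the \emph{unique} minimal normal subgroup of $G$ inside $K$: the only $G$-chief series of $K$ is $1<V<K$, with $V$ non-central abelian and $K/V\cong B$ bad non-abelian of composition length~$2$. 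Your walk halts immediately at $D=K$, and $D$ visibly contains the non-central abelian chief factor $V$. Consequently your Phase~2 conclusion that ``$D\cap G_1$ consists only of central chief factors'' also fails: here $G_1=C_G(K/V)=V$, so $D\cap G_1=V$ is non-central.

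The paper's argument avoids this by running \emph{three} phases rather than two, in a different order. First one lifts across the genuinely soluble top layer $K/D$ with $D:=K\cap G^{(4)}G_1$ (only abelian chief factors here, so only Proposition~\ref{prop5.1new}(i) is needed, with $k_1=1+2d\widehat r$); then Proposition~\ref{D-lifting} crosses the bad semisimple layer $D/(K\cap G_1)$; and \emph{finally} one descends through $K\cap G_1$, where the non-bad non-abelian chief factors actually live, using Proposition~\ref{prop5.1new}(i) and~(ii) with the full $k$. In the example above the first phase is vacuous, the second reaches $V$, and the third --- which your scheme omits --- disposes of $V$. Your plan can be repaired by inserting exactly this third phase after $D$-lifting; the constants then fall out as in~(\ref{k}).
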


\begin{corollary}
If $G$ has no simple chief factors and $G=G^{\prime}\left\langle y_{1}%
,,\ldots,y_{r}\right\rangle $ then $G=\left\langle y_{i}^{c_{ij}}\mid
i=1,\ldots,r,~j=1,\ldots,k\right\rangle $.
\end{corollary}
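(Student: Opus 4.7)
\noindent The plan is to deduce this directly from Theorem \ref{gens} by specializing to $K=G$. Recall the definition
\[
G_{0}=\bigcap_{M\in\mathcal{S}}\mathrm{C}_{G}(M),
\]
where $\mathcal{S}$ is the set of non-abelian simple chief factors of $G$. The hypothesis that $G$ has no simple chief factors makes $\mathcal{S}$ empty, so by the standing convention $G_{0}=G$; this is precisely the observation recorded after Theorem \ref{ThmA} in the introduction. Hence the choice $K=G$ satisfies the containment $K\leq G_{0}$ vacuously.

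With this choice of $K$, the condition $G=K\left\langle y_{1},\ldots,y_{r}\right\rangle $ in Theorem \ref{gens} is automatic, and the second condition $G=G^{\prime}\left\langle y_{1},\ldots,y_{r}\right\rangle $ is exactly our hypothesis. Applying Theorem \ref{gens} therefore yields elements $x_{ij}\in K=G$ with
\[
G=\left\langle y_{i}^{x_{ij}}\mid i=1,\ldots,r,~j=1,\ldots,k\right\rangle ,
\]
where $k=f_{0}(r,\mathrm{d}(G))$ is the quantity from (\ref{k}); relabelling the $x_{ij}$ as $c_{ij}$ gives the stated conclusion.

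There is essentially no technical obstacle beyond Theorem \ref{gens} itself. The interest of the corollary lies in the fact that, in the absence of simple chief factors, one is free to take conjugating elements from the whole of $G$ rather than being constrained to a proper normal subgroup $K\leq G_{0}$. In particular this generalizes the soluble case treated in \cite{S1}, which corresponds to the extreme situation in which every chief factor of $G$ is abelian and $G_{0}=G$ for that reason.
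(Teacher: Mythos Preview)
Your proof is correct and is exactly the intended derivation: the corollary is stated in the paper immediately after Theorem \ref{gens} without a separate proof, precisely because it follows by taking $K=G$ once one observes that the hypothesis forces $G_{0}=G$. One small quibble of wording: the containment $K\leq G_{0}$ is not satisfied ``vacuously'' but rather with equality, since $G_{0}=G$.
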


\begin{proof}
[Proof of Theorem \ref{gens}]Write $d=\mathrm{d}(G)$ and set $\varepsilon
=\widehat{r}^{-1}$. Let $N$ be a non-central chief factor of $G$. We will say
that a subset $W$ of $G$ has the $(k,\varepsilon)$\emph{-property} w.r.t. $N$
if $N$ is abelian and at least $k$ elements of $W$ have the $\varepsilon
/2$-fsp on $N$, or if $N$ is a product of at least $3$ simple groups and at
least $k$ elements of $W$ have the $\varepsilon$-fpp on the set of simple
factors of $N$. According to Theorem \ref{fppThm}, the set $\left\{
y_{1},\ldots,y_{r}\right\}  $ has the $(1,\varepsilon)$-property w.r.t. $N$.

Put $D=K\cap G^{(4)}G_{1}$. We begin by proving

\begin{description}
\item[\textbf{(*)}] there exists elements $a_{ij}\in K$ such that%
\begin{equation}
G=D\left\langle y_{i}^{a_{ij}}\mid i=1,\ldots,r,~j=1,\ldots,k_{1}\right\rangle
\label{genmodD}%
\end{equation}
where $k_{1}=1+2d\widehat{r}$.
\end{description}

Replacing $G$ by $G/D$ for the moment, we may assume that $K$ is soluble. If
$K=1$ we can take all $a_{ij}=1$ and there is nothing to prove.

Suppose that $K>1$ and let $N$ be a minimal normal subgroup of $G$ contained
in $K$; then $N$ is abelian. Arguing by induction on $\left\vert K\right\vert
$, we may suppose that $G=N\left\langle W\right\rangle $ where%
\[
W=\left\{  y_{i}^{a_{ij}}\mid i=1,\ldots,r,~j=1,\ldots,k_{1}\right\}
\]
and each $a_{ij}\in K$. If $N\leq\mathrm{Z}(G)$ then $G^{\prime}%
\leq\left\langle W\right\rangle $, so $\left\langle W\right\rangle \geq
G^{\prime}\left\langle Y\right\rangle =G$ and we are done.

If $N$ is non-central, the set $W$ has the $(k_{1},\varepsilon)$-property
w.r.t. $N$. As $d-k_{1}\varepsilon/2<0$, Proposition \ref{prop5.1new}(i) shows
that there exist elements $b_{ij}\in N$ such that%
\[
G=\left\langle y_{i}^{a_{ij}b_{ij}}\mid i=1,\ldots,r,~j=1,\ldots
,k_{1}\right\rangle ,
\]
and (\ref{genmodD}) follows on replacing $a_{ij}$ by $a_{ij}b_{ij}$. This
completes the proof of (*).

Now we apply Proposition \ref{D-lifting} to find elements $c_{ijl}\in D$ such
that%
\[
G=G_{1}\left\langle y_{i}^{a_{ij}c_{ijl}}\mid i=1,\ldots,r,~j=1,\ldots
,k_{1},~l=1,\ldots,m\right\rangle
\]
where $m=1+8\delta d$.

If $G_{1}=1$ we are done. Otherwise, let $N$ be a minimal normal subgroup of
$G$ contained in $G_{1}$, and suppose inductively that $G=N\left\langle
W\right\rangle $ where $W=\left\{  y_{i}^{x_{ij}}\mid i=1,\ldots
,r,~j=1,\ldots,k_{1}m\right\}  $. If $N$ is abelian we deduce as above that
$G$ is generated by a set of the form $\left\{  y_{i}^{x_{ij}b_{ij}}\mid
i=1,\ldots,r,~j=1,\ldots,k_{1}m\right\}  $, and the result follows since
$k=\max\{k_{1}m,\widehat{r}C\}\geq k_{1}m$.

Suppose that $N$ is non-abelian; then $N$ is not bad. If $k_{1}m<\widehat{r}%
C$, enlarge the family $W$ by repeating some of its elements to obtain a
family containing $k$ conjugates of each $y_{i}$ ($i=1,\ldots,r$). Then in any
case, $W$ has the $(k,\varepsilon)$-property w.r.t. $N$; Proposition
\ref{prop5.1new}(ii) now shows that that $G$ is generated by a set of the form
$\left\{  y_{i}^{x_{ij}c_{ij}}\mid i=1,\ldots,r,~j=1,\ldots,k\right\}  $ with
$c_{ij}\in N$, as required. (Note that $k\varepsilon\geq\max\{2d+4,C\}$ since
$k_{1}m>18d$.)
\end{proof}

\bigskip

\emph{Remarks.} (ix) Recall that $\delta=1$ if we accept the validity of the
Ore Conjecture (Subsection \ref{facts}).

(x) If we assume that $K\leq G_{1}$ we can take $k=\widehat{r}\cdot
\max\{2d+4,C\}=O(rd)$. In particular, if $G$ has no bad chief factors then the
Corollary holds with this smaller value of $k$.

\section{Commutators\label{commsection}}

In this section we begin the proof of the two main `commutator' results.

\begin{theorem}
\label{abs}Let $G=\left\langle g_{1},\ldots,g_{r}\right\rangle $ be a group
and $H$ a normal subgroup of $G$. Then%
\[
\lbrack H,G]=\left(
{\displaystyle\prod\limits_{i=1}^{r}}
[H,g_{i}]^{H}\right)  ^{\ast f_{3}}=\left(
{\displaystyle\prod\limits_{i=1}^{r}}
[H,g_{i}][H,g_{i}^{-1}]\right)  ^{\ast f_{3}}%
\]
where $f_{3}=O(rd)=O(r^{2})$ depends only on $r$ and $d=\mathrm{d}(G)$.
\end{theorem}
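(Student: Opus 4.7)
My plan is to prove Theorem~\ref{abs} by induction on $|G|$, handling one chief factor at a time with the fixed-point machinery from Section~\ref{gensec}. For the inductive step, let $N$ be a minimal normal subgroup of $G$ and apply the induction hypothesis to $G/N$: every element of $[H, G]$ is congruent modulo $N$ to a bounded product of elements from $\prod_i [H, g_i]^H$. The core task is then to absorb any remaining error in $N \cap [H, G]$ with only a controlled increase in product length.

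The analysis splits by the nature of $N$. If $N$ is a non-central abelian chief factor, Theorem~\ref{fppThm} ensures some $g_i$ has the $\varepsilon$-fsp on $N$, so the $\mathbb{F}_p$-subspace $[N, g_i]$ is large; combined with a Gasch\"{u}tz-style lifting (Lemma~\ref{Gasch}) and $H$-conjugation, a bounded number of factors from $\prod_i [N, g_i]^H$ suffices to fill $N$. If $N$ has composition length at least three (and is non-abelian), Theorem~\ref{fppThm} gives the $\varepsilon$-fgp property and the Gowers trick covers $N$ in $O(1)$ factors. The `bad' chief factors (simple, or the product of two simple groups, i.e.\ those inside $G_0$ but not $G_1$) fall outside the Gowers regime because $l(N)$ may be too small; here Proposition~\ref{D-lifting} and Lemma~\ref{4 factors} are invoked to produce enough commutators by hand. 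Totalling these contributions across the chief series should yield $f_3 = O(rd)$: one commutator per generator at each relevant chief factor accounts for the $O(r)$ contribution, and lifting through the soluble radical via Lemma~\ref{Gasch} accounts for the $O(d)$.

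For the second equality (in terms of $[H, g_i][H, g_i^{-1}]$), I would use the identities $[h_1, g_i]^{h_2} = [h_1 h_2, g_i] \cdot [h_2, g_i]^{-1}$ and $[h, g_i^{-1}] = g_i [h, g_i]^{-1} g_i^{-1}$. The first shows $[H, g_i]^H \subseteq [H, g_i] \cdot [H, g_i]^{-1}$, while the second relates $[H, g_i]^{-1}$ to $[H, g_i^{-1}]$ via $g_i$-conjugation. The $g_i$-conjugation can be moved through adjacent factors at the cost of introducing extra commutator elements themselves expressible within the product, so the two expressions generate $[H, G]$ with product lengths of the same order.

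The main obstacle I anticipate is controlling the `bad' chief factors inside $G_0 \smallsetminus G_1$: these have composition length $1$ or $2$, and the Gowers trick does not apply directly because $l(N)$ can be too small relative to what the trick demands. Closing the argument here requires the specialised identities of Lemma~\ref{4 factors} coordinated with the lifting in Proposition~\ref{D-lifting}. A secondary difficulty is the sharp bookkeeping needed to achieve $f_3 = O(rd)$ rather than a weaker bound such as $O(r^2 d)$: at each chief factor the cost must average to $O(1)$ per step, even though individual steps may cost up to $O(r)$ or $O(d)$.
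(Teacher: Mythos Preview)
Your plan has a structural gap that would prevent it from yielding a bound depending only on $r$ and $d$. In a chief-factor-by-chief-factor induction where you \emph{append} new commutator factors to absorb the error in $N$, the product length grows by a bounded amount at each step. Summing over a chief series then gives a bound proportional to the chief length of $G$, which is not controlled by $r$ and $d$ alone. Your sentence ``one commutator per generator at each relevant chief factor accounts for the $O(r)$ contribution'' hides exactly this: there may be arbitrarily many relevant chief factors.

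The paper avoids this by a fundamentally different mechanism, Theorem~\ref{KT} (the Key Theorem), which you do not invoke. There the number of factors is fixed once and for all at $10\cdot r\cdot k(d,r)$, and the inductive step does not append new terms but \emph{perturbs} the existing ones: given $\mathbf{u}(i)$ that work modulo $K$, one finds $\mathbf{a}(i)\in N^{(m)}$ so that $\mathbf{v}(i)=\mathbf{a}(i)\cdot\mathbf{u}(i)$ works exactly. Making this work requires carrying an auxiliary generation condition (equation~(\ref{G-gen})) through the induction, so that at each chief factor one has enough freedom to solve equation~(\ref{kappa}). The abelian and soluble cases then reduce to a fibre-counting comparison (Proposition~\ref{prop5.1new}, Proposition~\ref{NS6.2}); the semisimple case requires Theorem~\ref{comm20}, whose proof occupies all of Section~\ref{semisimplesec}. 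Merely saying ``the Gowers trick covers $N$ in $O(1)$ factors'' skips this entire layer: Gowers is applied not to the sets $[N,g_i]$ directly but to the images $W(i)\phi(i)$, and proving those images are large is the content of Theorem~\ref{T1} and the combinatorial reduction.

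Two smaller points. First, for the bad chief factors (composition length one or two) the relevant tool is not Proposition~\ref{D-lifting}, which lifts generators, but Lemmas~\ref{L3} and~\ref{L4}, which express such a factor as $(\prod_i[T,g_i]^T)^{\ast k_0 r}$ via the Liebeck--Shalev conjugacy-class result (Proposition~\ref{conjclass}); these are packaged into Proposition~\ref{accept}. Second, for the equality $[H,g_i]^H\subseteq[H,g_i][H,g_i^{-1}]$, Lemma~\ref{L1} gives it in one line from $[x,\alpha]^y=[xy,\alpha][y^{\alpha},\alpha^{-1}]$; your proposed route through conjugation identities is unnecessarily roundabout.
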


\begin{theorem}
\label{rel}Let $G=G^{\prime}\left\langle g_{1},\ldots,g_{r}\right\rangle $ be
a group and $H$ a normal subgroup of $G$ such that $H\left\langle g_{1}%
,\ldots,g_{r}\right\rangle =G$.\newline\emph{(i)} If $H\leq G_{0}$ then%
\[
\lbrack H,G]=\left(
{\displaystyle\prod\limits_{i=1}^{r}}
[H,g_{i}]^{H}\right)  ^{\ast f_{4}}=\left(
{\displaystyle\prod\limits_{i=1}^{r}}
[H,g_{i}][H,g_{i}^{-1}]\right)  ^{\ast f_{4}},
\]
\emph{(ii)} if $H\leq G_{1}$ then%
\[
\lbrack H,G]=\left(
{\displaystyle\prod\limits_{i=1}^{r}}
[H,g_{i}]\right)  ^{\ast f_{5}},
\]
where $f_{4}=O(r^{5}d^{6})$ and $f_{5}=O(rd)$ depend only on $r$ and
$d=\mathrm{d}(G)$.
\end{theorem}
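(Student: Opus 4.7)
The plan is to induct on a $G$-chief series refined through $H$, absorbing one chief factor $N=K/L$ of $G$ (with $K\leq H$) at a time. At each step the task is to show that $[N,G]$ lies in a bounded number of copies of $V=\prod_{i=1}^{r}V_{i}$ modulo $L$, where $V_{i}=[H,g_{i}]^{H}\cdot[H,g_{i}^{-1}]^{H}$ in part (i) and $V_{i}=[H,g_{i}]$ in part (ii); the bounds $f_{4}$, $f_{5}$ emerge from summing the per-step contributions over the series. Under the hypothesis $G=G^{\prime}\langle Y\rangle=H\langle Y\rangle$ with $Y=\{g_{1},\ldots,g_{r}\}$, Theorem \ref{fppThm} gives the fixed-point properties of $Y$ with $\varepsilon=\min\{(1+6\delta)^{-1},r^{-1}\}$: the $\varepsilon/2$-fsp on non-central abelian chief factors in $G_{0}$ and the $\varepsilon$-fgp on non-abelian chief factors in $G_{0}$.

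A short centralizer argument shows that a simple non-abelian chief factor $M$ cannot lie inside $G_{0}$ (else $M\leq C_{G}(M)$ would force $M$ abelian), and similarly no bad (simple or $S_{1}\times S_{2}$) chief factor lies inside $G_{1}$. Hence in part (ii) only two types of $N$ occur: non-central abelian, and good non-abelian, meaning $N=S_{1}\times\cdots\times S_{n}$ with $n\geq 3$. In part (i), one additional type is possible: a bad 2-simple factor $N=S_{1}\times S_{2}$.

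For abelian $N$, the $\varepsilon/2$-fsp forces $[N,g_{i}]$ to have codimension at most $(1-\varepsilon/2)\dim N$ in $N$ for some $i$; a linear-algebra accumulation over the $r$ generators then shows that $\prod_{i}[N,g_{i}]^{\ast c}$ exhausts $N$ with $c$ controlled in terms of $r$ and $\dim N$. For good non-abelian $N$, Remark (iii) converts the $\varepsilon$-fgp into $|[N,g_{i}]|\geq|N|^{\varepsilon/2}$, and the quasi-semisimple twisted-commutator theorem---the hard core of the paper, occupying all of Section \ref{semisimplesec}---then yields $\prod_{i}[N,g_{i}]^{\ast c_{0}}=N$ for an absolute constant $c_{0}$. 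These two cases exhaust part (ii); a careful accounting of the summed dimensions and the number of good non-abelian factors, each bounded in terms of $\mathrm{d}(G)$, delivers $f_{5}=O(rd)$. For the bad 2-simple case in part (i), the transitive $G$-action on $\{S_{1},S_{2}\}$ combined with $G=H\langle Y\rangle$ forces some element of $\langle Y\rangle$, or some $H$-conjugate thereof, to act as a swap $\alpha$; Lemma \ref{4 factors} then gives $N=[N,\alpha]^{\ast 8\delta}$. The need to pre-conjugate by $H$ is precisely why $V_{i}$ in (i) is enlarged to $[H,g_{i}]^{H}\cdot[H,g_{i}^{-1}]^{H}$; iterating this case through a chain of bad factors, together with the Proposition \ref{D-lifting}-style lifting required to bridge such chains, inflates the bound to $f_{4}=O(r^{5}d^{6})$.

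The main obstacle is the good non-abelian case: converting the statistical information $|[N,g_{i}]|\geq|N|^{\varepsilon/2}$ into the deterministic conclusion $\prod_{i}[N,g_{i}]^{\ast c_{0}}=N$ is genuinely hard and relies on the full quasi-semisimple machinery of Section \ref{semisimplesec}, built on the Gowers trick together with the detailed structural properties of simple groups collected in Subsection \ref{quasi}. The abelian case is essentially linear algebra, and the 2-simple bad case reduces directly to Lemma \ref{4 factors}; the overall difficulty is concentrated in the good non-abelian step.
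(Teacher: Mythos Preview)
Your inductive scheme has a genuine gap. You propose to descend a $G$-chief series inside $H$, showing at each step that the chief factor $N$ is covered by a bounded number of copies of $V=\prod_i V_i$, and then obtain $f_4,f_5$ by ``summing the per-step contributions over the series''. But the length of a chief series of a $d$-generator finite group is not bounded in terms of $d$ (or $r$): iterated wreath products, for instance, are $2$-generated with arbitrarily long chief series. So any additive accumulation over the series yields a bound depending on $|G|$, not on $(r,d)$ alone. Your claim that ``the summed dimensions and the number of good non-abelian factors [are] each bounded in terms of $\mathrm{d}(G)$'' is simply false. The same objection applies to your abelian step: you say $c$ is ``controlled in terms of $r$ and $\dim N$'', but $\dim N$ is unbounded, and even if each abelian factor needed only $c=1$ you would still be summing over unboundedly many factors.

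The paper's route is structurally different and is designed precisely to avoid this. For part~(ii) it passes to $H_1=[H,_\omega G]$, which is \emph{acceptable}, and then invokes the Key Theorem~\ref{KT}. That theorem is also proved by induction along a chief series, but the crucial point is that the number of commutator factors is \emph{fixed} at $10m=10r\,k(d,r)$ throughout: at each inductive step one does not append new factors but \emph{modifies} the existing tuple $\mathbf{v}(i)$ by left-multiplying by elements of $N$, while simultaneously maintaining the generation condition~(\ref{G-gen}). The counting arguments (Proposition~\ref{prop5.1new}, Proposition~\ref{NS6.2}, Theorem~\ref{comm20}) guarantee that this lifting is possible. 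This ``lift-and-adjust'' mechanism is the missing idea in your plan. For part~(i) the paper does not treat bad $2$-simple factors directly at all: it first applies Theorem~\ref{gens} to replace $\{g_i\}$ by an honest generating set of $H$-conjugates $\{g_i^{x_{ij}}\}$, and then quotes Theorem~\ref{abs} for that generating set; the $H$-conjugation in $[H,g_i]^H$ is absorbed by the conjugation $x_{ij}\in H$, not by a per-factor Lemma~\ref{4 factors} argument.
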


These are not quite the same as Theorems \ref{ThmB} and \ref{ThmC}, which
refer to a \emph{symmetric} set $Y=\{y_{1},\ldots,y_{r}\}$, and omit the
factors $[H,g_{i}^{-1}]$. To deduce the stated results, note that if $Y$ is
symmetric then%
\[
\left(
{\displaystyle\prod\limits_{j=1}^{r}}
[H,y_{j}]\right)  ^{\ast2r}\supseteq%
{\displaystyle\prod\limits_{j=1}^{r}}
[H,y_{i}][H,y_{i}^{-1}],
\]
and we may take $f_{1}=2rf_{3}$, $f_{2}=2rf_{4}$; of course if we are allowed
to order $Y$ so that $y_{2i}=y_{2i-1}^{-1}$ for $i=1,\ldots,r/2$ then we can
take $f_{1}=f_{3}$ and $f_{2}=f_{4}$.

\subsection{Acceptable normal subgroups}

Suppose that $A<B$ are normal subgroups of a group $G$. Recall that $B/A$ is a
\emph{bad chief factor} of $G$ if $B/A$ is a minimal normal subgroup of $G/A$
and $B/A$ is either simple or the direct product of two simple groups. Thus
$G_{1}$ (defined in Subsection \ref{fix}) is precisely the intersection of the
centralizers of all bad chief factors of $G$.

A normal subgroup $H$ of $G$ is said to be \emph{acceptable} in $G$ if

\begin{description}
\item[(a)] $H=[H,G]$ and

\item[(b)] if $A<B\leq H$ \emph{are normal subgroups of} $G$ \emph{then} $B/A$
\emph{is not a bad chief factor of }$G$\emph{.}
\end{description}

Here we show how the main results may be reduced to the consideration of
acceptable normal subgroups.

\begin{lemma}
\label{accGone}$H\vartriangleleft G$ is acceptable if and only if $H=[H,G]\leq
G^{\prime}\cap G_{1}$.
\end{lemma}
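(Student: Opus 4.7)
The plan is to establish the biconditional by checking each direction separately, using the minimality properties of chief factors and the very definition of $G_{1}$.

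For the forward direction, assume $H$ is acceptable. Condition (a) gives $H=[H,G]\leq G'$ immediately, so the content is showing $H\leq G_{1}$, i.e.\ that $H$ centralizes every bad chief factor $M=B/A$ of $G$. First I would consider $[H,B]A/A$: it is a $G$-invariant subgroup of $B/A=M$, and since $M$ is a minimal normal subgroup of $G/A$, either $[H,B]\leq A$ (which is what we want) or $[H,B]A=B$. In the second case, the natural projection $[H,B]\to B/A$ has kernel $[H,B]\cap A$ and image all of $M$, giving an isomorphism of $G$-groups
\[
[H,B]/([H,B]\cap A)\;\cong\; M.
\]
Since $[H,B]\leq H$ and both $[H,B]$ and $[H,B]\cap A$ are normal in $G$, and since this quotient is $G$-isomorphic to a minimal normal subgroup, it is itself a minimal normal subgroup of $G/([H,B]\cap A)$; being isomorphic to $M$, it is a bad chief factor sitting strictly inside $H$, contradicting~(b).

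For the reverse direction, suppose $H=[H,G]\leq G'\cap G_{1}$. Condition (a) is immediate, so I only need to check (b). If $A<B\leq H$ are normal in $G$ and $B/A$ were a bad chief factor, then by definition of $G_{1}$ we would have $H\leq \mathrm{C}_{G}(B/A)$, so $[H,B]\leq A$. But $B\leq H$, hence $[B,B]\leq[H,B]\leq A$, forcing $B/A$ to be abelian. This contradicts badness (bad factors are non-abelian by definition).

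The main point requiring care is the minimality claim in the forward direction: one must check that the $G$-isomorphism $[H,B]/([H,B]\cap A)\cong M$ actually transfers the property of being a minimal normal subgroup, since the two quotients live over different base groups. This is handled by the standard observation that $G$-invariant subgroups of $[H,B]/([H,B]\cap A)$ correspond bijectively (via the isomorphism) to $G$-invariant subgroups of $M$, so minimality on one side forces it on the other. Once that is in hand, everything else is bookkeeping with minimal normal subgroups.
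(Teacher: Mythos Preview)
Your proof is correct and follows essentially the same approach as the paper. The only cosmetic difference is that in the forward direction you work with $[H,B]$ while the paper uses $B\cap H$; since $[H,B]\leq B\cap H$ and both map onto $B/A$ under the same hypothesis, the arguments are interchangeable.
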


\begin{proof}
If $H\geq B>A$ and $B/A$ is a bad chief factor then $H$ does not centralize
$B/A$, so $H\nleq G_{1}$. Conversely, if $H\nleq G_{1}$ then $H$ does not
centralize some bad chief factor $B/A$; then $(B\cap H)A=A$ so $(B\cap
H)/(A\cap H)\cong B/A$ and $A\cap H<B\cap H\leq H$ contradicts (b), showing
that $H$ is not acceptable.
\end{proof}

\bigskip

The next result is elementary; it is the general form of facts (A) and (B)
mentioned in the introduction:

\begin{lemma}
\label{basiccomm}Let $H\vartriangleleft G=G^{\prime}\left\langle g_{1}%
,\ldots,g_{r}\right\rangle $ and let $n\geq1$. Then%
\[
\lbrack H,G]=[H,_{n}G]%
{\displaystyle\prod\limits_{i=1}^{r}}
[H,g_{i}].
\]
If in addition we have $G=H\left\langle g_{1},\ldots,g_{r}\right\rangle $ then%
\[
G=[H,_{n}G]\left\langle g_{1},\ldots,g_{r}\right\rangle .
\]

\end{lemma}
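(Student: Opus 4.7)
My plan is to handle the two assertions separately, each by induction on $n$ after passing to a suitable quotient.

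\textbf{First assertion.} Since $[H,_n G]$ is normal in $G$, the claim is equivalent (by passing to $\bar G = G/[H,_n G]$) to the following: \emph{if $[H,_n G] = 1$ and $G = G'\langle g_1,\ldots,g_r\rangle$, then $[H,G] = \prod_i [H,g_i]$}. I will induct on $n$; the base $n=1$ is vacuous. For the step, set $Z = [H,_{n-1} G]$; then $[Z,G] = [H,_n G] = 1$, so $Z \leq \mathrm{Z}(G)$, and applying the inductive hypothesis in $G/Z$ gives $[H,G] = P \cdot Z$ where $P := \prod_i [H,g_i]$. The substantive work is then to absorb $Z$ into $P$, using centrality of $Z$.

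For the inclusion $Z \subseteq P$, any $y \in [H,_{n-2}G]$ (with the convention $[H,_0 G] := H$) yields, by centrality of $Z$, a homomorphism $g \mapsto [y,g]$ from $G$ to $Z$ whose kernel contains $G'$; since $G = G'\langle g_i\rangle$, its values are determined by $[y,g_1],\ldots,[y,g_r]$. Symmetrically, for each fixed $i$, $y \mapsto [y,g_i]$ is a homomorphism $[H,_{n-2}G] \to Z$, so $[[H,_{n-2}G],g_i]$ coincides with the \emph{set} $\{[y,g_i]: y \in [H,_{n-2}G]\}$. Since $Z$ is abelian and generated by such $[y,g]$'s, collecting terms expresses every $z \in Z$ as $\prod_i [y_i,g_i]$ with $y_i \in [H,_{n-2}G] \subseteq H$. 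To conclude, any $x = \prod_i [h_i,g_i] \cdot \prod_i [y_i,g_i] \in P \cdot Z$ can be rewritten by interleaving (using centrality of each $[y_i,g_i]$) as $\prod_i [h_i,g_i][y_i,g_i]$, and then the identity $[y_i h_i, g_i] = [y_i,g_i]^{h_i}[h_i,g_i] = [y_i,g_i][h_i,g_i]$ (again by centrality of $[y_i,g_i]$) gives $x = \prod_i [y_i h_i, g_i] \in P$.

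\textbf{Second assertion.} I will prove inductively that whenever $G = G'U = KU$ with $U = \langle g_1,\ldots,g_r\rangle$ and $K \triangleleft G$, one has $G = [K,G]U$; iterating with $K := [H,_{j}G]$ then yields $G = [H,_n G]U$ for all $n \geq 1$. For the single step I pass to $\bar G = G/[K,G]$: the image $\bar K$ is central, so expanding any $[\bar k_1\bar u_1, \bar k_2\bar u_2]$ collapses to $[\bar u_1,\bar u_2]$, giving $\bar G' = \bar U'$. Combined with $\bar G = \bar G'\bar U$, this forces $\bar G = \bar U'\bar U = \bar U$, so $K \leq U[K,G]$ and hence $G = KU \subseteq [K,G]U$.

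\textbf{Main obstacle.} The crux is the compression step in Part~1 that collapses a length-$2r$ product of commutators into a length-$r$ one, placing $Z$ inside the \emph{ordered} product $P = \prod_i [H,g_i]$ rather than merely inside its normal closure. This is exactly where the hypothesis $G = G'\langle g_i\rangle$ (as opposed to merely $G = \langle g_i\rangle$) is essential: it lets me factor each homomorphism $G \to Z$ through $G/G'$, whose image is generated by the $r$ elements $g_i G'$, and thereby cut the length of the commutator product down to $r$.
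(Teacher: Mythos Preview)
Your proof is correct. For the second assertion, your argument is exactly the one the paper sketches: pass to $G/[K,G]$, observe that $\bar K$ becomes central so $\bar G' \leq \bar U$, and iterate. For the first assertion the paper gives no proof at all, merely citing \cite{NS}, Lemma~2.4 or \cite{S2}, Prop.~1.2.5; your induction on $n$ via the central quotient $G/[H,_{n-1}G]$, together with the collapsing identity $[y_i h_i,g_i]=[y_i,g_i][h_i,g_i]$ when $[y_i,g_i]$ is central, is precisely the standard argument behind those references (this is the classical Stroud-type computation alluded to in the introduction as fact~(B)).
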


\begin{proof}
The first claim is \cite{NS}, Lemma 2.4 or \cite{S2}, Prop. 1.2.5. For the
second, we argue by induction on $n$ and reduce to the case where $[H,G]=1$.
Then $G^{\prime}\leq\left\langle g_{1},\ldots,g_{r}\right\rangle $ and the
claim is evident.
\end{proof}

\begin{lemma}
\label{L1}Let $G$ be a group and $\alpha,\beta\in\mathrm{Aut}(G)$. Then%
\begin{align*}
\lbrack G,\alpha]^{G}  &  \subseteq\lbrack G,\alpha][G,\alpha^{-1}],\\
\lbrack G,\alpha\beta]^{G}  &  \subseteq\lbrack G,\beta][G,\beta
^{-1}][G,\alpha][G,\alpha^{-1}][G,\beta][G,\beta^{-1}]\\
\lbrack G,\alpha^{-1}\beta\alpha]^{G}  &  \subseteq\lbrack G,\beta
][G,\beta^{-1}][G,\alpha][G,\alpha^{-1}]
\end{align*}

\end{lemma}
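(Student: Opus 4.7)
The first containment is a direct commutator calculation. Expanding gives $[x,\alpha]^g = [xg,\alpha]\cdot [g,\alpha]^{-1}$, and a routine check shows $[g,\alpha]^{-1} = [g^{\alpha}, \alpha^{-1}]$; hence every $G$-conjugate of an element of $[G,\alpha]$ lies in $[G,\alpha]\cdot [G,\alpha^{-1}]$.

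For the second and third containments the strategy is the same: decompose the commutator in question as a product of simpler commutators, each one involving only a single automorphism $\alpha^{\pm 1}$ or $\beta^{\pm 1}$, and then apply the first containment to each factor after $G$-conjugation. The two elementary identities on which everything rests are $[x,\alpha\beta] = [x,\beta]\cdot [x,\alpha]^{\beta}$ and $h^{\gamma} = h\cdot [h,\gamma]$ for $h\in G$ and $\gamma\in\mathrm{Aut}(G)$; both are immediate from the definitions.

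Combining these for the second claim, we rewrite $[x,\alpha]^{\beta} = [x,\alpha]\cdot [[x,\alpha],\beta]$ to obtain
\[
[x,\alpha\beta] \;=\; [x,\beta]\cdot [x,\alpha]\cdot [[x,\alpha],\beta] \;\in\; [G,\beta]\cdot [G,\alpha]\cdot [G,\beta].
\]
Conjugation by $g\in G$ distributes over this product, and applying the first containment to each of the three factors places $[x,\alpha\beta]^{g}$ in $[G,\beta][G,\beta^{-1}][G,\alpha][G,\alpha^{-1}][G,\beta][G,\beta^{-1}]$, as required. For the third claim, a short direct calculation shows $[x,\alpha^{-1}\beta\alpha] = [y,\beta]^{\alpha}$ with $y=x^{\alpha^{-1}}$, and then $h^{\alpha}=h\cdot [h,\alpha]$ rewrites this as $[y,\beta]\cdot [[y,\beta],\alpha]\in [G,\beta]\cdot [G,\alpha]$; conjugating by $g$ and applying the first containment once to each factor yields $[G,\beta][G,\beta^{-1}][G,\alpha][G,\alpha^{-1}]$.

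There is no real obstacle beyond bookkeeping: the proof is a sequence of mechanical commutator manipulations, and the only care needed is in tracking which set each factor lives in and in taking set products in the correct order.
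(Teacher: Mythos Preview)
Your proof is correct and follows essentially the same route as the paper's. The paper records exactly the three identities
\[
[x,\alpha]^{y}=[xy,\alpha][y^{\alpha},\alpha^{-1}],\qquad
[x,\alpha\beta]=[x,\beta][x,\alpha][[x,\alpha],\beta],\qquad
[x,\alpha^{-1}\beta\alpha]=[x^{\alpha^{-1}},\beta][[x^{\alpha^{-1}},\beta],\alpha],
\]
and leaves the rest to the reader; you have written out the same identities and made explicit the final step (conjugate the product by $g$ and apply the first containment to each factor), which the paper suppresses.
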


\begin{proof}%
\begin{align*}
\lbrack x,\alpha]^{y}  &  =[xy,\alpha][y^{\alpha},\alpha^{-1}],\\
\lbrack x,\alpha\beta]  &  =[x,\beta][x,\alpha][[x,\alpha],\beta]\\
\lbrack x,\alpha^{-1}\beta\alpha]  &  =[x^{\alpha^{-1}},\beta][[x^{\alpha
^{-1}},\beta],\alpha].
\end{align*}

\end{proof}

\begin{lemma}
\label{LS}Let $G$ be a quasisimple group and $\alpha\in\mathrm{Aut}(G)$. Put
$\overline{G}=G/\mathrm{Z}(G)$. If $\left\vert [\overline{G},\alpha
]\right\vert ^{s}\geq\left\vert \overline{G}\right\vert $ then%
\[
G=\left(  [G,\alpha]^{G}\right)  ^{\ast cs}%
\]
where $c\in\mathbb{N}$ is an absolute constant.
\end{lemma}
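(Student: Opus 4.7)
My approach is in three stages: pass to $\overline{G}$, apply Proposition \ref{conjclass} there, then use the Gowers trick to lift the conclusion from $\overline{G}$ back to $G$.

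Set $Y = [G,\alpha]^{G}$, a normal subset of $G$ containing the identity. Its image in $\overline{G} = G/\mathrm{Z}(G)$ is $\overline{Y} = [\overline{G},\alpha]^{\overline{G}}$, which contains $[\overline{G},\alpha]$, so $|\overline{Y}|^{s} \geq |[\overline{G},\alpha]|^{s} \geq |\overline{G}|$. Since $\overline{G}$ is simple and $\overline{Y}$ is a normal subset of $\overline{G}$, Proposition \ref{conjclass} yields an absolute constant $c'$ with $\overline{Y}^{\,\ast c's} = \overline{G}$. Lifting, the set $Y^{\ast c's}$ meets every coset of $\mathrm{Z}(G)$ in $G$, so in particular $|Y^{\ast c's}| \geq |\overline{G}| = |G|/|\mathrm{Z}(G)|$.

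To convert \emph{every coset meets} into \emph{every element is hit}, I will apply the Gowers trick with $t=3$ and $X_{1}=X_{2}=X_{3}=Y^{\ast c's}$. The required inequality $\prod_{i=1}^{3}|X_{i}| \geq |G|^{3}\cdot l(G)^{-1}$ reduces, via the above size bound, to $l(G)\geq |\mathrm{Z}(G)|^{3}$. By Proposition \ref{l(simple)}, for quasisimple $G$ of Lie type with $q^{r}>27$ one has $l(G)\geq (q^{r}-1)/2$, whereas $|\mathrm{Z}(G)|$ is bounded by a polynomial in the Lie rank $r$; analogously, for covers of alternating groups $l$ grows linearly in $n$ while $|\mathrm{Z}|$ is uniformly bounded, and the sporadic cases are finite in number. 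These estimates ensure $l(G)\geq|\mathrm{Z}(G)|^{3}$ for all but a bounded finite set $\mathcal{E}$ of quasisimple groups. Outside $\mathcal{E}$, Gowers gives $Y^{\ast 3c's}=G$, so the lemma holds with $c=3c'$.

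The main obstacle is the finitely many small exceptional quasisimple groups in $\mathcal{E}$, where the Gowers inequality fails. For each such $G$ both $|\overline{G}|$ and $|\mathrm{Z}(G)|$ are bounded by absolute constants; together with the hypothesis $|[\overline{G},\alpha]|^{s} \geq |\overline{G}|$, this bounds the relevant values of $s$ for each fixed $\alpha$, reducing the statement to a finite verification (or alternatively to a sharper quasisimple extension of Proposition \ref{conjclass}, obtained by the same Liebeck--Shalev technique). Either way, the exceptions only force us to enlarge the absolute constant $c$, which is still absolute since $\mathcal{E}$ is bounded.
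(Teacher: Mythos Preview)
Your overall strategy is sound and gives a correct proof, but it differs from the paper's. Both arguments share the first step: apply Proposition~\ref{conjclass} to $\overline{Y}=[\overline{G},\alpha]^{\overline G}$ to get $\overline{Y}^{\,\ast c's}=\overline{G}$, i.e.\ $Y^{\ast c's}\mathrm{Z}(G)=G$ where $Y=[G,\alpha]^{G}$. The lift to $G$ is where the approaches diverge. The paper observes the identity
\[
[[g,\alpha]^{k},h]=[g,\alpha]^{-k}[g,\alpha]^{kh}=[g^{-1},\alpha]^{gk}[g,\alpha]^{kh}\in Y^{\ast 2},
\]
so any commutator $[w,h]$ with $w\in Y^{\ast c's}$ lies in $Y^{\ast 2c's}$; since every element of the quasisimple group $G$ is a product of $\delta^{\ast}$ commutators (Proposition~\ref{commwidth}(ii)), this gives $G=Y^{\ast 2c'\delta^{\ast}s}$ uniformly, with no case analysis. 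Your route instead bounds $|Y^{\ast c's}|\geq |G|/|\mathrm{Z}(G)|$ and applies the Gowers trick, which needs $l(G)\geq|\mathrm{Z}(G)|^{3}$; this is indeed true outside a finite set of small quasisimple groups, and your exception argument (that $Y$ generates $G$, so $Y^{\ast M}=G$ for some absolute $M$, and $1\in Y$ makes $Y^{\ast n}$ increasing, hence any $c\geq M$ works since $s\geq 1$) is correct---though your phrase ``bounds the relevant values of $s$'' is misleading; $s$ is not bounded, rather the needed width $M$ is.

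In short: the paper's commutator-width lift is cleaner and uniform, avoiding both the auxiliary inequality $l(G)\geq|\mathrm{Z}(G)|^{3}$ and the finite exception list; your Gowers-based lift is a legitimate alternative using tools already in the paper, at the cost of a small amount of extra bookkeeping.
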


\begin{proof}
Proposition \ref{conjclass} shows that if $Y$ is a normal subset of
$\overline{G}$ with $\left\vert Y\right\vert ^{s}\geq\left\vert \overline
{G}\right\vert $ then $\overline{G}=Y^{\ast c^{\prime}s}$, where $c^{\prime}$
is an absolute constant. Applying this with $Y=X\mathrm{Z}(G)/\mathrm{Z}(G)$
where $X=[G,\alpha]^{G}$ we get%
\[
G=X^{\ast c^{\prime}s}\mathrm{Z}(G).
\]
Now for $g,h,k\in G$ we have
\[
\lbrack\lbrack g,\alpha]^{k},h]=[g,\alpha]^{-k}[g,\alpha]^{kh}=[g^{-1}%
,\alpha]^{gk}[g,\alpha]^{kh}\in X^{\ast2},
\]
so if $w\in X^{\ast c^{\prime}s}$ then%
\[
\lbrack w,h]\in X^{\ast2c^{\prime}s}.
\]
According to Proposition \ref{commwidth}, there exists an absolute constant
$\delta^{\ast}$ such that every element of $G$ is a product of $\delta^{\ast}$
commutators (In fact $\delta^{\ast}=2$). It follows that%
\[
G=X^{\ast2c^{\prime}s\delta^{\ast}}.
\]

\end{proof}

\begin{lemma}
\label{L3}Let $G=\left\langle g_{1},\ldots,g_{r}\right\rangle $ and suppose
that $T\vartriangleleft G$ is quasisemisimple with one or two simple
composition factors. Then%
\[
T=\left(
{\displaystyle\prod\limits_{i=1}^{r}}
[T,g_{i}]^{T}\right)  ^{\ast k_{0}r},
\]
where $k_{0}$ is an absolute constant.
\end{lemma}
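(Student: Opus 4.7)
I would split the argument according to whether $T$ is quasisimple or a central product of two quasisimple groups.

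\emph{Quasisimple case.} Set $\bar T = T/\mathrm{Z}(T)$, a non-abelian simple group, and let $\bar G$ denote the image of $G$ in $\mathrm{Aut}(\bar T)$. Since $\bar T \leq \bar G$ (via inner automorphisms) and $\mathrm{C}_{\mathrm{Aut}(\bar T)}(\bar T) = 1$, the joint centraliser $\bigcap_{i=1}^{r} \mathrm{C}_{\bar T}(\bar g_i)$ equals $\mathrm{C}_{\bar T}(\bar G) = \mathrm{Z}(\bar T) = 1$. The standard inequality $[\bar T : \bigcap_i H_i] \leq \prod_i [\bar T : H_i]$ applied with $H_i = \mathrm{C}_{\bar T}(\bar g_i)$, combined with $|[\bar T, \bar g_i]| = [\bar T : \mathrm{C}_{\bar T}(\bar g_i)]$, gives $\prod_{i=1}^{r} |[\bar T, \bar g_i]| \geq |\bar T|$. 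By pigeonhole some index $i_0$ satisfies $|[\bar T, \bar g_{i_0}]|^{r} \geq |\bar T|$, and Lemma \ref{LS} applied with $\alpha = g_{i_0}$ and $s = r$ yields $T = ([T, g_{i_0}]^T)^{\ast cr}$. Since $1 \in [T, g_i]^T$ for every $i$, the product $\prod_{i=1}^{r} [T, g_i]^T$ contains $[T, g_{i_0}]^T$, and therefore $T \subseteq (\prod_i [T, g_i]^T)^{\ast cr}$, settling this case with $k_0 = c$.

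\emph{Two-factor case.} If $T = T_1 T_2$ is a central product of two quasisimple groups and $G$ normalises both $T_j$, apply the quasisimple case to each $T_j \vartriangleleft G$; the inclusion $[T_j, g_i]^{T_j} \subseteq [T, g_i]^T$ together with $T = T_1 T_2$ then yields the result with $k_0 = 2c$. If instead $G$ swaps $T_1$ and $T_2$, let $H = N_G(T_1) = N_G(T_2)$ (of index $2$) and pick any $g_{j_0} \notin H$; the elements $g_i \in H$, together with $g_{j_0}^2$ and the products $g_{j_0} g_i$ for $g_i \notin H$, $i \neq j_0$, form a generating set of $H$ of size at most $r$. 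Apply the quasisimple case to $T_1 \vartriangleleft H$ with this new generating set, then expand each twisted commutator with a product $g_{j_0} g_i$ into a bounded product of sets $[T, g_j]^T$ and $[T, g_j^{-1}]^T$ by Lemma \ref{L1}, using the identity $[T, g^{-1}]^T = ([T, g]^T)^{-1}$ to control the inverse generators it introduces. The factor $T_2 = T_1^{g_{j_0}}$ is handled symmetrically, and $T = T_1 T_2$ combines the two.

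\emph{Main obstacle.} The swap subcase is where the real work lies. Lemma \ref{L1} cleanly rewrites twisted commutators with two-letter words, but produces sets $[T, g_j^{-1}]^T = ([T, g_j]^T)^{-1}$ that do not obviously sit inside a positive product $(\prod_i [T, g_i]^T)^{\ast m}$. The natural fix is to strengthen the conclusion of the quasisimple case to cover $\prod_i [T, g_i]^T \cdot \prod_i [T, g_i^{-1}]^T$ simultaneously (which costs nothing in the pigeonhole step because $g_j^{-1}$ fixes the same elements of $\bar T$ as $g_j$) and then to feed the strengthened statement back into the swap subcase. Keeping $k_0$ an absolute constant while each intermediate use of Lemma \ref{L1} multiplies the factor count by a bounded amount is the combinatorial heart of the argument.
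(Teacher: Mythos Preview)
Your quasisimple case and the case where $G$ normalises both quasisimple factors are correct and match the paper's argument exactly.

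For the swap subcase the paper takes a much shorter route than you propose. If some $g_{j_0}$ interchanges $T_1$ and $T_2$, lift to the universal cover $\widetilde T=\widetilde S_1\times\widetilde S_2$ and apply Lemma~\ref{4 factors}: every commutator in $\widetilde S_j$ lies in $[\widetilde T,g_{j_0}]^{*4}$, so $\widetilde T=[\widetilde T,g_{j_0}]^{*4\delta^{*}}$ by Proposition~\ref{commwidth}, whence $T=[T,g_{j_0}]^{*4\delta^{*}}$. A single swapping generator does the whole job; no passage to $H=N_G(T_1)$ or rewriting of generators is needed. One then takes $k_0=\max\{c,4\delta^{*}\}$.

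Your alternative route has two defects. First, your claimed generating set for $H$ need not generate: with $G=\mathrm{Sym}(4)$, $g_1=(1\,2)$, $g_2=(1\,2\,3\,4)$, $H=\mathrm{Alt}(4)$ and $j_0=1$, your set reduces to $\{g_1^{2},\,g_1g_2\}=\{1,(1\,3\,4)\}$, which generates only a cyclic group of order~$3$. You would need the full Schreier set (of size at most $2r$), which still keeps $k_0$ absolute. Second, and more importantly, your ``main obstacle'' is illusory. The identity $[t,g]^{-1}=[t^{-1},g]^{t}$, already used in the proof of Lemma~\ref{LS}, shows that $[T,g]^{T}$ is closed under inversion; hence $[T,g^{-1}]^{T}=([T,g]^{T})^{-1}=[T,g]^{T}$. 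Once you observe this, Lemma~\ref{L1} places each $[T,h]^{T}$ for a Schreier generator $h$ inside a bounded product of the sets $[T,g_j]^{T}$ with no inverses appearing, and the argument closes. Your proposed fix, by contrast, would only establish the weaker conclusion with both $[T,g_i]^{T}$ and $[T,g_i^{-1}]^{T}$ in the product, which is not the lemma as stated.
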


\begin{proof}
Suppose that $T$ is quasisimple, with centre $Z$. Put $\overline{T}=T/Z$. Then
$\mathrm{C}_{\overline{T}}(G)=1$ so $\left\vert \overline{T}\right\vert \leq%
{\textstyle\prod\nolimits_{i=1}^{r}}
[\overline{T},g_{i}]$ and so $\left\vert [\overline{T},g_{i}]\right\vert
\geq\left\vert \overline{T}\right\vert ^{1/r}$ for some $i$. Now Lemma
\ref{LS} implies that $T=\left(  [T,g_{i}]^{T}\right)  ^{\ast cr}$.

If $T$ is not quasisimple, then $T=S_{1}S_{2}$ with each $S_{i}$ quasisimple
and $[S_{1},S_{2}]=1$. If $G$ normalizes the factors $S_{i}$, we apply the
preceding paragraph to each factor and obtain the same result as before.
Otherwise, $G$ permutes them transitively by conjugation. The action of $G$
lifts to an action on the universal cover $\widetilde{T}=\widetilde{S_{1}%
}\times\widetilde{S_{2}}$, and for some $i$ we have $\widetilde{S_{1}}^{g_{i}%
}=\widetilde{S_{2}},~\widetilde{S_{2}}^{g_{i}}=\widetilde{S_{1}}$. Let $C_{j}$
denote the set of commutators in $\widetilde{S_{j}}$; then Lemma
\ref{4 factors} shows that%
\[
C_{j}\subseteq\lbrack\widetilde{T},g_{i}]^{\ast4}%
\]
for $j=1,~2$. Since $\widetilde{S_{j}}=C_{j}^{\ast\delta^{\ast}}$ (Proposition
\ref{commwidth}), it follows that%
\[
\widetilde{T}=[\widetilde{T},g_{i}]^{\ast4\delta^{\ast}},
\]
which implies $T=[T,g_{i}]^{\ast4\delta^{\ast}}.$

The result follows on setting $k_{0}=\max\{c,~4\delta^{\ast}\}$.
\end{proof}

\bigskip

Let us say that $N\vartriangleleft G$ is \emph{narrow} if
\[%
{\displaystyle\bigcap\limits_{T\in\mathcal{M}}}
T\leq\mathrm{Z}(N)
\]
where $\mathcal{M}$ is the set of normal subgroups $T$ of $G$ contained in $N$
such that $N/T$ is semisimple with composition length at most two. This is
equivalent to saying that $N/\mathrm{Z}(N)$ is a direct product of bad chief
factors of $G$ (occurring as minimal normal subgroups of $G/\mathrm{Z}(N)$).

\begin{lemma}
\label{L4}Let $G=\left\langle g_{1},\ldots,g_{r}\right\rangle $ and let $N$ be
a perfect narrow normal subgroup of $G$. Then%
\[
N=\left(
{\displaystyle\prod\limits_{i=1}^{r}}
[N,g_{i}]^{N}]\right)  ^{\ast k_{0}r}%
\]
where $k_{0}$ is given in Lemma \ref{L3}.
\end{lemma}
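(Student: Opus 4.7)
\emph{Proof plan.} The strategy is to reduce to Lemma~\ref{L3} applied to each bad chief factor of $\bar N := N/\mathrm{Z}(N)$, assembling the resulting factorisations via the commuting direct-product structure.

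Since $N$ is perfect and narrow, the centre $\mathrm{Z}(\bar N)$ is trivial and $\bar N$ decomposes as a direct product $\bar N = \prod_{j\in J} M_j$, where each $M_j$ is a bad chief factor of $\bar G := G/\mathrm{Z}(N)$ (either a non-abelian simple group or a central product of two such). The $M_j$ pairwise commute in $\bar N$, each is normalised by every $g_i$, and $\mathrm{C}_{M_j}(G) = 1$.

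Applying Lemma~\ref{L3} to $M_j \vartriangleleft \bar G$, every $\bar n_j \in M_j$ admits a factorisation
\[
\bar n_j = \prod_{l=1}^{k_0 r} \prod_{i=1}^{r} \bar x_{j,i}^{(l)}, \qquad \bar x_{j,i}^{(l)} = [\bar m_{j,i}^{(l)}, g_i]^{\bar n_{j,i}^{(l)}} \in [M_j, g_i]^{M_j}.
\]
Because the $M_j$ pairwise commute in $\bar N$ and each $g_i$ normalises every $M_j$, the identity
\[
\prod_j [m_j, g_i]^{n_j} = \Bigl[\prod_j m_j,\; g_i\Bigr]^{\prod_j n_j}
\]
holds in $\bar N$. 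So for any $\bar n = (\bar n_j) \in \bar N$, interleaving the above factorisations coordinate-wise and reordering the product (using pairwise commutativity of the $M_j$) yields
\[
\bar n = \prod_{l=1}^{k_0 r} \prod_{i=1}^{r} \bar y_i^{(l)}, \qquad \bar y_i^{(l)} := \prod_j \bar x_{j,i}^{(l)} \in [\bar N, g_i]^{\bar N}.
\]
Hence $\bar N = \bigl(\prod_i [\bar N, g_i]^{\bar N}\bigr)^{\ast k_0 r}$, equivalently $P \cdot \mathrm{Z}(N) = N$, where $P := (\prod_i C_i)^{\ast k_0 r}$ and $C_i := [N, g_i]^N$.

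The main obstacle is upgrading $P \cdot \mathrm{Z}(N) = N$ to $P = N$. When the interleaving argument is lifted from $\bar N$ back to $N$, the rearrangement and the displayed identity hold only modulo $\mathrm{Z}(N)$, so the lifted product differs from a prescribed $n \in N$ by some residual $z \in \mathrm{Z}(N)$. To kill this error I would use the identity $[n z', g_i] = [n, g_i] \cdot [z', g_i]$ for $z' \in \mathrm{Z}(N)$, together with the freedom in choosing the lifts $m_{j,i}^{(l)}$ (which are determined only modulo $\mathrm{Z}(N)$), to absorb $z$ into a single $C_i$-factor. Alternatively one passes to the universal central extension $\tilde N \twoheadrightarrow N$ of $\bar N$: there the decomposition $\tilde N = \prod_j \tilde M_j$ is an honest direct product, the above argument runs without any central errors, and the resulting factorisation descends to $N$.
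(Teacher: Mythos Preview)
Your second route---via the universal central extension---is correct and is essentially the paper's argument. The paper states it more directly: since $N$ is perfect with $N/\mathrm{Z}(N)$ a direct product of bad chief factors, $N$ \emph{itself} is a central product $N=T_{1}\cdots T_{n}$, where each $T_{j}$ is a quasisemisimple normal subgroup of $G$ with one or two simple composition factors (namely the image in $N$ of $\tilde M_{j}$). One then applies Lemma~\ref{L3} to each $T_{j}$ inside $G$, and because the $T_{j}$ commute elementwise the factorisations combine exactly: for $m_{j},s_{j}\in T_{j}$ the identity
\[
\prod_{j}[m_{j},g_{i}]^{s_{j}}=\Bigl[\prod_{j}m_{j},\,g_{i}\Bigr]^{\prod_{j}s_{j}}
\]
holds in the central product, not merely modulo $\mathrm{Z}(N)$. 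So no lifting or absorption step is needed.

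By contrast, your first route is not complete as written. Adjusting a lift $m\mapsto mz'$ with $z'\in\mathrm{Z}(N)$ changes the corresponding factor by $[z',g_{i}]$, so the total adjustment available lies in $\prod_{i}[\mathrm{Z}(N),g_{i}]=[\mathrm{Z}(N),G]$. There is no reason this should equal $\mathrm{Z}(N)$: for instance, take $N=\mathrm{SL}_{2}(5)\times\mathrm{SL}_{2}(5)$ with $G=N\rtimes C_{2}$ swapping the two factors; here $N$ is perfect and narrow, yet $[\mathrm{Z}(N),G]$ has index~$2$ in $\mathrm{Z}(N)$. The clean fix is precisely your universal-cover alternative, or equivalently the paper's central-product decomposition of $N$.
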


\begin{proof}
The hypotheses imply that $N$ is a central product $N=T_{1}\ldots T_{n}$ where
each $T_{i}$ is a quasisemisimple normal subgroup of $G$ having one or two
simple composition factors. As the $T_{i}$ commute elementwise the claim
follows from Lemma \ref{L3}.
\end{proof}

\begin{proposition}
\label{accept}Let $G=\left\langle g_{1},\ldots,g_{r}\right\rangle $ and let
$H\vartriangleleft G$. Then $G$ has normal subgroups $H_{3}\leq H_{2}\leq
H_{1}\leq\lbrack H,G]$ such that%
\begin{align}
\lbrack H,G]  &  =%
{\displaystyle\prod\limits_{i=1}^{r}}
[H,g_{i}]\cdot H_{1},\label{line 1}\\
H_{2}  &  =\left(
{\displaystyle\prod\limits_{i=1}^{r}}
[H_{2},g_{i}]^{H}\right)  ^{\ast k_{0}r}\cdot H_{3}, \label{line 2}%
\end{align}
$H_{1}/H_{2}$ is acceptable in $G/H_{2}$ and $H_{3}$ is acceptable in $G$.
\end{proposition}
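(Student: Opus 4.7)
The proof peels off $[H,G]$ in three nested layers.

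First, set $H_1 = \bigcap_{n \ge 1} [H,_n G]$, the eventual stable value of the iterated commutator series (this stabilises because $G$ is finite). By construction $H_1 = [H_1, G]$, so Lemma~\ref{basiccomm} immediately yields (\ref{line 1}).

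Second, take $H_3$ to be the largest $G$-normal subgroup of $H_1$ that is perfect (i.e.\ $H_3 = [H_3, G]$) and contained in $G_1$. This largest subgroup exists because both conditions are preserved under products of $G$-normal subgroups: if $K, K' \triangleleft G$ satisfy $K = [K,G]$, $K' = [K',G]$, and $K, K' \le G_1$, then $KK'$ inherits all three properties. By Lemma~\ref{accGone}, $H_3$ is then acceptable in $G$.

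Third, define $H_2$ by requiring $H_2/H_3$ to be the largest $G/H_3$-normal subgroup of $H_1/H_3$ that is perfect and narrow (in the sense defined just before Lemma~\ref{L4}). Well-definedness comes from the observation that the join of two perfect narrow $G/H_3$-normal subgroups is again perfect and narrow: narrowness modulo centre amounts to being a direct product of bad chief factors of $G/H_3$, a structure preserved under joining (two distinct minimal normals of $G/H_3$ cannot be fused by a $G$-action into a longer chief factor without already lying in a common $G$-normal subgroup). Lemma~\ref{L4} applied in $G/H_3$ then gives
\[
H_2/H_3 = \left( \prod_{i=1}^{r} [H_2/H_3,\, g_i H_3]^{H_2/H_3} \right)^{*k_0 r},
\]
which, using $H_2 \le H_1 \le H$ so that $[H_2, g_i]^{H_2} \subseteq [H_2, g_i]^H \subseteq H_2$, lifts to (\ref{line 2}).

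It then remains to verify that $H_1/H_2$ is acceptable in $G/H_2$: perfection is automatic from $H_1 = [H_1, G]$, and the condition $H_1/H_2 \le (G/H_2)_1$ follows from the maximality in the definitions of $H_3$ and $H_2/H_3$. Indeed, any bad chief factor of $G/H_2$ inside $H_1/H_2$ acted on non-trivially by $H_1$ could be adjoined (via its preimage in $H_1$) to enlarge the narrow perfect piece $H_2/H_3$, contradicting maximality. Formalising this last step---via a chief-series refinement through $H_3 \le H_2 \le H_1$ and the structural description of narrow perfect subgroups as central products of quasisemisimple normals with at most two simple components---is the main obstacle, since it requires simultaneously coordinating the three properties of $H_3$'s acceptability in $G$, $H_2/H_3$'s narrowness in $G/H_3$, and $H_1/H_2$'s acceptability in $G/H_2$.
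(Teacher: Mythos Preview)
Your approach via maximality is genuinely different from the paper's explicit construction, and it can be made to work, but the route you have chosen leaves exactly the step you flag as ``the main obstacle'' undone, whereas the paper's construction sidesteps it entirely.

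The paper does \emph{not} define $H_2$ and $H_3$ by maximality. Instead it uses the soluble residual $G_s$: it sets $H_2=[H_1,G_s]$ and $H_3=[D,G_s]$, where $D$ is the intersection of all $M\vartriangleleft G$ with $M<H_2$ and $H_2/M$ bad. The point of $H_2=[H_1,G_s]$ is that $G_s$ centralises $H_1/H_2$, so every chief factor of $G$ inside $H_1/H_2$ is a chief factor of the soluble group $G/G_s$ and hence abelian; thus $H_1/H_2$ has \emph{no} non-abelian chief factors at all, and condition~(b) of acceptability is automatic. Similarly $D/H_3$ is centralised by $G_s$ and hence by $H_2$, so $D/H_3\le\mathrm{Z}(H_2/H_3)$, which together with the definition of $D$ makes $H_2/H_3$ narrow by construction. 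The Schreier conjecture is invoked only once, to verify that $H_3$ is acceptable.

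Two further remarks on your version. First, you use ``perfect'' to mean $[N,G]=N$, but Lemma~\ref{L4} requires $N=N'$; these differ (e.g.\ $N=C_p\times S$ with $G$ acting nontrivially on $C_p$ is narrow with $[N,G]=N$ but $N'\ne N$), so your $H_2/H_3$ may not literally satisfy the hypothesis of Lemma~\ref{L4}. This is easily repaired by taking ``perfect'' to mean $N'=N$ throughout. Second, your maximality argument for the acceptability of $H_1/H_2$ \emph{can} be completed: if $B/H_2$ is a bad minimal normal subgroup of $G/H_2$, then by Schreier the perfect group $B/H_2$ acts on each quasisemisimple factor $T_i$ of $H_2/H_3$ by inner automorphisms, giving $B/H_3=(H_2/H_3)\cdot C$ with $C=\mathrm{C}_{B/H_3}(H_2/H_3)$; one checks that $C'$ is quasisemisimple with $C'/\mathrm{Z}(C')\cong B/H_2$, so $B/H_3=T_1\cdots T_k\cdot C'$ is again perfect narrow, contradicting maximality. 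But this is precisely the work that the paper's choice of $H_2=[H_1,G_s]$ renders unnecessary.
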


\begin{proof}
Let $G_{s}$ be the soluble residual of $G$ and set $H_{1}=[H,_{\omega
}G],~H_{2}=[H_{1},G_{s}]$. Let $D$ be the intersection of all
$M\vartriangleleft G$ such that $M<H_{2}$ and $H_{2}/M$ is either simple or a
product of two simple groups, and put $H_{3}=[D,G_{s}]$.

Then (\ref{line 1}) follows from Lemma \ref{basiccomm}. Also $H_{1}=[H_{1},G]$
and $H_{1}/H_{2}$ is soluble, so $H_{1}/H_{2}$ is acceptable in $G/H_{2}$.

Now%
\[
\lbrack H_{3},G]\geq\lbrack H_{3},G_{s}]=[D,G_{s},G_{s}]=[D,G_{s}]=H_{3}%
\]
since $G_{s}$ is perfect. To complete the proof that $H_{3}$ is acceptable,
suppose that $K\leq H_{3}$ is a minimal normal subgroup of $G$ and that $K$ is
either simple or a product of two simple groups. Then $G/K\mathrm{C}_{G}(K)$
is soluble by the Schreier conjecture (Proposition \ref{schreier}), so
$G_{s}\leq K\mathrm{C}_{G}(K)$ and as $K\leq H_{2}\leq G_{s}$ it follows that
$H_{2}=K\times\mathrm{C}_{H_{2}}(K)$. This implies that $\mathrm{C}_{H_{2}%
}(K)\geq D\geq K,$ a contradiction. Applying this argument to an arbitrary
quotient of $G$ we infer that $H_{3}$ is acceptable in $G$.

Finally, $H_{2}/H_{3}$ is narrow in $G/H_{3}$ so Lemma \ref{L4} gives
(\ref{line 2}).
\end{proof}

\subsection{The `Key Theorem'}

The `Key Theorem' of \cite{NS} described certain product decompositions of an
acceptable normal subgroup in a $d$-generator group. As one of us wrote in
\cite{S2}, `each part has an undesirable feature in either its hypothesis or
its conclusion'. These are now swept away in our core technical result. To
state this we need some notation:

$\medskip$

\noindent\textbf{Definition} For $\mathbf{g},\mathbf{v}\in G^{(m)}$ and $1\leq
j\leq m$,%
\[
\tau_{j}(\mathbf{g},\mathbf{v})=v_{j}[g_{j-1},v_{j-1}]\ldots\lbrack
g_{1},v_{1}].
\]

\begin{theorem}
\label{KT}There exists a function $k:\mathbb{N}^{(2)}\rightarrow\mathbb{N}$
with the following property. Let $G$ be a $d$-generator group and $H$ an
acceptable normal subgroup of $G$. Suppose that $G=H\left\langle g_{1}%
,\ldots,g_{r}\right\rangle $. Put $m=r\cdot k(d,r)$, and for $1\leq j<k(d,r)$
and $1\leq i\leq r$ set%
\[
g_{i+jr}=g_{i}.
\]
Then for each $h\in H$ there exist $\mathbf{v}(i)\in H^{(m)}$ ($i=1,\ldots
,10$) such that%
\begin{equation}
h=%
{\displaystyle\prod\limits_{i=1}^{10}}
{\displaystyle\prod\limits_{j=1}^{m}}
[v(i)_{j},g_{j}] \label{hprod}%
\end{equation}
and%
\begin{equation}
\left\langle g_{1}^{\tau_{1}(\mathbf{g},\mathbf{v}(i))},\ldots,g_{m}^{\tau
_{m}(\mathbf{g},\mathbf{v}(i))}\right\rangle =G\qquad\text{for }i=1,\ldots,10.
\label{G-gen}%
\end{equation}

\end{theorem}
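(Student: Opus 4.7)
The plan is to prove Theorem \ref{KT} by a layered induction on the $G$-chief structure of $H$, combining three previously assembled ingredients: Proposition \ref{accept}, which produces a flag $H_3 \le H_2 \le H_1 \le [H,G]$ whose outer layers have explicit short product representations and whose innermost layer $H_3$ is itself acceptable; Theorem \ref{gens}, which guarantees that conjugating the $g_i$ by suitable elements of $H$ turns a $G$-generating set modulo $H$ into a genuine generating set; and the principal technical result of Section \ref{semisimplesec}, Theorem \ref{comm20}, which realises every element of a quasi-semisimple $G$-normal subgroup as a product of \emph{twisted} commutators of the specific form $[n_j, g_j^{\tau_j(\mathbf{g},\mathbf{v})}]$.

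First I would handle the outer layers of Proposition \ref{accept}. Lemma \ref{basiccomm} combined with (\ref{line 1}) disposes of $[H,G]$ modulo $H_1$, and (\ref{line 2}) disposes of $H_2$ modulo $H_3$; in both cases only $O(r)$ straight commutators $[v_j,g_j]$ are needed, and the freedom to prescribe the $v_j$ modulo large subgroups leaves enough room to enforce (\ref{G-gen}) via Theorem \ref{gens} together with the Gasch\"{u}tz-type lifting of Lemma \ref{Gasch}. Attention then focuses on $H_3$, which is acceptable: proceed by induction on the $G$-composition length, at each step stripping off a minimal $G$-normal subgroup $N$ of $H_3$, which by acceptability is either non-central abelian or a product of at least three isomorphic simple groups transitively permuted by $G$. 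Given a decomposition in $G/N$, lift it arbitrarily to $\mathbf{v}^{\circ}(i) \in H^{(m)}$ and seek tweaks $\mathbf{n}(i) \in N^{(m)}$ so that $\mathbf{v}(i) = \mathbf{v}^{\circ}(i)\mathbf{n}(i)$ simultaneously corrects the commutator product into $h$ and preserves (\ref{G-gen}) modulo $N$.

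In the abelian case, matching the correct residual element of $N$ is essentially a linear problem on $N^{(m)}$, and the set of $\mathbf{n}(i)$ on which the generation condition fails has density bounded by Proposition \ref{prop5.1new}(i); in the non-abelian case, the existence of a correcting tweak is precisely what Theorem \ref{comm20} supplies, once the inductive generation hypothesis on $\mathbf{v}^{\circ}(i)$ is translated, via Theorem \ref{fppThm}, into the fixed-point properties of the $g_j^{\tau_j^{\circ}}$ on $N$ required by that theorem, and Proposition \ref{prop5.1new}(ii) then controls the failure of generation. In both cases, a Gowers-trick counting shows that the density of good tweaks in $N^{(m)}$ exceeds $1/2$, so one block of $m$ commutators suffices per inductive step; this is what keeps the constant $10$ independent of the composition length, with the $10$ itself arising as the sum of a bounded number of blocks used for the outer layers of Proposition \ref{accept} plus the single block absorbed at each $H_3$-layer through Gowers. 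The main obstacle is of course the non-abelian step, whose substance is Theorem \ref{comm20} and which occupies the whole of Section \ref{semisimplesec}; within the present argument the principal delicacy is to check at each inductive stage that acceptability, together with Theorem \ref{gens}, supplies sufficient richness in the action of the $g_j^{\tau_j^{\circ}}$ on $N$ to invoke Theorem \ref{comm20}.
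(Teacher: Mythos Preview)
Your proposal conflates the proof of Theorem \ref{KT} with the proofs of Theorems \ref{abs} and \ref{rel} that \emph{use} it. In Theorem \ref{KT} the subgroup $H$ is already assumed acceptable, so $H=[H,G]$ and hence $H_{1}=[H,_{\omega}G]=H$; invoking Proposition \ref{accept} at the outset gains nothing, and the induction must proceed directly inside $H$. More seriously, your inductive step strips off a \emph{minimal} $G$-normal subgroup $N\le H$ and asserts that acceptability forces $N$ to be non-central abelian or semisimple with at least three factors. This is false: acceptability excludes bad (i.e.\ non-abelian, short) chief factors but says nothing about central ones. For instance, take $H=\widetilde{A_{5}}\times\widetilde{A_{5}}\times\widetilde{A_{5}}$ inside $G=H\rtimes C_{3}$ with $C_{3}$ permuting the three factors; then $H$ is perfect with no bad chief factors, hence acceptable, yet the diagonal copy of $C_{2}$ inside $\mathrm{Z}(H)\cong C_{2}\times C_{2}\times C_{2}$ is a central minimal normal subgroup of $G$. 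When $N$ is central one has $[N,g_{j}]=\{1\}$ for every $j$, so no tweak $\mathbf{n}(i)\in N^{(m)}$ can alter the commutator product at all, and your induction stalls.

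The paper circumvents this by working not with minimal but with \emph{quasi-minimal} normal subgroups: one chooses $N\le H$ minimal subject to $1<N=[N,G]$, and takes $Z$ maximal normal in $G$ strictly below $N$; then $Z$ lies in the hypercentre while $N/Z$ is a genuine chief factor. This forces four cases rather than two --- an ``easy'' case $[Z,G]>1$, an abelian case, a delicate ``soluble'' case $N>N'>1$ requiring the fibre estimate Proposition \ref{NS6.2} quoted from \cite{NS}, and the semisimple case invoking Theorem \ref{comm20} --- and it is precisely the easy and soluble cases that absorb the hypercentral obstruction your scheme ignores. Two further points: the constant $10$ is not accumulated across layers but is fixed from the start, all ten tuples $\mathbf{v}(i)$ being adjusted simultaneously at every inductive step, with $10$ dictated by the Gowers application inside Theorem \ref{comm20}; and Theorem \ref{gens} plays no role in establishing (\ref{G-gen}) --- generation is controlled entirely by the counting of Proposition \ref{prop5.1new}, applied via Lemma \ref{newgens} to the conjugated generators $g_{j}^{u(i)_{j}h_{j}}$.
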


In fact we can take%
\[
k(d,r)=1+\max\{r,1+6\delta\}\cdot\max\{4d+4,\widehat{C}\}\leq C_{1}dr,
\]
where $\widehat{C}$ and $C_{1}$ are absolute constants.

The proof will occupy the next three subsections. Accepting the theorem for
now, we deduce the main results stated above.

$\medskip$

\noindent\textbf{Proof of Theorem \ref{abs}.} We are given $H\vartriangleleft
G=\left\langle g_{1},\ldots,g_{r}\right\rangle $. Let $H_{3}\leq H_{2}\leq
H_{1}\leq\lbrack H,G]$ be the normal subgroups given by Proposition
\ref{accept}. Thus $H_{1}/H_{2}$ is acceptable in $G/H_{2}$ and $H_{3}$ is
acceptable in $G$. Theorem \ref{KT} shows that%
\[
H_{1}=\left(
{\displaystyle\prod\limits_{j=1}^{r}}
[H_{1},g_{j}]\right)  ^{\ast10k(d,r)}\cdot H_{2}%
\]
and that%
\[
H_{3}=\left(
{\displaystyle\prod\limits_{j=1}^{r}}
[H_{3},g_{j}]\right)  ^{\ast10k(d,r)}%
\]
where $d=\mathrm{d}(G)$. Combining these with (\ref{line 1}) and
(\ref{line 2}) from Proposition \ref{accept} we deduce that%
\[
\lbrack H,G]=\left(
{\displaystyle\prod\limits_{j=1}^{r}}
[H,g_{j}]^{H}\right)  ^{\ast f_{3}}%
\]
where $f_{3}=1+k_{0}r+20k(d,r)$; here $k_{0}$ is the absolute constant
introduced in Lemma \ref{L3}. Finally, Lemma \ref{L1} shows that
$[H,g_{j}]^{H}$ can be replaced by $[H,g_{j}][H,g_{j}^{-1}]$ for each $j$.

We observe that $f_{3}=O(r+k(d,r))=O(dr)=O(r^{2})$.

\bigskip

\noindent\textbf{Proof of Theorem \ref{rel} (i).} Now $H\vartriangleleft G$
satisfies $H\leq G_{0}$, and $G=G^{\prime}\left\langle g_{1},\ldots
,g_{r}\right\rangle =H\left\langle g_{1},\ldots,g_{r}\right\rangle $.
According to Theorem \ref{gens}, there exist element $x_{ij}\in H$ such that%
\[
G=\left\langle g_{i}^{x_{ij}}\mid i=1,\ldots,r,~j=1,\ldots,k\right\rangle
\]
where $k=f_{0}(r,\mathrm{d}(G))$. Using this generating set in Theorem
\ref{abs} gives%
\begin{align*}
\lbrack H,G]  &  =\left(  \prod_{j=1}^{k}%
{\displaystyle\prod\limits_{i=1}^{r}}
[H,g_{i}^{x_{ij}}]^{H}\right)  ^{\ast f_{3}(kr)}\\
&  =\left(  \prod_{j=1}^{k}%
{\displaystyle\prod\limits_{i=1}^{r}}
[H,g_{i}]^{H}\right)  ^{\ast f_{3}(kr)}=\left(
{\displaystyle\prod\limits_{i=1}^{r}}
[H,g_{i}]^{H}\right)  ^{\ast f_{4}}%
\end{align*}
where $f_{4}=kf_{3}(kr)$. Again, we may replace $[H,g_{j}]^{H}$ by
$[H,g_{j}][H,g_{j}^{-1}]$, by Lemma \ref{L1}.

Since $k=f_{0}(r,d)\leq C_{0}rd^{2}$ where $d=\mathrm{d}(G)$, we have
$f_{4}=O(k^{3}r^{2})=O(r^{5}d^{6})$.

\bigskip

We remark that this bound for $f_{4}(r,d)$ is very crude; a much better bound
emerges if, instead of quoting Theorem \ref{gens}, one uses the method of
proof of that theorem to reduce Theorem \ref{rel} (i) to Theorem \ref{rel} (ii).

\bigskip

\noindent\textbf{Proof of Theorem \ref{rel} (ii).} Now we assume that $H$, as
above, satisfies $H\leq G_{1}$. Put $H_{1}=[H,_{\omega}G]$. Then $H_{1}$ is
acceptable in $G$, by Lemma \ref{accGone}, and $G=H_{1}\left\langle
g_{1},\ldots,g_{r}\right\rangle $ by Lemma \ref{basiccomm}. Thus Theorem
\ref{KT} and Lemma \ref{basiccomm} together yield%
\begin{align*}
\lbrack H,G]  &  =%
{\displaystyle\prod\limits_{j=1}^{r}}
[H,g_{j}]\cdot H_{1}\\
&  =%
{\displaystyle\prod\limits_{j=1}^{r}}
[H,g_{j}]\cdot\left(
{\displaystyle\prod\limits_{j=1}^{r}}
[H_{1},g_{j}]\right)  ^{\ast10k(d,r)}=\left(
{\displaystyle\prod\limits_{i=1}^{r}}
[H,g_{i}]\right)  ^{\ast f_{5}}%
\end{align*}
where $f_{5}=1+10k(d,r)=O(rd).$

\subsection{Proof of the Key Theorem: reductions}

We follow the strategy of \cite{NS}, Section 4.\bigskip

\textbf{Notation} For $\mathbf{u},\mathbf{g}\in G^{(m)}$,%
\[
\mathbf{u}\cdot\mathbf{g}=(u_{1}g_{1},\ldots,u_{m}g_{m}),~~~\mathbf{c}%
(\mathbf{u},\mathbf{g})=%
{\displaystyle\prod\limits_{j=1}^{m}}
[u_{j},g_{j}].
\]

\begin{lemma}
\label{u-v}%
\[
\left(
{\displaystyle\prod\limits_{i=1}^{s}}
\mathbf{c}(\mathbf{a}(i)\cdot\mathbf{u}(i),\mathbf{g)}\right)  \left(
{\displaystyle\prod\limits_{i=1}^{s}}
\mathbf{c}(\mathbf{u}(i),\mathbf{g})\right)  ^{-1}=%
{\displaystyle\prod\limits_{i=1}^{s}}
\left(
{\displaystyle\prod\limits_{j=1}^{m}}
[a(i)_{j},g_{j}]^{\tau_{j}(\mathbf{g},\mathbf{u}(i))}\right)  ^{w(i)}%
\]
where $w(i)=\mathbf{c(u}(i-1),\mathbf{g})^{-1}\ldots\mathbf{c}(\mathbf{u}%
(1),\mathbf{g})^{-1}$.
\end{lemma}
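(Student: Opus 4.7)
My plan is to reduce to the case $s=1$ and then handle the general case by a telescoping induction. The key inputs are two elementary identities: the commutator formula $[ab,g] = [a,g]^{b}[b,g]$, and the rewriting $XY = (XYX^{-1})X$ (equivalently, conjugation by $X^{-1}$ on the right).

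First I would treat $s=1$. Applying the commutator formula to each factor of $\mathbf{c}(\mathbf{a}(1)\cdot\mathbf{u}(1),\mathbf{g})$ yields an alternating product $\prod_{j}[a(1)_j,g_j]^{u(1)_j}\,[u(1)_j,g_j]$. I would then push each commutator $[u(1)_j,g_j]$ rightward past all subsequent factors $[a(1)_k,g_k]^{u(1)_k}$ ($k>j$), so that they accumulate on the right as $\mathbf{c}(\mathbf{u}(1),\mathbf{g})$. Each such move conjugates the relevant exponent on the right by $[u(1)_j,g_j]^{-1} = [g_j,u(1)_j]$; taken together, the exponent of $[a(1)_k,g_k]$ becomes exactly $u(1)_k[g_{k-1},u(1)_{k-1}]\cdots[g_1,u(1)_1] = \tau_k(\mathbf{g},\mathbf{u}(1))$. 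This establishes the case $s=1$:
\[
\mathbf{c}(\mathbf{a}(1)\cdot\mathbf{u}(1),\mathbf{g}) = \Bigl(\prod_{j=1}^{m}[a(1)_j,g_j]^{\tau_j(\mathbf{g},\mathbf{u}(1))}\Bigr)\mathbf{c}(\mathbf{u}(1),\mathbf{g}).
\]

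For general $s$, abbreviate $X_i=\mathbf{c}(\mathbf{a}(i)\cdot\mathbf{u}(i),\mathbf{g})$ and $Y_i=\mathbf{c}(\mathbf{u}(i),\mathbf{g})$, and let $Z_i$ denote the interior product on the right-hand side (before conjugation by $w(i)$). The $s=1$ step gives $X_i = Z_iY_i$, and I would then induct on $s$, pushing each block $Y_1\cdots Y_{i-1}$ across $Z_i$ to convert it to $Z_i^{w(i)}$; note $w(i)^{-1} = Y_1\cdots Y_{i-1}$, which matches the convention $x^y = y^{-1}xy$. This yields
\[
X_1X_2\cdots X_s = Z_1\cdot Z_2^{w(2)}\cdots Z_s^{w(s)}\cdot Y_1Y_2\cdots Y_s,
\]
and multiplying on the right by $(Y_1\cdots Y_s)^{-1}$ gives the claimed identity.

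The whole argument is formal bookkeeping with commutators, with no genuine obstacle. The only point requiring care is tracking conjugators in the correct order during the telescoping, so as to match the prescribed form of $w(i)$ and $\tau_j$ exactly; everything else is an iterated application of the two identities above.
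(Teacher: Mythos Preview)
Your argument is correct and is precisely the ``direct calculation'' the paper alludes to: the $s=1$ step reduces to the identity $[au,g]=[a,g]^{u}[u,g]$ and a straightforward induction on $m$ yielding the exponents $\tau_j$, and the general case follows by telescoping the blocks $Z_iY_i$ with $w(i)=(Y_1\cdots Y_{i-1})^{-1}$ exactly as you describe. There is no gap, and nothing to compare since the paper does not spell out the computation.
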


This is a direct calculation. The next lemma is easily verified by induction
on $m$ (see \cite{NS}, Lemma 4.5):\bigskip

\begin{lemma}
\label{newgens}%
\[
\left\langle g_{j}^{\tau_{j}(\mathbf{g},\mathbf{u})}\mid j=1,\ldots
,m\right\rangle =\left\langle g_{j}^{u_{j}h_{j}}\mid j=1,\ldots
,m\right\rangle
\]
where $h_{j}=g_{j-1}^{-1}\ldots g_{1}^{-1}$.
\end{lemma}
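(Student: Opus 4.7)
The plan is induction on $m$. The base $m=1$ is immediate since $\sigma_1$ is the empty product, $h_1=1$, and $\tau_1 = u_1 = u_1 h_1$. For the inductive step, I isolate the following key identity, from which the conclusion falls out at once:
\[
\tau_m \;=\; u_m h_m \cdot \prod_{k=1}^{m-1} g_k^{\tau_k}.
\]
Granting this, $g_m^{\tau_m} = (g_m^{u_m h_m})^{w}$ where $w = \prod_{k=1}^{m-1} g_k^{\tau_k}$ lies in $\langle g_k^{\tau_k} : k<m\rangle$, which by the inductive hypothesis equals $\langle g_k^{u_k h_k} : k<m\rangle$. Hence $g_m^{\tau_m} \in \langle g_k^{u_k h_k} : 1 \le k \le m\rangle$, and the reverse containment follows by the symmetric rearrangement of the identity.

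To prove the identity, write $\sigma_j = [g_{j-1},u_{j-1}]\cdots[g_1,u_1]$ so $\tau_j = u_j\sigma_j$ and $\sigma_1=1$. The one-step recursion
\[
\sigma_j \;=\; g_{j-1}^{-1}\,\sigma_{j-1}\, g_{j-1}^{\tau_{j-1}}
\]
follows from $[g_{j-1},u_{j-1}]\sigma_{j-1} = g_{j-1}^{-1}\bigl(g_{j-1}^{u_{j-1}}\sigma_{j-1}\bigr)$ together with the rewriting $g_{j-1}^{u_{j-1}}\sigma_{j-1} = \sigma_{j-1}\,g_{j-1}^{u_{j-1}\sigma_{j-1}} = \sigma_{j-1}\,g_{j-1}^{\tau_{j-1}}$. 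Iterating this telescopically gives
\[
\sigma_m \;=\; g_{m-1}^{-1}g_{m-2}^{-1}\cdots g_1^{-1}\,\sigma_1\, g_1^{\tau_1}g_2^{\tau_2}\cdots g_{m-1}^{\tau_{m-1}} \;=\; h_m\prod_{k=1}^{m-1} g_k^{\tau_k},
\]
and multiplying by $u_m$ on the left yields the identity.

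There is no real obstacle here: the whole argument is bookkeeping around a recursive definition, and the only delicate point is tracking which copies of the $g_k$ get pulled to the left (producing $h_m$) versus which stay conjugated (producing the product of $g_k^{\tau_k}$). Once the one-step recursion $\sigma_j = g_{j-1}^{-1}\sigma_{j-1}g_{j-1}^{\tau_{j-1}}$ is written down, the iteration telescopes and the inductive step is essentially a one-liner.
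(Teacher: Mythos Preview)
Your proof is correct and follows exactly the approach the paper indicates (``easily verified by induction on $m$''); you have simply supplied the details. The key identity $\tau_m = u_m h_m \prod_{k<m} g_k^{\tau_k}$, obtained from the telescoping recursion $\sigma_j = g_{j-1}^{-1}\sigma_{j-1}g_{j-1}^{\tau_{j-1}}$, is precisely what makes the induction go through.
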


Now let $H\vartriangleleft G=H\left\langle g_{1},\ldots,g_{r}\right\rangle $
be as in Theorem \ref{KT}. If $H=1$ there is nothing to prove, so we suppose
that $H>1$ and argue by induction on $\left\vert H\right\vert $. Since $H$ is
acceptable, we have $H=[H,G]$. Choose $N\vartriangleleft G$ with $N\leq H$
minimal subject to $1<N=[N,G]$ (in \cite{NS} such an $N$ was called a
\emph{quasi-minimal normal subgroup} of $G$). Let $Z$ be a normal subgroup of
$G$ maximal subject to $Z<N$. Then $[Z,_{n}G]=1$ for some $n$, which implies
(i) that $Z=N\cap\zeta_{\omega}(G)$ is uniquely determined, and (ii) that
$[Z,N]\leq\lbrack Z,H]\leq\lbrack Z,G_{\omega}]=1$. By definition,
$\overline{N}=N/Z$ is a chief factor of $G$; it is not bad because $H$ is acceptable.

Applying Lemma \ref{basiccomm} to $Z$ we note that $Z$ is contained in the
Frattini subgroup $\Phi(G)$ of $G$.

We fix a natural number $k$, the candidate for $k(d,r),$ and define $g_{j}$
for $j=1,\ldots,kr$ as in Theorem \ref{KT}.

Depending on the nature of $N$, we shall choose a certain normal subgroup $K$
of $G$ with $1\neq K\leq N$.

Suppose now that $h\in H$. We have to find elements $\mathbf{v}(i)\in H^{(m)}$
($i=1,\ldots,10$) such that (\ref{hprod}) and (\ref{G-gen}) hold. By inductive
hypothesis, we can do this `modulo $K$': thus there exist $\mathbf{u}(i)\in
H^{(m)}$ and $\kappa\in K$ such that%
\[
h=\kappa%
{\displaystyle\prod\limits_{i=1}^{10}}
\mathbf{c}(\mathbf{u}(i),\mathbf{g})
\]
and%
\begin{align}
G  &  =K\left\langle g_{j}^{\tau_{j}(\mathbf{g},\mathbf{u}(i))}\mid
j=1,\ldots,m\right\rangle \nonumber\\
&  =K\left\langle g_{j}^{u(i)_{j}h_{j}}\mid j=1,\ldots,m\right\rangle
\qquad\text{for }i=1,\ldots,10, \label{u-genmodK}%
\end{align}
the second equality thanks to Lemma \ref{newgens}.

The idea now is to find elements $\mathbf{a}(i)\in N^{(m)}$ such that
(\ref{hprod}) and (\ref{G-gen}) are satisfied on setting%
\[
\mathbf{v}(i)=\mathbf{a}(i)\cdot\mathbf{u}(i).
\]

Lemma \ref{u-v} shows that (\ref{hprod}) is then equivalent to%
\begin{equation}%
{\displaystyle\prod\limits_{i=1}^{10}}
\left(
{\displaystyle\prod\limits_{j=1}^{m}}
[a(i)_{j},g_{j}]^{\tau_{j}(\mathbf{g},\mathbf{u}(i))}\right)  ^{w(i)}%
=\kappa\text{.} \label{A-prod}%
\end{equation}
This can be further simplified by setting%
\begin{align}
y(i)_{j}  &  =g_{j}^{\tau_{j}(\mathbf{g},\mathbf{u}(i))w(i)},~~t(i)_{j}%
=g_{j}^{u(i)_{j}h_{j}}\nonumber\\
b(i)_{j}  &  =a(i)_{j}^{\tau_{j}(\mathbf{g},\mathbf{u}(i))w(i)},~~c(i)_{j}%
=a(i)_{j}^{u(i)_{j}h_{j}}. \label{a-b-c}%
\end{align}

Define $\phi(i):N^{(m)}\rightarrow N$ by%
\[
\mathbf{b}\phi(i)=\mathbf{c}(\mathbf{b},\mathbf{y}(i)).
\]
Then (\ref{A-prod}) becomes%
\begin{equation}%
{\displaystyle\prod\limits_{i=1}^{10}}
\mathbf{b}(i)\phi(i)=\kappa, \label{kappa}%
\end{equation}
and (\ref{u-genmodK}) is equivalent to%
\begin{align}
G  &  =K\left\langle y(i)_{1},\ldots,y(i)_{m}\right\rangle \nonumber\\
&  =K\left\langle t(i)_{1},\ldots,t(i)_{m}\right\rangle \qquad\text{for
}i=1,\ldots,10. \label{y-t-gen}%
\end{align}

Similarly, (\ref{G-gen}) holds if and only if for $i=1,\ldots,10$ we have%
\begin{equation}
G=\left\langle t(i)_{j}^{c(i)_{j}}\mid j=1,\ldots,m\right\rangle Z
\label{t-c-gen}%
\end{equation}
(where $Z$ is added harmlessly since $Z\leq\Phi(G)$). Let $\mathcal{X}(i)$
denote the set of all $\mathbf{c}(i)\in N^{(m)}$ such that (\ref{t-c-gen})
holds, and write $W(i)$ for the image of $\mathcal{X}(i)$ under the bijection
$N^{(m)}\rightarrow N^{(m)}$ defined in (\ref{a-b-c}) sending $\mathbf{c}%
(i)\longmapsto\mathbf{b}(i)$.

To sum up: to establish the existence of $\mathbf{a}(1),\ldots,\mathbf{a}%
(10)\in N^{(m)}$ such that the $\mathbf{v}(i)=\mathbf{a}(i)\cdot\mathbf{u}(i)$
satisfy (\ref{hprod}) and (\ref{G-gen}), it suffices to find $(\mathbf{b}%
(1),\ldots,\mathbf{b}(10))\in W(1)\times\cdots\times W(10)$ such that
(\ref{kappa}) holds.

$\medskip$

Set $\varepsilon=\min\{\frac{1}{1+6\delta},\frac{1}{r}\}$, and write
$^{-}:G\rightarrow G/Z$ for the quotient map. Now we separate four cases.

\subsubsection{The easy case}

If $[Z,G]>1$ we define $K=[Z,G]$. Since $[Z,H]=1$ and $G=H\left\langle
g_{1},\ldots,g_{r}\right\rangle $, we have $K=%
{\textstyle\prod\nolimits_{j=1}^{r}}
[Z,g_{j}]$. Thus $\kappa=%
{\textstyle\prod\nolimits_{j=1}^{r}}
[z_{j},g_{j}]$ with $z_{1},\ldots,z_{r}\in Z$. In this case, (\ref{kappa}) is
satisfied if we set%
\begin{align*}
b(1)_{j}  &  =z_{j}\qquad(1\leq j\leq r)\\
b(1)_{j}  &  =1\qquad(r<j\leq m)\\
b(i)_{j}  &  =1\qquad(2\leq i\leq10,~1\leq j\leq m),
\end{align*}
because $y(i)_{j}$ is conjugate to $g_{j}$ under the action of $H$ and
$[Z,H]=1$.

For each $i$ we have $W(i)\supseteq Z^{(m)}$, since in this case
(\ref{y-t-gen}) implies (\ref{t-c-gen}) if $c(i)_{j}\in Z$ for all $j$. So
$\mathbf{b}(i)\in W(i)$ for each $i$, as required.

\subsubsection{The abelian case}

If $[Z,G]=1$ and $N$ is \emph{abelian} we set $K=N$. We use additive notation
for $N$ and consider it as a $G$-module. Then (\ref{y-t-gen}) implies that%
\[
\phi(1):\mathbf{b\longmapsto}%
{\textstyle\sum\nolimits_{j=1}^{m}}
b_{j}(y(1)_{j}-1)
\]
is a surjective ($\mathbb{Z}$-module) homomorphism $N^{(m)}\rightarrow N$. It
follows that%
\[
\left\vert \phi(1)^{-1}(c)\right\vert =\left\vert \ker\phi(1)\right\vert
=\left\vert N\right\vert ^{m-1}%
\]
for each $c\in N$.

Now fix $i\in\{1,\ldots,10\}$. According to Theorem \ref{fppThm}, at least one
of the elements $g_{j}$ has the $\varepsilon/2$-fsp on $\overline{N}$;
therefore at least $k$ of the elements $\overline{t(i)_{j}}$ have this
property. Now we apply Proposition \ref{prop5.1new}(i): this shows that
(\ref{t-c-gen}) holds for at least $\left\vert \overline{N}\right\vert
^{m}(1-\left\vert \overline{N}\right\vert ^{d-k\varepsilon/2})$ values of
$\overline{\mathbf{c}(i)}$ in $\left\vert \overline{N}\right\vert ^{m}$. It
follows that%
\begin{equation}
\left\vert W(i)\right\vert =\left\vert \mathcal{X}(i)\right\vert
\geq\left\vert Z\right\vert ^{m}\cdot\left\vert \overline{N}\right\vert
^{m}(1-\left\vert \overline{N}\right\vert ^{d-k\varepsilon/2})=\left\vert
N\right\vert ^{m}(1-\left\vert \overline{N}\right\vert ^{d-k\varepsilon/2}).
\label{W(i)-sol}%
\end{equation}

We need to compare $\left\vert \overline{N}\right\vert $ with $\left\vert
N\right\vert $. Let $\{x_{1},\ldots,x_{d}\}$ be a generating set for $G$. Then
$\mathbf{b\longmapsto}%
{\textstyle\sum\nolimits_{j=1}^{d}}
b_{j}(x_{j}-1)$ induces an epimorphism from $\overline{N}^{(d)}$ onto $N$;
consequently $\left\vert N\right\vert \leq\left\vert \overline{N}\right\vert
^{d}$. Thus provided $k\varepsilon/2d>1$ we have%
\[
\left\vert W(i)\right\vert \geq\left\vert N\right\vert ^{m}(1-\left\vert
N\right\vert ^{1-k\varepsilon/2d}).
\]

Assume now that $k\varepsilon>4d$. Then $W(i)$ is non-empty for each $i$. For
$i=2,\ldots,10$ choose $\mathbf{b}(i)\in W(i)$ and put%
\[
c=\kappa\left(
{\displaystyle\prod\limits_{i=2}^{10}}
\mathbf{b}(i)\phi(i)\right)  ^{-1}.
\]
Then%
\[
\left\vert \phi(1)^{-1}(c)\right\vert +\left\vert W(1)\right\vert
\geq\left\vert N\right\vert ^{m}(\left\vert N\right\vert ^{-1}+1-\left\vert
N\right\vert ^{1-k\varepsilon/2d})>\left\vert N\right\vert ^{m}.
\]
It follows that $\phi(1)^{-1}(c)\cap W(1)$ is non-empty. Thus we may choose
$\mathbf{b}(1)\in\phi(1)^{-1}(c)\cap W(1)$ and ensure that (\ref{kappa}) is satisfied.

\subsubsection{The soluble case}

Suppose next that $[Z,G]=1$ and $N>N^{\prime}>1$. In this case we take
$K=N^{\prime}$. Since $N^{\prime}\leq Z$, the argument above again gives
(\ref{W(i)-sol}).

The maps $\phi(i)$ are no longer homomorphisms, however, and it is quite a
major undertaking to obtain a good estimate for the fibres. The outcome is
Proposition 7.1 of \cite{NS}; translated into the present notation it is

\begin{proposition}
\label{NS6.2}Assume that $G=Z\left\langle y(i)_{1},\ldots,y(i)_{m}%
\right\rangle $ for $i=1,2,3$. Then for each $c\in N^{\prime}$ there exist
$c_{1},c_{2},c_{3}\in N$ such that $c=c_{1}c_{2}c_{3}$ and%
\begin{equation}
\left\vert \phi(i)^{-1}(c_{i})\right\vert \geq\left\vert N\right\vert
^{m}\cdot\left\vert \overline{N}\right\vert ^{-d-1}\qquad(i=1,2,3).
\label{fibres-sol}%
\end{equation}

\end{proposition}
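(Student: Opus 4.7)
The plan is to exploit the two-step nilpotent structure that the soluble-case hypotheses impose on $N$: the condition $[Z,G]=1$ says $Z\leq\mathrm{Z}(G)$, then $N'\leq Z$ is central too, and $\overline{N}=N/Z$ is a non-central abelian chief factor of $G$, irreducible as an $\mathbb{F}_p G$-module with $Z$ acting trivially.

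First I would note that, since $Z$ is central, the identity $[bz,y]=[b,y]$ shows that $\phi(i)(\mathbf{b})=\prod_{j=1}^m[b_j,y(i)_j]$ depends only on $\overline{\mathbf{b}}\in\overline{N}^{(m)}$; so $\phi(i)$ descends to a set map $\psi(i)\colon\overline{N}^{(m)}\to N$. Post-composing with $N\to\overline{N}$ produces an additive homomorphism $\overline{\phi}(i)\colon\overline{N}^{(m)}\to\overline{N}$. The hypothesis $G=Z\langle y(i)_1,\ldots,y(i)_m\rangle$ ensures that the $y(i)_j-1$ together with the trivial action of $Z$ span the augmentation of $\overline{N}$ as a $\mathbb{Z} G$-module, and the irreducibility of the non-central chief factor $\overline{N}$ then forces $\overline{\phi}(i)$ to be surjective; hence $|\ker\overline{\phi}(i)|=|\overline{N}|^{m-1}$. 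Writing $\psi_i$ for the restriction $\psi(i)|_{\ker\overline{\phi}(i)}$, which takes values in $Z$, and observing that $\phi(i)^{-1}(c)$ for $c\in N'\leq Z$ is precisely the preimage under $N^{(m)}\to\overline{N}^{(m)}$ of $\psi_i^{-1}(c)$, we have
\[
|\phi(i)^{-1}(c)|=|Z|^m\cdot|\psi_i^{-1}(c)|.
\]

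The core work is to bound $|\psi_i^{-1}(c)|$ from below. The function $\psi_i$ is not a homomorphism, but its defect is controlled: from $[b_1b_2,y]=[b_1,y]^{b_2}[b_2,y]$ and centrality of $[[b_1,y],b_2]\in[N,N]=N'\leq Z$, the deviation of $\psi(i)$ from being additive assembles into a bilinear map $\beta(i)\colon\overline{N}^{(m)}\times\overline{N}^{(m)}\to N'$. I would show that there is a subset $C_i\subseteq Z$ containing a large portion of $N'$ such that every $c\in C_i$ has fibre $|\psi_i^{-1}(c)|\geq|\overline{N}|^{m-1-d}$; combining with the displayed identity yields $|\phi(i)^{-1}(c)|\geq|Z|^m|\overline{N}|^{m-1-d}=|N|^m|\overline{N}|^{-d-1}$, matching the target. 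The loss of $|\overline{N}|^{d+1}$ over the ``generic'' estimate traces back to $G$ having only $d$ generators, via an inequality of the form $|N|\leq|Z|\cdot|\overline{N}|^d$ that controls how $Z$ sits above the image of a $d$-generator lift of $G$, in the spirit of the $|N|\leq|\overline{N}|^d$ estimate already used in the abelian case above.

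Finally, to obtain the three-fold decomposition $c=c_1c_2c_3$, I would exploit the three independent generating systems $\mathbf{y}(1),\mathbf{y}(2),\mathbf{y}(3)$: they produce three distinct bilinear forms $\beta(1),\beta(2),\beta(3)$, and a counting argument shows the combined $\psi$-images $C_1\cdot C_2\cdot C_3$ must cover all of $N'$. The principal obstacle, which the authors describe as a ``major undertaking'' and attribute to Proposition~7.1 of \cite{NS}, is precisely this coverage argument together with the fibre bound above; concretely, I would translate Proposition~7.1 of \cite{NS} into the present notation, verifying that the slightly sharper definitions of $G_0$ and $G_1$ used here do not disturb the quantitative conclusions.
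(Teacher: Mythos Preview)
Your proposal is essentially correct and matches the paper's treatment: the paper does \emph{not} prove this proposition at all, but simply quotes it from \cite{NS} (as Proposition~7.1 there), remarking that obtaining the fibre estimate is ``quite a major undertaking''. Your final paragraph acknowledges exactly this, so you and the paper agree on the actual content of the argument --- namely, cite \cite{NS}.

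Your preliminary sketch of the structure (descent of $\phi(i)$ through $Z$, surjectivity of $\overline{\phi}(i)$, the bilinear defect controlling the failure of $\psi_i$ to be a homomorphism) is a faithful outline of the ideas behind the proof in \cite{NS}, though the paper itself does not reproduce any of it. One small slip: the inequality you write as $|N|\leq|Z|\cdot|\overline{N}|^d$ is trivially true (since $|N|=|Z|\cdot|\overline{N}|$), so it cannot be the source of the $|\overline{N}|^{-d-1}$ loss; the genuine input in \cite{NS} is a bound on how many cosets of the radical of the bilinear form $\beta(i)$ can meet a given fibre, and this is where the generator number $d$ enters. But since both you and the paper ultimately defer to \cite{NS} for the details, this does not affect the correctness of your proposal.
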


The initial hypothesis follows from (\ref{y-t-gen}) since now $K\leq Z$.

Assume now that $k\varepsilon>4d+2$. Then (\ref{W(i)-sol}) and
(\ref{fibres-sol}) together imply that $\phi(i)^{-1}(c_{i})\cap W(i)$ is
non-empty for $i=1,2,3$, while (\ref{W(i)-sol}) implies that $W(i)$ is
non-empty for every $i$.

Choose $\mathbf{b}(i)\in W(i)$ for $i=4,\ldots,10$. Put%
\[
c=\kappa\left(
{\displaystyle\prod\limits_{i=4}^{10}}
\mathbf{b}(i)\phi(i)\right)  ^{-1},
\]
and choose $c_{1},c_{2},c_{3}$ as in Proposition \ref{NS6.2}. Then for
$i=1,2,3$ we can find $\mathbf{b}(i)\in\phi(i)^{-1}(c_{i})\cap W(i)$, and so
ensure that (\ref{kappa}) is satisfied.

\subsubsection{The semisimple case}

If $[Z,G]=1$ and $N=N^{\prime}$, define $K=N$. Now $\overline{N}$ is
semisimple with at least $3$ simple factors, and $N$ is quasi-semisimple. In
this case, Theorem \ref{fppThm} shows that at least one the elements $g_{j}$
has the $\varepsilon$-fgp on $\overline{N}$; therefore for each $i$, at least
$k$ of the elements $\overline{t(i)_{j}}$ and at least $k$ of the elements
$y(i)_{j}$ have this property. Proposition \ref{prop5.1new}(ii) now shows
that
\[
\left\vert \overline{\mathcal{X}(i)}\right\vert \geq\left\vert \overline
{N}\right\vert ^{m}(1-2^{2-k\varepsilon}),
\]
provided we assume that $k\varepsilon\geq\max\{2d+4,C\}$ for a certain
absolute constant $C$. This implies%
\[
\left\vert W(i)\right\vert =\left\vert \mathcal{X}(i)\right\vert
\geq\left\vert N\right\vert ^{m}(1-2^{2-k\varepsilon}).
\]

Now Theorem \ref{comm20}, proved below in Subsection \ref{ssns}, gives the
following: there are absolute constants $D,$ $\varepsilon_{0}$ such that if
for each $i=1,\ldots,10$

\begin{description}
\item[(a)] the group $\left\langle y(i)_{1},\ldots,y(i)_{m}\right\rangle $
permutes the quasisimple factors of $N$ transitively,

\item[(b)] at least $k$ of the $y(i)_{j}$ have the $\varepsilon$-fgp on
$\overline{N}$, where $k\varepsilon\geq4+2D$,

\item[(c)] the subset $W(i)\subseteq N^{(m)}$ satisfies $\left\vert
W(i)\right\vert \geq(1-\varepsilon_{0}/6)\left\vert N\right\vert ^{m}$,
\end{description}

\noindent then%
\[%
{\displaystyle\prod\limits_{i=1}^{10}}
W(i)\phi(i)=N.
\]
Condition (a) follows from (\ref{y-t-gen}). Thus we can find $\mathbf{b}(i)\in
W(i)$ ($i=1,\ldots,10$) such that (\ref{kappa}) is satisfied provided we
assume that%
\begin{align*}
k\varepsilon &  >\max\{2d+4,C,4+2D,2+\log_{2}(6/\varepsilon_{0})\}\\
&  =\max\{2d+4,C^{\ast}\}
\end{align*}
where $C^{\ast}$ is an absolute constant.

\subsubsection{Conclusion of the proof}

Recall that we defined $\varepsilon=\min\{\frac{1}{1+6\delta},\frac{1}{r}\}$.
So if we now define%
\[
k(d,r)=1+\max\{r,1+6\delta\}\cdot\max\{4d+4,\left\lceil C^{\ast}\right\rceil
\},
\]
then $k=k(d,r)$ fulfils the requirements of all the preceding steps. This
concludes the proof of Theorem \ref{KT} modulo Proposition \ref{prop5.1new},
Theorem \ref{comm20}, and \cite{NS}, Proposition 6.2.

\section{Semisimple groups\label{semisimplesec}}

This section is devoted to the proof of Theorem \ref{comm20}. This will be
stated in Subsection \ref{ssns}. Like Proposition 9.2 of \cite{NS}, which it
in effect generalizes, its proof has two components: (1) a result about
products of commutators in quasisimple groups, and (2) a complicated
combinatorial reduction argument. These will occupy the next two subsections.

As remarked in the Introduction, the proof of (1) given here is significantly
simpler (and shorter) than \cite{NS2}, which played the analogous role in our
earlier work. The reduction argument (2) is essentially the same as in
\cite{NS}, though we are now using it to prove something different
(specifically, we have to control the image of a certain mapping rather than
its fibres). We have re-cast the argument from scratch, in an attempt to make
it more transparent (the reader will judge whether we have succeeded!)
However, we shall quote one combinatorial result from Section 8 of \cite{NS}.

\subsection{Twisted commutators in quasisimple groups\label{quasi}}

For automorphisms $\alpha,\beta$ of a group $S$ and $x,y\in S$ we write%
\[
T_{\alpha,\beta}(x,y)=x^{-1}y^{-1}x^{\alpha}y^{\beta}.
\]
For $\alpha=(\alpha_{1},\ldots,\alpha_{D})$ and $\beta=(\beta_{1},\ldots
,\beta_{D})$ in $\mathrm{Aut}(S)^{(D)}$ the mapping $\mathbf{T}_{\alpha,\beta
}:S^{(D)}\times S^{(D)}\rightarrow S$ is defined by%
\[
\mathbf{T}_{\alpha,\beta}(\mathbf{x},\mathbf{y})=%
{\displaystyle\prod\limits_{i=1}^{D}}
T_{\alpha_{i},\beta_{i}}(x_{i},y_{i}).
\]

\begin{theorem}
\label{T1}There exist $\varepsilon>0$ and $D\in\mathbb{N}$ such that if $S$ is
a finite quasisimple group, $\alpha,\beta\in\mathrm{Aut}(S)^{(D)}$, and
$X\subseteq S^{(2D)}$ has size at least $(1-\varepsilon)\left\vert
S^{(2D)}\right\vert $, then $\left\vert \mathbf{T}_{\alpha,\beta
}(X)\right\vert \geq\lambda\left\vert S\right\vert $, where%
\begin{equation}
\lambda=\left\{
\begin{array}
[c]{ccc}%
l(S)^{-3/5} &  & \text{if }l(S)\geq3\\
&  & \\
1 &  & \text{if }l(S)=2
\end{array}
\right.  . \label{lambda}%
\end{equation}

\end{theorem}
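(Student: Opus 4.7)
I would split the argument according to $l(S)$. The regime $l(S)=2$ involves only a bounded finite list of quasisimple groups (those with a nontrivial real representation of dimension two). For these, $\lambda=1$ is demanded, i.e.\ we need $\mathbf{T}_{\alpha,\beta}(X)=S$. A direct fibre-counting estimate shows each fibre of $\mathbf{T}_{\alpha,\beta}:S^{(2D)}\to S$ has size at most $c\,|S|^{2D-1}$ for some constant $c$ depending on $S,\alpha,\beta$, so if $D$ is chosen large and $\varepsilon$ small (uniformly across the finite list), removing an $\varepsilon$-fraction of the tuples cannot empty any fibre, and the image is all of $S$.

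For the main regime $l(S)\ge 3$, I would reduce to a \emph{single-factor} estimate: for any $(\alpha,\beta)\in\mathrm{Aut}(S)^{2}$ and any $Z\subseteq S\times S$ of density at least $1-\eta$, $|T_{\alpha,\beta}(Z)|\ge\lambda|S|$. This uses the identity
\[
T_{\alpha,\beta}(x,y)\;=\;[x,\alpha c_y]\cdot[y,\beta],
\]
where $c_y$ denotes conjugation by $y$ (an immediate consequence of $y^{-1}x^{\alpha}y=x^{\alpha c_y}$). Fixing a $y$ for which the slice $Z_y=\{x:(x,y)\in Z\}$ is dense in $S$, the image contains the translate $[Z_y,\alpha c_y]\cdot[y,\beta]$, whose size is at least $|Z_y|/|\mathrm{C}_S(\alpha c_y)|$ since the commutator map $x\mapsto x^{-1}x^{\alpha c_y}$ has fibres of size $|\mathrm{C}_S(\alpha c_y)|$. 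It therefore suffices to show that for a positive proportion of $y\in S$ one has $|\mathrm{C}_S(\alpha c_y)|\le l(S)^{3/5}$. Promoting this to the $D$-fold statement is then a Fubini/averaging argument: some coordinate pair of $X$ admits a fixing of the other $D-1$ pairs with a slice of density at least $1-D\varepsilon$, and $\mathbf{T}_{\alpha,\beta}(X)$ contains a two-sided translate of the single-factor image on that slice, so the same lower bound carries over.

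The main obstacle is the centraliser estimate $|\mathrm{C}_S(\alpha c_y)|\le l(S)^{3/5}$ for \emph{most} $y$. For groups of Lie type, I would note that regular semisimple elements (those $y$ whose centraliser $\mathrm{C}_S(c_y)$ is a maximal torus of rank equal to the Lie rank $r$) form a positive-density subset of $S$; combined with Proposition~\ref{l(simple)}, which bounds $l(S)$ below by $(q^r-1)/2$, the inner-automorphism centraliser is already small enough to beat $l(S)^{3/5}$. The task is then to show that pre-multiplying $c_y$ by a fixed $\alpha\in\mathrm{Aut}(S)$ does not enlarge the fixed-point set by too much; this uses the explicit structure of $\mathrm{Out}(S)$ (diagonal/field/graph automorphisms, Proposition~\ref{schreier}) together with Proposition~\ref{bcpbound}, which bounds the index of any proper subgroup of $S$ in terms of its rank. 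For alternating and sporadic groups, direct case analyses give the same kind of bound. The exponent $3/5$ emerges from the worst-case interaction between the outer-automorphism twist and the torus; improving it is possible but unnecessary for the applications.
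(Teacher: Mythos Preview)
Your single-factor reduction via the identity $T_{\alpha,\beta}(x,y)=[x,\alpha c_y]\cdot[y,\beta]$ is correct (this is Lemma~\ref{T-C} in the paper), but the centraliser inequality you need,
\[
|\mathrm{C}_S(\alpha c_y)|\le l(S)^{3/5}\quad\text{for a positive proportion of }y\in S,
\]
is simply false, already for $\alpha=1$. For $S=\mathrm{Alt}(n)$ one has $l(S)=n-1$, while \emph{every} nontrivial element of $S$ has centraliser of order at least $n-O(1)$; so $|\mathrm{C}_S(c_y)|\gtrsim n\gg (n-1)^{3/5}$ for all $y$, not just most. For $S$ of Lie type over $\mathbb{F}_q$ of rank $r$, a regular semisimple $y$ has $|\mathrm{C}_S(c_y)|\asymp q^r$ (Lemma~\ref{torus}), whereas $l(S)^{3/5}\asymp q^{3r/5}$ by Proposition~\ref{l(simple)}; again the inequality goes the wrong way. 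So the fibre bound $|Z_y|/|\mathrm{C}_S(\alpha c_y)|\ge l(S)^{-3/5}|S|$ cannot be obtained from a single coordinate, and no amount of ``pre-multiplying by $\alpha$'' will shrink an order-$q^r$ torus down to $q^{3r/5}$.

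The paper's proof avoids this obstruction by treating the various pieces of $\mathrm{Aut}(S)$ quite differently. For \emph{inner} automorphisms it does not use centraliser bounds at all: it quotes the Garion--Shalev density estimate (Proposition~\ref{gash}), which says that the commutator map on a dense subset of $S\times S$ already hits a $\tfrac34$-proportion of $S$. For \emph{inner-diagonal} automorphisms in high rank it uses \emph{eight} factors simultaneously and a character-theoretic count (Lemma~\ref{char}) to control fibre sizes, not a single-coordinate argument. Your centraliser idea \emph{does} appear in the paper, but only for field automorphisms of order $>50$ (Proposition~\ref{1/2}): precisely the regime where the fixed-field group $G=L_\phi$ is small enough that $|\mathrm{C}_S(xh\phi^{-1})|$ really can be pushed below $l(S)^{1/2}$. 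Field automorphisms of bounded order, graph automorphisms, and the small-rank diagonal cases are all absorbed by the bounded-index reduction (Proposition~\ref{firstredn}), which costs a factor $n^2$ in $D$. The exponent $3/5$ is not ``the worst-case interaction between the outer-automorphism twist and the torus''; it is what survives after combining the $\tfrac34$-density from Garion--Shalev with the $l(S)^{-1/2}$ from the field-automorphism case and the character estimate.
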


The following corollary is Theorem 1.1 of \cite{NS2}:

\begin{corollary}
\label{NS2Thm1.1}There exists $D_{1}\in\mathbb{N}$ such that if $S$ is a
finite quasisimple group and $\alpha,\beta\in\mathrm{Aut}(S)^{(D_{1})}$ then%
\[%
{\displaystyle\prod\limits_{i=1}^{D_{1}}}
T_{\alpha_{i},\beta_{i}}(S,S)=S.
\]

\end{corollary}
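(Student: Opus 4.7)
The plan is to deduce the corollary directly from Theorem \ref{T1} via the Gowers trick stated in the introduction. Take $D_{1}=5D$, where $D$ is supplied by Theorem \ref{T1}. For any prescribed $\alpha,\beta\in\mathrm{Aut}(S)^{(D_{1})}$, partition $\{1,\ldots,D_{1}\}$ into five consecutive blocks $B_{1},\ldots,B_{5}$ of size $D$, and, for each $k$, let $\alpha^{(k)},\beta^{(k)}$ denote the corresponding $D$-tuples. Setting
\[
Y_{k}\;=\;\prod_{i\in B_{k}}T_{\alpha_{i},\beta_{i}}(S,S)\;=\;\mathbf{T}_{\alpha^{(k)},\beta^{(k)}}\bigl(S^{(2D)}\bigr)\;\subseteq\;S,
\]
I apply Theorem \ref{T1} to each block with $X=S^{(2D)}$ (which trivially satisfies $|X|\geq(1-\varepsilon)|S^{(2D)}|$) to obtain $|Y_{k}|\geq\lambda|S|$, with $\lambda$ given by (\ref{lambda}).

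Next I assemble the five sets. Because $S$ is perfect, every one-dimensional representation is trivial, hence $l(S)\geq 2$. If $l(S)=2$ then $\lambda=1$, so already $Y_{1}=S$ and therefore $Y_{1}\cdots Y_{5}=S$. If instead $l(S)\geq 3$ then $\lambda=l(S)^{-3/5}$ and
\[
\prod_{k=1}^{5}|Y_{k}|\;\geq\;\lambda^{5}|S|^{5}\;=\;|S|^{5}\cdot l(S)^{-3}\;=\;|S|^{5}\cdot l(S)^{2-5},
\]
so the Gowers trick (the case $t=5\geq 3$) delivers $Y_{1}Y_{2}\cdots Y_{5}=S$. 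Either way,
\[
\prod_{i=1}^{D_{1}}T_{\alpha_{i},\beta_{i}}(S,S)\;=\;S,
\]
which is the desired conclusion with $D_{1}=5D$.

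There is no real obstacle in this deduction: all the serious work is already absorbed into Theorem \ref{T1}. The only thing to notice is that the exponent $3/5$ appearing in (\ref{lambda}) is exactly calibrated so that $t=5$ blocks offset the $l(S)^{2-t}$ slack in Gowers's inequality; any cruder version of Theorem \ref{T1} with a worse exponent would still yield the corollary, only with a larger value of $D_{1}$. Thus the real work lies entirely in the preceding theorem, and this corollary is just a matching of numerics.
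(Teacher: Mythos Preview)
Your proof is correct and essentially identical to the paper's own argument: set $D_{1}=5D$, split into five blocks of length $D$, apply Theorem~\ref{T1} with $X=S^{(2D)}$ to each block, and invoke the Gowers trick with $t=5$. Your separate treatment of the case $l(S)=2$ is a harmless elaboration; the paper simply uses the weaker bound $\lambda\geq l(S)^{-3/5}$ uniformly, which suffices for the Gowers inequality in either case.
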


\begin{proof}
Set $D_{1}=5D$, and divide $\alpha$ and $\beta$ into $5$ $D$-tuples
$\alpha(j),~\beta(j)$. Taking $X=S^{(2D)}$ in the theorem gives $\left\vert
T_{\alpha(j),\beta(j)}(X)\right\vert \geq l(S)^{-3/5}\left\vert S\right\vert $
for $j=1,\ldots,5$. The result now follows by the `Gowers trick', since
$5\times\frac{3}{5}=5-2$.
\end{proof}

\subsubsection{Reductions for Theorem \ref{T1}\label{P-reductions}}

In this subsection, we fix a finite group $S$ and $\lambda\in(0,1]$. For any
$\alpha,\beta\in\mathrm{Aut}(S)^{(D)}$ we consider the statement

\begin{description}
\item[$\mathcal{P}(\alpha,\beta;D,\varepsilon):$] \emph{For }$X\subseteq
S^{(2D)}$,%
\[
\left\vert X\right\vert \geq(1-\varepsilon)\,|S^{(2D)}|\,\Longrightarrow
\left\vert \mathbf{T}_{\alpha,\beta}(X)\right\vert \geq\lambda\left\vert
S\right\vert .
\]

\end{description}

\noindent If $\Gamma$ is a subgroup of $\mathrm{Aut}(S)$, we write%
\[
\mathcal{P}(\Gamma;D,\varepsilon)\Leftrightarrow\mathcal{P}(\alpha
,\beta;D,\varepsilon)~\forall\alpha,\beta\in\Gamma^{(D)}.
\]

Thus Theorem \ref{T1} asserts the existence of $D$ and $\varepsilon$ such that
$\mathcal{P}(\mathrm{Aut}(S);D,\varepsilon)$ holds with $\lambda$ defined by
(\ref{lambda}) for every quasisimple group $S$.

Our aim in the rest of this subsection is to establish the reduction steps
Propositions \ref{obviousredn}, \ref{firstredn} and \ref{DtoCredn}.

\begin{proposition}
\label{obviousredn}If $D_{1}\leq D$ and $\varepsilon_{1}\geq\varepsilon$ then
$\mathcal{P}(\Gamma;D_{1},\varepsilon_{1})$ implies $\mathcal{P}%
(\Gamma;D,\varepsilon)$.
\end{proposition}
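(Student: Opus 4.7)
The plan is a straightforward pigeonhole/fibre argument exploiting the multiplicativity of $\mathbf{T}_{\alpha,\beta}$ in its coordinates. Given $\alpha,\beta\in\Gamma^{(D)}$ and $X\subseteq S^{(2D)}$ with $|X|\geq(1-\varepsilon)|S|^{2D}$, I would write $\alpha=(\alpha',\alpha'')$ and $\beta=(\beta',\beta'')$ where the primed tuples have length $D_1$ and the double-primed tuples have length $D-D_1$, and similarly decompose each $(\mathbf{x},\mathbf{y})\in S^{(2D)}$ as $(\mathbf{x}',\mathbf{y}',\mathbf{x}'',\mathbf{y}'')$ with $\mathbf{x}',\mathbf{y}'\in S^{(D_1)}$ and $\mathbf{x}'',\mathbf{y}''\in S^{(D-D_1)}$. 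Directly from the definition,
\[
\mathbf{T}_{\alpha,\beta}(\mathbf{x},\mathbf{y}) = \mathbf{T}_{\alpha',\beta'}(\mathbf{x}',\mathbf{y}')\cdot\mathbf{T}_{\alpha'',\beta''}(\mathbf{x}'',\mathbf{y}'').
\]

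First I would fix the tail coordinates by averaging. For each $(\mathbf{x}'',\mathbf{y}'')\in S^{(2(D-D_1))}$ let $X_{\mathbf{x}'',\mathbf{y}''} = \{(\mathbf{x}',\mathbf{y}'):(\mathbf{x}',\mathbf{y}',\mathbf{x}'',\mathbf{y}'')\in X\}$. Summing these fibre sizes recovers $|X|$, so by pigeonhole some $(\mathbf{x}_0'',\mathbf{y}_0'')$ satisfies $|X_0|\geq(1-\varepsilon)|S|^{2D_1}$, where $X_0 := X_{\mathbf{x}_0'',\mathbf{y}_0''}$. Since $\varepsilon\leq\varepsilon_1$ we have $(1-\varepsilon)\geq(1-\varepsilon_1)$, so $|X_0|\geq(1-\varepsilon_1)|S|^{2D_1}$, which is exactly the hypothesis needed to apply $\mathcal{P}(\alpha',\beta';D_1,\varepsilon_1)$, yielding $|\mathbf{T}_{\alpha',\beta'}(X_0)|\geq\lambda|S|$.

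I would then conclude by a right-translation: put $c=\mathbf{T}_{\alpha'',\beta''}(\mathbf{x}_0'',\mathbf{y}_0'')$. The image $\mathbf{T}_{\alpha,\beta}(X)$ contains $\mathbf{T}_{\alpha',\beta'}(X_0)\cdot c$, a set of the same cardinality as $\mathbf{T}_{\alpha',\beta'}(X_0)$, hence of size at least $\lambda|S|$. This is the conclusion of $\mathcal{P}(\alpha,\beta;D,\varepsilon)$, and since $\alpha,\beta\in\Gamma^{(D)}$ were arbitrary we deduce $\mathcal{P}(\Gamma;D,\varepsilon)$. There is no real obstacle: the hypothesis $D_1\leq D$ is used once to carve out a $D_1$-block of coordinates, and $\varepsilon_1\geq\varepsilon$ is used once to pass from the averaged fibre density to the density assumed by the given instance of $\mathcal{P}$.
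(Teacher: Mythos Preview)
Your proof is correct and essentially identical to the paper's: both split $\alpha,\beta$ into a $D_1$-block and a $(D-D_1)$-block, use averaging/pigeonhole to find a tail $(\mathbf{x}''_0,\mathbf{y}''_0)$ whose fibre $X_0\subseteq S^{(2D_1)}$ has density at least $1-\varepsilon\geq 1-\varepsilon_1$, apply $\mathcal{P}(\alpha',\beta';D_1,\varepsilon_1)$ to $X_0$, and translate by the fixed tail value. The paper's write-up is slightly terser but the argument is the same step for step.
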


\begin{proof}
If $D_{1}=D$ the claim is obvious. Suppose that $D>D_{1}$. We write%
\[
\mathbf{T}_{\alpha,\beta}(\mathbf{x},\mathbf{y})=\mathbf{T}_{\alpha^{\prime
},\beta^{\prime}}(\mathbf{x}^{\prime},\mathbf{y}^{\prime})\mathbf{T}%
_{\alpha^{\prime\prime},\beta^{\prime\prime}}(\mathbf{x}^{\prime\prime
},\mathbf{y}^{\prime\prime})
\]
where $\mathbf{x}^{\prime}=(x_{1},\ldots,x_{D_{1}}),~\mathbf{x}^{\prime\prime
}=(x_{D_{1}+1},\ldots,x_{D})$ etc. Now if $X\subseteq S^{(2D)}$ satisfies
$\left\vert X\right\vert \geq(1-\varepsilon)\left\vert S^{(2D)}\right\vert $
then there exist $(\mathbf{x}^{\prime\prime},\mathbf{y}^{\prime\prime})\in
S^{2(D-D_{1})}$ and $X_{1}\subseteq S^{(2D_{1})}$ such that $X_{1}%
\times\{(\mathbf{x}^{\prime\prime},\mathbf{y}^{\prime\prime})\}\subseteq X$
and $\left\vert X_{1}\right\vert \geq(1-\varepsilon)\left\vert S^{(2D_{1}%
)}\right\vert $. Then%
\[
T_{\alpha,\beta}(X)\supseteq T_{\alpha^{\prime},\beta^{\prime}}(X_{1}%
)\cdot\mathbf{T}_{\alpha^{\prime\prime},\beta^{\prime\prime}}(\mathbf{x}%
^{\prime\prime},\mathbf{y}^{\prime\prime}),
\]
a set of size at least $\lambda\left\vert S\right\vert $ since $1-\varepsilon
\geq1-\varepsilon_{1}$.
\end{proof}

\begin{proposition}
\label{firstredn}If $\Delta\vartriangleleft\Gamma$ and $\left\vert
\Gamma:\Delta\right\vert \leq n$ then $\mathcal{P}(\Delta;D,\varepsilon)$
implies $\mathcal{P}(\Gamma;n^{2}D,\varepsilon)$.
\end{proposition}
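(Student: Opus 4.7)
The plan is to reduce an instance of $\mathcal{P}(\alpha,\beta;n^{2}D,\varepsilon)$ with $\alpha,\beta\in\Gamma^{(n^{2}D)}$ to an instance of $\mathcal{P}(\alpha',\beta';D,\varepsilon)$ with $\alpha',\beta'\in\Delta^{(D)}$ by carving $D$ privileged coordinates out of the $n^{2}D$ available ones. Concretely, given $X\subseteq S^{(2n^{2}D)}$ of density at least $1-\varepsilon$, I will (i) identify $D$ indices on which $\alpha_i,\beta_i$ have a very restricted coset structure, (ii) fix the remaining coordinates of $X$ so that the surviving slice is still $(1-\varepsilon)$-dense in $S^{(2D)}$, and (iii) rewrite the restricted product as a single $\Delta$-twisted commutator product (up to harmless left-multiplication by a constant), so that the hypothesis $\mathcal{P}(\Delta;D,\varepsilon)$ applies directly.

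For (i), note that each pair $(\alpha_{i}\Delta,\beta_{i}\Delta)$ lives in $(\Gamma/\Delta)^{2}$, a set of size at most $n^{2}$. By pigeonhole there is a subset $I\subseteq\{1,\ldots,n^{2}D\}$ with $|I|=D$ on which the pair is constant, say equal to $(\gamma_{0}\Delta,\gamma_{0}'\Delta)$; so for $i\in I$ one may write $\alpha_{i}=\delta_{i}\gamma_{0}$ and $\beta_{i}=\delta_{i}'\gamma_{0}'$ with $\delta_{i},\delta_{i}'\in\Delta$. For (ii), decompose $S^{(2n^{2}D)}=S^{(2D)}\times S^{(2(n^{2}-1)D)}$ according to $I$ versus its complement; the average density of slices equals the density of $X$, so some choice of values for the non-$I$ coordinates, call them $(\mathbf{z},\mathbf{w})$, yields a slice $X_{0}\subseteq S^{(2D)}$ with $|X_{0}|\ge(1-\varepsilon)|S|^{2D}$.

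For (iii), after substituting $(\mathbf{z},\mathbf{w})$ the product $\mathbf{T}_{\alpha,\beta}(\mathbf{x},\mathbf{y})$ breaks into an interleaving of fixed elements of $S$ and the $D$ variable factors $T_{\alpha_{i},\beta_{i}}(x_{i},y_{i})$ for $i\in I$. Using the identity $T_{\alpha,\beta}(x,y)^{g}=T_{g^{-1}\alpha g,g^{-1}\beta g}(x^{g},y^{g})$ (with $g\in S$, hence the new $\alpha,\beta$ still in $\Gamma$), we can pull all intervening constants to the left at the cost of replacing each $(\alpha_{i},\beta_{i})$ by a conjugate pair $(\tilde\alpha_{j},\tilde\beta_{j})$ and each $(x_{i},y_{i})$ by a bijective image $(\tilde x_{j},\tilde y_{j})$. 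Because $\Delta\vartriangleleft\Gamma$, after absorbing the effect of conjugation into the choice of coset representatives we still have $\tilde\alpha_{j}=\tilde\delta_{j}\gamma_{0}$ and $\tilde\beta_{j}=\tilde\delta_{j}'\gamma_{0}'$ with $\tilde\delta_{j},\tilde\delta_{j}'\in\Delta$. Now apply the bijective change of variables $\tilde x_{j}=\hat x_{j}^{\gamma_{0}^{-1}}$, $\tilde y_{j}=\hat y_{j}^{\gamma_{0}'^{-1}}$ (preserving the density of $X_{0}$); writing $\bar\delta_{j}=\gamma_{0}^{-1}\tilde\delta_{j}\gamma_{0}\in\Delta$ and $\bar\delta_{j}'=\gamma_{0}'^{-1}\tilde\delta_{j}'\gamma_{0}'\in\Delta$, each factor becomes $\hat x_{j}^{-\gamma_{0}^{-1}}\hat y_{j}^{-\gamma_{0}'^{-1}}\hat x_{j}^{\bar\delta_{j}}\hat y_{j}^{\bar\delta_{j}'}$. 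Using the decomposition $T_{\alpha,\beta}(x,y)=[x,y][x,\alpha]^{y}[y,\beta]$ together with the commutator identity $[x,\bar\delta\gamma_0]=[x,\gamma_{0}][x,\bar\delta]^{\gamma_{0}}$, the $\gamma_{0}$-dependence in each factor is exposed as a commutator with the fixed element $\gamma_{0}$, which can be pushed out to the left as a single constant independent of $j$; the remaining product is exactly $\mathbf{T}_{\bar{\boldsymbol\delta},\bar{\boldsymbol\delta}'}(\hat{\mathbf{x}},\hat{\mathbf{y}})$ with $\bar{\boldsymbol\delta},\bar{\boldsymbol\delta}'\in\Delta^{(D)}$. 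Applying $\mathcal{P}(\Delta;D,\varepsilon)$ to this product on the image $\hat X_{0}$ of $X_{0}$ under the bijection gives the desired bound $|\mathbf{T}_{\alpha,\beta}(X)|\ge\lambda|S|$.

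The main obstacle is the algebraic step (iii): pulling through the coset representatives $\gamma_{0},\gamma_{0}'$ into a clean single constant, rather than producing $D$ distinct correction terms that would spoil the structure of a $\Delta$-twisted commutator product. The factor $n^{2}$ in the conclusion (versus the naive $n$) is exactly the price of this double reduction — one factor of $n$ for the coset of the $\alpha$-component, one for the $\beta$-component — and shows up in the pigeonhole of step (i).
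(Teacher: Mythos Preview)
Your steps (i) and (ii) are fine, but step (iii) contains a genuine gap. The factor you propose to ``push out to the left as a single constant'' is $[x_{j},\gamma_{0}]=x_{j}^{-1}x_{j}^{\gamma_{0}}$, and this is \emph{not} a constant: it depends on the variable $x_{j}$, and there is one such term for each $j=1,\ldots,D$. More fundamentally, no bijective change of variables in a single factor $T_{\alpha,\beta}(x,y)=x^{-1}y^{-1}x^{\alpha}y^{\beta}$ can alter the coset $\alpha\Delta$: any substitution $x\mapsto a\hat{x}^{\sigma}b$ turns the pair of ``exponents'' $\{1,\alpha\}$ into $\{\sigma,\sigma\alpha\}$ (up to inner automorphisms), and the ratio $\sigma^{-1}\cdot\sigma\alpha=\alpha$ is unchanged. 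So you cannot convert $T_{\delta_{j}\gamma_{0},\,\delta_{j}'\gamma_{0}'}$ into a $\Delta$-twisted commutator by working one factor at a time; your pigeonhole on individual cosets $(\alpha_{i}\Delta,\beta_{i}\Delta)$ is therefore not enough.

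The paper's route is quite different and explains where the $n^{2}$ really comes from. The key identity (Lemma~\ref{T...T}) shows that a product of two adjacent twisted commutators can be rewritten, via a bijective change of variables, as a single twisted commutator whose first automorphism is the \emph{product} $\alpha_{1}\alpha_{2}$ (and similarly one can multiply the $\beta$'s). Given $n$ consecutive factors, pigeonhole on the partial products $\alpha_{1},\alpha_{1}\alpha_{2},\ldots,\alpha_{1}\cdots\alpha_{n}$ in $\Gamma/\Delta$ forces some contiguous product $\alpha_{i}\cdots\alpha_{j}$ to lie in $\Delta$; iterating Lemma~\ref{T...T} then produces one factor with first automorphism in $\Delta$. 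A second pass of length $n$ does the same for the $\beta$'s. Thus $n^{2}$ consecutive $\Gamma$-factors yield one genuine $\Delta$-factor (Proposition~\ref{bigred}), and applying this in $D$ blocks gives the claimed reduction. The moral: you must \emph{multiply} twisted commutators to move the automorphisms into $\Delta$; you cannot do it by substitution inside a single factor.
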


This is a little more complicated. It will follow from

\begin{proposition}
\label{bigred}Let $\Delta$ be a normal subgroup of index $n$ in $\Gamma$, and
let $\alpha,~\beta\in\Gamma^{(n^{2}D)}$. Then there exist $\overline{\alpha
},~\overline{\beta}\in\Delta^{(D)}$ and a bijection $\pi:S^{(n^{2}%
D)}\rightarrow S^{((2n^{2}-2)D)}\times S^{(2D)}$ such that, for each
$\mathbf{x\in}S^{(n^{2}D)}$,%
\[%
{\displaystyle\prod\limits_{i=1}^{n^{2}D}}
T_{\alpha_{i},\beta_{i}}(x_{2i-1},x_{2i})=%
{\displaystyle\prod\limits_{i=1}^{D}}
T_{\overline{\alpha}_{i},\overline{\beta}_{i}}(\widehat{x}_{2i-1},\widehat
{x}_{2i})\cdot R(\widetilde{x}),
\]
where $(\widetilde{x},\widehat{x})=\mathbf{x}\pi$ and $R(\widetilde{x})$
depends only on $\widetilde{x}$.
\end{proposition}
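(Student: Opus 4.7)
The plan is to reduce each $\Gamma$-twisted commutator to a $\Delta$-twisted commutator by a local identity, and then invoke the combinatorial result from Section 8 of \cite{NS} (the one the authors have already flagged as being quoted in this section) to collapse each block of $n^2$ twisted commutators into a single $\Delta$-twisted commutator plus a residue.

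First, fix a transversal $g_1,\ldots,g_n$ for $\Delta$ in $\Gamma$ and, for each $i\in[n^2D]$, write $\alpha_i=g_{k(i)}a_i$ and $\beta_i=g_{l(i)}b_i$ with $a_i,b_i\in\Delta$ (using normality of $\Delta$). A direct computation gives the identities
\begin{align*}
T_{g\alpha',\beta}(x,y) &= [x,g]\cdot T_{\alpha',\beta}(x^{g},y),\\
T_{\alpha,g\beta'}(x,y) &= [y,g]^{x}\cdot T_{\alpha,\beta'}(x,y^{g}),
\end{align*}
valid for $g\in\Gamma$ and $\alpha',\beta'\in\Delta$. Applying these identities in tandem to each factor, via the coordinate-wise bijection $(x_{2i-1},x_{2i})\longmapsto(u_i,v_i):=(x_{2i-1}^{g_{k(i)}},x_{2i}^{g_{l(i)}})$, rewrites $T_{\alpha_i,\beta_i}(x_{2i-1},x_{2i})$ as a product of ``correction commutators'' (depending only on $u_i,v_i$ and the fixed $g_k$'s) times the $\Delta$-twisted commutator $T_{a_i,b_i}(u_i,v_i)$.

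Next I would partition the $n^2D$ factors into $D$ consecutive blocks of $n^2$ factors each, involving disjoint sets of $2n^2$ variables per block, and apply a single-block reduction to each block: a product of $n^2$ $\Delta$-twisted commutators interleaved with the corrections above can, after a further bijective change of variables on the $2n^2$ block-coordinates, be written as a single $\Delta$-twisted commutator $T_{\overline{\alpha}_i,\overline{\beta}_i}(\widehat{x}_{2i-1},\widehat{x}_{2i})$ in two new variables times a function $R_i$ of the remaining $2n^2-2$ variables. Concatenating the block-level bijections yields the required $\pi$; the per-block commutators assemble into $\prod_{i=1}^{D}T_{\overline{\alpha}_i,\overline{\beta}_i}(\widehat{x}_{2i-1},\widehat{x}_{2i})$, and the residues multiply into $R(\widetilde{x})$.

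The main obstacle is justifying the single-block reduction. The subtlety is that the correction commutators produced in Step 1 depend on the same coordinates $(u_i,v_i)$ as the $\Delta$-twisted commutators they accompany; thus one cannot simply freeze $2n^2-2$ of the pairs $(u_i,v_i)$ to form $R$ and leave one pair as $(\widehat{x}_1,\widehat{x}_2)$, because the corrections attached to the ``live'' pair interfere with the desired twisted-commutator form. Disposing of them requires a bijective redefinition of $(\widehat{x}_1,\widehat{x}_2)$ as suitable functions of the live coordinates (and possibly auxiliary block variables) so that the residual corrections are absorbed either into the new $\Delta$-twisted commutator or into $R$; this is precisely the combinatorial bookkeeping carried out in Section~8 of \cite{NS}, which I would invoke directly. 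The quadratic ``compression factor'' $n^2$ reflects the fact that both the $\alpha$- and $\beta$-slots of a $\Gamma$-twisted commutator can independently occupy any of $n$ cosets of $\Delta$, so $n^2$ is exactly the room needed for the bookkeeping machinery to dispose of every possible coset-type combination.
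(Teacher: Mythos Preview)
Your proposal has a genuine gap at the single-block reduction, and the appeal to Section~8 of \cite{NS} does not rescue it. That section (specifically Proposition~8.4, the result quoted later in this paper) is a purely combinatorial statement about rearranging a long product, labelled by cycles of a permutation action, into a form exhibiting $D$ disjoint twisted-commutator patterns; it has nothing to do with passing from $\Gamma$-twisted commutators to $\Delta$-twisted ones, and it is not invoked anywhere in the paper's proof of this proposition. After your Step~1 you have $n^{2}D$ $\Delta$-twisted commutators interleaved with corrections that depend on the same live variables; you offer no mechanism to collapse $n^{2}$ of these to a single $T_{\overline{\alpha},\overline{\beta}}$, and no cited result supplies one.

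The idea you are missing is a \emph{merging} identity rather than a decomposition identity. The paper establishes (Lemma~\ref{T...T}) that
\[
T_{\alpha_{1},\beta_{1}}(x_{1},x_{2})\,T_{\alpha_{2},\beta_{2}}(x_{3},x_{4})
= A(x\nu)\cdot T_{\alpha_{1}\alpha_{2},\beta_{2}}(x\mu)\cdot B(x\nu)
\]
via a bijective change of variables, so that two consecutive twisted commutators fuse into one whose $\alpha$-slot carries the \emph{product} $\alpha_{1}\alpha_{2}$ (and a dual identity puts $\beta_{1}\beta_{2}$ in the $\beta$-slot). Now pigeonhole on the $n{+}1$ partial products $1,\alpha_{1},\alpha_{1}\alpha_{2},\ldots$ in $\Gamma/\Delta$ produces $i\le j\le n$ with $\alpha_{i}\cdots\alpha_{j}\in\Delta$; iterating the merging lemma (and the conjugation Lemma~\ref{T-conj} to move the $A,B$ residues past) collapses each block of $n$ factors to a single twisted commutator with $\alpha$-entry in $\Delta$. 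A second pass of the same argument on the $\beta$-entries, again in blocks of $n$, then lands both entries in $\Delta$. That is precisely why the compression factor is $n\cdot n=n^{2}$: not because of $n^{2}$ coset-types to be handled individually, but because two independent pigeonhole arguments, each costing a factor $n$, are applied in series. Your coset-decomposition identities $T_{g\alpha',\beta}(x,y)=[x,g]\,T_{\alpha',\beta}(x^{g},y)$ are correct but lead nowhere without a merging step, since they leave you with the same number of twisted commutators you started with.
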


Accepting this for now we deduce Proposition \ref{firstredn}. Let
$\varepsilon\in(0,1)$ and suppose that $W\subseteq S^{(2n^{2}D)}$ satisfies
$\left\vert W\right\vert \geq(1-\varepsilon)|S^{(2n^{2}D)}|$. Then $\left\vert
W\pi\right\vert =\left\vert W\right\vert $; so for at least one element $u\in
S^{((2n^{2}-2)D)}$ the set%
\[
Y_{u}:=\left\{  y\in S^{(2D)}\mid(u,y)\in W\pi\right\}
\]
satisfies $\left\vert Y_{u}\right\vert \geq(1-\varepsilon)\left\vert
S^{(2D)}\right\vert $. Then%
\[
\mathbf{T}_{\alpha,\beta}(W)\supseteq\mathbf{T}_{\overline{\alpha}%
,\overline{\beta}}(Y_{u})\cdot R(u).
\]
If $\mathcal{P}(\Delta;D,\varepsilon)$ holds then $|\mathbf{T}_{\overline
{\alpha},\overline{\beta}}(Y_{u})|\,\geq\lambda\left\vert S\right\vert $, and
so $\left\vert \mathbf{T}_{\alpha,\beta}(W)\right\vert \geq\lambda\left\vert
S\right\vert $. Thus $\mathcal{P}(\Gamma;n^{2}D,\varepsilon)$ holds as claimed.

\bigskip

Now we embark on the proof of Proposition \ref{bigred}.

\begin{lemma}
\label{T...T}%
\begin{align*}
T_{a_{1},\beta_{1}}(x_{1},x_{2})T_{a_{2},\beta_{2}}(x_{3},x_{4})  &
=A(x\nu)\cdot T_{a_{1}\alpha_{2},\beta_{2}}(x\mu)\cdot B(x\nu)\\
&  =C(x\tau)\cdot T_{a_{1},\beta_{1}\beta_{2}}(x\sigma)
\end{align*}
where $x\mapsto(x\mu,x\nu)$ and $x\mapsto(x\sigma,x\tau)$ are bijections from
$S^{(4)}$ to $S^{(2)}\times S^{(2)}$.
\end{lemma}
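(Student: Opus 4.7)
Both identities will be established by direct calculation from the definition $T_{\alpha,\beta}(x,y)=x^{-1}y^{-1}x^{\alpha}y^{\beta}$. The common strategy is to exhibit a bijective reparametrization of $S^{(4)}$ together with a short ``shift'' identity for the twisted commutator that combines the two automorphism exponents into a single one.

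For the first equality, I would start from the identity
\[
T_{\alpha_2,\beta_2}(x_3,x_4)=[x_3,\alpha_1^{-1}]\cdot T_{\alpha_1\alpha_2,\beta_2}(x_3^{\alpha_1^{-1}},x_4),
\]
which follows by inserting $x_3^{\alpha_1^{-1}}x_3^{-\alpha_1^{-1}}=1$ after the initial $x_3^{-1}$ in the expansion of $T_{\alpha_2,\beta_2}$ and using $(x_3^{\alpha_1^{-1}})^{\alpha_1\alpha_2}=x_3^{\alpha_2}$. Left-multiplication by $T_{\alpha_1,\beta_1}(x_1,x_2)$ then yields
\[
T_1T_2=T_{\alpha_1,\beta_1}(x_1,x_2)\cdot[x_3,\alpha_1^{-1}]\cdot T_{\alpha_1\alpha_2,\beta_2}(x_3^{\alpha_1^{-1}},x_4).
\]
The remaining task is to cast the left factor $T_{\alpha_1,\beta_1}(x_1,x_2)\cdot[x_3,\alpha_1^{-1}]$ into the form $A(x\nu)\cdot T_{\alpha_1\alpha_2,\beta_2}(x\mu)\cdot B(x\nu)\cdot T_{\alpha_1\alpha_2,\beta_2}(x\mu)^{-1}$. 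I would take $x\mu=(x_3^{\alpha_1^{-1}},x_4)$ and choose $x\nu$ by a coordinate change that folds the $x_3$-data into a shift of the $(x_1,x_2)$-coordinates, so that the resulting $A$ and $B$ are genuine functions of $x\nu$ alone.

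For the second equality, the dual observation is that setting $y=x_3^{\alpha_2\alpha_1^{-1}}$ and $z=x_4^{\beta_1^{-1}}$ produces a twisted commutator
\[
T_{\alpha_1,\beta_1\beta_2}(y,z)=y^{-1}z^{-1}y^{\alpha_1}z^{\beta_1\beta_2}=x_3^{-\alpha_2\alpha_1^{-1}}x_4^{-\beta_1^{-1}}x_3^{\alpha_2}x_4^{\beta_2},
\]
whose last two factors match the tail $x_3^{\alpha_2}x_4^{\beta_2}$ of $T_1T_2$. Taking $x\sigma=(y,z)$ and $x\tau$ bijective with the complementary data, the residual $C$ on the left arises by cancellation; after an analogous bijective reparametrization it reduces to a function of $x\tau$ alone.

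The main obstacle is not the algebra but the combinatorial bookkeeping: one must verify that the proposed coordinate changes $x\mapsto(x\mu,x\nu)$ and $x\mapsto(x\sigma,x\tau)$ are genuine bijections $S^{(4)}\to S^{(2)}\times S^{(2)}$, and that the noise functions genuinely depend only on the stated 2-tuples. Since each intermediate transformation is left- or right-multiplication (or conjugation) by a group element, and hence a bijection of $S$, invertibility of the whole reparametrization is automatic; the identity then reduces to a direct expansion comparing both sides factor by factor. This is presumably what the author has in mind when treating such a lemma as a ``short direct calculation''.
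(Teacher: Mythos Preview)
Your general strategy (direct expansion plus a bijective reparametrization) is exactly the paper's, but your specific choices of $x\mu$ and $x\sigma$ do not work, and this is not just bookkeeping.

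Take the first identity. With $x\mu=(x_3^{\alpha_1^{-1}},x_4)$, bijectivity forces $x\nu$ to be (some bijection of) $(x_1,x_2)$. You then need
\[
T_{\alpha_1,\beta_1}(x_1,x_2)\cdot[x_3,\alpha_1^{-1}]
= x_1^{-1}x_2^{-1}x_1^{\alpha_1}x_2^{\beta_1}x_3^{-1}x_3^{\alpha_1^{-1}}
\]
to equal $A(x\nu)\cdot T_{\alpha_1\alpha_2,\beta_2}(x\mu)\cdot B(x\nu)\cdot T_{\alpha_1\alpha_2,\beta_2}(x\mu)^{-1}$. But the left side is independent of $x_4$, while $T_{\alpha_1\alpha_2,\beta_2}(x\mu)$ involves $x_4$; this forces $B(x\nu)=1$, and then $A(x\nu)$ must equal the left side, which depends on $x_3$ --- a variable you have already committed entirely to $x\mu$. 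The same problem recurs for the second identity: with $x\sigma=(x_3^{\alpha_2\alpha_1^{-1}},x_4^{\beta_1^{-1}})$, the residual $C=T_1T_2\cdot T_{\alpha_1,\beta_1\beta_2}(x\sigma)^{-1}$ still contains $x_3^{-1}x_4^{-1}x_4^{\beta_1^{-1}}x_3^{\alpha_2\alpha_1^{-1}}$, so it cannot be a function of $x\tau\approx(x_1,x_2)$ alone.

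The paper's fix is to let the ``noise'' variables carry the cross-dependence. For the first identity it sets $z=x_3(x_1^{\alpha_1}x_2^{\beta_1})^{-1}$, $w=z^{-\alpha_2}x_4zx_2$, $t=x_1x_2^{\beta_1\alpha_1^{-1}}z^{\alpha_2\beta_2\alpha_2^{-1}\alpha_1^{-1}}$, and takes $x\mu=(t,w)$, $x\nu=(x_2,z)$, with $A(y,z)=(y^{\beta_1}z^{\alpha_2\beta_2\alpha_2^{-1}})^{\alpha_1^{-1}}$ and $B(y,z)=(yz)^{-\beta_2}$. For the second it takes $u=x_2^{\beta_1}x_3^{-1}x_4^{-1}$, $v=x_1u^{\alpha_1^{-1}}x_3^{\alpha_2\alpha_1^{-1}}u^{-\beta_2\alpha_1^{-1}}$, and sets $x\sigma=(v,x_2)$, $x\tau=(u,x_3)$, $C(u,y)=(y^{\alpha_2}u^{-\beta_2})^{\alpha_1^{-1}}$. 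The point is that $x\mu$ and $x\sigma$ are \emph{not} simple reparametrizations of $(x_3,x_4)$; they genuinely mix in $x_1,x_2$, and correspondingly $x\nu,x\tau$ absorb pieces of $x_3,x_4$. Only with such a mixed change of variables do the flanking factors become functions of the two-tuple alone.
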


\begin{proof}
Take $z=x_{3}(x_{1}^{\alpha_{1}}x_{2}^{\beta_{1}})^{-1},~w=z^{-\alpha_{2}%
}x_{4}zx_{2}$ and $t=x_{1}x_{2}^{\beta_{1}\alpha_{1}^{-1}}z^{\alpha_{2}%
\beta_{2}\alpha_{2}^{-1}\alpha_{1}^{-1}}$ and set%
\begin{align*}
x\mu &  =(t,w),~~x\nu=(x_{2},z)\\
A(y,z)  &  =(y^{\beta_{1}}z^{\alpha_{2}\beta_{2}\alpha_{2}^{-1}})^{\alpha
_{1}^{-1}},~B(y,z)=(yz)^{-\beta_{2}}.
\end{align*}
Take $u=x_{2}^{\beta_{1}}x_{3}^{-1}x_{4}^{-1},~=x_{1}u^{\alpha_{1}^{-1}}%
x_{3}^{\alpha_{2}\alpha_{1}^{-1}}u^{-\beta_{2}\alpha_{1}^{-1}}$ and set%
\begin{align*}
x\sigma &  =(v,x_{2}),~~x\tau=(u,x_{3})\\
C(u,y)  &  =(y^{\alpha_{2}}u^{-\beta_{2}})^{\alpha_{1}^{-1}.}%
\end{align*}

\end{proof}

\begin{lemma}
\label{T-conj}%
\[
zT_{\alpha,\beta}(x,y)=T_{\alpha,\beta}(x^{\prime},y^{\prime})z^{-\gamma}%
\]
where $x^{\prime}=z^{\alpha\beta^{-1}\alpha^{-1}}xz^{-1}$, $y^{\prime
}=z^{\alpha\beta^{-1}}yz^{-\alpha\beta^{-1}\alpha^{-1}}$ and $\gamma
=[\alpha^{-1},\beta]$.
\end{lemma}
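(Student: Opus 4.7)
The plan is to verify the identity by direct expansion of $T_{\alpha,\beta}(x',y')$; the formulas for $x'$ and $y'$ are reverse-engineered precisely so that all the ``internal'' occurrences of $z$ (and its images under various automorphisms) telescope away, leaving only the outer $z$ on one side and the single factor $z^{-\gamma}$ on the other.

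I would first compute the four ingredients of $T_{\alpha,\beta}(x',y') = (x')^{-1} (y')^{-1} (x')^{\alpha} (y')^{\beta}$. Write $\rho = \alpha\beta^{-1}\alpha^{-1}$ and $\sigma = \alpha\beta^{-1}$, so that $x' = z^{\rho} x z^{-1}$ and $y' = z^{\sigma} y z^{-\rho}$. Taking inverses and applying the automorphisms $\alpha,\beta$ term-by-term yields
\begin{align*}
(x')^{-1} &= z \cdot x^{-1} \cdot z^{-\rho}, \\
(y')^{-1} &= z^{\rho} \cdot y^{-1} \cdot z^{-\sigma}, \\
(x')^{\alpha} &= z^{\rho\alpha} \cdot x^{\alpha} \cdot z^{-\alpha} = z^{\sigma} \cdot x^{\alpha} \cdot z^{-\alpha}, \\
(y')^{\beta} &= z^{\sigma\beta} \cdot y^{\beta} \cdot z^{-\rho\beta} = z^{\alpha} \cdot y^{\beta} \cdot z^{-\gamma},
\end{align*}
using the composition identities $\rho\alpha = \sigma$, $\sigma\beta = \alpha$, and $\rho\beta = \alpha\beta^{-1}\alpha^{-1}\beta = \gamma$ in $\mathrm{Aut}(S)$.

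Multiplying the four factors in order, the tail of each kills the head of the next: $z^{-\rho}$ cancels $z^{\rho}$, then $z^{-\sigma}$ cancels $z^{\sigma}$, then $z^{-\alpha}$ cancels $z^{\alpha}$. What remains is
\[
z \cdot x^{-1} y^{-1} x^{\alpha} y^{\beta} \cdot z^{-\gamma} = z\, T_{\alpha,\beta}(x,y)\, z^{-\gamma},
\]
which, after transposing $z^{-\gamma}$ to the other side, gives the asserted identity.

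There is no real obstacle here; the whole lemma is mechanical once one has guessed the right dressings for $x$ and $y$. The only point requiring care is the composition of automorphisms in the exponents of $z$ --- in particular, the verification that applying $\beta$ to $z^{-\rho}$ produces exactly $z^{-\gamma}$ for $\gamma = [\alpha^{-1},\beta]$, and the consistent handling of the left/right exponential convention so that the three cancellations run cleanly. This explains why the dressings on $y'$ are \emph{not} symmetric to those on $x'$: the asymmetry is exactly what is needed for the interior telescoping.
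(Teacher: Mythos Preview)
Your approach is exactly what the paper intends: this lemma is one of those ``stated without proof'' and meant to be verified by direct calculation, which is precisely what you do. The telescoping computation is correct.

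One small point: your final sentence (``after transposing $z^{-\gamma}$ to the other side, gives the asserted identity'') glosses over a sign. What your computation actually establishes is
\[
T_{\alpha,\beta}(x',y') \;=\; z\,T_{\alpha,\beta}(x,y)\,z^{-\gamma},
\]
which rearranges to $z\,T_{\alpha,\beta}(x,y)=T_{\alpha,\beta}(x',y')\,z^{\gamma}$, not $z^{-\gamma}$. A quick sanity check with $\alpha=\beta=\mathrm{id}$ (so $\gamma=\mathrm{id}$, $x'=zxz^{-1}$, $y'=zyz^{-1}$) confirms your version: then $T_{\alpha,\beta}(x',y')z^{\gamma}=z[x,y]z^{-1}\cdot z=z[x,y]$, whereas the version with $z^{-\gamma}=z^{-1}$ fails. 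So the $z^{-\gamma}$ in the displayed statement is a typo for $z^{\gamma}$; your calculation is right, and in the applications (conjugating $T$-factors past intervening terms) only the \emph{shape} of the identity matters, so nothing downstream is affected.
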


\begin{lemma}
\label{four}Suppose that $\left\vert \Gamma:\Delta\right\vert =2$. Given
$\alpha_{i},~\beta_{i}\in\Gamma$ ($i=1,\ldots,4$), there exist $\gamma
,~\delta\in\Delta$, a bijection $x\mapsto(x^{\ast},\widetilde{x})$ from
$S^{(8)}$ to $S^{(2)}\times S^{(6)}$ and maps $P,~Q:S^{(6)}\rightarrow S$ such
that%
\[%
{\displaystyle\prod\limits_{i=1}^{4}}
T_{\alpha_{i},\beta_{i}}(x_{2i-1},x_{2i})=P(\widetilde{x})T_{\gamma,\delta
}(x^{\ast})Q(\widetilde{x}).
\]

\end{lemma}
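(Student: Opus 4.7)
The plan is to use Lemma \ref{T...T} repeatedly to collapse three of the four $T$-factors into a single central $T_{\gamma,\delta}$, absorbing the remaining (fourth) factor together with all the auxiliary maps $A, B, C$ into the outer factors $P$ and $Q$. Since $\Gamma/\Delta$ is abelian of order $2$, writing $\bar\alpha_i, \bar\beta_i \in \mathbb{F}_2$ for the images of $\alpha_i, \beta_i$, formula~1 of Lemma \ref{T...T} replaces $T_{\alpha_i,\beta_i}T_{\alpha_j,\beta_j}$ by $A \cdot T_{\alpha_i\alpha_j,\beta_j} \cdot B$---summing $\bar\alpha_i$ and $\bar\alpha_j$ while keeping $\bar\beta_j$---and formula~2 is dual. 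Two such merges applied to three chosen factors therefore produce a central $T_{\gamma,\delta}$ whose parities $(\bar\gamma, \bar\delta) \in \mathbb{F}_2^2$ are determined by the combinatorial choice.

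A finite case analysis on the multiset $\{(\bar\alpha_i,\bar\beta_i)\}_{i=1}^{4}$ shows that some such choice realises $(\bar\gamma, \bar\delta) = (0,0)$, i.e.\ $\gamma, \delta \in \Delta$. If some $i$ has $(\bar\alpha_i,\bar\beta_i) = (0,0)$, take $T_i$ itself as the central factor. Otherwise all four types lie in $\{(0,1),(1,0),(1,1)\}$, so by pigeonhole two of them share a type: shared type $(0,1)$ is resolved by formula~2 on that pair, type $(1,0)$ by formula~1, and for shared type $(1,1)$ a further merge with a third $T_k$ (using formula~1 when $\bar\alpha_k = 1$ and formula~2 when $\bar\beta_k = 1$) drives the parities to $(0,0)$. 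An easy enumeration of the residual sub-configurations (four pairs of type $(1,1)$; three of type $(1,1)$ plus one other; two of type $(1,1)$ plus one $(0,1)$ and one $(1,0)$) confirms that in each the required third factor is available.

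To arrive at the stated form $P(\widetilde{x}) T_{\gamma,\delta}(x^*) Q(\widetilde{x})$, any side-factor $A$, $B$, or $C$ that ends up strictly between the central $T_{\gamma,\delta}$ and the outside---and, if the residual fourth $T$-factor happens to sit between the three that were merged, also that factor---is moved across an adjacent $T$ using Lemma \ref{T-conj} (to transpose one $T$-factor past another, apply Lemma \ref{T-conj} once to each of the four variables that make up one of them). The residual fourth $T$-factor is then absorbed into $P$ or $Q$ as a group-element-valued function of its two arguments, and the composite of the bijections furnished by the merges of Lemma \ref{T...T} together with the trivial coordinate changes from Lemma \ref{T-conj} provides the required bijection $S^{(8)} \to S^{(2)} \times S^{(6)}$.

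The main obstacle is bookkeeping. Every individual step is elementary, but one must track carefully---through two nested applications of the bijection of Lemma \ref{T...T} and several applications of Lemma \ref{T-conj}---which two of the eight original coordinates end up as $x^*$; and in each subcase of the parity analysis one must arrange for the three factors chosen for merging to be consecutive in the product, possibly after preliminary transpositions as above. This is a tedious case-by-case verification, not a conceptual difficulty.
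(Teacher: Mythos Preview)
Your approach is correct, but the paper's proof is considerably simpler and avoids the global parity case analysis altogether. The paper proceeds in two stages that treat the $\alpha$-side and the $\beta$-side independently: first, since $|\Gamma:\Delta|=2$, at least one of $\alpha_1,\alpha_2,\alpha_1\alpha_2$ lies in $\Delta$, so a single application of Lemma~\ref{T...T} (or just selecting one of the two factors) gives $T_1T_2=P_1\,T_{\gamma_1,\eta_1}\,Q_1$ with $\gamma_1\in\Delta$; the same trick on $T_3T_4$ gives $\gamma_2\in\Delta$. Then, after using Lemma~\ref{T-conj} to push the intervening scalar $Q_1P_2$ past $T_{\gamma_2,\eta_2}$, the same three-way observation applied to $\eta_1,\eta_2,\eta_1\eta_2$ yields $\delta\in\Delta$, with $\gamma\in\{\gamma_1,\gamma_2\}\subseteq\Delta$ automatically. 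There is never any need to analyse the sixteen parity configurations jointly, nor to transpose a $T$-factor past another: only scalars (functions of $\widetilde x$) are ever moved via Lemma~\ref{T-conj}, and all merges are between already-adjacent $T$-factors.

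Your route works, but a couple of points in your write-up deserve care. First, the opening summary ``collapse three of the four $T$-factors'' does not match your actual case analysis, where sometimes one factor suffices and sometimes two; this is only a wording issue. Second, your description of transposing $T$-factors (``apply Lemma~\ref{T-conj} once to each of the four variables'') is garbled: one application of Lemma~\ref{T-conj}, with $z$ taken to be the entire $T$-factor to be moved, does the job. After such a move, the displaced factor appears as $T_i^{-\gamma}$, which (as you intend) is then absorbed wholesale into $P$ or $Q$ as a function of its two original coordinates; the bijection bookkeeping still goes through because Lemma~\ref{T-conj} changes the arguments of the \emph{stationary} $T$-factor bijectively for each fixed $z$, and $z$ itself is a function of coordinates that end up in $\widetilde x$. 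So your argument is sound, but it is worth noting that the paper's two-stage trick removes the need for these manoeuvres entirely.
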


\begin{proof}
Define%
\[
(\widehat{x}_{1},\widehat{x}_{2},\widetilde{x}_{1},\widetilde{x}_{2}%
;\gamma_{1},\eta_{1})=\left\{
\begin{array}
[c]{cccc}%
(x_{1},x_{2},x_{3},x_{4};\alpha_{1},\beta_{1}) &  & \text{if} & \alpha_{1}%
\in\Delta\\
&  &  & \\
(x_{3},x_{4},x_{1},x_{2};\alpha_{2},\beta_{2}) &  & \text{if} & \alpha_{2}%
\in\Delta\\
&  &  & \\
(x\mu,x\nu;\alpha_{1}\alpha_{2},\beta_{2}) &  & \text{if} & \alpha_{1}%
\alpha_{2}\in\Delta
\end{array}
\right.
\]
(assuming in the 2nd and 3d lines that $\alpha_{1}\notin\Delta$). Then (using
Lemma \ref{T...T} in the 3d case) we see that%
\[
T_{a_{1},\beta_{1}}(x_{1},x_{2})T_{a_{2},\beta_{2}}(x_{3},x_{4})=P_{1}%
(\widetilde{x}_{1},\widetilde{x}_{2})T_{\gamma_{1},\eta_{1}}(\widehat{x}%
_{1},\widehat{x}_{2})Q_{1}(\widetilde{x}_{1},\widetilde{x}_{2})
\]
for suitable maps $P_{1},~Q_{1}$. Note that $\gamma_{1}\in\Delta$ and
$(x_{1},x_{2},x_{3},x_{4})\mapsto(\widehat{x}_{1},\widehat{x}_{2}%
,\widetilde{x}_{1},\widetilde{x}_{2})$ is bijective. Similarly%
\[
T_{a_{3},\beta_{3}}(x_{5},x_{6})T_{a_{4},\beta_{4}}(x_{7},x_{8})=P_{2}%
(\widetilde{x}_{3},\widetilde{x}_{4})T_{\gamma_{2},\eta_{2}}(\widehat{x}%
_{3},\widehat{x}_{4})Q_{2}(\widetilde{x}_{3},\widetilde{x}_{4})
\]
where $\gamma_{2}\in\Delta$ and $(x_{3},x_{4},x_{5},x_{6})\mapsto(\widehat
{x}_{3},\widehat{x}_{4},\widetilde{x}_{3},\widetilde{x}_{4})$ is bijective.

Put $z=Q_{1}(\widetilde{x}_{1},\widetilde{x}_{2})P_{2}(\widetilde{x}%
_{3},\widetilde{x}_{4})$ and set%
\[
\overline{x}_{3}=z^{\gamma_{2}\eta_{2}^{-1}\gamma_{2}^{-1}}\widehat{x}%
_{3}z^{-1},~\overline{x}_{4}=z^{\gamma_{2}\eta_{2}^{-1}}\widehat{x}%
_{3}z^{-\gamma_{2}\eta_{2}^{-1}\gamma_{2}^{-1}}.
\]
Lemma \ref{T-conj} gives%
\[
T_{\gamma_{1},\eta_{1}}(\widehat{x}_{1},\widehat{x}_{2})\cdot z\cdot
T_{\gamma_{2},\eta_{2}}(\widehat{x}_{3},\widehat{x}_{4})=T_{\gamma_{1}%
,\eta_{1}}(\widehat{x}_{1},\widehat{x}_{2})T_{\gamma_{2},\eta_{2}}%
(\overline{x}_{3},\overline{x}_{4})R(\widetilde{x})
\]
where $\widetilde{x}=(\widetilde{x}_{1},\widetilde{x}_{2},\widetilde{x}%
_{3},\widetilde{x}_{4})$. Now we repeat the first procedure, applied to the
second pair of automorphisms $\eta_{1},\eta_{2}$. This gives $\delta\in
\{\eta_{1},\eta_{2},\eta_{1}\eta_{2}\}\cap\Delta$, $\gamma\in\{\gamma
_{1},\gamma_{2}\}$ and a bijection $(\widehat{x}_{1},\widehat{x}_{2}%
,\overline{x}_{3},\overline{x}_{4})\mapsto(x_{1}^{\ast},x_{2}^{\ast
},\widetilde{x}_{5},\widetilde{x}_{6})$ such that%
\[
T_{\gamma_{1},\eta_{1}}(\widehat{x}_{1},\widehat{x}_{2})T_{\gamma_{2},\eta
_{2}}(\overline{x}_{3},\overline{x}_{4})=P_{3}(\widetilde{x}_{5},\widetilde
{x}_{6})T_{\gamma,\delta}(x_{1}^{\ast},x_{2}^{\ast})Q_{3}(\widetilde{x}%
_{5},\widetilde{x}_{6}).
\]
Then%
\[%
{\displaystyle\prod\limits_{i=1}^{4}}
T_{\alpha_{i},\beta_{i}}(x_{2i-1},x_{2i})=PT_{\gamma,\delta}(x_{1}^{\ast
},x_{2}^{\ast})Q
\]
where $P=P_{1}(\widetilde{x}_{1},\widetilde{x}_{2})P_{3}(\widetilde{x}%
_{5},\widetilde{x}_{6})$ and $Q=Q_{3}(\widetilde{x}_{5},\widetilde{x}%
_{6})R(\widetilde{x}_{1},\widetilde{x}_{2},\widetilde{x}_{3},\widetilde{x}%
_{4})$. The result follows.\bigskip
\end{proof}

\textbf{Proof of Proposition \ref{bigred}.} Suppose first that $n=2$\textbf{.
}Write $T_{i}=T_{\alpha_{i},\beta_{i}}(x_{2i-1},x_{2i})$. Grouping these four
at a time and applying the preceding lemma we see that%
\[%
{\displaystyle\prod\limits_{i=1}^{4D}}
T_{i}=%
{\displaystyle\prod\limits_{j=1}^{D}}
P_{j}(y_{j})T_{\gamma_{j},\delta_{j}}(u_{j})Q_{j}(y_{j})
\]
where $\gamma_{j},~\delta_{j}\in D$, $y_{j}\in S^{(6)}$, $u_{j}\in S^{(2)}$
and $(x_{1},\ldots,x_{4D})\mapsto(y_{1},\ldots,y_{D};u_{1},\ldots,u_{D})$ is a
bijection. Using Lemma \ref{T-conj} we now conjugate the factors
$T_{\gamma_{j},\delta_{j}}(u_{j})$ by $z_{j}=(P_{1}^{\lambda_{1j}}Q_{1}%
^{\mu_{1j}}\ldots Q_{j-1}^{\mu_{j-1,j}}P_{j}^{\lambda_{jj}})^{-1}$, for
suitable automorphisms $\lambda_{ij},\mu_{ij}\in\Delta$, to obtain%
\[%
{\displaystyle\prod\limits_{i=1}^{4D}}
T_{i}=%
{\displaystyle\prod\limits_{j=1}^{D}}
T_{\gamma_{j},\delta_{j}}(\widehat{x}_{2j-1},\widehat{x}_{2j})\cdot
R(\widetilde{x})
\]
where $\widetilde{x}=(y_{1},\ldots,y_{D}),$ $R=%
{\textstyle\prod\nolimits_{j=1}^{D}}
P_{j}(y_{j})^{\lambda_{j}}Q_{j}(y_{j})^{\mu_{j}}$ for certain automorphisms
$\lambda_{j},\mu_{j}\in\Delta$, and $\widehat{x}_{2j-1},\widehat{x}_{2j}$ are
obtained from $y_{j}$ by multiplying on the left and right by expressions
depending only on $z_{j}=z_{j}(y_{1},\ldots,y_{D})$. The result follows.

Now we consider the general case where $\left\vert \Gamma:\Delta\right\vert
=n>2$\textbf{.} This follows the same pattern. Suppose first that $D=1$. There
exist $i,j$ with $1\leq i\leq j\leq n$ such that $\gamma=\alpha_{i}%
\alpha_{i+1}\ldots\alpha_{j}\in\Delta$. Using Lemmas \ref{T...T} and
\ref{T-conj} repeatedly we get%
\[%
{\displaystyle\prod\limits_{i=1}^{n}}
T_{i}=P(\widetilde{x})T_{\gamma,\beta_{j}}(x^{\ast})Q(\widetilde{x})
\]
where $x\mapsto(x^{\ast},\widetilde{x})$ is a bijection $S^{(2n)}\rightarrow
S^{(2)}\times S^{(2n-2)}$. Grouping the factors together $n$ at a time and
applying this to each group of $n$ factors we get%
\begin{align*}%
{\displaystyle\prod\limits_{i=1}^{n^{2}}}
T_{i}  &  =%
{\displaystyle\prod\limits_{i=1}^{n}}
P_{i}(\widetilde{x}_{i})T_{\gamma_{i},\beta_{j(i)}}(x_{i}^{\ast}%
)Q_{i}(\widetilde{x}_{i})\\
&  =%
{\displaystyle\prod\limits_{i=1}^{n}}
T_{\gamma_{i},\beta_{j(i)}}(\overline{x}_{i})\cdot R_{1}(x^{\dag}),
\end{align*}
using Lemma \ref{T-conj} for the second step; here $x\mapsto(\overline
{x},x^{\dag})$ is a bijection $S^{(2n^{2})}\rightarrow S^{(2n)}\times
S^{(2n^{2}-2n)}$ and each $\gamma_{i}\in\Delta$.

There exist $k,l$ with $1\leq k\leq l\leq n$ such that $\beta_{j(k)}%
\ldots\beta_{j(l)}=\delta\in\Delta$, and repeating the procedure we get%
\[%
{\displaystyle\prod\limits_{i=1}^{n}}
T_{\gamma_{i},\beta_{j(i)}}(\overline{x}_{i})=T_{\gamma_{k},\delta}(x^{\ddag
})\cdot R_{2}(x^{\sharp})
\]
where $\overline{x}\mapsto(x^{\ddag},x^{\sharp})$ is a bijection
$S^{(2n)}\rightarrow S^{(2)}\times S^{(2n-2)}$. So putting $\gamma=\gamma_{k}$
we have%
\begin{equation}%
{\displaystyle\prod\limits_{i=1}^{n^{2}}}
T_{i}=T_{\gamma,\delta}(x^{\ddag})\cdot R(x^{\dag},x^{\sharp})
\label{n-sqaured}%
\end{equation}
with $\gamma,\delta\in\Delta$ and $x\mapsto(x^{\ddag},x^{\dag},x^{\sharp})$ a
bijection $S^{(n^{2})}\rightarrow S^{2}\times S^{(2n^{2}-2n)}\times
S^{(2n-2)}$.

In the general case where $D>1$ we group the $n^{2}D$ factors $T_{i}$ together
$n^{2}$ at a time, apply (\ref{n-sqaured}) to each product of $n^{2}$ factors,
and then conjugate the resulting terms $T_{\gamma(i),\delta(i)}(x_{i}^{\ddag
})$ by the intervening factors $R$ using Lemma \ref{T-conj} to obtain%
\[%
{\displaystyle\prod\limits_{i=1}^{n^{2}D}}
T_{i}=%
{\displaystyle\prod\limits_{i=1}^{D}}
T_{\gamma(i),\delta(i)}(\widehat{x})\cdot R(\widetilde{x})
\]
for a certain bijection $x\mapsto(\widetilde{x},\widehat{x}):S^{(n^{2}%
D)}\rightarrow S^{((2n^{2}-2)D)}\times S^{(2D)}$.

This completes the proof.

\bigskip

The final reduction step needs the next three lemmas.

\begin{lemma}
\label{sdp}Let $\varepsilon\in(0,1)$. If $Z\subseteq X\times Y$ satisfies
$\left\vert Z\right\vert \geq(1-\varepsilon^{2})\left\vert X\times
Y\right\vert $ then for at least $(1-\varepsilon)\left\vert X\right\vert $
elements $u\in X$ we have $\left\vert Z\cap(\{u\}\times Y)\right\vert
\geq(1-\varepsilon)\left\vert Y\right\vert $.
\end{lemma}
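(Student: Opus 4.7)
The plan is to prove this by a direct averaging (essentially Markov's inequality) argument. Define the ``bad'' set
\[
B = \bigl\{ u \in X \mid |Z \cap (\{u\} \times Y)| < (1-\varepsilon)|Y| \bigr\},
\]
so that the conclusion becomes $|B| \leq \varepsilon|X|$. The goal is to bound $|B|$ by comparing a lower bound for the size of the complement $Z^{c} = (X\times Y)\smallsetminus Z$ coming from $B$ with the global upper bound $|Z^{c}| \leq \varepsilon^{2}|X||Y|$ given by the hypothesis.

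First I would slice $Z^{c}$ according to its first coordinate. For every $u \in B$, by definition $|Z^{c} \cap (\{u\}\times Y)| > \varepsilon|Y|$. Summing over $u \in B$ (and using that the slices are disjoint) gives
\[
|Z^{c}| \;\geq\; \sum_{u \in B} |Z^{c} \cap (\{u\}\times Y)| \;>\; |B|\cdot\varepsilon|Y|.
\]
On the other hand, the hypothesis $|Z| \geq (1-\varepsilon^{2})|X||Y|$ yields $|Z^{c}| \leq \varepsilon^{2}|X||Y|$. Combining the two bounds and dividing by $\varepsilon|Y|$ (nonzero since $\varepsilon \in (0,1)$ and $Y \neq \emptyset$, the latter being trivial otherwise) gives $|B| < \varepsilon|X|$. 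Hence the number of ``good'' $u \in X$ is at least $|X| - |B| \geq (1-\varepsilon)|X|$.

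There is essentially no obstacle here: the lemma is a one-line consequence of Markov's inequality applied to the function $f(u) = |Z^{c} \cap (\{u\}\times Y)|/|Y|$, whose average over $u \in X$ is at most $\varepsilon^{2}$, so the proportion of $u$ for which $f(u) > \varepsilon$ is at most $\varepsilon$. The only minor care needed is with strict vs. weak inequalities, which is harmless.
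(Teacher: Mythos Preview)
Your proof is correct and is essentially the same averaging argument as in the paper: the paper bounds $|Z|$ from above by $(1-\rho)|X|(1-\varepsilon)|Y| + \rho|X||Y|$ (where $\rho|X|$ is the number of good $u$) and compares with the hypothesis, while you equivalently bound $|Z^{c}|$ from below over the bad set $B$ and compare with $|Z^{c}|\leq\varepsilon^{2}|X||Y|$. Both are the same Markov-type count.
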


\begin{proof}
Suppose the number of such elements $u$ is $\rho\left\vert X\right\vert $.
Then%
\[
(1-\varepsilon^{2})\left\vert X\times Y\right\vert \leq(1-\rho)\left\vert
X\right\vert \cdot(1-\varepsilon)\left\vert Y\right\vert +\rho\left\vert
X\right\vert \cdot\left\vert Y\right\vert
\]
whence $\rho\geq1-\varepsilon$.
\end{proof}

\begin{lemma}
\label{T-C}%
\begin{align*}
T_{\alpha,\beta}(x,y)  &  =[x,\alpha y][y,\beta]\\
z[x,\gamma]  &  =[x^{\prime},\gamma]z^{\gamma}%
\end{align*}
where $x^{\prime}=xz^{-1}$.
\end{lemma}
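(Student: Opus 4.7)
The plan is to verify each of the two identities by direct expansion using the definitions: recall that $[a,\beta] = a^{-1}a^{\beta}$ for $\beta \in \mathrm{Aut}(S)$, and $T_{\alpha,\beta}(x,y) = x^{-1}y^{-1}x^{\alpha}y^{\beta}$. Neither identity requires any structural input; each is a one-line manipulation in the semidirect product $S \rtimes \mathrm{Aut}(S)$ (or rather, in a group containing $S$ and acted on by the relevant automorphisms).

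For the first identity, I would interpret $\alpha y$ as the composite operation ``apply $\alpha$, then conjugate by $y$'', so that $x^{\alpha y} = y^{-1}x^{\alpha}y$. Then
\[
[x,\alpha y] = x^{-1}\cdot x^{\alpha y} = x^{-1}y^{-1}x^{\alpha}y,
\]
and multiplying on the right by $[y,\beta] = y^{-1}y^{\beta}$ produces
\[
x^{-1}y^{-1}x^{\alpha}y\cdot y^{-1}y^{\beta} = x^{-1}y^{-1}x^{\alpha}y^{\beta} = T_{\alpha,\beta}(x,y),
\]
giving the claim.

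For the second identity, with $x' = xz^{-1}$, I would compute
\[
[x',\gamma]\,z^{\gamma} = (x')^{-1}(x')^{\gamma}z^{\gamma} = (xz^{-1})^{-1}(xz^{-1})^{\gamma}z^{\gamma} = z\,x^{-1}\,x^{\gamma}z^{-\gamma}\,z^{\gamma} = z\,x^{-1}x^{\gamma} = z[x,\gamma],
\]
as required. There is no real obstacle: both lines are mechanical, and the only thing to be careful about is parsing the mixed symbol ``$\alpha y$'' in the first identity as an operator (automorphism composed with inner automorphism by $y$) rather than a product of group elements. Since the lemma is stated without proof in the excerpt, presumably the authors intend exactly this routine verification.
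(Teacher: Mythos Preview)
Your proof is correct and is exactly the routine verification the paper intends: the lemma is stated without proof, falling under the paper's convention that such lemmas ``can be verified by a short direct calculation.'' Your reading of $\alpha y$ as the composite automorphism (apply $\alpha$, then conjugate by $y$) is the right one, and both expansions are accurate.
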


Recall that for $D$-tuples $x,~\beta,$ we use the notation $\mathbf{c}%
(x,\beta)=%
{\textstyle\prod\nolimits_{i=1}^{D}}
[x_{i},\beta_{i}]$, and $x\cdot\beta=(x_{1}\beta_{1},\ldots,x_{D}\beta_{D})$.

\begin{lemma}
\label{big T-C}There is a bijection $y\mapsto\overline{y}:S^{(D)}\rightarrow
S^{(D)}$, and for each fixed $y\in S^{(D)}$ a bijection $x\mapsto x^{\prime
}:S^{(D)}\rightarrow S^{(D)}$ (depending on $y$), such that%
\[
\mathbf{T}_{\alpha,\beta}(x,y)=\mathbf{c}(x^{\prime},\overline{y}\cdot
\alpha)\cdot h(y),
\]
where $h(y)$ depends only on $y$.
\end{lemma}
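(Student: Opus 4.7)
The plan is to start from the first identity in Lemma \ref{T-C}, namely
\[
T_{\alpha_i,\beta_i}(x_i,y_i) = [x_i,\alpha_i y_i]\,[y_i,\beta_i],
\]
so that
\[
\mathbf{T}_{\alpha,\beta}(x,y) \;=\; \prod_{i=1}^{D} [x_i,\alpha_i y_i]\,[y_i,\beta_i].
\]
The key observation is that each $[y_i,\beta_i]$ depends only on $y$ (since $\beta$ is fixed parameter data), and our goal is an expression of the form $\mathbf{c}(x',\overline{y}\cdot\alpha)\cdot h(y)$. So we should push every $[y_i,\beta_i]$ factor (and its eventual conjugates) rightwards past the $[x_j,\alpha_j y_j]$ factors with larger index, collecting them on the far right as $h(y)$.

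The tool for this is the second identity in Lemma \ref{T-C}: $z[x,\gamma] = [xz^{-1},\gamma]\,z^{\gamma}$, valid for any $z\in S$. Starting from the left end, I set $z_1=[y_1,\beta_1]$ and move it past $[x_2,\alpha_2 y_2]$, which replaces $x_2$ by $x_2 z_1^{-1}$ and produces a trailing $z_1^{\alpha_2 y_2}$. Absorbing $[y_2,\beta_2]$ into $z_2 := z_1^{\alpha_2 y_2}[y_2,\beta_2]$, I then push $z_2$ past $[x_3,\alpha_3 y_3]$, and so on. After $D-1$ steps I obtain
\[
\mathbf{T}_{\alpha,\beta}(x,y) = \Bigl(\prod_{i=1}^{D} [x'_i,\alpha_i y_i]\Bigr) \cdot h(y),
\]
where $x'_1=x_1$, $x'_i = x_i\, z_{i-1}^{-1}$ for $i\ge 2$, the elements $z_{i}$ are defined recursively from the $y_j$ and $\beta_j$, and $h(y)=z_D$. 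For fixed $y$ (and the fixed data $\alpha,\beta$), each $z_{i-1}$ is a fixed element of $S$, so the map $x\mapsto x'$ is right-multiplication componentwise by fixed elements, hence a bijection $S^{(D)}\to S^{(D)}$; moreover $h(y)$ depends only on $y$.

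Finally, I rewrite the exponent $\alpha_i y_i$ in the form required by the statement. Viewing $y_i$ as the inner automorphism it induces, the standard identity $\alpha\, c_y\,\alpha^{-1} = c_{y^{\alpha}}$ in $\mathrm{Aut}(S)$ gives $\alpha_i y_i = y_i^{\alpha_i}\,\alpha_i$. Setting $\overline{y}_i := y_i^{\alpha_i}$, we have $[x'_i,\alpha_i y_i] = [x'_i,\overline{y}_i\cdot\alpha_i]$, and the map $y\mapsto\overline{y}$ is the componentwise application of the bijections $\alpha_i\colon S\to S$, hence a bijection $S^{(D)}\to S^{(D)}$. Therefore
\[
\mathbf{T}_{\alpha,\beta}(x,y) = \mathbf{c}(x',\overline{y}\cdot\alpha)\cdot h(y),
\]
as required.

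The only real obstacle is the bookkeeping in the iterated shuffle: one must verify that after moving $z_{i-1}$ past $[x_i,\alpha_i y_i]$, the new $x'_i$ still depends on $x_i$ only through a right multiplication by a function of $y$ alone (so that the overall $x\mapsto x'$ is bijective for each fixed $y$), and that the residual $z_i$ collected on the right never absorbs any $x_j$. Both facts follow by induction on $i$ from the shape of the identity $z[x,\gamma]=[xz^{-1},\gamma]z^{\gamma}$, which leaves $z$ as a pure function of $y$ at every stage.
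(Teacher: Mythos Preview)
Your argument is essentially the paper's own proof: both start from the factorisation $T_{\alpha_i,\beta_i}(x_i,y_i)=[x_i,\alpha_i y_i][y_i,\beta_i]$ from Lemma~\ref{T-C}, then iteratively shuffle the $y$-only factors to the right using $z[x,\gamma]=[xz^{-1},\gamma]z^{\gamma}$, and finally rewrite $\alpha_i y_i$ as $\overline{y}_i\alpha_i$. The paper records the recursion as $z_1=1$, $z_i=(z_{i-1}[y_{i-1},\beta_{i-1}])^{\alpha_i y_i}$, $x'_i=x_iz_i^{-1}$, which is exactly your shuffle re-indexed.

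One small correction in your final step: under the paper's right-action convention ($g^{\alpha\beta}=(g^{\alpha})^{\beta}$), the conjugation identity in $\mathrm{Aut}(S)$ reads $\alpha\, c_y\,\alpha^{-1}=c_{\,y^{\alpha^{-1}}}$, not $c_{y^{\alpha}}$. Hence $\alpha_i y_i = y_i^{\alpha_i^{-1}}\cdot\alpha_i$, and you should set $\overline{y}_i=y_i^{\alpha_i^{-1}}$ (as the paper does), not $y_i^{\alpha_i}$. This is a harmless slip for the lemma as stated, since either map is a bijection $S^{(D)}\to S^{(D)}$, but only the corrected choice makes the displayed equation literally true.
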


\begin{proof}
Using Lemma \ref{T-C} we get%
\begin{align*}
\mathbf{T}_{\alpha,\beta}(x,y)  &  =%
{\displaystyle\prod\limits_{i=1}^{D}}
[x_{i},\alpha_{i}y_{i}][y_{i},\beta_{i}]\\
&  =%
{\displaystyle\prod\limits_{i=1}^{D}}
[x_{i}^{\prime},\alpha_{i}y_{i}]\cdot z_{D}%
\end{align*}
where $x_{i}^{\prime}=x_{i}z_{i}^{-1}$ and $z_{1}=1,$ $z_{i}=(z_{i-1}%
[y_{i-1},\beta_{i-1}])^{\alpha_{i}y_{i}}$ for $1<i\leq D$. The result follows
on setting $\overline{y}_{i}=y_{i}^{\alpha_{i}^{-1}}.$
\end{proof}

\begin{proposition}
\label{DtoCredn}Let $\alpha,\beta\in\mathrm{Aut}(S)$. Suppose that for each
$Y\subseteq S^{(D)}$ with $\left\vert Y\right\vert \geq(1-\varepsilon
)|S^{(D)}|$ there exists $\mathbf{y}\in Y$ such that%
\begin{equation}
X\subseteq S^{(D)},\left\vert X\right\vert \geq(1-\varepsilon)\,|S^{(D)}%
|\,\Longrightarrow\left\vert \mathbf{c}(X,\mathbf{y}\cdot\alpha)\right\vert
\geq\lambda|S|. \label{Xyformula}%
\end{equation}
Then $\mathcal{P}(\alpha,\beta;D,\varepsilon^{2})$ holds.
\end{proposition}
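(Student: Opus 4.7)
\textbf{Proof plan for Proposition \ref{DtoCredn}.} The natural approach is to write $S^{(2D)} = S^{(D)} \times S^{(D)}$ with the second coordinate playing the role of $\mathbf{y}$, use Lemma \ref{sdp} to slice a large $W \subseteq S^{(2D)}$ into fibres that are still large for many $\mathbf{y}$, then translate twisted commutators into ordinary commutators via Lemma \ref{big T-C}, and finally invoke the hypothesis to find a suitable $\mathbf{y}$.

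More precisely, suppose $W \subseteq S^{(D)} \times S^{(D)}$ satisfies $|W| \geq (1-\varepsilon^{2})|S^{(2D)}|$. By Lemma \ref{sdp}, the set
\[
Y^{\ast} = \bigl\{\mathbf{y} \in S^{(D)} : \bigl|\{\mathbf{x} \in S^{(D)} : (\mathbf{x},\mathbf{y}) \in W\}\bigr| \geq (1-\varepsilon)|S^{(D)}|\bigr\}
\]
has cardinality at least $(1-\varepsilon)|S^{(D)}|$. Now apply Lemma \ref{big T-C}: there is a bijection $\mathbf{y} \mapsto \overline{\mathbf{y}}$ on $S^{(D)}$ and, for each fixed $\mathbf{y}$, a bijection $\mathbf{x} \mapsto \mathbf{x}'$ on $S^{(D)}$ such that
\[
\mathbf{T}_{\alpha,\beta}(\mathbf{x},\mathbf{y}) = \mathbf{c}(\mathbf{x}', \overline{\mathbf{y}}\cdot\alpha)\cdot h(\mathbf{y}).
\]
Since $\mathbf{y} \mapsto \overline{\mathbf{y}}$ is a bijection, the image $\overline{Y^{\ast}} = \{\overline{\mathbf{y}} : \mathbf{y} \in Y^{\ast}\}$ still satisfies $|\overline{Y^{\ast}}| \geq (1-\varepsilon)|S^{(D)}|$, so the hypothesis of the proposition may be applied to $Y = \overline{Y^{\ast}}$. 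This yields some $\overline{\mathbf{y}_{0}} \in \overline{Y^{\ast}}$ (with corresponding preimage $\mathbf{y}_{0} \in Y^{\ast}$) with the property (\ref{Xyformula}).

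Write $X_{0} = \{\mathbf{x}' : \mathbf{x} \in S^{(D)},\ (\mathbf{x},\mathbf{y}_{0}) \in W\}$, where the bijection $\mathbf{x} \mapsto \mathbf{x}'$ is the one attached to $\mathbf{y}_{0}$. Because $\mathbf{y}_{0} \in Y^{\ast}$ and $\mathbf{x} \mapsto \mathbf{x}'$ is bijective, $|X_{0}| \geq (1-\varepsilon)|S^{(D)}|$. Then
\[
\mathbf{T}_{\alpha,\beta}(W) \supseteq \mathbf{c}(X_{0},\overline{\mathbf{y}_{0}}\cdot\alpha)\cdot h(\mathbf{y}_{0}),
\]
and the hypothesis applied to $X = X_{0}$ gives $|\mathbf{c}(X_{0},\overline{\mathbf{y}_{0}}\cdot\alpha)| \geq \lambda|S|$. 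Hence $|\mathbf{T}_{\alpha,\beta}(W)| \geq \lambda|S|$, establishing $\mathcal{P}(\alpha,\beta;D,\varepsilon^{2})$.

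The only conceptual point requiring care, and arguably the ``main obstacle,'' is verifying that the bijections in Lemma \ref{big T-C} carry large sets to large sets at both stages: the global bijection $\mathbf{y} \mapsto \overline{\mathbf{y}}$ so that the hypothesis can be applied to $\overline{Y^{\ast}}$ (rather than $Y^{\ast}$), and the fibrewise bijection $\mathbf{x} \mapsto \mathbf{x}'$ so that the transformed fibre $X_{0}$ is still large. Since both are bijections on $S^{(D)}$, this is automatic, and no further quantitative input beyond Lemmas \ref{sdp} and \ref{big T-C} is needed; the apparent squaring $\varepsilon \mapsto \varepsilon^{2}$ in the conclusion is precisely the cost paid to Lemma \ref{sdp} for reducing a two-variable hypothesis to a one-variable one.
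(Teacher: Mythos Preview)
Your proof is correct and follows essentially the same route as the paper: slice $W$ via Lemma~\ref{sdp}, convert twisted commutators to ordinary ones via Lemma~\ref{big T-C}, and apply the hypothesis. You are in fact slightly more careful than the paper's own proof, which applies the hypothesis directly to the set $Y$ of good second coordinates and then asserts $|\mathbf{c}(X',\overline{\mathbf{y}}\cdot\alpha)|\geq\lambda|S|$ without explicitly noting that one must pass through the bijection $\mathbf{y}\mapsto\overline{\mathbf{y}}$ first; your formulation, applying the hypothesis to $\overline{Y^{\ast}}$ and pulling back, makes this step transparent.
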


Here $\mathbf{c}(X,\mathbf{y}\cdot\alpha)=\left\{  \mathbf{c}(\mathbf{x}%
,\mathbf{y}\cdot\alpha)\mid\mathbf{x}\in X\right\}  $.

$\medskip$

\begin{proof}
Suppose that $W\subseteq S^{(2D)}$ satisfies $\left\vert W\right\vert
\geq(1-\varepsilon^{2})\left\vert S^{(2D)}\right\vert $. Let%
\[
Y=\left\{  \mathbf{y}\in S^{(D)}\mid~~|W\cap(S^{(D)}\times\{\mathbf{y}%
\})|~~\geq(1-\varepsilon)|S^{(D)}|\right\}  .
\]
Lemma \ref{sdp} shows that $\left\vert Y\right\vert \geq(1-\varepsilon
)|S^{(D)}|$, so we can choose $\mathbf{y}\in Y$ so that (\ref{Xyformula})
holds. There exists $X\subseteq S^{(D)}$ with $\left\vert X\right\vert
\geq(1-\varepsilon)|S^{(D)}|$ and $X\times\{\mathbf{y}\}\subseteq W$. Let
$x\mapsto x^{\prime}$ be the bijection $S^{(D)}\rightarrow S^{(D)}$ given in
Lemma \ref{big T-C}. Then%
\[
\mathbf{T}_{\alpha,\beta}(W)\supseteq\mathbf{T}_{\alpha,\beta}(X\times
\{\mathbf{y}\})=\mathbf{c}(X^{\prime},\overline{\mathbf{y}}\cdot\alpha)\cdot
h(\mathbf{y}),
\]
a set of size at least $\lambda\left\vert S\right\vert $.
\end{proof}

\subsubsection{Small groups}

Let $N^{\ast}$ be an upper bound for the orders of quasisimple groups $S$ such
that $l(S)=2$; that $N^{\ast}$ is finite follows from Proposition
\ref{l(simple)} and well-known facts about the alternating groups. We fix a
natural number $N_{0}\geq N^{\ast}$, to be specified later, and denote by
$\overline{\mathcal{S}}$ the class of all quasisimple groups of order less
than $N_{0}$. Set%
\[
N_{1}=\max_{S\in\overline{\mathcal{S}}}~\left\vert \mathrm{Out}(S)\right\vert
.
\]
There is a natural number $\delta_{1}$ such that for each $S\in\overline
{\mathcal{S}}$, every element of $S$ is a product of $\delta_{1}$ commutators
(obviously $\delta_{1}\leq\delta^{\ast}$, given in Proposition \ref{commwidth}%
; in fact we can take $\delta_{1}\leq2$).

Define%
\begin{align*}
\gamma &  :G\times G\rightarrow G\\
\gamma(x,y)  &  =[x,y].
\end{align*}

\begin{lemma}
\label{inn}Let $\alpha,\beta\in\mathrm{Inn}(S)$. Then there exist a bijection
$(x,y)\longmapsto(\overline{x},\overline{y})$ from $S^{(2)}$ to $S^{(2)}$ and
an element $t\in S$ such that%
\[
T_{\alpha,\beta}(x,y)=[\overline{x},\overline{y}]t
\]
for all $x,y\in S$.
\end{lemma}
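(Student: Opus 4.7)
The plan is to write $\alpha = \mathrm{inn}(a)$ and $\beta = \mathrm{inn}(b)$ for some $a, b \in S$ (using the convention $g^h = h^{-1} g h$), so that
\[
T_{\alpha, \beta}(x, y) = x^{-1} y^{-1} a^{-1} x a b^{-1} y b,
\]
and then to produce the required bijection and constant $t$ explicitly. The key idea is to make a single substitution in $y$ and then to recognise the result as a commutator times a constant.

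First I would substitute $y' = ay$ (clearly a bijection of $S$ in $y$ for fixed $a$). A short calculation gives
\[
T_{\alpha,\beta}(x,y) = x^{-1} y'^{-1} x \cdot c \cdot y' \cdot b,
\]
where $c := ab^{-1}a^{-1}$. The effect is to compress the two separate inner twists by $a$ and by $b$ into a single conjugation factor $c$ sitting between $x$ and $y'$, together with a harmless trailing $b$.

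Next I would observe the elementary identity $[cu,\, cvc^{-1}] = u^{-1} v^{-1} u \cdot (cvc^{-1})$, valid for all $u, v \in S$, which follows immediately by expanding the four factors (the two internal $c^{-1}c$'s cancel). Applying it with $u = x$ and $v = y'$ and multiplying on the right by $cb$ gives
\[
[cx,\, cy'c^{-1}] \cdot cb = x^{-1} y'^{-1} x \cdot cy' \cdot b = T_{\alpha,\beta}(x,y).
\]
So I would set $\overline{x} = cx$, $\overline{y} = c \cdot ay \cdot c^{-1} = ab^{-1}\,y\,aba^{-1}$, and $t = cb = ab^{-1}a^{-1}b$.

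Finally, the bijection requirement is immediate: $\overline{x}$ depends only on $x$ (via left multiplication by $c$), and $\overline{y}$ depends only on $y$ (via the bijection $y \mapsto ay$ followed by conjugation by $c$), so $(x,y) \mapsto (\overline{x}, \overline{y})$ is a bijection of $S^{(2)}$. There is no real obstacle in the argument: once the substitution $y' = ay$ is made, everything follows from a direct calculation, and the only moment of ingenuity is in spotting that conjugating $y'$ by $c$ inside the commutator is what absorbs the stray factor of $c$ appearing in the middle of $T$.
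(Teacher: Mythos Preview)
Your proof is correct and in fact yields exactly the same bijection and constant as the paper's proof: since $c=ab^{-1}a^{-1}=tb^{-1}$ and $c^{-1}=aba^{-1}=bt^{-1}$, your $\overline{x}=cx$, $\overline{y}=c(ay)c^{-1}$, $t=cb=[a^{-1},b]$ coincide with the paper's $\overline{x}=tb^{-1}x$, $\overline{y}=tb^{-1}aybt^{-1}$, $t=[\alpha^{-1},\beta]$. The only difference is presentational: you motivate the formulas via the substitution $y'=ay$ and the identity $[cu,cvc^{-1}]=u^{-1}v^{-1}u\cdot cvc^{-1}$, whereas the paper simply writes down the answer and leaves verification to the reader.
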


\begin{proof}
For simplicity, let $\alpha$ and $\beta$ denote also elements of $S$ inducing
the given inner automorphisms. Now define%
\begin{align*}
t  &  =[\alpha^{-1},\beta],\\
(\overline{x},\overline{y})  &  =(t\beta^{-1}x,t\beta^{-1}\alpha y\beta
t^{-1}).
\end{align*}

\end{proof}

\begin{proposition}
\label{Small}If $S\in\overline{\mathcal{S}}$ then $\mathcal{P}(\mathrm{Aut}%
(S);D,\varepsilon)$ holds for $\lambda=1$, with%
\[
D=N_{1}^{2}\delta_{1},~~\varepsilon=N_{0}^{-2\delta_{1}}.
\]

\end{proposition}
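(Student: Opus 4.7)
The proof should combine the two reduction propositions with Lemma \ref{inn} and the commutator-width bound $\delta_1$, exploiting the fact that in the regime $|S| < N_0$ the threshold $(1-\varepsilon)|S|^{2\delta_1}$ differs from $|S|^{2\delta_1}$ by strictly less than $1$.

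The plan is first to establish $\mathcal{P}(\mathrm{Inn}(S);\delta_{1},\varepsilon)$, and then to lift this to $\mathrm{Aut}(S)$ using Propositions \ref{firstredn} and \ref{obviousredn}. For the inner case, fix $\alpha,\beta\in\mathrm{Inn}(S)^{(\delta_{1})}$. Apply Lemma \ref{inn} coordinatewise to rewrite
\[
\mathbf{T}_{\alpha,\beta}(\mathbf{x},\mathbf{y})=\prod_{i=1}^{\delta_{1}}[\overline{x}_{i},\overline{y}_{i}]\,t_{i},
\]
with each $t_{i}$ a fixed element and $(x_{i},y_{i})\mapsto(\overline{x}_{i},\overline{y}_{i})$ a bijection of $S^{(2)}$. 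Iteratively shifting the $t_{i}$'s to the right (using $u\cdot v=u\cdot v^{t^{-1}}\cdot t v$ style rearrangements, i.e. conjugating the later commutators by fixed elements), we obtain a bijection $\Psi:S^{(2\delta_{1})}\to S^{(2\delta_{1})}$ and a fixed $T\in S$ with
\[
\mathbf{T}_{\alpha,\beta}(\mathbf{x},\mathbf{y})=\Bigl(\prod_{i=1}^{\delta_{1}}[x'_{i},y'_{i}]\Bigr)\cdot T,\qquad(\mathbf{x}',\mathbf{y}')=\Psi(\mathbf{x},\mathbf{y}).
\]

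Now the key arithmetic observation: with $\varepsilon=N_{0}^{-2\delta_{1}}$ and $|S|<N_{0}$, one has
\[
\varepsilon\,|S^{(2\delta_{1})}|=\bigl(|S|/N_{0}\bigr)^{2\delta_{1}}<1.
\]
So if $X\subseteq S^{(2\delta_{1})}$ satisfies $|X|\geq(1-\varepsilon)|S^{(2\delta_{1})}|$, the complement has integer cardinality $<1$, i.e.\ $X=S^{(2\delta_{1})}$. By the definition of $\delta_{1}$, every element of $S$ is a product of $\delta_{1}$ commutators, so
\[
\mathbf{T}_{\alpha,\beta}(X)=\Bigl\{\prod_{i=1}^{\delta_{1}}[a_{i},b_{i}]\cdot T\;\Big|\;a_{i},b_{i}\in S\Bigr\}=S\cdot T=S.
\]
This establishes $\mathcal{P}(\mathrm{Inn}(S);\delta_{1},\varepsilon)$ with $\lambda=1$.

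Next, since $|\mathrm{Aut}(S):\mathrm{Inn}(S)|=|\mathrm{Out}(S)|\leq N_{1}$, Proposition \ref{firstredn} (applied with $\Delta=\mathrm{Inn}(S)$, $\Gamma=\mathrm{Aut}(S)$, $n=|\mathrm{Out}(S)|$) upgrades the previous statement to $\mathcal{P}(\mathrm{Aut}(S);n^{2}\delta_{1},\varepsilon)$, and then Proposition \ref{obviousredn} inflates the parameter $n^{2}\delta_{1}\leq N_{1}^{2}\delta_{1}$ to $D=N_{1}^{2}\delta_{1}$, giving the conclusion. The only technical step is the consolidation of the correction terms $t_{i}$ in the first paragraph, which is a bookkeeping calculation rather than a genuine obstacle; everything else is an immediate application of previously established results together with the integrality trick that drives the smallness of $\varepsilon$.
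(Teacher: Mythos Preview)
Your proof is correct and follows essentially the same approach as the paper: reduce to $\mathcal{P}(\mathrm{Inn}(S);\delta_{1},\varepsilon)$ via Proposition~\ref{firstredn}, observe that the integrality constraint forces $X=S^{(2\delta_{1})}$, and then use Lemma~\ref{inn} together with the commutator-width bound to conclude $\mathbf{T}_{\alpha,\beta}(X)=S$. The paper handles the consolidation of the $t_{i}$ in one line by noting that the set of commutators is normal (so $\prod_{i}\gamma(S\times S)t_{i}=\gamma(S\times S)^{\ast\delta_{1}}\cdot t$), and it applies Proposition~\ref{firstredn} directly with $n=N_{1}$ rather than with $n=|\mathrm{Out}(S)|$ followed by Proposition~\ref{obviousredn}; but these are cosmetic differences.
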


\begin{proof}
In view of Proposition \ref{firstredn}, it will suffice to establish
$\mathcal{P}(\mathrm{Inn}(S);\delta_{1},\varepsilon)$. Let $X\subseteq
S^{(2\delta_{1})}$ satisfy $\left\vert X\right\vert \geq(1-\varepsilon
)\left\vert S\right\vert ^{2\delta_{1}}$; then $X=S^{(2\delta_{1})}$. Let
$\alpha,\beta\in\mathrm{Inn}(S)^{(\delta_{1})}$. Using Lemma \ref{inn} we
obtain%
\begin{align*}
\mathbf{T}_{\alpha,\beta}(X)  &  =%
{\displaystyle\prod\limits_{i=1}^{\delta_{1}}}
T_{\alpha_{i},\beta_{i}}(S,S)\\
&  =%
{\displaystyle\prod\limits_{i=1}^{\delta_{1}}}
\gamma(S\times S)t_{i}=\gamma(S\times S)^{\ast\delta_{1}}\cdot t=S,
\end{align*}
where $t=t_{1}\ldots t_{\delta_{1}}$. The result follows.
\end{proof}

\subsubsection{Inner automorphisms}

The key to this case is a result is due to Garion and Shalev. In \cite{GaSh}
they define for each finite group $G$ the invariant%
\[
\epsilon(G)=\left(  \zeta^{G}(2)-1\right)  ^{1/4},
\]
where $\zeta^{G}(2)=%
{\textstyle\sum}
\chi(1)^{-2}$ summed over irreducible characters $\chi$ of $G$.

\begin{proposition}
\label{gash}\emph{ (\cite{GaSh}, Corollary 1.4(ii))} For $W\subseteq G\times
G$ and $\eta\in(0,1)$,%
\[
\left\vert W\right\vert \geq(1-\eta)\left\vert G\right\vert ^{2}%
\Longrightarrow\left\vert \gamma(W)\right\vert \geq(1-\eta-3\epsilon
(G))\left\vert G\right\vert .
\]

\end{proposition}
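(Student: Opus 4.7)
The plan is to combine the Frobenius commutator-counting formula with a Chebyshev-type inequality that measures the failure of the commutator map to distribute uniformly over $G$. Write $N(g) = |\{(x,y) \in G \times G : [x,y] = g\}|$ and $B = G \times G \setminus W$, so $|B| \leq \eta|G|^{2}$. For any $g \notin \gamma(W)$, every pair $(x,y)$ with $[x,y] = g$ already lies in $B$; summing such contributions will bound $|G \setminus \gamma(W)|$ provided we know that $N(g)$ is close to $|G|$ for most $g$.

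First I would invoke the classical Frobenius identity
\[
\frac{N(g)}{|G|} = \sum_{\chi \in \mathrm{Irr}(G)} \frac{\chi(g)}{\chi(1)},
\]
and apply character orthogonality to compute the $L^{2}$-deviation
\[
\sum_{g \in G} \left(\frac{N(g)}{|G|} - 1\right)^{2} = \sum_{\chi \neq 1} \frac{|G|}{\chi(1)^{2}} = |G|(\zeta^{G}(2)-1) = |G|\,\epsilon(G)^{4}.
\]

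Next, set $\delta = \epsilon(G)$ (assuming $\epsilon(G) \leq 1$, otherwise the claimed inequality is vacuous) and let $A = \{g \in G : N(g) < (1-\delta)|G|\}$. Chebyshev's inequality applied to the identity above gives $|A| \leq |G|\,\epsilon(G)^{4}/\delta^{2} = |G|\,\epsilon(G)^{2}$. For each $g \in G \setminus (\gamma(W) \cup A)$ we have $N(g) \geq (1-\delta)|G|$ and all $N(g)$ pairs mapping to $g$ lie in $B$; summing yields
\[
(1-\delta)|G| \cdot |G \setminus (\gamma(W) \cup A)| \leq \sum_{g} |\{(x,y) \in B : [x,y]=g\}| = |B| \leq \eta|G|^{2},
\]
so $|G \setminus (\gamma(W) \cup A)| \leq \eta|G|/(1-\epsilon(G))$. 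Combining this with the bound on $|A|$ and using $\eta/(1-\epsilon) \leq \eta + 2\epsilon$ (valid for $\eta \leq 1$ and $\epsilon \leq 1/2$) together with $\epsilon(G)^{2} \leq \epsilon(G)$, one obtains $|G \setminus \gamma(W)| \leq (\eta + 2\epsilon(G) + \epsilon(G)^{2})|G| \leq (\eta + 3\epsilon(G))|G|$, as required.

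The only genuinely non-trivial ingredient is the character-theoretic Plancherel computation arising from the Frobenius formula; the rest is a standard Chebyshev-plus-pigeonhole manipulation. I do not anticipate a serious obstacle, since the whole argument is essentially forced once one recognises that $\epsilon(G)^{4}$ is precisely the normalised $L^{2}$-deviation of $N(g)/|G|$ from the constant value $1$.
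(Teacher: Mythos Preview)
Your argument is correct. The paper does not actually prove this proposition; it is quoted directly from Garion--Shalev \cite{GaSh} as Corollary~1.4(ii), so there is no in-paper proof to compare against. That said, your route---Frobenius's formula, the orthogonality computation giving $\sum_g (N(g)/|G|-1)^2 = |G|\,\epsilon(G)^4$, followed by a Chebyshev cut-off---is exactly the approach that the definition of $\epsilon(G)$ is designed to encode, and is essentially how Garion and Shalev obtain the result.

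One small remark on the endgame: you need $\epsilon(G)\le 1/2$ for the inequality $\eta/(1-\epsilon)\le \eta+2\epsilon$, but you only note ``assuming $\epsilon(G)\le 1$''. This is harmless, since whenever $\epsilon(G)\ge 1/3$ the right-hand side $(1-\eta-3\epsilon(G))|G|$ is already non-positive and the statement is vacuous; it would be cleaner to say this explicitly.
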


This is useful in combination with Theorem 1.1 of \cite{LiSh2}, which implies
that $\zeta^{G}(2)\rightarrow1$ as $\left\vert G\right\vert \rightarrow\infty$
when $G$ ranges over quasisimple groups. We may therefore choose $N_{2}%
\in\mathbb{N}$ so that $\epsilon(S)<\frac{1}{24}$ for every quasisimple group
$S$ with $\left\vert S\right\vert \geq N_{2}$.

\begin{proposition}
\label{Inner}Let $S$ be a quasisimple group with $\left\vert S\right\vert \geq
N_{2}$. Then $\mathcal{P}(\mathrm{Inn}(S);1,\frac{1}{8})$ holds with
$\lambda=l(S)^{-3/5}$.
\end{proposition}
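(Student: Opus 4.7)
The plan is to combine Lemma \ref{inn}, which tells us that for inner $\alpha,\beta$ the twisted commutator $T_{\alpha,\beta}$ reduces to a translate of an ordinary commutator, with the Garion--Shalev estimate (Proposition \ref{gash}).

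First, given any $X\subseteq S\times S$ with $|X|\geq(1-\tfrac18)|S|^2=\tfrac78|S|^2$, apply Lemma \ref{inn} to the fixed pair $\alpha,\beta\in\mathrm{Inn}(S)$: there is a bijection $(x,y)\mapsto(\overline{x},\overline{y})$ of $S\times S$ onto itself and an element $t\in S$ such that $T_{\alpha,\beta}(x,y)=[\overline{x},\overline{y}]\,t$ for all $x,y$. Setting $\overline{X}=\{(\overline{x},\overline{y}):(x,y)\in X\}$, the bijectivity yields $|\overline{X}|=|X|\geq\tfrac78|S|^2$ and
\[
\mathbf{T}_{\alpha,\beta}(X)=\gamma(\overline{X})\cdot t,
\]
so $|\mathbf{T}_{\alpha,\beta}(X)|=|\gamma(\overline{X})|$.

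Second, apply Proposition \ref{gash} to $W=\overline{X}$ with $\eta=\tfrac18$. Because $|S|\geq N_2$ we have by the choice of $N_2$ that $\epsilon(S)<\tfrac{1}{24}$, whence
\[
|\gamma(\overline{X})|\geq\bigl(1-\tfrac18-3\epsilon(S)\bigr)|S|>\bigl(1-\tfrac18-\tfrac18\bigr)|S|=\tfrac34\,|S|.
\]

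Finally, to conclude $|\mathbf{T}_{\alpha,\beta}(X)|\geq l(S)^{-3/5}|S|$, it suffices to show $l(S)^{-3/5}\leq\tfrac34$, i.e.\ $l(S)\geq(4/3)^{5/3}$. But $S$ is quasisimple, hence perfect, so $S$ admits no nontrivial one-dimensional $\mathbb{R}$-representation; thus $l(S)\geq 2$, and $2^{-3/5}<\tfrac34$ as required. There is no real obstacle here; the only content is recognising that the symmetric shape of Lemma \ref{inn} lets one transfer an $X$-density hypothesis directly to the classical commutator map $\gamma$, where the Garion--Shalev bound applies off the shelf.
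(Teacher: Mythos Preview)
Your proof is correct and follows essentially the same approach as the paper: reduce via Lemma \ref{inn} to the ordinary commutator map, apply the Garion--Shalev bound (Proposition \ref{gash}) with $\eta=\tfrac18$ and $\epsilon(S)<\tfrac{1}{24}$ to get at least $\tfrac34|S|$, and compare with $l(S)^{-3/5}$ using $l(S)\geq 2$. Your explicit justification that $l(S)\geq 2$ because $S$ is perfect is a welcome addition.
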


\begin{proof}
Let $\alpha,\beta\in\mathrm{Inn}(S)$ and let $X\subseteq S^{(2)}$ satisfy
$\left\vert X\right\vert \geq\frac{7}{8}\left\vert S\right\vert ^{2}$.
According to Lemma \ref{inn}, there exist $t\in S$ and a subset $Y$ of
$S^{(2)}$ with $\left\vert Y\right\vert =\left\vert X\right\vert $ such that
$\mathbf{T}_{\alpha.\beta}(X)=\gamma(Y)t.$

By Proposition \ref{gash} we have%
\[
\left\vert \gamma(W)\right\vert \geq\left(  \frac{7}{8}-3\epsilon(S)\right)
\left\vert S\right\vert \geq\frac{3}{4}\left\vert S\right\vert .
\]
Therefore%
\[
\left\vert \mathbf{T}_{\alpha.\beta}(X)\right\vert =\left\vert \gamma
(Y)\right\vert \geq\left\vert \gamma(W)\right\vert \geq\frac{3}{4}\left\vert
S\right\vert >l(S)^{-3/5}\left\vert S\right\vert
\]
since $l(S)\geq2$.
\end{proof}

\subsubsection{Diagonal automorphisms}

In this subsection and the next, we consider a quasisimple group $S$ of Lie
type, of untwisted rank $r$. This means (\cite{GLS}, Section 2.2) that $S$ is
the group of fixed points of a Steinberg automorphism $\sigma$ of order
$k\in\{1,2,3\}$ of some untwisted Lie type group $S^{\diamondsuit}%
\leq\mathrm{GL}_{d}(q^{k})$ of rank $r$ (where $k=1$ precisely when
$S=S^{\diamondsuit}$ is untwisted). We denote by $D\leq\mathrm{GL}_{d}(q^{k})$
the group of diagonal matrices that induce diagonal automorphisms on $S$. Thus
$S\vartriangleleft SD$ and the restriction to $S$ of the inner automorphisms
of $SD$ is the group $\mathrm{InnDiag}(S)$ of inner-diagonal automorphisms of
$S$. We will use the facts (\emph{loc. cit.} Section 2.5):%
\begin{align*}
\left\vert SD:S\right\vert  &  \leq r+1,\\
\left\vert \mathrm{Z}(SD)\right\vert  &  =\left\vert \mathrm{Z}(S)\right\vert
\leq r+1.
\end{align*}

An abelian subgroup of $S$ consisting of semisimple elements and maximal with
this property will be called a \emph{maximal torus} of $S$ (this is the same
as the intersection with $S$ of a maximal torus in the underlying algebraic
group). The following estimate is easily derived from \cite{C}, Proposition 3.3.5:

\begin{lemma}
\label{torus}The size of a maximal torus of $S$ is at most $(q+1)^{r}$.
\end{lemma}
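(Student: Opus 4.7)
The plan is to invoke the standard parametrization of maximal tori in a finite group of Lie type by Weyl-group elements, together with the explicit order formula of Carter cited in the lemma. Let $\mathbf{G}$ denote the simple algebraic group over $\overline{\mathbb{F}}_p$ of which $S$ is (up to centre) the group of fixed points under a Steinberg endomorphism $F$, let $\mathbf{T}_0\subset\mathbf{G}$ be an $F$-stable maximal torus contained in an $F$-stable Borel subgroup, and let $W=N_{\mathbf{G}}(\mathbf{T}_0)/\mathbf{T}_0$ be the Weyl group. The $S$-conjugacy classes of maximal tori in $S$ correspond bijectively to the $F$-conjugacy classes in $W$; given $w\in W$, write $T_w$ for the corresponding maximal torus, obtained by twisting $\mathbf{T}_0$ by $w$.

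First I would record Carter's formula (Proposition 3.3.5 of \cite{C}): writing the action of $F$ on the cocharacter lattice $X_*(\mathbf{T}_0)\otimes\mathbb{R}\cong\mathbb{R}^r$ as $q\cdot\sigma_0$, where $\sigma_0$ is a finite-order linear map (the ``graph'' part of $F$, of order $k\in\{1,2,3\}$), one has
\[
|T_w| \;=\; \bigl|\det_{\mathbb{R}^r}(q\,w\sigma_0-1)\bigr|.
\]
Next I would bound this determinant. The operator $w\sigma_0$ has finite order, hence all its complex eigenvalues $\zeta_1,\ldots,\zeta_r$ are roots of unity, in particular $|\zeta_i|=1$. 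The eigenvalues of $qw\sigma_0-1$ are the $q\zeta_i-1$, each of absolute value at most $q+1$ by the triangle inequality.

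Therefore
\[
|T_w| \;=\; \Bigl|\prod_{i=1}^{r}(q\zeta_i-1)\Bigr| \;\leq\; \prod_{i=1}^{r}(q+1) \;=\; (q+1)^{r},
\]
which is the desired bound. The only real content is the correct invocation of Carter's order formula in the twisted case; once one has that, the eigenvalue estimate is immediate. There is no serious obstacle: one should merely check that the definition of ``maximal torus of $S$'' used in the paper (an abelian subgroup of semisimple elements, maximal subject to that) really coincides with $\mathbf{T}^F\cap S$ for an $F$-stable maximal torus $\mathbf{T}$ of $\mathbf{G}$, up to a factor contained in the centre, so that the bound $(q+1)^r$ is not disturbed (the centre contributes at most $r+1$, which is absorbed harmlessly, and in any case the cited Carter result applies directly to $\mathbf{T}^F$).
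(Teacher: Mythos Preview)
Your proposal is correct and is precisely the derivation the paper has in mind: the paper does not give a proof at all, merely stating that the estimate ``is easily derived from \cite{C}, Proposition 3.3.5,'' and your argument via Carter's determinant formula and the eigenvalue bound $|q\zeta_i-1|\le q+1$ is exactly that easy derivation. One small remark: your closing parenthetical about the centre contributing an extra factor of at most $r+1$ is unnecessary and slightly misleading---Carter's formula already computes $|\mathbf{T}^F|$, which contains the centre, so the bound $(q+1)^r$ applies directly to the full torus and nothing needs to be absorbed.
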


\begin{proposition}
\label{inndi}There exists $N_{3}\in\mathbb{N}$ such that if $\left\vert
S\right\vert \geq N_{3}$, $r\geq9$ and $q>10$ then $\mathcal{P}%
(\mathrm{InnDiag}(S);8,10^{-3})$ holds with $\lambda=l(S)^{-3/5}$.
\end{proposition}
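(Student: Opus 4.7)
The plan is to apply Proposition \ref{DtoCredn} with $D=8$ and $\varepsilon=10^{-3/2}$ (so $\varepsilon^{2}=10^{-3}$) in order to reduce the twisted-commutator statement to a statement about pure commutators: for each $\alpha\in\mathrm{InnDiag}(S)^{(8)}$ and each $Y\subseteq S^{(8)}$ with $|Y|\geq(1-\varepsilon)|S|^{8}$, we must produce some $\mathbf{y}\in Y$ such that the multi-commutator map $\phi(\mathbf{x}):=\prod_{i=1}^{8}[x_{i},y_{i}\alpha_{i}]$ satisfies $|\phi(X)|\geq l(S)^{-3/5}|S|$ for every $X\subseteq S^{(8)}$ of density at least $1-\varepsilon$.

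To select $\mathbf{y}$, lift each $\alpha_{i}$ to a representative $a_{i}\in SD$; as $y_{i}$ ranges uniformly over $S$ the product $t_{i}:=a_{i}y_{i}$ ranges uniformly over the coset $a_{i}S$ in $SD$. Standard density estimates for finite groups of Lie type guarantee that the proportion of regular semisimple elements in any coset of $S$ in $SD$ is at least $1-c/q$ for an absolute constant $c$; hence the density of $\mathbf{y}\in S^{(8)}$ with every $t_{i}$ regular semisimple exceeds $1-8c/q$, which is greater than $1-\varepsilon$ once $q>10$ and $N_{3}$ is taken sufficiently large. We may therefore choose a $\mathbf{y}\in Y$ with all $t_{i}$ regular semisimple, and then Lemma \ref{torus} applied inside $SD$ yields $|C_{S}(t_{i})|\leq(q+1)^{r}$ (absorbing the factor $|SD{:}S|\leq r+1$ into $N_{3}$).

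For the image-size bound, observe that the push-forward of the uniform measure on $S^{(8)}$ under $\phi$ equals the convolution $\mu_{1}\ast\cdots\ast\mu_{8}$, where $\mu_{i}$ is the uniform distribution on the commutator set $[S,t_{i}]$ of cardinality $\geq|S|/(q+1)^{r}$. If one can show that $\|\mu_{1}\ast\cdots\ast\mu_{8}\|_{\infty}\leq l(S)^{3/5}/|S|$, then $\max_{s}|\phi^{-1}(s)|\leq l(S)^{3/5}|S|^{7}$ and hence
\[
|\phi(X)|\geq\frac{|X|}{\max_{s}|\phi^{-1}(s)|}\geq\frac{(1-\varepsilon)|S|}{l(S)^{3/5}}\geq l(S)^{-3/5}|S|
\]
after absorbing the factor $(1-\varepsilon)$ into $N_{3}$.

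The main obstacle is the quantitative convolution bound in the previous paragraph: Young's inequality yields only $\|\mu_{i}\|_{\infty}\leq(q+1)^{r}/|S|$, far weaker than what is needed. Bridging the gap requires genuine use of the quasirandomness $l(S)\geq(q^{r}-1)/2$ from Proposition \ref{l(simple)}, via a Gowers/Babai--Nikolov--Pyber-type convolution estimate (\emph{cf.\ }\cite{BNP}) or by invoking character-sum bounds attached to the regular semisimple elements $t_{i}$. The specific value $D=8$ and the exponent $3/5$ in $\lambda=l(S)^{-3/5}$ appear to be tuned so that, under the hypotheses $r\geq 9$ and $q>10$, this balance closes.
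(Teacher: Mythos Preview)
Your strategy matches the paper's: reduce via Proposition~\ref{DtoCredn}, pick $\mathbf{y}$ so that each $t_{i}=y_{i}\alpha_{i}$ is regular semisimple, and then bound the fibres of $\phi$ to get a lower bound on $|\phi(X)|$. Two points deserve correction.

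First, the density estimate ``$1-8c/q$'' via a union bound is useless at $q=11$: with $c$ of order $1/3$ the bound is negative. The correct argument (which the paper uses) is multiplicative: Proposition~\ref{reg} gives regular-semisimple density at least $2/3$ in each coset $Sh_i'$, so the set of good $\mathbf{y}$ has density at least $(2/3)^{8}\approx 0.039>10^{-3/2}$, hence meets any $Y$ of density $\geq 1-10^{-3/2}$.

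Second, the Gowers/BNP machinery does not yield the $L^{\infty}$ fibre bound you need; it controls products of sets, not uniformity of convolutions in sup-norm. The paper instead carries out a direct Frobenius-formula computation (Lemma~\ref{char} and Proposition~\ref{qsimple}): writing $\phi$ in terms of products of conjugates $k_{1}^{y_{1}}\cdots k_{8}^{y_{8}}$, the number of representations of each $g$ is $\frac{c_{1}\cdots c_{8}}{|S|}(1+\gamma_{g})$ with $|\gamma_{g}|<\tfrac12$, using $|\chi(k_{i})|\leq\sqrt{|\mathrm{C}_{G}(k_{i})|}\leq(q+1)^{(r+1)/2}$ (torus bound, Lemma~\ref{torus}), nonlinear character degrees $\geq cq^{r}$ (Proposition~\ref{l(simple)}), and the conjugacy-class count of \cite{FG}. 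This gives the sharper fibre bound $|\phi^{-1}(g)|<\tfrac{3}{2}|S|^{7}$, whence $|\phi(X)|\geq\tfrac{1}{6}|S|$; finally $l(S)\geq(11^{9}-1)/2$ forces $\tfrac{1}{6}>l(S)^{-3/5}$. So the character-sum route you mention in passing is the one that works, and the hypotheses $r\geq 9$, $q>10$ are there to make the error term in that computation tend to zero.
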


This will be deduced from the next two results:

\begin{proposition}
\label{reg}\emph{\cite{GL}} If $S$ is a classical group and $h\in D$ then the
number of regular semisimple elements in the coset $Sh$ is at least $\left(
1-\frac{3}{q-1}-\frac{1}{(q-1)^{2}}\right)  \left\vert S\right\vert $, which
exceeds $\frac{2}{3}\left\vert S\right\vert $ if $q>10$.
\end{proposition}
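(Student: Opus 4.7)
The plan is to prove the proposition by counting the complementary set: elements of $Sh$ that fail to be regular semisimple. The guiding principle is that an element $g$ of the ambient algebraic group is regular semisimple iff its characteristic polynomial on the natural module has no repeated irreducible factor (suitably interpreted: for $GL$-type this is separability; for $\mathrm{Sp}$, $\mathrm{O}^{\pm}$ and $U$ one also needs to handle the symmetry $\lambda\mapsto\lambda^{-1}$ or $\lambda\mapsto\lambda^{-q}$ imposed by the defining form). So the task reduces to bounding the density of elements whose characteristic polynomial has a repeated factor, uniformly over the coset $Sh$.

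First I would set up the characteristic-polynomial map $\chi\colon Sh\to\mathbb{F}_{q^k}[t]$ (with $k=1$ in the untwisted cases and $k=2$ for unitary), observing that a diagonal twist $h$ merely shifts the polynomial by a multiplicative constant on the constant term, and hence does not alter the analysis of separability. The relevant algebraic fact, which I would either quote from the structure theory of reductive algebraic groups or reprove by a direct determinant computation, is that the fibres of $\chi$ over separable polynomials (satisfying the type's symmetry) are single regular semisimple classes whose cardinality is $|S|/|T|$ for a maximal torus $T$, and that non-separable fibres are uniformly smaller in a controlled way.

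The core estimate is then an inclusion-exclusion over which irreducible factor is repeated. For each monic irreducible $f\in\mathbb{F}_q[t]$ of degree $d$, the elements of $Sh$ with $f^2\mid\chi_g$ form a subvariety of codimension at least $d$ in the parameter space of characteristic polynomials, whose count is bounded by $|S|\cdot q^{-d}$ up to lower-order symmetry factors; summing the counts of irreducibles of each degree and applying the Gauss-type estimate $\#\{\text{irred. of degree }d\}\le q^d/d$ gives a geometric series in $q^{-1}$ whose sum is $\sum_{d\ge1}d^{-1}q^{-d}=-\log(1-1/q)\le 1/(q-1)+O(1/(q-1)^2)$. The factor $3$ in the stated bound comes from three sources that must be added in the classical case: (a) the ordinary degree-$d$ repeated-factor contribution, (b) the self-reciprocal (or $\sigma$-conjugate) contribution forced by the defining form, which effectively doubles the count when the repeated factor is paired with its image, and (c) the boundary contributions at $t=\pm 1$ (or $t=\pm$ unitary fixed eigenvalues) where symmetry collapses and the codimension drops by one.

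The main obstacle will be executing this argument uniformly across the four families (linear, unitary, symplectic, orthogonal): the twist by $\sigma$ in the unitary case, the palindromy constraint in the symplectic and orthogonal cases, and the extra $\pm 1$ eigenvalue subtleties in the orthogonal case all modify the parameter count slightly, and one must verify case by case that the worst numerical constants are still absorbed in $3/(q-1)+1/(q-1)^2$. For each type, one writes $Sh$ as the $\sigma$-fixed points of the connected algebraic group $\widetilde{S}\cdot h$ (via Lang--Steinberg) and pulls the density estimate down from the algebraic side, where it reduces to a transparent statement about roots not vanishing. Once the worst-case constants are tallied across all four types, the second assertion $q>10\Rightarrow 1-3/(q-1)-1/(q-1)^{-2}>2/3$ is a direct calculation.
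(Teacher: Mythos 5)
There is a genuine gap here, and it is worth noting first that the paper does not prove this proposition at all: it is quoted from Guralnick--L\"ubeck \cite{GL}, with the remark that it follows from the proof given there (which proceeds by bounding the codimension and the number of irreducible components of the subvariety of non-regular-semisimple elements in the relevant coset of the algebraic group, and then counting rational points), not by the characteristic-polynomial combinatorics you propose. Your strategy -- squarefree characteristic polynomial implies regular semisimple, then bound the density of elements whose characteristic polynomial has a repeated irreducible factor -- is the standard alternative route (in the spirit of Neumann--Praeger and Fulman), but as written its central estimate is numerically inconsistent: if each irreducible $f$ of degree $d$ contributed density $q^{-d}$ and there are up to $q^{d}/d$ of them, the resulting series $\sum_{d}\frac{1}{d}$ diverges; to reach $\sum_{d\geq 1}d^{-1}q^{-d}$ you need density roughly $q^{-2d}$ per non-self-conjugate $f$ (because $f^{2}$ forces $\tilde{f}^{2}$ as well) together with the fact that self-reciprocal (or $\sigma$-self-conjugate) irreducibles of degree $d$ number only $O(q^{d/2}/d)$. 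Two further asserted steps would fail as stated: the fibres of the characteristic-polynomial map over non-separable polynomials are not ``uniformly smaller'' (the fibre over $(t-1)^{\dim V}$ is the entire unipotent variety, of the same order of magnitude as a regular semisimple fibre), and a diagonal twist $h$ is not merely a shift of the constant term in the symplectic, orthogonal and unitary cases -- it replaces the eigenvalue symmetry $\lambda\mapsto\lambda^{-1}$ (resp.\ $\lambda\mapsto\lambda^{-q}$) by a similitude-twisted one $\lambda\mapsto\mu\lambda^{-1}$, which changes the self-pairing analysis rather than just one coefficient.

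Most importantly, the explicit constant $1-\frac{3}{q-1}-\frac{1}{(q-1)^{2}}$, uniform in the rank, the type, and the coset $Sh$, is the entire content of the proposition: the application in Proposition \ref{inndi} needs precisely that this exceeds $\frac{2}{3}\left\vert S\right\vert$ for $q>10$. Your sketch defers this to ``tallying the worst-case constants across all four types'', but that tally is exactly where all the work lies (the boundary behaviour at eigenvalues $\pm1$, the split/non-split orthogonal cases, and the low-degree self-reciprocal factors are what generate the $\frac{3}{q-1}$), so the proposal does not yet constitute a proof of the stated bound; it is an outline of a programme whose quantitative heart is missing.
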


\noindent(This follows from the proof of \cite{GL}, though it is not
explicitly stated there in this form.)

\begin{proposition}
\label{qsimple}Assume that $r\geq9$ and $q>10$. Let $h_{1},h_{2},\ldots,h_{8}$
be regular semisimple elements of $SD$, and let $X\subseteq S^{(8)}$ satisfy
$\left\vert X\right\vert \geq\frac{1}{4}\left\vert S\right\vert ^{8}$. Then
provided $\left\vert S\right\vert $ is sufficiently large, the number of
elements $g\in S$ such that
\[
\mathbf{c}(\mathbf{x},\mathbf{h})=\prod_{i=1}^{8}[x_{i},h_{i}]=g
\]
has a solution $\mathbf{x}=(x_{1},\ldots,x_{8})\in X$ is at least $\frac{1}%
{6}|S|$.
\end{proposition}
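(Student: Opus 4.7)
My plan is to estimate the fibre function $N(g) := |\{\mathbf{x} \in S^{(8)} : \prod_{i=1}^8 [x_i, h_i] = g\}|$ via character theory on $S$, showing that it is almost uniform on $S$, and then conclude by a pigeonhole argument.

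For each irreducible representation $\rho$ of $S$ (with character $\chi = \chi_\rho$ and dimension $d_\chi$), let $\alpha_i$ denote the automorphism of $S$ induced by conjugation by $h_i$. By Schur's orthogonality, $\sum_{x \in S}\rho([x,h_i]) = 0$ unless $\rho^{\alpha_i} \cong \rho$ as $S$-representations; when it is, picking a unitary intertwiner $T_i$ with $\rho(x^{\alpha_i}) = T_i\rho(x)T_i^{-1}$, the sum evaluates to $(|S|/d_\chi)\,\mathrm{tr}(T_i)\,T_i^{-1}$, and $\mathrm{tr}(T_i) = \chi'(h_i)$ is the value at $h_i$ of an extension of $\chi$ to $S\langle h_i\rangle$. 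Combining the independence of the $x_i$ with Fourier inversion $\delta_1 = \frac{1}{|S|}\sum_\rho d_\rho\chi_\rho$ on $S$ yields
\[
N(g) \;=\; |S|^7 \sum_{\chi} \frac{\prod_{i=1}^8 \chi'(h_i)}{d_\chi^{7}}\,\mathrm{tr}\!\left(\rho(g^{-1})\,T_1^{-1}\cdots T_8^{-1}\right),
\]
where the sum runs over irreducible $\chi$ simultaneously fixed (up to isomorphism) by every $\alpha_i$. The trivial character contributes exactly $|S|^7$, and it remains to estimate the rest.

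For the error bound, unitarity of the $T_i$ gives $|\mathrm{tr}(\rho(g^{-1})\prod T_j^{-1})| \leq d_\chi$. Since each $h_i$ is regular semisimple in $SD$, its centraliser lies in a maximal torus of $SD$, and Lemma \ref{torus} (together with $|SD:S|\leq r+1$) gives $|C_{SD}(h_i)| \leq (r+1)(q+1)^r$; orthogonality inside $S\langle h_i\rangle$ then bounds $|\chi'(h_i)|^2 \leq (r+1)(q+1)^r$. Using Proposition \ref{l(simple)} to get $d_\chi \geq l(S) \geq (q^r-1)/2$ for $\chi \neq 1$,
\[
\bigl|\,N(g) - |S|^7\bigr| \;\leq\; |S|^7 \cdot \bigl((r+1)(q+1)^r\bigr)^{4}\sum_{\chi \neq 1}\frac{1}{d_\chi^{6}} \;\leq\; |S|^7 \cdot \frac{\bigl((r+1)(q+1)^r\bigr)^{4}}{l(S)^{4}}\bigl(\zeta^S(2)-1\bigr).
\]
The ratio $(q+1)^{4r}/l(S)^{4}$ is controlled essentially by $16(1+1/q)^{4r}$, while $\zeta^S(2)-1 \to 0$ as $|S| \to \infty$ by the Liebeck--Shalev estimate quoted just before Proposition \ref{Inner}. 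Consequently, $\max_g N(g) \leq (1+\eta)|S|^7$ for any prescribed $\eta > 0$, provided $|S|$ is sufficiently large.

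Setting $\eta < 1/2$, the conclusion is a one-line pigeonhole: if $E = \mathbf{c}(X,\mathbf{h})$ satisfied $|E| < |S|/6$, then $X \subseteq \psi^{-1}(E)$ would force $|X| \leq |E|\cdot\max_g N(g) \leq (1+\eta)|S|^8/6 < |S|^8/4$, contradicting the hypothesis. The main obstacle is the outer-automorphism bookkeeping: when $h_i \in SD\setminus S$, one must carry out the Clifford-theoretic description of $\sum_x \rho([x,h_i])$ cleanly in terms of the intertwiners $T_i$ and extended characters $\chi'(h_i)$, with sharp control of $|\chi'(h_i)|$. Once this is set up, the quantitative claim rests on the observation that the at-worst constant-in-$r,q$ factor $(r+1)^4(q+1)^{4r}/l(S)^4$ is overwhelmed by the Witten-zeta decay $\zeta^S(2)-1 = o(1)$ as $|S|$ ranges through the admissible family.
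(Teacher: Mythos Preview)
Your strategy is essentially the paper's: establish near-uniformity of the fibres of $\mathbf{x}\mapsto\prod[x_i,h_i]$ via character theory, then pigeonhole. The paper first rewrites the product of commutators as $k_1^{y_1}\cdots k_8^{y_8}\cdot(k_1\cdots k_8)^{-1}$ and applies the Frobenius class-multiplication formula in $G=SD$ (where $k_i^S=k_i^G$ because centralisers of semisimple elements surject onto $G/S$); you instead stay in $S$ and do the Clifford bookkeeping with intertwiners. These are equivalent, and your formula for $N(g)$ is correct.

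The gap is in the final estimate. You assert that $(r+1)^4(q+1)^{4r}/l(S)^4$ is ``constant-in-$r,q$'', but with $l(S)\sim q^r/2$ this ratio is roughly $16(r+1)^4(1+1/q)^{4r}$, which for fixed $q>10$ grows exponentially in $r$. The bare qualitative statement $\zeta^S(2)-1\to 0$ therefore does not suffice: you need a rate. The paper obtains this by bounding $\sum_{\chi\neq 1}d_\chi^{-6}$ directly as (number of characters)$\cdot l(S)^{-6}$, invoking the Fulman--Guralnick bound $|\mathrm{Irr}(G)|\le 100(r+1)q^r$; the extra factor $l(S)^{-2}\sim q^{-2r}$ then beats $(q+1)^{4r}/q^{4r}$, and using $q+1<q^{1.1}$ for $q>10$ one gets decay of order $(r+1)^2 q^{\,4.4-0.6r}$, which tends to $0$ precisely because $r\ge 9$. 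Your argument is repaired by the same device (or equivalently by inserting the quantitative bound $\zeta^S(2)-1\le |\mathrm{Irr}(S)|\,l(S)^{-2}=O(q^{-r})$), but as written the conclusion does not follow.
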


Before proving this let us deduce Proposition \ref{inndi}. Let $\alpha
,\beta\in\mathrm{InnDiag}(S)^{(8)}$ and let $Y\subseteq S^{(8)}$ satisfy
$\left\vert Y\right\vert \geq(1-(\frac{2}{3})^{8})\left\vert S\right\vert
^{8}$. There exist $c_{i}\in S$ and $h_{i}^{\prime}\in D$ such that
$\alpha_{i}$ is induced by $c_{i}h_{i}^{\prime}$ ($i=1,\ldots,8$). Put
$Y^{\prime}=\{\mathbf{y}\cdot\mathbf{c}\cdot\mathbf{h}^{\prime}\mid
\mathbf{y}\in Y\}$. Then $\left\vert Y^{\prime}\right\vert =\left\vert
Y\right\vert $, so Proposition \ref{reg} ensures that $Y^{\prime}$ contains at
least one element $\mathbf{y}\cdot\mathbf{c}\cdot\mathbf{h}^{\prime}%
=(h_{1},\ldots,h_{8})$ with each $h_{i}$ regular semisimple. Then provided
$\left\vert S\right\vert $ is sufficiently large, Proposition \ref{qsimple}
gives%
\[
\left\vert \mathbf{c}(X,\mathbf{y}\cdot\alpha)\right\vert =\left\vert
\mathbf{c}(X,\mathbf{h})\right\vert \geq\tfrac{1}{6}|S|
\]
whenever $X\subseteq S^{(8)}$ satisfies $\left\vert X\right\vert \geq\frac
{1}{4}\left\vert S\right\vert ^{8}$. Applying Proposition \ref{DtoCredn} we
infer that $\mathcal{P}(\alpha,\beta;8,(\frac{2}{3})^{16})$ holds with
$\lambda=\frac{1}{6}$. Now Proposition \ref{inndi} follows, since $(\frac
{2}{3})^{16}>10^{-3}$ and $l(S)\geq\frac{1}{2}(11^{9}-1)>6^{5/3}$ by
Proposition \ref{l(simple)}.

\bigskip

\textbf{Proof of Proposition \ref{qsimple}.} Relabelling $(h_{1}^{-1}%
,h_{2}^{-h_{1}^{-1}},\ldots,h_{8}^{-(h_{1}\ldots h_{7})^{-1}})$ as
$(k_{1},k_{2},\ldots,k_{8})$ and $(x_{1},x_{2}^{h_{1}^{-1}},\ldots
,x_{8}^{(h_{1}\cdots h_{7})^{-1}})$ as $(y_{1},\ldots,y_{8})$, it will suffice
to prove that the image of the map
\[
f:(y_{1},\ldots,y_{8})\mapsto k_{1}^{y_{1}}k_{2}^{y_{2}}\cdots k_{8}^{y_{8}%
}\cdot(k_{1}\cdots k_{8})^{-1}\in S
\]
has size at least $\frac{1}{6}|S|$ when $(y_{1},\ldots,y_{8})$ ranges over a
subset of $S^{(8)}$ of proportion $\frac{1}{4}$.

Write $G=SD$. We observe that if $g$ is a semisimple element of $S$ then
$\mathrm{C}_{G}(g)$ contains a maximal torus of $G$ and so maps onto $G/S$.
This means that the conjugacy class $g^{S}$ of $g$ in $S$ is the same as the
conjugacy class of $g$ in $G$. Now we count solutions in conjugacy classes of
$G$:

\begin{lemma}
\label{char} Assume that $r\geq9$, $q>10$. Let $\delta>0$ and let
$k_{1},\ldots,k_{8}$ be regular semisimple elements of $G$. Put $c_{i}%
=\left\vert k_{i}^{G}\right\vert $. There is an integer $N_{\delta}$ such that
if $\left\vert S\right\vert \geq N_{\delta}$ then the following holds:

For every $g\in S$ the number of $8$-tuples $(a_{1},\ldots,a_{8})\in k_{1}%
^{G}\times\cdots\times k_{8}^{G}$ such that
\[
a_{1}\ldots a_{8}=gk_{1}\cdots k_{8}%
\]
is
\[
\frac{c_{1}\ldots c_{8}}{|S|}(1+\gamma_{g})~\text{where }|\gamma_{g}|<\delta.
\]

\end{lemma}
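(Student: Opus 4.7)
The plan is to apply the Frobenius character-sum formula to the number
\[
N_h := \#\{(a_1,\ldots,a_8) \in k_1^G\times\cdots\times k_8^G : a_1\cdots a_8 = h\},\qquad h=gk_1\cdots k_8,
\]
which gives
\[
N_h = \frac{c_1\cdots c_8}{|G|}\sum_{\chi\in\mathrm{Irr}(G)} \frac{\chi(k_1)\cdots\chi(k_8)\,\overline{\chi(h)}}{\chi(1)^{7}}.
\]
The strategy is to extract the claimed main term from the linear characters of $G$ and to bound the contribution of the remaining characters.

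Since $S$ is perfect we have $S\leq G'$; since $D$ is abelian, $G/S$ is abelian, and hence in fact $G' = S$. The linear characters of $G$ are therefore exactly the $|G:S|$ irreducible characters of the abelian quotient $G/S$. For any such $\lambda$,
\[
\lambda(k_1)\cdots\lambda(k_8)\overline{\lambda(h)} = \lambda\bigl(k_1\cdots k_8\,h^{-1}\bigr) = \lambda(g^{-1}) = 1
\]
because $g\in S$, so these characters contribute exactly $|G:S|\cdot c_1\cdots c_8/|G| = c_1\cdots c_8/|S|$, the claimed main term.

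For a character $\chi$ nontrivial on $S$, Clifford theory (with $G/S$ abelian) shows that every constituent of $\chi|_S$ is a nontrivial irreducible of $S$, so $\chi(1)\geq l(S)$; a standard bookkeeping argument gives $\zeta^G(s) \leq |G:S|\,\zeta^S(s)$ for $s\geq 2$. The Liebeck--Shalev estimate invoked just before Proposition \ref{Inner} then implies
\[
\sum_{\chi|_S\neq 1} \chi(1)^{-s} \leq |G:S|\bigl(\zeta^S(s)-1\bigr) \longrightarrow 0
\]
as $|S|\to\infty$ through quasisimple groups. Combined with the trivial bound $|\chi(h)|\leq\chi(1)$ and a character-value bound at the regular semisimple $k_i$, this would make the tail of smaller order than $c_1\cdots c_8/|S|$, uniformly in the parameter $g\in S$, and hence make $|\gamma_g|<\delta$ for $|S|\geq N_\delta$.

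The main obstacle is the character-value estimate at the regular semisimple elements. The elementary second-orthogonality inequality $|\chi(k_i)|^2\leq |C_G(k_i)|\leq (r+1)(q+1)^r$ (using Lemma \ref{torus} and $|G:S|\leq r+1$) loses a factor of $(q+1)^{4r}$ over the eight factors $k_i$, which by itself is not enough to be absorbed by the zeta decay. To close the estimate one needs the sharper character-ratio bound, that $|\chi(x)|/\chi(1)$ is a negative power of $|x^G|$ at regular semisimple $x$; such bounds are available for finite groups of Lie type (Gluck's theorem, with quantitative refinements in the Bezrukavnikov--Liebeck--Shalev--Tiep style). Once this is in hand, comparison with the explicit orders of $|S|$, which grow like $q^{cr^2}$ for classical groups with $r\geq 9$, shows that the error-to-main ratio tends to $0$ as $|S|\to\infty$, uniformly in $g$, and this is precisely where the hypotheses $r\geq 9$ and $q>10$ are used.
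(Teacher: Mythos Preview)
Your setup---the Frobenius formula and the extraction of the main term from the linear characters via $G'=S$---is exactly what the paper does. The gap is in your diagnosis of the ``main obstacle''. You assert that the elementary bound $|\chi(k_i)|^2\leq |C_G(k_i)|$ is not enough and that one must invoke Gluck-type character-ratio bounds. This is wrong: the paper closes the estimate using only the centralizer bound.

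What you are missing is a sufficiently strong \emph{quantitative} estimate for $\sum_{\chi\text{ nonlinear}}\chi(1)^{-6}$. The qualitative Liebeck--Shalev statement $\zeta^S(2)\to 1$ is indeed too weak here; but the paper does not use it. Instead it combines two explicit ingredients: the minimal-degree bound $\chi(1)\geq cq^r$ (Proposition~\ref{l(simple)}), and the Fulman--Guralnick bound $|\mathrm{Irr}(G)|\leq 100q^r(r+1)$. Together these give
\[
\sum_{\chi\text{ nonlinear}}\chi(1)^{-6}\ \leq\ 100(r+1)c^{-6}q^{-5r}.
\]
Then with $|\chi(k_i)|\leq (q+1)^{(r+1)/2}$ and $|\chi(h)|\leq \chi(1)$, the tail (times $|G:S|\leq r+1$) is bounded by
\[
C\,(r+1)^2 (q+1)^{4(r+1)} q^{-5r}\ \leq\ C'(r+1)^2\, q^{4.4-0.6r},
\]
using $q+1<q^{1.1}$ for $q>10$. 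Since $r\geq 9$ forces the exponent $4.4-0.6r$ to be negative, this tends to $0$ as $|S|\to\infty$, uniformly in $g$. So the hypotheses $r\geq 9$, $q>10$ enter precisely here, and no sharper character-ratio input is required.
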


Assuming this for the moment we can finish the proof of Proposition
\ref{qsimple}. Take $\delta=\frac{1}{2}$ and assume that $\left\vert
S\right\vert \geq N_{\delta}$. Then Lemma \ref{char} implies that for each
$g\in S$ we have%
\begin{align*}
\left\vert f^{-1}(g)\right\vert  &  =\prod_{i=1}^{8}|\mathrm{C}_{S}%
(k_{i})|\cdot\frac{c_{1}\ldots c_{8}}{\left\vert S\right\vert }(1+\gamma
_{g})\\
&  =\left\vert S\right\vert ^{7}(1+\gamma_{g})<\tfrac{3}{2}\left\vert
S\right\vert ^{7}.
\end{align*}
Suppose that $Y\subseteq S^{(8)}$ satisfies $\left\vert Y\right\vert \geq
\frac{1}{4}\left\vert S\right\vert ^{8}$. Then%
\[
\left\vert f(Y)\right\vert >\frac{\left\vert Y\right\vert }{\tfrac{3}%
{2}\left\vert S\right\vert ^{7}}\geq\tfrac{1}{6}\left\vert S\right\vert ,
\]
as required.

\bigskip\medskip

\textbf{Proof of Lemma \ref{char}} Let $\chi$ be an irreducible character of
$G$. By Clifford theory $\chi\!\downarrow_{S}$ is a sum of irreducible
characters of $S$, say $\psi+\phi+\cdots$. Then $\chi(1)\geq\psi(1)$. Now if
$\chi$ is nonlinear then $\psi\in\mathrm{Irr}(S)$ is also nonlinear, and hence
$\chi(1)\geq\psi(1)\geq cq^{r}$ for some absolute constant $c$, by Proposition
\ref{l(simple)}.

Put $p=gk_{1},\ldots,k_{8}$ and let $s(p)$ denote the number of the number of
$8$-tuples $(a_{1},\ldots,a_{8})\in k_{1}^{G}\times\cdots\times k_{8}^{G}$
such that $a_{1}a_{2}\ldots a_{8}=p$.

A well-known formula (cf. \cite{SGT}, 7.2) gives
\[
s(p)=\frac{c_{1}\ldots c_{8}}{|G|}\sum_{\chi\in\mathrm{Irr}(G)}\frac
{\chi(k_{1})\ldots\chi(k_{8})\chi(p^{-1})}{\chi(1)^{7}}.
\]
Since $k_{1}\cdots k_{8}p^{-1}\in S$ and hence lies inside $\ker\chi$ for any
linear character $\chi$ of $G$, these contribute precisely $\left\vert
G/G^{\prime}\right\vert =|G|/|S|$ to the above sum. It therefore suffices to
show that%

\[
\left\vert G:S\right\vert \sum_{\chi\in\mathrm{Irr}_{0}(G)}\frac{\chi
(k_{1})\ldots\chi(k_{8})\chi(p^{-1})}{\chi(1)^{7}}\rightarrow0\text{ as
}|S|\rightarrow\infty,
\]
where $\mathrm{Irr}_{0}(G)$ denotes the set of non-linear irreducible
characters of $G$.

Since $|\chi(p^{-1})|/\chi(1)\leq1$ it is enough to show that
\[
V:=(r+1)\sum_{\chi\in\mathrm{Irr}_{0}(G)}\frac{\chi(k_{1})\ldots\chi(k_{8}%
)}{\chi(1)^{6}}\rightarrow0\text{ as }|S|\rightarrow\infty.
\]

Now since $k_{i}$ is regular semisimple, $\mathrm{C}_{G}(k_{i})$ is a torus of
$G=SD$, and so $\left\vert \mathrm{C}_{G}(k_{i})\right\vert \leq(q+1)^{r+1}$
by Lemma \ref{torus}.

Hence $\left\vert \chi(k_{i})\right\vert \leq\sqrt{|\mathrm{C}_{G}(k_{i}%
)|}\leq(q+1)^{(r+1)/2}$, and we obtain
\[
\left\vert \chi(k_{1})\ldots\chi(k_{8})\right\vert \chi(1)^{-6}\leq
\frac{((q+1)^{r+1})^{4}}{c^{6}q^{6r}}=c^{-6}(q+1)^{4+4r}q^{-6r}.
\]
By Corollary 1.2 (3) of \cite{FG}, $\left\vert \mathrm{Irr}(G/\mathrm{Z}%
(G))\right\vert \leq100q^{r}$, whence $\left\vert \mathrm{Irr}(G)\right\vert
\leq100q^{r}(r+1)$. Moreover $q+1<q^{1.1}$ when $q>10$. Consequently%

\[
V\leq c_{5}(r+1)^{2}q^{4.4-.6r}%
\]
for some absolute constant $c_{5}>0$.

As $r\geq9$ we have $0.6r>4.4$; consequently $V\rightarrow0$ as $\left\vert
S\right\vert \rightarrow\infty$, as required.

\subsubsection{Field automorphisms\label{fieldautos}}

As in the preceding subsection, $S$ denotes a quasisimple group of Lie type,
of untwisted rank $r$. We assume that $S$ is universal, and introduce some
more notation (cf. \cite{GLS}, Section 2.2). $L$ is a simple simply connected
algebraic group $L$ defined over $\mathbb{F}_{p}$, and $S=L_{\sigma}\leq
L(\mathbb{F}_{q^{k}})$ is the group of $\sigma$-fixed points of a Steinberg
automorphism $\sigma$ acting on $L$. Here $k\in\{1,2,3\}$ and $\sigma^{k}$ is
the smallest power of $\sigma$ which is a power of the Frobenius automorphism
$[p]$ of $L$. In fact $\sigma$ is the product of a graph automorphism of $L$
and some power of $[p]$, so $\sigma$ commutes with all field automorphisms of
$L$.

We consider $L$ as embedded in some $\mathrm{GL}_{d}$. Then $\mathrm{GL}_{d}$
contains a torus $T$ that normalizes $L$ and induces the diagonal
automorphisms on $L$. In the same way $D=T_{\sigma}$ induces the diagonal
automorphisms of $S=L_{\sigma}$.

We consider a field automorphism $\phi$ of $S$. Thus $\phi$ is the restriction
to $S$ of $[p]^{f}$ for some $f$, and we shall denote $[p]^{f}$ also by $\phi
$. Then $\phi^{n}=\sigma^{k}$ where $n$ is the order of $\phi$ as an
automorphism of $S$.

Let $q_{0}$ denote the cardinality of the fixed field of $\phi$. Thus
$q_{0}=p^{f}$, while $\mathbb{F}_{q^{k}}$ is the fixed field of $\phi^{n}$, so
$q^{k}=p^{nf}$ and%

\begin{equation}
q^{k}=q_{0}^{n}.\label{qandq0}%
\end{equation}
We remark that $k\leq2$ unless $S$ is of type $^{3}\!D_{4}$, with $r=4$; and
$q$ might be the square root of a non-square integer if $S$ is a Suzuki or Ree group.

For an algebraic subgroup $M$ of $\mathrm{GL}_{d}(\overline{\mathbb{F}_{p}})$
we denote by $M_{\phi}$ the fixed-point set of $\phi$ in $M$. Later, we shall
need to consider the groups%
\[
G=L_{\phi},\quad H=T_{\phi}%
\]
Thus $G$ is an untwisted quasisimple group of Lie type, say $X$, over
$\mathbb{F}_{q_{0}}$ of rank $r$ equal to the rank of $L$. The group $H$
induces the diagonal automorphisms on $G$. Since $\sigma$ commutes with $\phi$
it preserves $G$ and acts on it as an automorphism of order $k$ (since
$\sigma^{k}=\phi^{n}$).

\medskip

We shall consider automorphisms%
\[
\alpha=ch\phi^{-1}%
\]
where

\begin{itemize}
\item $\phi$ is a field automorphism of $S$ having order $n>50$,

\item $h$ is a diagonal automorphism of $S$ (we identify $h$ with an element
of $D$),

\item $c$ is an inner automorphism of $S$ (we will identify $c$ with an
element of $S$).
\end{itemize}

\begin{proposition}
\label{largefieldauto}With $\alpha$ as above, $\mathcal{P}(\alpha
,\beta;1,\frac{3}{5})$ and $\mathcal{P}(\beta,\alpha^{-1};1,\frac{3}{5})$ hold
for every $\beta\in\mathrm{Aut}(S)$, with $\lambda=l(S)^{-3/5}$.
\end{proposition}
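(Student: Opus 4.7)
The plan is to reduce both assertions to a single centralizer estimate via Proposition~\ref{DtoCredn}, and then exploit the assumption $n>50$ to bound $|C_S(y\alpha)|$ for generic $y\in S$.

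A direct calculation shows
\[
T_{\alpha,\beta}(x,y)^{-1} = T_{\beta^{-1},\alpha^{-1}}(y^\beta,\,x^\alpha).
\]
Since $(x,y)\mapsto(y^\beta,x^\alpha)$ is a bijection of $S^{(2)}$ preserving cardinalities, this identity (applied to the pair $(\beta,\alpha^{-1})$ in place of $(\alpha,\beta)$) shows that $\mathcal{P}(\beta,\alpha^{-1};1,3/5)$ for all $\beta$ is equivalent to $\mathcal{P}(\alpha,\beta^{-1};1,3/5)$ for all $\beta$, i.e., to $\mathcal{P}(\alpha,\gamma;1,3/5)$ for all $\gamma\in\mathrm{Aut}(S)$. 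Thus it suffices to establish this latter statement.

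Apply Proposition~\ref{DtoCredn} with $D=1$ and $\varepsilon=\sqrt{3/5}$. The task reduces to showing that for every $Y\subseteq S$ with $|Y|\ge(1-\sqrt{3/5})|S|$ there exists $y\in Y$ such that every $X\subseteq S$ with $|X|\ge(1-\sqrt{3/5})|S|$ satisfies
\[
|\mathbf{c}(X,y\alpha)| \;=\; \bigl|\{[x,y\alpha] : x\in X\}\bigr| \;\ge\; l(S)^{-3/5}|S|.
\]
The fibres of the map $x\mapsto[x,y\alpha]$ are precisely the right cosets of $C_S(y\alpha)$, so the image has size at least $|X|/|C_S(y\alpha)|$. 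It is therefore enough to verify that the set of $y\in S$ with $|C_S(y\alpha)|>M$ has size less than $(1-\sqrt{3/5})|S|$, where we may take $M=(1-\sqrt{3/5})\,l(S)^{3/5}$.

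The centralizer estimate is obtained from the algebraic-group setup of Subsection~\ref{fieldautos}. As $y$ ranges over $S$, the automorphism $y\alpha=(yc)h\phi^{-1}$ ranges over the coset $S\cdot h\phi^{-1}$; its action is governed by a Steinberg endomorphism of $L$ whose fixed-point subgroup (up to $S$-conjugacy) is a twisted form of $G=L_\phi$, a quasisimple group of Lie type over $\mathbb{F}_{q_0}$ with $q_0=q^{k/n}$. A Lang-Steinberg argument, combined with an analogue of Proposition~\ref{reg} applied inside the coset $Sh\phi^{-1}$, shows that all but an $O(1/q_0)$-proportion of $y\in S$ yield $y\alpha$ that is regular semisimple; for such $y$, $C_S(y\alpha)$ lies inside a maximal torus of this twisted group, and Lemma~\ref{torus} gives $|C_S(y\alpha)|\le(q_0+1)^r$.

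The principal obstacle is the quantitative comparison $(q_0+1)^r\le M$. The hypothesis $n>50$ (with $k\le 3$) yields $q_0\le q^{3/50}$, so $(q_0+1)^r = q^{(3/50+o(1))r}$, whereas Proposition~\ref{l(simple)} gives $l(S)\ge (q^r-1)/2$ and hence $l(S)^{3/5}\ge c\cdot q^{3r/5}$ for an absolute $c>0$ (groups with $q^r$ too small for this estimate, or with $l(S)=2$, lie in $\overline{\mathcal{S}}$ and are subsumed in Proposition~\ref{Small} via the threshold $N_0$ of Subsection~\ref{P-reductions}). The exponent gap between $3/50$ and $3/5$ leaves ample room for the inequality to hold, and the $O(1/q_0)$ proportion of non-regular $y$ is well below $1-\sqrt{3/5}\approx 0.225$ whenever $q_0$ is bounded below, as it is under the standing hypotheses. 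The remaining delicate cases, where $q$ is a half-integer power (Suzuki and Ree groups) or where $k\in\{2,3\}$, are handled by the same mechanism, since the bound $n>50$ comfortably dominates the factor $k$ throughout.
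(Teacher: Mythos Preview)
Your reduction via Proposition~\ref{DtoCredn} and the symmetry identity for $T_{\alpha,\beta}$ are essentially what the paper does (the paper uses the threshold $\varepsilon=4/5$ rather than $\sqrt{3/5}$, and aims for the centralizer bound $l(S)^{1/2}$ rather than $(1-\sqrt{3/5})\,l(S)^{3/5}$, but this is cosmetic).

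The genuine gap is the centralizer estimate itself. You invoke ``an analogue of Proposition~\ref{reg}'' to assert that for all but an $O(1/q_0)$-proportion of $y$, the automorphism $y\alpha$ is ``regular semisimple'' with $|C_S(y\alpha)|\le (q_0+1)^r$. But Proposition~\ref{reg} is a specific result about diagonal cosets $Sh$ in classical groups; there is no such result available for cosets of the form $Sh\phi^{-1}$ involving a field automorphism, and you have not indicated how one would be proved. Moreover, it is not clear what ``regular semisimple'' means for an element of $\mathrm{Aut}(S)$ rather than of an algebraic group, nor why its centralizer in $S$ should be bounded by a torus in $L_\phi$; this identification requires substantial work.

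The paper's actual argument avoids regularity altogether. Using Lang's theorem it constructs explicit bijections $\mu,\nu$ identifying the coset $Sh$ with a coset $Gh'$ of $G=L_\phi$, and shows (Lemma~\ref{central}) that $|C_S(g\phi^{-1})|=|C(z)|$, where $C(z)$ is the stabilizer of $z\in Gh'$ under a twisted conjugation action of $G$. It then observes (Lemma~\ref{orbitequiv}) that this twisted action is equivalent to ordinary conjugation in $G_1=GH\rtimes\langle\sigma\rangle$, so the number of orbits is at most $|G_1:G|\,c(G)\le 60\,q_0^{r+1}$ using the conjugacy-class bound of \cite{FG}. Combined with $l(S)^{1/2}>q_0^{11r}$ (Lemma~\ref{fixed}), this shows the ``bad'' set has proportion at most $60\,q_0^{-10r+1}$. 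Your exponent comparison $3/50$ versus $3/5$ is morally the same as the paper's $11r$ versus $r+1$, but the orbit-counting mechanism that makes it rigorous is missing from your proposal.
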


This will follow from

\begin{proposition}
\label{1/2}Let%
\begin{equation}
W=\left\{  x\in S\mid\left\vert \mathrm{C}_{S}(xh\phi^{-1})\right\vert
<l(S)^{1/2}\right\}  . \label{Wdef}%
\end{equation}
Then $\left\vert W\right\vert >\frac{4}{5}\left\vert S\right\vert $.
\end{proposition}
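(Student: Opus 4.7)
The plan is to translate the centralizer-counting problem on the outer coset $S\phi^{-1}$ into an ordinary class-counting problem in the finite group $G = L_\phi$, where the combinatorics is controlled by $q_0$ and becomes negligible against $l(S)^{1/2}$ as soon as $n$ is large enough, as hypothesized here. First I would note that the map $x \mapsto y := xh\phi^{-1}$ is a bijection $S \to S\phi^{-1}$, and that $|\mathrm{C}_S(y)|$ depends only on the $S$-conjugacy class of $y$ (with conjugation taking place inside $S\langle\phi\rangle$). Writing $\mathcal{C}$ for the set of such classes in $S\phi^{-1}$ and $y_C$ for a representative of $C \in \mathcal{C}$, one obtains
\[
\frac{|S\smallsetminus W|}{|S|} = \sum_{\substack{C \in \mathcal{C} \\ |\mathrm{C}_S(y_C)| \geq l(S)^{1/2}}} \frac{1}{|\mathrm{C}_S(y_C)|} \leq \frac{|\mathcal{C}|}{l(S)^{1/2}},
\]
so the task reduces to bounding $|\mathcal{C}|$.

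Next I would invoke Shintani descent for the pair of commuting Steinberg endomorphisms $\phi, \sigma$ of the simple algebraic group $L$ (with $L_\phi = G$ and $L_\sigma = S$). This yields a bijection between $\mathcal{C}$ and the $G$-conjugacy classes in the coset $G \cdot \sigma$ of $G\langle\sigma\rangle$, on which $\sigma$ induces an automorphism of order dividing $k \in \{1,2,3\}$, with matching centralizer orders. Hence $|\mathcal{C}| \leq k \cdot k(G)$, and standard bounds on class numbers of finite groups of Lie type (for instance those in \cite{FG}) give $k(G) \leq c_1 q_0^r$ for an absolute constant $c_1$. Combining with Proposition \ref{l(simple)} (which gives $l(S) \geq q^r/4$ once $q^r > 27$; the finitely many small exceptions are absorbed into the threshold $N_0$) and the relation $q^k = q_0^n$ with $n > 50$, we obtain $q^{r/2} \geq q_0^{nr/(2k)} \geq q_0^{8r}$, and so
\[
\frac{|S\smallsetminus W|}{|S|} \leq \frac{2 k c_1 q_0^r}{q_0^{8r}} = 2 k c_1 q_0^{-7r},
\]
which is comfortably smaller than $1/5$ for $q_0 \geq 2$ and $r \geq 1$.

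The main obstacle is setting up Shintani descent rigorously: one must verify both the bijection $\mathcal{C} \leftrightarrow \mathrm{Cl}(G\sigma)$ and the matching of centralizer orders, taking particular care in the twisted cases $k \in \{2,3\}$ and for Suzuki/Ree groups (where $q$ need not even be an integer). If invoking Shintani wholesale is inconvenient, an alternative is to bound $|\mathcal{C}|$ directly via Burnside's lemma applied to the $S$-conjugation action on $S\phi^{-1}$, estimating the resulting fixed-point counts by a Lang--Steinberg surjectivity argument on $L$ with respect to a suitable twist of $\phi^{-1}$ by $xh$. In any case the crux of the proof is that the gap between $q_0^r$ and $q^{r/2}$ is enormous once $n > 50$, so considerable slackness can be tolerated in the class-number estimate.
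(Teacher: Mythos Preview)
Your approach is correct and essentially the same as the paper's: the paper carries out Shintani descent explicitly via Lang's theorem (the maps $\mu,\nu$ and Lemma \ref{central}, which give precisely your bijection $\mathcal{C}\leftrightarrow\{\text{$G$-classes in }Gh'\sigma^{-1}\}$ with matching centralizer orders), then bounds the class count on the $G$-side by your Burnside alternative (Lemma \ref{orbitequiv}, yielding $|\mathcal{C}|\leq |G_1:G|\,c(G)\leq 60\,q_0^{r+1}$ via \cite{FG}), and compares with $l(S)^{1/2}$ exactly as you do. One minor slip: the relevant coset is $Sh\phi^{-1}$ rather than $S\phi^{-1}$, since the diagonal element $h$ need not lie in $S$; the paper tracks this through the auxiliary element $h'=\kappa\kappa^{-\sigma}\in H$, and this is also why the bound picks up the extra factor $|GH:G|\leq q_0$ that your estimate $|\mathcal{C}|\leq k\cdot k(G)$ omits (harmlessly, given the slack).
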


To deduce Proposition \ref{largefieldauto}, suppose $Y\subseteq S$ satisfies
$\left\vert Y\right\vert \geq\frac{1}{5}\left\vert S\right\vert $. Then
$Yc\cap W$ is non-empty; choose $y\in Y$ with $yc\in W$. Then $\left\vert
\mathrm{C}_{S}(y\alpha)\right\vert <l(S)^{1/2}$, so for any subset $X$ of $S$
with $\left\vert X\right\vert \geq\frac{1}{5}\left\vert S\right\vert $ we have%
\begin{align*}
\left\vert \mathbf{c}(X,y\alpha)\right\vert  &  =\left\vert \left\{
[x,y\alpha]\mid x\in X\right\}  \right\vert \\
&  \geq\left\vert X\right\vert l(S)^{-1/2}\geq\frac{1}{5}l(S)^{-1/2}\left\vert
S\right\vert .
\end{align*}
With Proposition \ref{DtoCredn}, this shows that $\mathcal{P}(\alpha
,\beta;1,\frac{3}{5})$ holds with $\lambda=\frac{1}{5}l(S)^{-1/2}$ (as
$\frac{3}{5}<(\frac{4}{5})^{2}$). Since%
\[
T_{\beta^{-1},\alpha^{-1}}(x,y)=T_{\alpha,\beta}(y^{\alpha^{-1}},x^{\beta
^{-1}})^{-1},
\]
this implies also that $\mathcal{P}(\beta^{-1},\alpha^{-1};1,\frac{3}{5})$
holds with the same value of $\lambda$.

Suppose that $k\leq2$. Then (\ref{qandq0}) implies that $q>2^{25}$.
Proposition \ref{l(simple)} then implies that $l(S)\geq(q-1)/2\geq2^{24}$. If
$k=3$ then $S=~^{3}\!D_{4}(q)$ and Proposition \ref{l(simple)} gives
$l(S)\geq(q^{4}-1)/2>(2^{4\cdot50/3}-1)/2>2^{65}$. In any case, then,
$l(S)^{-1/10}\leq2^{-2.4}<\frac{1}{5}$, whence%
\[
\frac{1}{5}l(S)^{-1/2}>l(S)^{-3/5}.
\]

Proposition \ref{largefieldauto} follows.

\bigskip

We proceed to the proof of Proposition \ref{1/2}. We are given $h\in
D=T_{\sigma}$. By Lang's theorem (\cite{GLS}, Theorem 2.1.1) we may choose
$\kappa\in T$ with $h=\kappa^{-1}\kappa^{\phi}$. Put $h^{\prime}=\kappa
\kappa^{-\sigma}$. Note that%
\begin{align*}
(\kappa^{-1}\kappa^{\phi})^{\sigma}  &  =h^{\sigma}=h=\kappa^{-1}\kappa^{\phi
},\\
h^{\prime\phi}  &  =(\kappa\kappa^{-\sigma})^{\phi}=\kappa\kappa^{-\sigma
}=h^{\prime},
\end{align*}
so $h^{\prime}\in H$. Define%
\begin{align*}
\mu,\nu &  :L\rightarrow LT\\
\mu(x)  &  =[x\kappa,\phi],~\nu(x)=[(x\kappa)^{-1},\sigma].
\end{align*}

\begin{lemma}
\emph{(i) }$\mu^{-1}(Sh)=\nu^{-1}(Gh^{\prime})$;

\emph{(ii) }if $g\in Sh$ then $\left\vert \mu^{-1}(g)\right\vert =\left\vert
G\right\vert $;

\emph{(iii)} if $z\in Gh^{\prime}$ then $\left\vert \nu^{-1}(z)\right\vert
=\left\vert S\right\vert .$
\end{lemma}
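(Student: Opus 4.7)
The plan is to identify $\mu$ and $\nu$, after right-multiplication by $h^{-1}$ and $(h')^{-1}$, with classical Lang maps conjugated by $\kappa$, and then invoke Lang--Steinberg. Throughout, the structural facts in play are that $T$ is abelian and normalises $L$, so that conjugation by $\kappa$ is an automorphism of $L$ and $\kappa$ commutes with $h$, $h'$, $\kappa^\sigma$, and $\kappa^\phi$.

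The core algebraic identities, obtained by direct manipulation, are: using $h = \kappa^{-1}\kappa^\phi$,
\[
\mu(x)h^{-1} = \kappa^{-1}x^{-1}x^\phi\kappa = \lambda(x)^\kappa,
\]
where $\lambda\colon L\to L$, $y\mapsto y^{-1}y^\phi$ is Lang's map for $\phi$; and setting $u := x^\kappa \in L$ and using $h' = \kappa\kappa^{-\sigma}$,
\[
\nu(x)(h')^{-1} = \kappa\cdot(u\cdot u^{-\sigma})\cdot\kappa^{-1}.
\]
By Lang--Steinberg, $\lambda$ is a surjection $L\to L$ with every fibre of size $|L_\phi|=|G|$. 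The map $\lambda'\colon L\to L$, $u\mapsto u\cdot u^{-\sigma}$ is conjugate via $u\leftrightarrow u^{-1}$ to the Lang map $v\mapsto v^{-1}v^\sigma$, so is also surjective, with every fibre of size $|L_\sigma|=|S|$. Parts (ii) and (iii) then fall out at once. For (ii), if $g=sh\in Sh$ then $\mu(x)=g$ is equivalent to $\lambda(x)=\kappa s\kappa^{-1}$; the right-hand side lies in $L$ since $T$ normalises $L$, so Lang gives $|\mu^{-1}(g)|=|G|$. For (iii), if $z=gh'\in Gh'$ then $\nu(x)=z$ is equivalent to $\lambda'(u)=\kappa^{-1}g\kappa\in L$, and since $x\mapsto u=x^\kappa$ is a bijection of $L$, $|\nu^{-1}(z)|=|S|$.

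For (i), the idea is to show that both membership conditions collapse to the same equation $w^{h'}=w^\sigma$, where $w:=\lambda(x)=x^{-1}x^\phi$. Writing $\mu(x)\in Sh$ as the condition that $\lambda(x)^\kappa$ be $\sigma$-fixed gives $(w^\sigma)^{\kappa^\sigma}=w^\kappa$, and using $\kappa^\sigma\kappa^{-1}=(h')^{-1}$ in the abelian torus $T$ (immediate from $h'=\kappa\kappa^{-\sigma}$), this cleans up to $w^\sigma=w^{h'}$. On the other hand, $\nu(x)\in Gh'$ is the condition that $\nu(x)(h')^{-1}$ be $\phi$-fixed; substituting $x^\phi = xw$ and using $h'\in T_\phi$, the equation $x^\phi h' x^{-\phi\sigma}=xh'x^{-\sigma}$ simplifies to $wh'w^{-\sigma}=h'$, which is again $w^\sigma=w^{h'}$. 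Hence $\mu^{-1}(Sh)=\nu^{-1}(Gh')$.

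The algebra throughout is elementary, and the only real obstacle is careful bookkeeping: since $\kappa$ generally lies in $T\setminus L$, it cannot be moved past elements of $L$ freely -- one must only use that $T$ is abelian and that it normalises $L$. The substantive point is the common reduction to $w^{h'}=w^\sigma$ in (i), which expresses the fact that the twist by $\kappa$ simultaneously converts the $\sigma$-fixed-point condition (defining $S$) into the $\phi$-fixed-point condition (defining $G$), and is what makes the matching of fibre sizes in (ii) and (iii) meaningful.
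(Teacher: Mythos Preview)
Your proof is correct and follows essentially the same approach as the paper. Both arguments reduce $\mu$ and $\nu$ to conjugated Lang maps and invoke Lang--Steinberg for (ii) and (iii); for (i), the paper's direct chain of equivalences $\mu(x)^\sigma=\mu(x)\Leftrightarrow\nu(x)^\phi=\nu(x)$ is exactly your reduction to the common condition $w^{h'}=w^\sigma$ with $w=x^{-1}x^\phi$, just with the intermediate quantity left implicit.
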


\begin{proof}
(i).%
\begin{align*}
\mu(x)  &  \in Sh\Longleftrightarrow\mu(x)^{\sigma}=\mu(x)\\
&  \Longleftrightarrow\kappa^{-1}x^{-1}x^{\phi}\kappa^{\phi}=\kappa^{-\sigma
}x^{-\sigma}x^{\sigma\phi}\kappa^{\sigma\phi}\\
&  \Longleftrightarrow x^{\sigma}\kappa^{\sigma}\kappa^{-1}x^{-1}%
=x^{\sigma\phi}\kappa^{\sigma\phi}\kappa^{-\phi}x^{-\phi}\\
&  \Longleftrightarrow\nu(x)=v(x)^{\phi}\Longleftrightarrow\nu(x)\in
Gh^{\prime}.
\end{align*}
(ii), (iii). Let $g\in Sh$. Then $g=\kappa^{-1}g^{\prime}\kappa^{\phi}$ with
$g^{\prime}\in L$, and by Lang's theorem again we have $g^{\prime}=[x,\phi]$
for some $x\in L$. Then $\mu(x)=g$, and we see that $\mu^{-1}(g)=xL_{\phi}%
=xG$. Similarly we find that $\nu^{-1}(z)=y\kappa L_{\sigma}\kappa
^{-1}=yS^{\kappa^{-1}}$ where $y=\kappa y_{1}\kappa^{-1}$ and $z=\kappa\cdot
y_{1}y_{1}^{-\sigma}\cdot\kappa^{\sigma}$.
\end{proof}

\bigskip

Now consider the semi-direct product $G_{1}=GH\rtimes\langle\sigma\rangle$. We
define a permutation action of $G$ on $G_{1}$ as follows: for $x\in G$ and
$a\in G_{1}$,%
\[
a^{\widehat{x}}=x^{-1}ax^{\sigma}.
\]
We will call this the \emph{twisted action}. For $a\in G_{1}$ we denote the
stabilizer of $a$ in $G$ under this action by $C(a)$, i.e.%
\[
C(a)=\{x\in G\mid ax^{\sigma}=xa\}.
\]
Set $Y=\mu^{-1}(Sh)=\nu^{-1}(Gh^{\prime})$.\bigskip

\begin{lemma}
\label{central} Let $y\in Y$ and put $g=\mu(y)$, $z=\nu(y)$ Then
\[
\left\vert \mathrm{C}_{S}(g\phi^{-1})\right\vert =\left\vert C(z)\right\vert
.
\]

\end{lemma}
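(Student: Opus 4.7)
The plan is to re-express both $g$ and $z$ uniformly in terms of the Lang-type element $w := y\kappa \in L$. Direct substitution in the definitions of $\mu$ and $\nu$ gives the symmetric pair of identities
\[
g = [w,\phi] = w^{-1}w^{\phi}, \qquad z = [w^{-1},\sigma] = w\,w^{-\sigma}.
\]
This suggests defining the conjugation map $\Phi \colon s \mapsto w s w^{-1}$ and proving that it restricts to a bijection $C_S(g\phi^{-1}) \longrightarrow C(z)$. The equality of cardinalities is then immediate.

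To set this up, I first unwind the two centralizer conditions. Working inside $S \rtimes \langle \phi \rangle$, the condition $s \cdot g\phi^{-1} = g\phi^{-1} \cdot s$ rearranges (using $\phi^{-1} s \phi = s^{\phi}$) to the clean identity $s^{\phi} = g^{-1} s g$; the twisted-centralizer condition $xz = zx^{\sigma}$ is already in usable form. The main algebraic step is then to verify that, for $s \in S$, the element $x := wsw^{-1}$ satisfies $x^{\phi} = x$ (equivalently, $x \in L_{\phi} = G$) if and only if $s^{\phi} = g^{-1} s g$. This is a short manipulation driven entirely by the identity $g = w^{-1}w^{\phi}$: substituting and cancelling inverts the conjugation. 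In particular $\Phi$ sends $C_S(g\phi^{-1})$ injectively into $G$.

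The remaining check is that $x = \Phi(s)$ actually lies in $C(z)$. Using $z = w w^{-\sigma}$, both $xz$ and $zx^{\sigma}$ collapse to $w s w^{-\sigma}$ and $w s^{\sigma} w^{-\sigma}$ respectively, so equality holds iff $s = s^{\sigma}$, which is automatic since $s \in S = L_{\sigma}$. Running the same calculations backwards, starting from $x \in C(z)$ and setting $s := w^{-1} x w$, one recovers $s^{\sigma} = s$ from the twisted-centralizer relation combined with $z = w w^{-\sigma}$, and $s^{\phi} = g^{-1} s g$ from $x^{\phi} = x$ combined with $g = w^{-1} w^{\phi}$. Thus $\Phi$ is the desired bijection.

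I do not anticipate any genuine obstacle; the content of the lemma is that the single element $w = y\kappa$ provided by Lang's theorem simultaneously linearises the two commutators defining $g$ and $z$, so conjugation by $w$ converts the $\phi$-centralizer condition in $S$ into the twisted $\sigma$-centralizer condition in $G$. All remaining work is routine bookkeeping with the two identities $g = w^{-1} w^{\phi}$ and $z = w w^{-\sigma}$.
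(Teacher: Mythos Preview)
Your proposal is correct and is essentially identical to the paper's own argument: the paper sets $b=(y\kappa)a(y\kappa)^{-1}$ and verifies that $a^{g\phi^{-1}}=a\Leftrightarrow b^{\phi}=b$ and $a\in S\Leftrightarrow zb^{\sigma}=bz$, concluding $\mathrm{C}_{S}(g\phi^{-1})=(y\kappa)^{-1}C(z)(y\kappa)$. Your map $\Phi$ and your two checks are exactly this with the notation $w=y\kappa$.
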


\begin{proof}
Let $a\in L$ and put $b=y\kappa a\kappa^{-1}y^{-1}$. The condition
$a^{g\phi^{-1}}=a$ is equivalent to $b^{\phi}=b$, i.e. $b\in L_{\phi}=G$. The
condition $a\in S=L_{\sigma}$ is equivalent to $(b^{y\kappa})^{\sigma
}=b^{y\kappa}$, i.e. $zb^{\sigma}=bz$. So%
\[
\mathrm{C}_{S}(g\phi^{-1})=(y\kappa)^{-1}C(z)y\kappa.
\]

\end{proof}

\bigskip

If we put%
\begin{align*}
Z  &  =\left\{  z\in Gh^{\prime}\mid\left\vert C(z)\right\vert <l(S)^{1/2}%
\right\} \\
Y^{\ast}  &  =\nu^{-1}(Z),
\end{align*}
the two preceding lemmas give%
\begin{equation}
\left\vert W\right\vert =\left\vert G\right\vert ^{-1}\left\vert Y^{\ast
}\right\vert =\left\vert S\right\vert \left\vert G\right\vert ^{-1}\left\vert
Z\right\vert . \label{WandZ}%
\end{equation}

\begin{lemma}
\label{fixed}\emph{(i) }If $S\neq~^{3}\!D_{4}(q)$ then $l(S)^{1/2}>q_{0}%
^{11r}$;

\emph{(ii)} If $S=~^{3}\!D_{4}(q)$ then $l(S)^{1/2}>\left\vert G\right\vert $.
\end{lemma}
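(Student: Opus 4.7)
The plan is to estimate $l(S)$ from below via Proposition \ref{l(simple)}, convert the resulting bound from $q$ to $q_0$ using the identity $q^k = q_0^n$ with $n > 50$, and then simply compare with the desired targets. Since $n \geq 51$ and $q_0 \geq 2$, we have $q^k = q_0^n \geq 2^{51}$, which comfortably guarantees $q^r \geq q > 27$, so Proposition \ref{l(simple)} applies and gives
\[
l(S) \geq (q^r - 1)/2.
\]

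For (i), $S$ is not $^3\!D_4(q)$, so $k \in \{1,2\}$, and I would write $q = q_0^{n/k} \geq q_0^{n/2} \geq q_0^{25.5}$. Substituting, $l(S) \geq (q_0^{25.5r} - 1)/2 > q_0^{25r}/2$, so $l(S)^{1/2} > q_0^{12.5r}/\sqrt{2}$, which beats $q_0^{11r}$ with enormous margin. There is essentially no obstacle here; it is just a matter of chaining two inequalities.

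For (ii), $S = {}^3\!D_4(q)$ forces $k=3$ and (untwisted) rank $r=4$, so $q = q_0^{n/3} \geq q_0^{17}$. Hence $l(S) \geq (q^4-1)/2 \geq (q_0^{68}-1)/2 > q_0^{67}$, giving $l(S)^{1/2} > q_0^{33}$. On the other hand, $L$ is the simply-connected algebraic group of type $D_4$ and $G = L_\phi$ is its fixed-point group under the untwisted Frobenius $\phi$; thus $G$ is the universal (simply connected) form of $D_4(q_0)$, with order
\[
|G| = q_0^{12}(q_0^2-1)(q_0^4-1)^2(q_0^6-1) < q_0^{28}.
\]
Combining, $l(S)^{1/2} > q_0^{33} > q_0^{28} > |G|$, as required.

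No deep machinery is needed beyond Proposition \ref{l(simple)} and the standard order formula for $D_4(q_0)$. The only place to be careful is (ii), where one must identify $G$ as the \emph{simply connected} $D_4(q_0)$ (using that $L$ was taken to be simply connected in the setup of \S\ref{fieldautos}) and use the correct exponent sum $12 + 2 + 4 + 4 + 6 = 28$; everything else is routine numerical bookkeeping.
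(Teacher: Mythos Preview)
Your proof is correct and follows essentially the same route as the paper: apply Proposition~\ref{l(simple)} to bound $l(S)\geq (q^r-1)/2$, use $q=q_0^{n/k}$ with $n\geq 51$ to convert to a power of $q_0$, and in case~(ii) compare with the standard order bound $|D_4(q_0)|<q_0^{28}$. The only cosmetic difference is that the paper writes the intermediate bound for~(i) as $l(S)\geq q_0^{24r}$ rather than your $l(S)^{1/2}>q_0^{12.5r}/\sqrt{2}$, but both suffice with room to spare.
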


\begin{proof}
Proposition \ref{l(simple)} says that $l(S)$ is at least $(q^{r}-1)/2$. Also
$q=q_{0}^{n/k}\geq q_{0}^{51/k}$.

In case (i) we have $k\leq2$. Then $l(S)>\frac{1}{2}(q_{0}^{25r}-1)$, whence
$l(S)\geq q_{0}^{24r}$ and the result follows.

In case (ii), $k=3$ and $G=D_{4}(q_{0})$. In this case, we have%
\begin{align*}
l(S)  &  \geq(q^{4}-1)/2\geq(q_{0}^{68}-1)/2,\\
\left\vert G\right\vert  &  <q_{0}^{28}<l(S)^{1/2}.
\end{align*}

\end{proof}

\bigskip

Since $C(z)\leq G$ for each $z\in Gh^{\prime}$, it follows in case (ii) that
$Z=Gh^{\prime}$ and hence that $\left\vert W\right\vert =\left\vert
S\right\vert $.

Henceforth, \textbf{we assume that} $S\neq~^{3}\!D_{4}(q)$.

Let $c(G)$ denote the number of conjugacy classes of $G$.

\begin{lemma}
\label{orbitequiv}The coset $Gh^{\prime}\subseteq G_{1}$ is a union of at most
$\left\vert G_{1}:G\right\vert c(G)$ orbits of $G$ with the twisted action.
\end{lemma}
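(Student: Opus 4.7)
The plan is to translate the twisted $G$-action on $Gh'$ into ordinary $G$-conjugation on a coset of $G$ inside $G_1$, then bound the number of conjugation orbits on each such coset.

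First I would note that $G=L_{\phi}$ is normal in $G_{1}=GH\langle\sigma\rangle$. Indeed, $T$ normalizes $L$, so $H=T_{\phi}$ normalizes $G$; and $\sigma$ commutes with $\phi$, so $\sigma$ preserves the fixed-point set $L_{\phi}=G$. In particular every coset $Gt$ of $G$ in $G_{1}$ is preserved by $G$-conjugation.

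Next I would exhibit a $G$-equivariant bijection between $(Gh',\widehat{\ })$ and the coset $Gh'\sigma^{-1}\subset G_{1}$ equipped with ordinary $G$-conjugation. Define $\psi:Gh'\to Gh'\sigma^{-1}$ by $\psi(a)=a\sigma^{-1}$. A direct calculation using $x^{\sigma}=\sigma^{-1}x\sigma$ gives
\[
x^{-1}\psi(a)x \;=\; x^{-1}a\sigma^{-1}x \;=\; (x^{-1}ax^{\sigma})\sigma^{-1} \;=\; \psi(a^{\widehat{x}}),
\]
so $\psi$ intertwines the twisted action of $G$ on $Gh'$ with $G$-conjugation on $Gh'\sigma^{-1}$. (The stabilizer match $C(a)=\{x:ax^{\sigma}=xa\}$ is the same as the $G$-centralizer of $\psi(a)$, consistent with Lemma \ref{central}.) Hence counting twisted orbits on $Gh'$ is the same as counting $G$-conjugation orbits on the coset $Gh'\sigma^{-1}$.

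Finally I would bound the number of $G$-conjugation orbits on a single coset $Gt$ of $G$ in $G_{1}$. For $g\in G$, the map $gt\mapsto x^{-1}(gt)x=(x^{-1}g\,\alpha(x))\,t$, where $\alpha(x)=txt^{-1}\in\mathrm{Aut}(G)$, shows that $G$-orbits on $Gt$ correspond bijectively to $\alpha$-twisted conjugacy classes of $G$. By the classical Brauer permutation lemma, for a finite group the number of such $\alpha$-twisted classes equals the number of $\alpha$-invariant conjugacy classes of $G$, and in particular is at most $c(G)$. Applying this with $t=h'\sigma^{-1}$ gives at most $c(G)$ orbits on the coset in question, and a fortiori at most $|G_{1}:G|\cdot c(G)$; the loose factor $|G_{1}:G|$ presumably reflects the authors' preference for a uniform bound that also controls orbits on the other cosets of $G$ in $G_{1}$ later.

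The main obstacle here is really just keeping the conventions straight: one must check that $\sigma$ does normalize $G$ and not merely act as an outer operator, and that the twist in the formula $x^{-1}ax^{\sigma}$ combines with the shift $a\mapsto a\sigma^{-1}$ to yield ordinary conjugation. The counting step itself is essentially formal once one is willing to quote the twisted-conjugacy count; alternatively one could bypass Brauer by observing that each $G_{1}$-conjugacy class in $G_{1}$ intersects a given coset $Gt$ in at most $|G_{1}:G|$ many $G$-orbits, and using a crude estimate for $c(G_{1})$.
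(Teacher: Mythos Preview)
Your proof is correct, and the first step --- translating the twisted $G$-action on $Gh'$ to ordinary $G$-conjugation on the coset $Gh'\sigma^{-1}$ via $a\mapsto a\sigma^{-1}$ --- is exactly what the paper does.

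The counting step differs. The paper does not restrict to a single coset: it bounds the total number of $G$-conjugation orbits on all of $G_1$ by Burnside's lemma, using the crude estimate $|\mathrm{C}_{G_1}(g)|\leq |G_1:G|\,|\mathrm{C}_G(g)|$ to get
\[
|G|^{-1}\sum_{g\in G}|\mathrm{C}_{G_1}(g)|\leq |G_1:G|\,c(G),
\]
which of course also bounds the number of orbits on the particular coset $Gh'\sigma^{-1}$. Your route instead identifies the $G$-orbits on a fixed coset $Gt$ with $\alpha$-twisted conjugacy classes of $G$ (for $\alpha=\mathrm{conj}_t$) and invokes the classical fact that their number equals the number of $\alpha$-fixed ordinary classes, giving the sharper bound $c(G)$ on that single coset. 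Your approach thus shows the factor $|G_1:G|$ in the lemma is unnecessary, at the cost of quoting a slightly less elementary (though standard) result; the paper's argument is more self-contained but looser. Either bound suffices for the application that follows.
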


\begin{proof}
For $z\in Gh^{\prime}$ and $x\in G$ we have%
\[
(z\cdot\sigma^{-1})^{x}=x^{-1}zx^{\sigma}\cdot\sigma^{-1}=z^{\widehat{x}}%
\cdot\sigma^{-1}%
\]
in $G_{1}$. This shows that the twisted action on $G$ on the coset
$Gh^{\prime}$ is equivalent to the conjugation action of $G$ on $Gh^{\prime
}\sigma^{-1}\subseteq G_{1}$. The number of orbits of $G$ acting by
conjugation on $G_{1}$ is%
\begin{align*}
\left\vert G\right\vert ^{-1}\sum_{g\in G}\left\vert \mathrm{C}_{G_{1}%
}(g)\right\vert  &  \leq\left\vert G\right\vert ^{-1}\left\vert G_{1}%
:G\right\vert \sum_{g\in G}\left\vert \mathrm{C}_{G}(g)\right\vert \\
&  =\left\vert G\right\vert ^{-1}\left\vert G_{1}:G\right\vert \left\vert
G\right\vert c(G)=\left\vert G_{1}:G\right\vert c(G).
\end{align*}
The result follows.
\end{proof}

\bigskip

Since $G=L_{\phi}$ is a quasisimple group of untwisted Lie type,%
\[
\left\vert GH:G\right\vert \leq\left\vert \mathrm{Outdiag}(G)\right\vert
\leq\min\{r+1,q_{0}-1\}<q_{0}.
\]
The automorphism $\sigma$ has order $1$ or $2$. Thus $\left\vert
G_{1}:G\right\vert \leq2q_{0}.$ Now Theorem 1.1 (1) in \cite{FG} shows that
$c(G)\leq30q_{0}^{r}$. Applying Lemma \ref{fixed}, we deduce that if $y\in
Gh^{\prime}\smallsetminus Z$ then
\[
\left\vert y^{\widehat{G}}\right\vert =\frac{\left\vert G\right\vert
}{\left\vert C(y)\right\vert }<\frac{\left\vert G\right\vert }{q_{0}^{11r}};
\]
Hence by Lemma \ref{orbitequiv} $Gh^{\prime}\smallsetminus Z$ is the union of
at most $60q_{0}^{r+1}$ orbits of this size, whence%
\[
\left\vert Gh^{\prime}\smallsetminus Z\right\vert <60q_{0}^{-10r+1}\left\vert
G\right\vert .
\]
Therefore $\left\vert Z\right\vert \geq\eta\left\vert G\right\vert $ where
$\eta=1-60/2^{9}>\frac{4}{5}$.

Now Proposition \ref{1/2} follows from (\ref{WandZ}).

\subsubsection{Proof of Theorem \ref{T1}}

As explained in Subsection \ref{P-reductions}, we have to find $D\in
\mathbb{N}$ and $\varepsilon>0$ such that $\mathcal{P}(\mathrm{Aut}%
(S);D,\varepsilon)$ holds with $\lambda$ given by (\ref{lambda}) for every
quasisimple group $S$: i.e. $\lambda=l(S)^{-3/5}$ if $l(S)\geq3$, $\lambda=1$
if $l(S)=2$. Henceforth, when we say that $\mathcal{P}(\ldots)$ holds for some
group $S$, we will mean that it holds with $\lambda$ given by (\ref{lambda}).

Set $N_{0}=\max\{N_{2},N_{3},1+\left\vert M\right\vert \}$ where $N_{i}$ are
the bounds introduced above and $M$ denotes the largest sporadic (quasi)simple
group (it happens to be simple).

Now let $S$ be a quasisimple group. We consider several cases.

$\medskip$

\emph{Case 1.} Where $\left\vert S\right\vert <N_{0}$. Proposition \ref{Small}
shows that $\mathcal{P}(\mathrm{Aut}(S);D_{1},\varepsilon_{1})$ holds for some
$D_{1}$ and $\varepsilon_{1}$.

$\medskip$

We assume henceforth that $\left\vert S\right\vert \geq N_{0}$. Putting
$\Gamma_{0}=\mathrm{Inn}(S)$, Proposition \ref{Inner} shows that
$\mathcal{P}(\Gamma_{0};1,\frac{1}{8})$ holds.

$\medskip$

\emph{Case 2.} Where $S/\mathrm{Z}(S)$ is an alternating group. Then
$\left\vert \mathrm{Aut}(S):\Gamma_{0}\right\vert =2$, and Proposition
\ref{firstredn} gives $\mathcal{P}(\mathrm{Aut}(S);4,\frac{1}{8})$.

$\medskip$

From now on, $S$ is a group of Lie type, of rank $r$ over $\mathbb{F}_{q}$. We
denote by $\Phi$ the group of field automorphisms of $S$. Then $\mathrm{Aut}%
(S)$ has normal subgroups%
\[
\mathrm{Aut}(S)\geq\Gamma\geq\Gamma_{1}\geq\Gamma_{2}\geq\Gamma_{0}%
=\mathrm{Inn}(S)
\]
where $\Gamma_{2}=\mathrm{InnDiag}(S)$, $\Gamma=\Gamma_{2}\Phi,$ and
$\Gamma_{1}=\Gamma_{2}\Phi_{1}$ where $\Phi_{1}$ is the subgroup of $\Phi$
generated by all elements of order at most $50$.

Put $n_{0}=\operatorname{lcm}[50]$, and define $n_{1}=\min\{q+1,r+1\}$ if $S$
has type $A_{r}$ or $^{2}A_{r}$, $n_{1}=4$ otherwise. We have%
\begin{align*}
\left\vert \mathrm{Aut}(S):\Gamma\right\vert  &  \leq6,\\
\left\vert \Gamma:\Gamma_{2}\right\vert  &  \leq\log_{p}(q^{3})\leq3\log
_{2}(q),\\
\left\vert \Gamma_{1}:\Gamma_{2}\right\vert  &  \leq n_{0},\\
\left\vert \Gamma_{2}:\Gamma_{0}\right\vert  &  \leq n_{1}%
\end{align*}
where $p=\mathrm{char}(\mathbb{F}_{q})$ (see \cite{GLS}, Section 2.5).

$\medskip$

\emph{Case 3.} Where $q\leq10$. In this case, $\left\vert \mathrm{Aut}%
(S):\Gamma_{0}\right\vert \leq600$. As in Case 2, we may deduce that
$\mathcal{P}(\mathrm{Aut}(S);D_{2},\frac{1}{8})$ holds where $D_{2}=360,000$.

$\medskip$

\emph{Case 4. }Where $q>10$. If $r<9$ we have $\left\vert \Gamma_{2}%
:\Gamma_{0}\right\vert \leq n_{1}\leq9$; we deduce as before that
$\mathcal{P}(\Gamma_{2};81,\frac{1}{8})$ holds. If $r\geq9$, Proposition
\ref{inndi} gives $\mathcal{P}(\Gamma_{2};8,\frac{1}{2})$. Taking
$D_{3}=81n_{0}^{2}$, we infer in any case that $\mathcal{P}(\Gamma_{1}%
;D_{3},\frac{1}{8})$ holds, whatever the rank $r$.

Now let $\alpha,\beta\in\Gamma^{(D_{3})}$. If $\alpha_{i}$ and $\beta_{i}$ lie
in $\Gamma_{1}$ for every $i$ then we have $\mathcal{P}(\alpha,\beta
;D_{3},\frac{1}{8})$. If not, let us suppose for convenience that $\alpha
_{1}\notin\Gamma_{1}$. Then $\alpha_{1}=ch\phi$ where $c\in\Gamma_{0}$, $h$ is
diagonal, and $\phi\in\Phi$ has order exceeding $50$. Proposition
\ref{largefieldauto} now shows that $\mathcal{P}(\alpha_{1},\beta_{1};1,3/5)$
holds. As in the proof of Proposition \ref{obviousredn}, this in turn implies
$\mathcal{P}(\alpha,\beta;D_{3},3/5)$.

Thus $\mathcal{P}(\Gamma;D_{3},\frac{1}{8})$ holds in either case. Since
$\left\vert \mathrm{Aut}(S):\Gamma\right\vert \leq6$, a final application of
Proposition \ref{firstredn} gives $\mathcal{P}(\mathrm{Aut}(S);D_{4},\frac
{1}{8})$ where $D_{4}=36D_{3}$.

$\medskip$

\emph{Conclusion. }Take $D=\max\{4,D_{1},D_{2},D_{4}\}$ and $\varepsilon
=\min\{\varepsilon_{1},\frac{1}{8}\}$. Then $\mathcal{P}(\mathrm{Aut}%
(S);D,\varepsilon)$ holds in all cases, by Proposition \ref{obviousredn}.

\subsection{Commutators in semisimple groups\label{ssns}}

In this subsection, $D$ and $\varepsilon$ are the constants introduced in
subsection \ref{quasi}. We will say that a multiset $Y$ has the $(k,\eta
)$\emph{-fpp} on a $\left\langle Y\right\rangle $-set $\Omega$ if at least $k$
elements of $Y$ have the $\eta$-fpp on $\Omega$.

\begin{theorem}
\label{comm20}Let $N$ be a finite quasisemisimple group with at least $3$
non-abelian composition factors. Let $\mathbf{y}_{1},\ldots,\mathbf{y}_{10}$
be $m$-tuples of automorphisms of $N$. Assume that for each $i$, the group
$\left\langle \mathbf{y}_{i}\right\rangle $ permutes the set $\Omega$ of
quasisimple factors of $N$ transitively and that $\mathbf{y}_{i}$ has the
$(k,\eta)$-fpp on $\Omega$, where $k\eta\geq4+2D$. For each \thinspace$i$ let
$W(i)\subseteq N^{(m)}$ be a subset with $\left\vert W(i)\right\vert
\geq(1-\varepsilon/6)\left\vert N\right\vert ^{m}$. Then%
\[%
{\displaystyle\prod\limits_{i=1}^{10}}
W(i)\phi(i)=N
\]
where $\phi(i):N^{(m)}\rightarrow N$ is given by%
\[
(x_{1},\ldots,x_{m})\phi(i)=%
{\displaystyle\prod\limits_{i=1}^{m}}
[x_{i},y_{ij}].
\]

\end{theorem}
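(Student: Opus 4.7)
The plan is to combine the twisted-commutator result (Theorem~\ref{T1}) with the Gowers-type theorem (\cite{BNP} Corollary~2.6 quoted in the introduction), applied with $t=10$ to the sets $X_i = W(i)\phi(i) \subseteq N$. Since $\langle \mathbf{y}_i \rangle$ acts transitively on $\Omega$, all the quasisimple factors of $N$ are isomorphic to a common $S$ and $l(N)=l(S)$; so it will suffice to prove that $|X_i| \ge |N|\cdot l(S)^{-3/5}$ for each $i$, as then $\prod_i |X_i| \ge |N|^{10} l(S)^{-6} \ge |N|^{10} l(N)^{-8}$ and the Gowers hypothesis is met.

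Fix $i$ and write $\mathbf{y}=\mathbf{y}_i$, $W=W(i)$, $\phi=\phi(i)$. The heart of the argument is a combinatorial reorganization of $\phi$ into twisted-commutator form. When $y_j$ permutes $\Omega$ nontrivially---say sends $S_a$ to $S_b$ with $a\neq b$---the factor $[x_j, y_j]$ projects to $S_a S_b$ modulo the centre as a pair of twisted commutators, and more generally the cycle structure of $y_j$ on $\Omega$ determines a pattern of twisted commutators linking the $x_j$-components across factors. Iterating the algebraic identities of Lemmas~\ref{T...T} and~\ref{T-conj} to regroup and conjugate successive factors, and using the $(k,\eta)$-fpp hypothesis (with $k\eta \ge 4+2D$) to guarantee enough ``moving'' $y_j$, one should produce a bijection $\pi:N^{(m)}\to U\times N^{(2D)}$, automorphisms $\alpha,\beta\in\mathrm{Aut}(S)^{(D)}$, and a function $\rho:U\to N$ such that
\[
\phi(\mathbf{x}) = \rho(\mathbf{u})\cdot \Psi_{\alpha,\beta}(\mathbf{v}),\qquad (\mathbf{u},\mathbf{v})=\pi(\mathbf{x}),
\]
where $\Psi_{\alpha,\beta}:N^{(2D)}\to N$ has the property that its projection to each quasisimple factor of $N$ coincides, up to the central-product identifications, with the twisted commutator map $\mathbf{T}_{\alpha,\beta}$ of Theorem~\ref{T1}. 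This is the pattern of the reduction in \cite{NS} Section~8, here re-cast for image rather than fibre control.

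Given such a reorganization, apply Lemma~\ref{sdp} to the set $W\pi\subseteq U\times N^{(2D)}$ of density at least $1-\varepsilon/6 > 1-\varepsilon^2$ (using $\varepsilon\le 1/6$): for some $\mathbf{u}^*\in U$ the fibre $W_{\mathbf{u}^*}:=\{\mathbf{v}:(\mathbf{u}^*,\mathbf{v})\in W\pi\}$ has density at least $1-\varepsilon$ in $N^{(2D)}$, hence also in the projection to any fixed quasisimple factor of $N^{(2D)}$. Theorem~\ref{T1} then gives image of size at least $l(S)^{-3/5}|S|$ in each quasisimple factor of $N$; because $\pi$ was constructed to decouple the coordinates of $\mathbf{v}$ across the factors of $N$, the joint image $\Psi_{\alpha,\beta}(W_{\mathbf{u}^*})$ in $N$ has size at least $l(S)^{-3/5}|N|$. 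Hence $|X_i|\ge |\rho(\mathbf{u}^*)\cdot\Psi_{\alpha,\beta}(W_{\mathbf{u}^*})| \ge l(S)^{-3/5}|N|$, as required.

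The main obstacle is the combinatorial reorganization: aligning the $[x_j,y_j]$ factors into twisted-commutator blocks requires delicate bookkeeping about how each $y_j$ acts on $\Omega$, and ensuring that the final bijection $\pi$ decouples the coordinates across the $n$ factors of $N$ (so that Theorem~\ref{T1} can be applied factor-by-factor while accumulating a joint bound in $N$). The numerical hypothesis $k\eta\ge 4+2D$ appears precisely to leave enough ``moving'' indices $j$ to extract the $D$ twisted-commutator blocks after a bounded overhead for fixing the remaining combinatorial data.
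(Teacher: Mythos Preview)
Your overall plan (bound $|W(i)\phi(i)|$ from below, then apply the Gowers inequality with $t=10$) is exactly the paper's plan. The gap is in the middle step, your proposed ``decoupling across the factors of $N$''.

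You want a bijection $\pi$ so that $\Psi_{\alpha,\beta}:N^{(2D)}\to N$ acts as $\mathbf{T}_{\alpha,\beta}$ independently on each of the $n$ quasisimple factors. But the structure of $[x_j,y_j]$ prevents this: when $y_j$ sends $S_a$ to $S_b$, the $a$-component and the $b$-component of $[x_j,y_j]$ involve the \emph{same} $S$-coordinates of $x_j$, so the contributions to different factors of $N$ are inherently correlated. Even granting such a decoupling, the conclusion would be wrong: applying Theorem~\ref{T1} to each factor separately yields an image of size at least $(\lambda|S|)^n=\lambda^{\,n}|N|$, not $\lambda|N|$. With $\lambda=l(S)^{-3/5}$ and $n$ unbounded this is useless for the Gowers step. (Your passage ``hence also in the projection to any fixed quasisimple factor of $N^{(2D)}$'' confuses projections, which can only raise density, with fibres, which is what you would actually need; and in any case the fibre $W_{\mathbf u^*}$ is not a product set, so per-factor bounds do not multiply.)

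The paper's fix is to treat the $n$ output coordinates \emph{asymmetrically}. One builds a bijection
\[
\Psi:N^{(m)}\longrightarrow S^{C}\times S^{K'}\times S^{(n-1)},\qquad
\mathbf x\longmapsto(\mathbf x\pi_C,\,[\mathbf x,\mathbf g]\pi_{K'},\,(\mathbf x\phi_2,\ldots,\mathbf x\phi_n)),
\]
so that the last $n-1$ coordinates of $\phi$ are part of the ``data'' and only the first coordinate $\phi_1$ remains to be analysed. A recursive substitution (rewriting $y_i(j)$ via the relations coming from the cycle decompositions of the $g_i$) shows that $\phi_1$, viewed as a word, has the pattern described in \cite{NS}, Prop.~8.4; the hypothesis $k\eta\geq4+2D$ enters through the inequality $(m-2)n-\sum e_i\geq 2D$ and guarantees that one can extract $D$ twisted-commutator blocks in a \emph{single} copy of $S$, with all remaining symbols absorbed into the data. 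Then, after a density argument (Lemma~\ref{subsets}), for a proportion $\geq\tfrac56$ of values $\mathbf z=(\phi_2,\ldots,\phi_n)\in S^{(n-1)}$ one applies Theorem~\ref{T1} once to get $\geq\lambda|S|$ values of $\phi_1$. This yields $|W\phi|\geq\tfrac56\,\lambda\,|S|^{\,n}=\tfrac56\,\lambda\,|N|\geq l(S)^{-4/5}|N|$, with only one factor of $\lambda$ regardless of $n$; and $10\cdot\tfrac45=8$ exactly matches the Gowers exponent.

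So the missing idea is: do not try to run Theorem~\ref{T1} in every factor, but arrange the combinatorics so that $n-1$ output coordinates vary essentially freely while Theorem~\ref{T1} is invoked in just one.
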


The action of $\mathrm{Aut}(N)$ lifts to an action on the universal cover
$\widetilde{N}$ of $N$, and $\widetilde{N}=S_{1}\times\cdots\times S_{n}$
where the $S_{i}$ are quasisimple groups. Replacing $N$ by $\widetilde{N}$ and
each $W(i)$ by its inverse image in $\widetilde{N}^{(m)}$, we may suppose that
in fact $N=S_{1}\times\cdots\times S_{n}$. Since $\left\langle \mathbf{y}%
_{1}\right\rangle $ permutes $\Omega=\{S_{1},\ldots,S_{n}\}$ transitively, the
groups $S_{i}$ are all isomorphic to a quasisimple group $S $.

Now let $G=\left\langle g_{1},\ldots,g_{m}\right\rangle \leq\mathrm{Aut}(N)$
and denote by $e_{i}$ the number of cycles (including fixed points) of $g_{i}$
in its action on $\Omega$. Define $\phi:N^{(m)}\rightarrow N$ by%
\[
\mathbf{x}\phi=\mathbf{c}(\mathbf{x},\mathbf{g})=%
{\displaystyle\prod\limits_{i=1}^{m}}
[x_{i},g_{i}].
\]
We shall prove

\begin{proposition}
\label{thm}Suppose that that
\begin{equation}
(m-2)n-%
{\displaystyle\sum\limits_{i=1}^{m}}
e_{i}\geq2D. \label{ineq}%
\end{equation}

\emph{(1)} Let $W\subseteq N^{(m)}$ satisfy $\left\vert W\right\vert
\geq(1-\varepsilon/6)\left\vert N\right\vert ^{m}$. Then%
\[
\left\vert W\phi\right\vert \geq l(S)^{-4/5}\left\vert N\right\vert .
\]

\emph{(2)} If $D$ is replaced by $D_{1}=5D$, then $\phi$ is surjective, and
each fibre of $\phi$ has size at least $\left\vert N\right\vert ^{-2D_{1}%
/n}\left\vert N\right\vert ^{m-1}$.
\end{proposition}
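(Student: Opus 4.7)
My plan is to exploit the cycle structure of each $\sigma_i \in \mathrm{Sym}(\Omega)$ induced by $g_i$ to rewrite $\phi$, coordinate by coordinate, as a product of freely chosen elements of $S$ and twisted commutators in $S$; then to apply Theorem~\ref{T1} coordinate-wise and combine the resulting per-coordinate image bounds via the Gowers trick.

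The first step is a cycle-by-cycle reparametrization. For each $i \in [m]$ and each cycle $\gamma$ of $\sigma_i$ on $\Omega$ I would pick a distinguished point $s = s(i,\gamma) \in \gamma$ (the choice deferred to the next step). Writing $c_{i,j} := ([x_i, g_i])^{(j)} \in S_j \cong S$ for the $j$-th component, a direct calculation shows that on the $\ell := |\gamma|$ coordinates in $\gamma$ the $\ell$ inputs $(x_i^{(j)})_{j \in \gamma}$ are in bijection with the $\ell - 1$ outputs $(c_{i,j})_{j \in \gamma \smallsetminus \{s\}}$ (each a free element of $S$) together with the distinguished input $v_{i,\gamma} := x_i^{(s)} \in S$, and under this bijection
\[
c_{i,s} = [v_{i,\gamma}, \alpha_{i,\gamma}] \cdot h_{i,\gamma},
\]
where $\alpha_{i,\gamma} \in \mathrm{Aut}(S)$ is the return automorphism of $g_i^{\ell}$ on $S_s$ and $h_{i,\gamma} \in S$ depends only on the free outputs in $\gamma$. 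The map $\phi$ is thus translated into an assignment $(u,v) \mapsto y$ with $y^{(j)} = \prod_{i=1}^m c_{i,j}$, each factor $c_{i,j}$ being either a free variable $u_{i,j} \in S$ (when $j$ is non-distinguished for $i$) or the commutator expression $[v_{i,\gamma_{i,j}}, \alpha_{i,\gamma_{i,j}}] \cdot h_{i,\gamma_{i,j}}$ (when $j$ is distinguished).

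The second step is a combinatorial assignment of the distinguished points. Invoking the lemma quoted from Section~8 of \cite{NS}, I would choose the $s(i,\gamma)$ so that, uniformly over $j \in \Omega$, after absorbing adjacent free factors into the commutator terms using the identities of Lemmas~\ref{T...T} and~\ref{T-conj}, each $y^{(j)}$ admits a presentation
\[
y^{(j)} = u^{(j)} \cdot \mathbf{T}_{\boldsymbol{\alpha}^{(j)},\boldsymbol{\beta}^{(j)}}(\mathbf{v}^{(j)}, \mathbf{w}^{(j)}) \cdot h^{(j)}
\]
with free prefix $u^{(j)} \in S$, a twisted-commutator product of T1-length $D$ (respectively $5D$ for part (2)), and a $j$-dependent constant $h^{(j)}$ expressible in terms of already-fixed variables. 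The counting $\sum_{i,\gamma}(|\gamma|-1) = mn - \sum_i e_i$ records the total number of free slots; the hypothesis $(m-2)n - \sum_i e_i \geq 2D$ is exactly what leaves $2D$ free slots available to each coordinate after paying a two-slot boundary cost per $j$.

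For part (1), I would pass from the global density $|W| \geq (1 - \varepsilon/6)|N|^m$ to per-coordinate density $\geq 1 - \varepsilon$ on a suitable slice by iterated application of Lemma~\ref{sdp}; Theorem~\ref{T1} then delivers, for each $j$, an achievable set of size $\geq l(S)^{-3/5}|S|$ for the twisted-commutator factor, which translates (via the free prefix $u^{(j)}$ and a union bound across $j$) into $|W\phi| \geq l(S)^{-4/5}|N|$. For part (2), with $D$ replaced by $D_1 = 5D$, each coordinate decomposes into $5$ twisted-commutator blocks of length $D$; Theorem~\ref{T1} produces $5$ subsets of $S$ each of density $\geq l(S)^{-3/5}$, and the Gowers trick, applied exactly as in the proof of Corollary~\ref{NS2Thm1.1}, forces their product to be all of $S$. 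A further application of the Gowers trick in $N$, using $l(N) \geq l(S)$ for the semisimple group $N$, yields $\phi(N^{(m)}) = N$; the fiber bound $|\phi^{-1}(t)| \geq |N|^{-2D_1/n}|N|^{m-1}$ then follows from a counting argument tracking the quantitative form of Theorem~\ref{T1} through the combinatorial decomposition. The principal obstacle is the combinatorial assignment: a naive pigeonhole distribution of distinguished points does not generally produce the uniform twisted-commutator structure that Theorem~\ref{T1} requires, and the matching lemma from Section~8 of \cite{NS} is what forces simultaneous balance across all $n$ coordinates.
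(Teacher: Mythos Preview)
Your plan has a genuine gap at the combinatorial step, and the counting you give actually witnesses it. You correctly compute that the total number of non-distinguished (free) slots is $mn-\sum_i e_i$, and you observe that the hypothesis reads $(mn-\sum_i e_i)-2n\geq 2D$. But this leaves only $2D$ excess free slots \emph{globally}, not $2D$ per coordinate. The expression $y^{(j)}=\prod_{i=1}^m c_{i,j}$ at a fixed coordinate $j$ involves only the $m$ factors $c_{1,j},\ldots,c_{m,j}$; among these, the ones carrying a twisted commutator are precisely those $i$ for which $j$ is the distinguished point of the relevant $g_i$-cycle. There is no reason this number should be at least $2D$ at \emph{every} $j$, and the matching lemma from \cite{NS} Section~8 does not manufacture such uniformity. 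So you cannot place a length-$D$ twisted-commutator block at each coordinate simultaneously, and a per-coordinate application of Theorem~\ref{T1} would in any case give an image bound of order $(l(S)^{-3/5})^{n}|N|$, which is far too small.

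The paper's argument is structurally different: it does not treat the coordinates symmetrically. Using the transitivity of $G$ on $\Omega$, it singles out the coordinate $j=1$ and builds a bijection $\Psi:N^{(m)}\to S^{C}\times S^{K'}\times S^{(n-1)}$ whose last $n-1$ coordinates are exactly the outputs $(\phi_2,\ldots,\phi_n)$. Thus for each fixed $\mathbf{z}=(z_2,\ldots,z_n)$, the remaining output $\phi_1$ is a function of the first two blocks of variables. An iterated substitution (propagating the equations (S$(i,j)$) and (S$(j)$) down the ordering on $\Omega$) rewrites $\phi_1$ as a long word $U_n$ in these variables; Claim~1 controls its shape, and Claim~2 (via \cite{NS}, Prop.~8.4) extracts exactly $2D$ variables $\xi_1,\eta_1,\ldots,\xi_D,\eta_D$ in which $U_n$ has the twisted-commutator form (\ref{U-T}), with all other variables absorbed into the coefficients. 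Theorem~\ref{T1} is applied \emph{once}, at coordinate~$1$ only; the image bound $l(S)^{-4/5}|N|$ then comes from Lemma~\ref{subsets}, which shows that a $5/6$-fraction of $\mathbf{z}\in S^{(n-1)}$ are ``good'', so that $|W\phi|\geq \tfrac{5}{6}|S|^{n-1}\cdot l(S)^{-3/5}|S|\geq l(S)^{-4/5}|N|$. Part~(2) follows the same template with $D$ replaced by $5D$, using Corollary~\ref{NS2Thm1.1} in place of the image estimate; no Gowers step in $N$ is needed. The ingredient your outline is missing is precisely this asymmetric reduction: turning $n-1$ output coordinates into free parameters so that the entire budget of $2D$ slots can be spent at the single remaining coordinate.
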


Part (2) is a sharper version of \cite{NS}, Proposition 9.1; we will not be
needing it, and include it in a revisionist spirit, to show how the main
results of \cite{NS} can be reproduced using these methods.

To deduce Theorem \ref{comm20} from (1), note that for each $i=1,\ldots,10$,
the total number of cycles for $y_{i1},\ldots,y_{im}$ on $\Omega$ is at most%
\[
(m-k)n+k(1-\eta/2)n\leq(m-2)n-nD,
\]
which implies condition (\ref{ineq}) since $n\geq3$. So taking $\mathbf{g}%
=\mathbf{y}_{i}$ and writing $\phi(i)$ for the corresponding map $\phi$, we
may infer that%
\[
\left\vert W(i)\phi(i)\right\vert \geq l(S)^{-4/5}\left\vert N\right\vert .
\]
Now $l(S)=l(N)=l$, say, and we have%
\[%
{\displaystyle\prod\limits_{i=1}^{10}}
\left\vert W(i)\phi(i)\right\vert \geq\frac{\left\vert N\right\vert ^{10}%
}{l^{8}}.
\]
It follows by the `Gowers trick' that $%
{\displaystyle\prod\limits_{i=1}^{10}}
W(i)\phi(i)=N$, and this is the statement of Theorem \ref{comm20} since
$W(i)\phi(i)=%
{\displaystyle\prod\limits_{j=1}^{m}}
[W(i),y_{ij}]$.

\subsubsection{Proof of Proposition \ref{thm}}

\begin{lemma}
Suppose that $G=\left\langle g_{1},\ldots,g_{m}\right\rangle $ acts
transitively on a finite set $J$. Fix $t\in J$. Then there is a total order on
$J$ with minimal element $t$ such that for each $j>t$ there exist
$i(j)\in\lbrack m]$ and $\varepsilon_{j}\in\{\pm1\}$ such that $j\cdot
g_{i(j)}^{\varepsilon_{j}}<j$.
\end{lemma}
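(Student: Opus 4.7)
The plan is to construct the order greedily by a breadth-first (or any rooted spanning-tree) search on the natural undirected action graph on $J$. First I would define a graph $\Gamma$ with vertex set $J$, placing an edge between $j$ and $j'$ whenever $j' = j\cdot g_i^{\pm 1}$ for some $i\in[m]$. Since each $g_i^{-1}$ is also a word in $g_1,\ldots,g_m$ (via its inverse), the subgroup generated by $\{g_i^{\pm 1}\}$ is still $G$, which acts transitively on $J$; hence for any $j_1,j_2\in J$ there is $g\in G$ with $j_1\cdot g=j_2$, and writing $g$ as a word in the $g_i^{\pm 1}$ exhibits a path from $j_1$ to $j_2$ in $\Gamma$. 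Thus $\Gamma$ is connected.

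Next I would run BFS on $\Gamma$ starting at the root $t$, obtaining an enumeration $t=j_1,j_2,\ldots,j_n$ of $J$ in the order vertices are first discovered. Define the total order on $J$ by $j_a<j_b\Longleftrightarrow a<b$; then $t$ is the minimum. For each $a\geq 2$, the vertex $j_a$ was discovered from some already-visited neighbour $j_b$ with $b<a$, so there exist $i\in[m]$ and $\varepsilon\in\{\pm 1\}$ with $j_b=j_a\cdot g_i^{\varepsilon}$. Setting $i(j_a)=i$ and $\varepsilon_{j_a}=\varepsilon$ yields $j_a\cdot g_{i(j_a)}^{\varepsilon_{j_a}}=j_b<j_a$, as required.

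There is no real obstacle here: the only content is the observation that transitivity of $G$ on $J$ forces connectedness of the symmetric generator graph $\Gamma$, after which any rooted spanning-tree enumeration of $\Gamma$ delivers the desired order. (One can equivalently phrase this as induction on $|J|$: peel off a leaf of a spanning tree of $\Gamma$ rooted at $t$, order the remaining vertices by induction, and place the leaf last.)
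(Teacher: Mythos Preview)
Your proof is correct and is essentially the same argument as the paper's: the paper uses a Schreier transversal for $\mathrm{stab}_G(t)$ in $G$ and orders $J$ by word length, which is exactly a rooted spanning tree of the Schreier (action) graph ordered by distance from $t$---precisely your BFS enumeration. The only difference is vocabulary (Schreier transversal versus BFS/spanning tree), not content.
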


\begin{proof}
Let $X$ with $1\in X$ be a Schreier transversal to the right cosets of
$\mathrm{stab}_{G}(t)$: thus $X$ is a set of words on $\{g_{1},\ldots,g_{m}\}$
such that (1) $x\longmapsto t\cdot x$ is a bijection $X\rightarrow J$ and (2)
each initial segment of a word in $X$ is again in $X$, i.e. if a word
$vg_{i}^{\pm1}$ is in $X$ then $v\in X$. Now define the size of $j=t\cdot x$
to be the length of $x$, and finally order $J$ lexicographically by size.
\end{proof}

\bigskip

Keeping $G$ and $J$ as above, we label the elements of $J$ as $\{1,2,\ldots
,n\}$ in the given order, and fix $i(j),$ $\varepsilon_{j}$ ($j=2,\ldots,n$)
as in the lemma. Say $g_{i}$ has cycles $\Delta_{il}$, $l=1,\ldots,e_{i}$
(including cycles of length $1$); we also write%
\[
\Delta_{il}=\Delta_{i}(j)\text{ if }j\in\Delta_{il}.
\]
Let $\delta_{il}=\delta_{i}(j)$ denote the least member of $\Delta_{il}%
=\Delta_{i}(j)$, and set%
\[
\widehat{j}=\delta_{i(j)}(j),
\]
i.e. $\widehat{j}$ is the least element in the $\left\langle g_{i(j)}%
\right\rangle $-orbit of $j$. This implies that $\widehat{j}<j$ if $j>1$.

Put%
\begin{align*}
\Delta_{il}^{^{\prime}}  &  =\Delta_{il}\smallsetminus\{\delta_{il}\},\\
J_{i}^{^{\prime}}  &  =%
{\displaystyle\bigcup\limits_{l=1}^{e_{i}}}
\Delta_{il}^{^{\prime}}.
\end{align*}
In writing products labelled by $\Delta_{il}$, we will assume that
$\Delta_{il}$ is ordered as a $g_{i}$-cycle starting with $\delta_{il}$ (not
with the induced order from $J$).

Let $S$ be a finite group, $N=S^{J}$, and suppose that $G$ acts on $N$,
permuting the factors according to the action of $G$ on $J$. Write elements of
$N$ as $x=(x(j))_{j\in J}$.

For any subset $T$ of $[m]\times J$ write $\pi_{T}:N^{(m)}\rightarrow S^{T}$
for the projection map%
\[
(x_{1},\ldots,x_{m})\pi_{T}=(x_{i}(j))_{(i,j)\in T}.
\]

For $x\in S$ an expression $x^{\ast}$ will mean $x^{\alpha}$ where $\alpha$ is
some fixed automorphism of $S$, depending on the context but not on $x$, and
$x^{-\ast}=(x^{\ast})^{-1}$.

We write%
\[
\lbrack\mathbf{x},\mathbf{g}]=([x_{1},g_{1}],\ldots,[x_{m},g_{m}]).
\]

\begin{lemma}
\label{L2}Let $x,y\in N$. Then $[x,g_{i}]=y$ if and only if%
\begin{align}
y(\delta_{il})  &  =x(\delta_{il})^{-1}x(\delta_{il})^{\ast}%
{\displaystyle\prod\limits_{j\in\Delta_{il}^{\prime}}}
y(j)^{-\ast}\label{subst}\\
x(j)  &  =x(j^{-})^{\ast}y(j)^{-1}\qquad(j\in\Delta_{il}^{\prime})
\label{rest}%
\end{align}
for $1\leq l\leq e_{i}$, where $j^{-}=j\cdot g_{i}^{-1}$.
\end{lemma}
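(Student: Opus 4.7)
The plan is to unpack the equation $[x,g_i]=y$ pointwise on the $G$-set $J$, and exploit the fact that $g_i$ partitions $J$ into its own cycles $\Delta_{il}$. Since $g_i$ acts on $N=S^{J}$ as an automorphism permuting (and possibly twisting) the factors, the action takes the form $x^{g_i}(j)=x(jg_i^{-1})^{\tau_j}$ for some automorphism $\tau_j$ of $S$ depending on $j$. Writing $[x,g_i]=x^{-1}x^{g_i}$ componentwise, the single condition $[x,g_i]=y$ is equivalent to the family of relations
\[
x(j)\,y(j)=x(j^{-})^{\tau_j}\qquad(j\in J),
\]
one for each $j$. Each such relation involves only the factor at $j$ and the factor at $j^{-}=jg_i^{-1}$, both of which lie in the same $g_i$-cycle; so the whole system decouples across the cycles $\Delta_{il}$.

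Fix a cycle $\Delta_{il}$ of length $k$ and label its elements in $g_i$-order as $j_0=\delta_{il},j_1,\dots,j_{k-1}$, so that $j_r^{-}=j_{r-1}$ for $1\le r\le k-1$ and $j_0^{-}=j_{k-1}$. For $r\ge 1$ the relation at $j_r$ solves directly for $x(j_r)$:
\[
x(j_r)=x(j_{r-1})^{\tau_{j_r}}\,y(j_r)^{-1},
\]
which is exactly the claimed equation (\ref{rest}), with the context-dependent $\ast$ standing for $\tau_{j_r}$. I would then iterate this recursion from $r=1$ up to $r=k-1$ to express $x(j_{k-1})$ in terms of $x(j_0)$ and the $y(j_r)$ for $r\in[1,k-1]=\Delta_{il}'$, each $y(j_r)^{-1}$ acquiring a twist $\tau_{j_{r+1}}\tau_{j_{r+2}}\cdots\tau_{j_{k-1}}$. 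Substituting the resulting expression for $x(j_{k-1})^{\tau_{j_0}}$ into the remaining head relation $x(\delta_{il})y(\delta_{il})=x(j_{k-1})^{\tau_{j_0}}$ and rearranging yields
\[
y(\delta_{il})=x(\delta_{il})^{-1}\,x(\delta_{il})^{\ast}\prod_{j\in\Delta_{il}'}y(j)^{-\ast},
\]
where each $\ast$ abbreviates the appropriate composition of the $\tau$'s; this is (\ref{subst}).

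For the converse direction, I would simply read the argument backwards on each cycle: assuming (\ref{rest}) and (\ref{subst}), the equations (\ref{rest}) give the componentwise relation $x(j)y(j)=x(j^{-})^{\tau_j}$ for every $j\in\Delta_{il}'$, and then (\ref{subst}), after re-substituting the same iterated formula for $x(j_{k-1})$, recovers the corresponding relation at $j=\delta_{il}$. Doing this for each cycle and each $l\in[1,e_i]$ covers all of $J$, so $[x,g_i]=y$.

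There is no substantive obstacle here: the entire content is the observation that the cycle structure of $g_i$ makes the system triangular on the "tail" $\Delta_{il}'$ of each cycle, leaving one consistency condition per cycle at its minimum $\delta_{il}$. The only care needed is notational bookkeeping for the twists $\tau_j$, which the convention on $\ast$ lets me suppress; I would be mildly careful to verify, once, that composing the twists around a full cycle produces a genuine automorphism of $S$ so that the $\ast$ on $x(\delta_{il})$ in (\ref{subst}) is well-defined in the sense of the paper's convention.
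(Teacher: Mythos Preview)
Your proposal is correct and follows essentially the same approach as the paper: work componentwise, decouple along $g_i$-cycles, use the tail relations to eliminate $x(j)$ for $j\in\Delta_{il}'$, and obtain the single consistency condition at $\delta_{il}$. The paper phrases the converse slightly differently (observing that the displayed equations determine $y$ uniquely given $x$), but the content is the same.
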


\begin{proof}
Compare the $j$-components of $u=[x,g_{i}]$ and of $y$ as $j$ runs over a
given cycle $\Delta_{il}$. To simplify notation let's suppose that
$\Delta_{il}=(1,2,\ldots,s)$, with $\delta_{il}=1$. For $1\leq j\leq s$ we
have%
\[
u(j)=x(j)^{-1}x(j-1)^{\alpha_{j}}%
\]
(writing $x(0)=x(s)$) where $\alpha_{j}\in\mathrm{Aut}(S)$ depends on $j$ and
$g_{i}$. Using these to eliminate $x(2),\ldots,x(s)$ in turn we get%
\[
x(1)^{-1}x(1)^{\beta}=u(1)u(s)^{\alpha_{s}}u(s-1)^{\alpha_{s-1}\alpha_{s}%
}\ldots u(2)^{\alpha_{2}\ldots\alpha_{s}},
\]
where $\beta=\alpha_{1}\ldots\alpha_{s}$ is the automorphism induced by
$g_{i}^{s}$ on the first component of $S^{\Delta_{il}^{{}}}$. Thus
(\ref{subst}) and (\ref{rest}) hold with $u$ in place of $y$. The lemma
follows since these equations determine $y$ uniquely, given $x$.
\end{proof}

\bigskip

Put%
\begin{align*}
C  &  =\left\{  (i,\delta_{il})\mid1\leq i\leq m,~1\leq l\leq e_{i}\right\} \\
K  &  =\left\{  (i,j)\mid1\leq i\leq m,~j\in J_{i}^{^{\prime}}\right\} \\
K^{\prime}  &  =K\smallsetminus\left\{  (i(j),j)\mid j=2,\ldots,n\right\}  .
\end{align*}
Define $\Theta:N^{(m)}\rightarrow S^{C}\times S^{K}=S^{(mn)}$ by%
\[
\mathbf{x}\Theta=(\mathbf{x}\pi_{C},[\mathbf{x},\mathbf{g}]\pi_{K}).
\]
Lemma \ref{L2} shows that $\Theta$ is bijective.

Now define $\phi:N^{(m)}\rightarrow N=S^{(n)}$ by%
\begin{align*}
\mathbf{x}\phi &  =%
{\displaystyle\prod\limits_{i=1}^{m}}
[x_{i},g_{i}]\\
&  =(\mathbf{x}\phi_{1},\ldots,\mathbf{x}\phi_{n}).
\end{align*}
\bigskip Define $\Psi:N^{(m)}\rightarrow S^{C}\times S^{K^{\prime}}\times
S^{(n-1)}=S^{(mn)}$ by%
\[
\mathbf{x}\Psi=(\mathbf{x}\pi_{C},[\mathbf{x},\mathbf{g}]\pi_{K^{\prime}%
},(\mathbf{x}\phi_{2},\ldots,\mathbf{x}\phi_{n})).
\]

\begin{lemma}
The mapping $\Psi:S^{(mn)}\rightarrow S^{(mn)}$ is bijective.
\end{lemma}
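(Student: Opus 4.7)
Both source and target of $\Psi$ have size $|S|^{mn}$, so it suffices to prove injectivity. I would use the bijectivity of $\Theta\colon N^{(m)}\to S^C\times S^K$ (just established via Lemma~\ref{L2}) to parameterize $\mathbf{x}\in N^{(m)}$ by $(\mathbf{a},\mathbf{b}):=(\mathbf{x}\pi_C,[\mathbf{x},\mathbf{g}]\pi_K)\in S^C\times S^K$; the ``missing'' symbols $b_{i,j}:=[x_i,g_i](j)$ for $(i,j)\in C$ are then functions of $\mathbf{a}$ and $\mathbf{b}$ prescribed by equation~(\ref{subst}). In these coordinates $\Psi$ acts as the identity on the $(\mathbf{a},\mathbf{b}|_{K'})$ part and sends the remaining $n-1$ coordinates $(b_{i(j),j})_{j=2}^{n}$ to $(c_j)_{j=2}^{n}$, where $c_j=\prod_{i=1}^{m}b_{i,j}$. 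So, fixing $\mathbf{a}$ and $\mathbf{b}|_{K'}$, I need to show that the map $(b_{i(j),j})_{j\ge 2}\mapsto(c_j)_{j\ge 2}$ on $S^{n-1}$ is a bijection.

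I would carry this out by \emph{descending} induction on $j$ in the total order on $J$ defined earlier, recovering the $b_{i(j),j}$ one at a time from $j=n$ down to $j=2$. At step $j\ge 2$, assume $b_{i(j'),j'}$ has been determined for every $j'>j$ with $j'\ge 2$, and examine the factors of $c_j=\prod_{i=1}^{m}b_{i,j}$ with $i\ne i(j)$. If $(i,j)\in K'$ then $b_{i,j}$ is in the fixed data $\mathbf{b}|_{K'}$. Otherwise $(i,j)\in C$, i.e.\ $j=\delta_{il}$ is the minimum of the $g_i$-cycle $\Delta_{il}$ through $j$, so~(\ref{subst}) expresses $b_{i,j}$ in terms of $a_{i,j}$ and the values $b_{i,j'}$ with $j'\in\Delta_{il}'$; each such $j'$ exceeds $j$ in the total order (since $j$ is the cycle minimum), so $b_{i,j'}$ either lies in $\mathbf{b}|_{K'}$ or, being of the form $b_{i(j'),j'}$ with $j'>j$, was recovered at a previous stage. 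Moreover, the defining property $j\cdot g_{i(j)}^{\varepsilon_j}<j$ of $i(j)$ ensures that $j$ is not the minimum of the $g_{i(j)}$-cycle through $j$, so $(i(j),j)\in K\setminus K'$ and $b_{i(j),j}$ is a legitimate unknown. Writing the equation as $c_j=L_j\cdot b_{i(j),j}\cdot R_j$ with $L_j,R_j\in S$ now known, we uniquely solve $b_{i(j),j}=L_j^{-1}c_j R_j^{-1}$, closing the induction.

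The only substantive obstacle is the bookkeeping showing that at each stage $j$ every factor $b_{i,j}$ with $i\ne i(j)$ is indeed computable from prior data. This works precisely because the custom total order on $J$ places each cycle minimum $\delta_{il}$ before every other element of~$\Delta_{il}$, so the dependencies introduced by~(\ref{subst}) always point toward larger values of~$j$, i.e.\ backwards in the descending induction.
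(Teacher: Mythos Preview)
Your proof is correct and follows essentially the same approach as the paper: both arguments use the $\Theta$-parametrization to fix the $S^{C}\times S^{K'}$ part and then run a descending induction on $j$ from $n$ down to $2$, exploiting that equation~(\ref{subst}) only refers to indices $j'>j$, to uniquely recover the remaining coordinates $\eta_j=b_{i(j),j}$ from the values $c_j=\mathbf{x}\phi_j$. The paper phrases this as constructing the inverse of $\Psi$ directly, while you phrase it as proving injectivity (which suffices by equal finite cardinalities), but the substance is the same.
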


\begin{proof}
Let $(\mathbf{u},\mathbf{v},z_{2},\ldots,z_{n})\in S^{C}\times S^{K^{\prime}%
}\times S^{(n-1)}$. We have to show that there exists a unique $\mathbf{x}\in
N^{(m)}$ such that $\mathbf{x}\pi_{C}=\mathbf{u},$ $[\mathbf{x},\mathbf{g}%
]\pi_{K^{\prime}}=\mathbf{v}$ and $\mathbf{x}\phi_{j}=z_{j}$ for
$j=2,\ldots,n$.

Since $\Theta$ is bijective, for each tuple $\eta=(\eta_{2},\ldots,\eta
_{n})\in S^{(n-1)}$ there exists a unique $\mathbf{x}\in N^{(m)}$ with%
\begin{align*}
\mathbf{x}\pi_{C}  &  =\mathbf{u},[\mathbf{x},\mathbf{g}]\pi_{K^{\prime}%
}=\mathbf{v,}\\
\lbrack x_{i(j)},g_{i(j)}](j)  &  =\eta_{j}\qquad(j=2,\ldots,n).
\end{align*}
Write $y_{i}=[x_{i},g_{i}]$. Then%
\[
\mathbf{x}\phi_{j}=y_{1}(j)y_{2}(j)\ldots y_{m}(j).
\]
If $(i,j)\in K^{\prime}$ then $y_{i}(j)$ is the $(i,j)$-component of
$[\mathbf{x},\mathbf{g}]\pi_{K^{\prime}}=\mathbf{v}$. If $(i,j)\in C$ then
$y_{i}(j)$ is determined by equation (\ref{subst}); this involves $x_{i}(j)$,
a component of $\mathbf{x}\pi_{C}=\mathbf{u}$, and further factors $y_{i}(r)$
where $r>j$.

If $(i,j)\notin C\cup K^{\prime}$ then $i=i(j)$ and $y_{i}(j)=\eta_{j}$. Now
we can solve the equations%
\begin{equation}
\eta_{j}=y_{i-1}(j)^{-1}\ldots y_{1}(j)^{-1}z_{j}y_{m}(j)^{-1}\ldots
y_{i+1}(j)^{-1} \label{eta}%
\end{equation}
successively for $j=n,$ $n-1,\ldots,2$, uniquely for $\eta$. The result follows.
\end{proof}

\bigskip

Observe now that $\mathbf{x}\phi=(z_{1},\ldots,z_{n})$ if and only if%
\begin{equation}
\mathbf{x}\phi_{1}=z_{1} \label{eq1}%
\end{equation}
and%
\begin{equation}
\mathbf{x}\Psi=(\mathbf{u},\mathbf{v},z_{2},\ldots,z_{n}) \label{eq2}%
\end{equation}
for some $(\mathbf{u},\mathbf{v})\in S^{C}\times S^{K^{\prime}}$.

Putting $y_{i}=[x_{i},g_{i}]$ as above we have%
\begin{equation}
\mathbf{x}\phi_{1}=y_{1}(1)y_{2}(1)\ldots y_{m}(1). \label{phi}%
\end{equation}
Now the following hold:

If $(i,j)\in C$ then $j=\delta_{il}$ for some $l\leq e_{i}$, and%
\begin{equation}
y_{i}(j)=x_{i}(j)^{-1}x_{i}(j)^{\ast}%
{\displaystyle\prod\limits_{k\in\Delta_{il}^{\prime}}}
y_{i}(k)^{-\ast}; \tag{S($i,j$)}\label{S(i,j)}%
\end{equation}
note that for each factor $y_{i}(k)$ occurring on the right we have
$(i,k)\notin C$ and $k>j$.

If $i=i(j)$ then%
\begin{equation}
y_{i}(j)^{-1}=y_{i+1}(j)\ldots y_{m}(j)z_{j}^{-1}y_{1}(j)\ldots y_{i-1}(j);
\tag{S($j$)}\label{S(j)}%
\end{equation}
note that for each factor $y_{r}(j)$ occurring on the right we have $r\neq
i(j)$.

Now we are going to successively transform the right-hand member of
(\ref{phi}) in the following manner: for some $(i,j)\in C$, substitute for the
factor $y_{i}(j)$ the expression on the right-hand side of (\ref{S(i,j)});
then use (S($k$)) to eliminate one of the newly introduced factors
$y_{i}(k)^{-1}$.

To analyse this process, for the time being we consider the $y_{i}(j),$
$y_{i}(j)^{-1},$ $x_{i}(j),~x_{i}(j)^{-1}$ and $z_{j}^{-1}$ as abstract
symbols (but allowing the automorphisms denoted by $\ast$ to distribute over
the factors in the usual way). If $U$ is a product of such symbols, possibly
decorated with $\ast$s, the \emph{support} $\sup(U)$ is the multiset of
symbols that occur in $U,$ with their multiplicities. For $(i,j)\in C$ let
$Y_{ij}$ denote the right-hand side of (\ref{S(i,j)}), and for $(i,j)\notin C$
set $Y_{ij}=y_{i}(j)$. For $j=2,\ldots,n$ put%
\[
Z_{j}=Y_{i(j)+1,j}\ldots Y_{mj}z_{j}^{-1}Y_{1j}\ldots Y_{i(j)-1,j}.
\]
Then%
\begin{align*}
\sup(Y_{ij})  &  =\{x_{i}(j)^{-1},x_{i}(j),y_{i}(k)^{-1}\mid k\in\Delta
_{i}^{\prime}(j)\}\text{ if }(i,j)\in C,\\
\sup(Y_{ij})  &  =\{y_{i}(j)\}\text{ if }(i,j)\notin C
\end{align*}
and%
\[
\sup(Z_{j})=\{z_{j}^{-1}\}\cup%
{\displaystyle\bigcup\limits_{i\neq i(j)}}
\sup(Y_{ij})
\]
(disjoint union).

Now set%
\[
U_{1}=%
{\displaystyle\prod\limits_{i=1}^{m}}
Y_{i1}\text{.}%
\]
Then%
\[
\sup(U_{1})=%
{\displaystyle\bigcup\limits_{i}}
\sup(Y_{i1})\ni y_{i(2)}(2)^{-1},
\]
because $(i,1)\in C$ for every $i$, and $2\in\Delta_{i(2)}^{\prime}(1)$. Let
$U_{2}$ be the expression obtained from $U_{1}$ on replacing $y_{i(2)}%
(2)^{-1}$ by $Z_{2}$. Then%
\begin{align*}
\sup(U_{2})  &  =\sup(U_{1})\cup\sup(Z_{2})\smallsetminus\{y_{i(2)}%
(2)^{-1}\}\\
&  =%
{\displaystyle\bigcup\limits_{i}}
\sup(Y_{i1})\cup\{z_{2}^{-1}\}\cup%
{\displaystyle\bigcup\limits_{i\neq i(2)}}
\sup(Y_{i2})\smallsetminus\{y_{i(2)}(2)^{-1}\}.
\end{align*}

Iterating this process, suppose that after $j-1<n-1$ steps we obtain $U_{j}$,
where $\sup(U_{j})$ contains
\begin{equation}%
{\displaystyle\bigcup\limits_{r=1}^{j}}
\left(
{\displaystyle\bigcup\limits_{i\neq i(r)}}
\sup(Y_{ir})\smallsetminus\{y_{i(r)}(r)^{-1}\}\right)  . \label{supp}%
\end{equation}
Say $\widehat{j+1}=r$, so $r\leq j$ and $j+1\in\Delta_{i(j+1)}^{\prime}(r)$.
Then $(i(j+1),r)\in C$ and (if $r>1$) $i(j+1)\neq i(r)$, so $y_{i(j+1)}%
(j+1)^{-1}\in\sup(Y_{i(j+1),r})\subseteq\sup(U_{j})$. Now replace
$y_{i(j+1)}(j+1)^{-1}$ in $U_{j}$ by $Z_{j+1}$ to obtain $U_{j+1}$. Then the
analogue of (\ref{supp}) holds with $j+1$ for $j$.

After $n-1$ such steps we obtain an expression $U=U_{n}$ with%
\[
\sup(U)=\mathcal{X}\cup\mathcal{Y}\cup\mathcal{Z}%
\]
where%
\begin{align*}
\mathcal{X}  &  =\{x_{i}(j),x_{i}(j)^{-1}\mid(i,j)\in C\},~~\mathcal{Y}%
=\{y_{i}(j),y_{i}(j)^{-1}\mid(i,j)\in K^{\prime}\},\\
\mathcal{Z}  &  =\{z_{2}^{-1},\ldots,z_{n}^{-1}\}.
\end{align*}

To any formal product $V$ of factors $x_{i}(j)^{\pm\ast},$ $y_{i}(j)^{\pm\ast
},$ $z_{j}^{-\ast}$ we assign a numerical sequence $\tau(V) $ as follows:
reading $V$ from left to right, ignore all factors $x_{i}(j)^{\pm\ast}$ and
$z_{j}^{-\ast}$; to each factor $y_{i}(j)^{\ast}$ assign the label $i$, and to
each maximal product of consecutive terms of the form $y_{i}(k)^{-\ast}$
(fixed $i,$ varying $k$) assign the label $i$.

$\medskip$

\textbf{Claim 1}: For each $j=1,\ldots,n$, $\tau(U_{j})$ is a subsequence of%
\[
\mathcal{S}(j)=(1,\ldots,m,1,\ldots,m,\ldots,1,\ldots,m)
\]
where $1,\ldots,m$ is repeated $j$ times.

$\medskip$

\emph{Proof.} This is clear for $j=1$. Let $j\geq1$ and suppose inductively
that $\tau(U_{j})$ is a subsequence of $\mathcal{S}(j)$. Put $i=i(j+1)$; then
$y_{i}(j+1)^{-\ast}$ is a factor in $U_{j}$, and we obtained $U_{j+1}$ by
replacing it with $Z_{j+1}^{\ast}$.

Thus%

\[
\tau(U_{j})=(I_{1},P,i,Q,I_{2})
\]
where $(P,i,Q)$ is a subsequence of $(1,\ldots,m)$, $I_{1}$ is a subsequence
of $\mathcal{S}(p)$ and $I_{2}$ is a subsequence of $\mathcal{S}(q)$ and
$p+1+q=j$ (here $p$ or $q$ could be $0$, with $\mathcal{S}(0)=\varnothing$);
the displayed $i$ is due to $y_{i}(j+1)^{-\ast}$. Substituting $Z_{j+1}^{\ast
}$ for $y_{i}(j+1)^{-\ast}$ has the effect of replacing $i$ by $(\underline
{i},i+1,,\ldots,m,1,\ldots,i-1,\underline{i})$, where the underlined $i$s may
or may not be present (depending on whether $y_{i}(j+1)^{-\ast}$ appears in
the middle or at either end of a product of consecutive terms of the form
$y_{i}(k)^{-\ast}$). In any case,%
\[
(P,\underline{i},i+1,,\ldots,m,1,\ldots,i-1,\underline{i},Q)
\]
is a subsequence of $\mathcal{S}(2)$, and so $\tau(U_{j+1})$ is a subsequence
of
\[
(\mathcal{S}(p),\mathcal{S}(2),\mathcal{S}(q))=\mathcal{S}(p+2+q)=\mathcal{S}%
(j+1).
\]

$\medskip$

\textbf{Claim 2: }There exist $2D$ distinct elements $\xi_{1},\eta_{1}%
,\ldots,\xi_{D},\eta_{D}$ of $\mathcal{Y}$ such that the following holds.
There exist $R$, $A_{i},~B_{i},C_{i},~D_{i}$ ($i=1,\ldots,D$), each of which
is a product of factors $t^{\ast}$ with $t\in\mathcal{X}\cup\mathcal{Z}%
\cup\mathcal{Y}\smallsetminus\{\xi_{1},\eta_{1},\ldots,\xi_{D},\eta_{D}\}$,
such that%
\begin{equation}
U_{n}\simeq%
{\displaystyle\prod\limits_{i=1}^{D}}
(A_{i}\xi_{i}B_{i})^{-\ast}(C_{i}\eta_{i}D_{i})^{-\ast}(A_{i}\xi_{i}%
B_{i})^{\ast}(C_{i}\eta_{i}D_{i})^{\ast}\cdot R, \label{U-T}%
\end{equation}
meaning that the two sides represent the same element in the free group on all
the occurring symbols $t^{\ast}$, $t\in\mathcal{X}\cup\mathcal{Y}%
\cup\mathcal{Z}$.

$\medskip$

\emph{Proof.} This follows from hypothesis (\ref{ineq}) and Claim 1 by (the
proof of) \cite{NS}, Prop. 8.4.

$\medskip$

To complete the proof of Proposition \ref{thm} we need one further lemma:

\begin{lemma}
\label{subsets}Let $K^{\prime\prime}\subseteq K^{\prime}$ be a set of size
$\left\vert K^{\prime}\right\vert -2D\geq0$. Suppose that $W\subseteq N^{(m)}$
satisfies $\left\vert W\right\vert \geq(1-\varepsilon/q)\left\vert
N^{(m)}\right\vert $. Let $\mathcal{P}$ be the set of elements $\mathbf{z}\in
S^{(n-1)}$ for which there exist $\mathbf{u}\in S^{C}$, $\mathbf{v}\in
S^{K^{\prime\prime}}$ such that%
\[
\left\vert \left\{  w\in W\mid w\Psi\pi_{C\cup K^{\prime\prime}\cup\lbrack
n-1]}=(\mathbf{u},\mathbf{v},\mathbf{z})\right\}  \right\vert \geq
(1-\varepsilon)\left\vert S\right\vert ^{2D}.
\]
Then $\left\vert \mathcal{P}\right\vert \geq(1-\frac{1}{q})\left\vert
S\right\vert ^{n-1}$.
\end{lemma}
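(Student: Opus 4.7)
The key input is the bijectivity of $\Psi:N^{(m)}\to S^{(mn)}$ established in the previous lemma. From this, the composite map $\mathbf{x}\mapsto\mathbf{x}\Psi\pi_{C\cup K''\cup[n-1]}$ is a surjection onto $S^C\times S^{K''}\times S^{(n-1)}$ whose fibres all have size exactly $|S|^{2D}$, since we are dropping precisely the $|K'|-|K''|=2D$ coordinates indexed by $K'\setminus K''$. Once this is recorded, the lemma reduces to a pigeonhole statement: we have a set $W$ of density $\geq 1-\varepsilon/q$ in $N^{(m)}$ and we want to transfer this density to the quotient $S^{(n-1)}$ along the ``$\mathbf{z}$-coordinate''.

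To do this, writing $B=S^C\times S^{K''}$, I would define, for each $\mathbf{z}\in S^{(n-1)}$,
\[
f(\mathbf{z})=\#\bigl\{w\in W : w\Psi\pi_{C\cup K''\cup[n-1]}\in B\times\{\mathbf{z}\}\bigr\}.
\]
For $\mathbf{z}\notin\mathcal{P}$, the definition of $\mathcal{P}$ forces each of the $|B|$ fibres over $(\mathbf{u},\mathbf{v},\mathbf{z})$ to contain strictly fewer than $(1-\varepsilon)|S|^{2D}$ elements of $W$, so $f(\mathbf{z})<(1-\varepsilon)|B|\cdot|S|^{2D}$; for $\mathbf{z}\in\mathcal{P}$ only the trivial bound $f(\mathbf{z})\leq|B|\cdot|S|^{2D}$ is available. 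Summing over $\mathbf{z}$ and equating to $|W|\geq(1-\varepsilon/q)|S|^{mn}$, a one-line rearrangement produces the lower bound $|\mathcal{P}|\geq(1-1/q)|S|^{n-1}$.

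This is essentially a two-slot averaging argument, morally the same as Lemma \ref{sdp}, and I do not anticipate any real obstacle beyond keeping the bookkeeping of the index sets $C$, $K'$, $K''$ straight and verifying that the various projections coming out of $\Psi$ have the fibre sizes claimed. The combinatorial content is the routine observation that a subset of density $1-\delta$ in a product admits ``many'' slices of density $1-q\delta$; the only reason the statement is phrased in such an intricate way is the layered target $S^C\times S^{K''}\times S^{(n-1)}$ inherited from the earlier analysis.
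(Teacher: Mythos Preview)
Your proposal is correct and follows essentially the same approach as the paper: use the bijectivity of $\Psi$ to see that the fibres of $\Psi\pi_{C\cup K''\cup[n-1]}$ all have size $|S|^{2D}$, then bound $|W|$ from above by splitting the sum over $\mathbf{z}\in\mathcal{P}$ (trivial bound) and $\mathbf{z}\notin\mathcal{P}$ (the $(1-\varepsilon)$ bound on every fibre), and compare with the hypothesis $|W|\geq(1-\varepsilon/q)|S|^{mn}$ to extract $|\mathcal{P}|\geq(1-1/q)|S|^{n-1}$. The paper's argument is exactly this pigeonhole computation, written with $\lambda=|\mathcal{P}|/|S|^{n-1}$.
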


\begin{proof}
Put $\sigma=\left\vert S\right\vert $. Recall that $\left\vert C\right\vert
+\left\vert K^{\prime\prime}\right\vert =mn-(n-1)-2D$, and that $\Psi$ is
bijective. Suppose that $\left\vert \mathcal{P}\right\vert =\lambda
\sigma^{n-1}$. Then%
\[
\left\vert W\right\vert \leq\lambda\sigma^{n-1}\sigma^{mn-n+1-2D}\cdot
\sigma^{2D}+(1-\lambda)\sigma^{n-1}\sigma^{mn-n+1-2D}\cdot(1-\varepsilon
)\sigma^{2D}.
\]
It follows that%
\[
1-\varepsilon/q\leq\lambda+(1-\lambda)(1-\varepsilon),
\]
which implies that $\lambda\geq1-\frac{1}{q}$.
\end{proof}

\bigskip

Now, for some subset $L\subseteq K^{\prime}$ of size $2D$ we have%
\[
\{\xi_{1},\eta_{1},\ldots,\xi_{D},\eta_{D}\}=\{y_{i}(j)\mid(i,j)\in L\}.
\]
Put $K^{\prime\prime}=K^{\prime}\smallsetminus L$. Recall that $W\subseteq
N^{(m)}$ satisfies $\left\vert W\right\vert \geq(1-\varepsilon/6)\left\vert
N\right\vert ^{m}$. Let $\mathcal{P}\subseteq S^{(n-1)}$ be the set defined in
Lemma \ref{subsets}; thus $\left\vert \mathcal{P}\right\vert \geq\frac{5}%
{6}\left\vert S\right\vert ^{n-1}$.

By definition, for each $\mathbf{z}\in\mathcal{P}$ there exist $\mathbf{u}%
_{\mathbf{z}}\in S^{C}$, $\mathbf{v}_{\mathbf{z}}\in S^{K^{\prime\prime}}$ and
$W_{\mathbf{z}}\subseteq W$ with $\left\vert W_{\mathbf{z}}\right\vert
\geq(1-\varepsilon)\left\vert S\right\vert ^{2D}$ such that%
\[
W_{\mathbf{z}}\Psi\pi_{C\cup K^{\prime\prime}\cup\lbrack n-1]}=\{(\mathbf{u}%
_{\mathbf{z}},\mathbf{v}_{\mathbf{z}},\mathbf{z})\}.
\]
As $\Psi$ is a bijection this implies that $\left\vert W_{\mathbf{z}}\Psi
\pi_{L}\right\vert =\left\vert W_{\mathbf{z}}\right\vert \geq(1-\varepsilon
)\left\vert S\right\vert ^{2D}$.

Now let $\mathbf{x}\in W_{\mathbf{z}}$. Then%
\begin{align*}
\mathbf{x}\phi &  =(\mathbf{x}\phi_{1},\ldots,\mathbf{x}\phi_{n})\\
&  =(\mathbf{x}\phi_{1},\mathbf{z})
\end{align*}
and%
\[
\mathbf{x}\phi_{1}=U(\mathcal{X},\mathcal{Y},\mathcal{Z}).
\]
In the expression (\ref{U-T}) for $U(\mathcal{X},\mathcal{Y},\mathcal{Z})$,
each of the factors $R$, $A_{i},~B_{i},C_{i},~D_{i}$ is a product of terms
$t^{\pm\ast}$ where $t$ is a component of $\mathbf{x}\Psi\pi_{C\cup
K^{\prime\prime}\cup\lbrack n-1]}=(\mathbf{u}_{\mathbf{z}},\mathbf{v}%
_{\mathbf{z}},\mathbf{z})$. Therefore%
\begin{align}
\mathbf{x}\phi_{1}  &  =%
{\displaystyle\prod\limits_{i=1}^{D}}
(a_{i}\xi_{i}b_{i})^{-\alpha_{i}}(c_{i}\eta_{i}d_{i})^{-\beta_{i}}(a_{i}%
\xi_{i}b_{i})^{\gamma_{i}}(c_{i}\eta_{i}d_{i})^{\delta_{i}}\cdot
r\label{T-eq}\\
&  =%
{\displaystyle\prod\limits_{i=1}^{D}}
T_{\sigma_{i},\tau_{i}}(\overline{\xi_{i}},\overline{\eta_{i}})\cdot
r\nonumber
\end{align}
where $a_{i},~b_{i},~c_{i},~d_{i}$ and $r$ depend only on $\mathbf{z}$ and
$\alpha_{i},~\beta_{i},~\gamma_{i},~\delta_{i}$ are certain automorphisms of
$S$, independent of everything else, and%
\begin{align*}
\overline{\xi_{i}}  &  =(a_{i}\xi_{i}b_{i})^{-\alpha_{i}},~\overline{\eta_{i}%
}=(c_{i}\eta_{i}d_{i})^{-\beta_{i}}\\
\sigma_{i}  &  =\alpha_{i}^{-1}\gamma_{i},~\tau_{i}=\beta_{i}^{-1}\delta_{i}.
\end{align*}

Now $(\xi_{1},\eta_{1},\ldots,\xi_{D},\eta_{D})=\mathbf{x}\Psi\pi_{L}$ takes
$\left\vert W_{\mathbf{z}}\Psi\pi_{L}\right\vert \geq(1-\varepsilon)\left\vert
S\right\vert ^{2D}$ values as $\mathbf{x}$ ranges over $W_{\mathbf{z}}$; hence
so does the tuple $(\overline{\xi_{1}},\overline{\eta_{1}},\ldots
,\overline{\xi_{D}},\overline{\eta_{D}})$. According to Theorem \ref{T1} this
implies that $%
{\displaystyle\prod\limits_{i=1}^{D}}
T_{\sigma_{i},\tau_{i}}(\overline{\xi_{i}},\overline{\eta_{i}})$ takes at
least $\lambda\left\vert S\right\vert $ values, where $\lambda=l(S)^{-3/5}$ if
$l(S)\geq3$, $\lambda=1$ if $l(S)=2$; therefore so does $\mathbf{x}\phi_{1}$,
by (\ref{T-eq}). It follows that%
\begin{align*}
\left\vert W\phi\right\vert \geq%
{\displaystyle\sum\limits_{\mathbf{z}\in\mathcal{P}}}
\left\vert W_{\mathbf{z}}\phi_{1}\right\vert \geq &  \frac{5}{6}\left\vert
S\right\vert ^{n-1}\cdot\lambda\left\vert S\right\vert \\
&  \geq l(S)^{-4/5}\left\vert N\right\vert
\end{align*}
since $\left\vert S\right\vert ^{n}=\left\vert N\right\vert $ and $(\frac
{6}{5})^{5}<3$. This completes the proof of (1).$\medskip$

To prove (2), we replace $D$ by $D_{1}=5D$ in the above. Let $(\mathbf{u}%
,\mathbf{v}_{0},\mathbf{z})$ be an arbitrary element of $S^{C}\times
S^{K^{\prime\prime}}\times S^{(n-1)}$, and let $z\in S$. For each
$\mathbf{\xi}=(\xi_{1},\eta_{1},\ldots,\xi_{D_{1}},\eta_{D_{1}})\in S^{L}$
there exists $\mathbf{x}\in N^{(m)}$ such that%
\begin{align*}
\mathbf{x}\Psi\pi_{C\cup K^{\prime\prime}\cup\lbrack n-1]}  &  =(\mathbf{u}%
,\mathbf{v}_{0},\mathbf{z})\\
\mathbf{x}\Psi\pi_{L}  &  =\mathbf{\xi.}%
\end{align*}
Take $\mathbf{u}_{z}=\mathbf{u}$ and $\mathbf{v}_{z}=\mathbf{v}_{0}$ in the
above discussion. Then $\mathbf{x}\phi_{1}$ is given by (\ref{T-eq}). Now
Corollary \ref{NS2Thm1.1} says that%
\[
S=%
{\displaystyle\prod\limits_{i=1}^{D_{1}}}
T_{\sigma_{i},\tau_{i}}(S,S).
\]
We may therefore choose $\mathbf{\xi}$ so that%
\[%
{\displaystyle\prod\limits_{i=1}^{D_{1}}}
T_{\sigma_{i},\tau_{i}}(\overline{\xi_{i}},\overline{\eta_{i}})=zr^{-1},
\]
and so ensure that $\mathbf{x}\phi=(z,\mathbf{z})$. It follows that%
\begin{align*}
\left\vert (z,\mathbf{z})\phi^{-1}\right\vert \geq\left\vert S^{C}\times
S^{K^{\prime\prime}}\right\vert  &  =\left\vert S\right\vert ^{mn-(n-1)-2D_{1}%
}\\
&  >\left\vert N\right\vert ^{-2D_{1}/n}\left\vert N\right\vert ^{m-1}.
\end{align*}

\section{Applications\label{profsec}}

\subsection{Subgroups of finite index\label{Finite-index-sec}}

Here we re-prove the main result of \cite{NS}:

\begin{theorem}
\label{serre}If $G$ is a finitely generated profinite group then every
subgroup of finite index in $G$ is open.
\end{theorem}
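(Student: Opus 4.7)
The plan is to reduce to Serre's theorem for finitely generated abelian and semisimple profinite groups, both of which are classical, using Corollary \ref{normal_mod_G_0} as the bridge. First I would pass to the normal case: any $H\le G$ of finite index contains its normal core $\bigcap_{g\in G}H^{g}$, which is normal of finite index, so it is enough to prove every finite-index normal subgroup $N\vartriangleleft G$ is open.

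I would then induct on $[G:N]$. Assuming $N\ne G$, Corollary \ref{normal_mod_G_0} forces at least one of $M=NG'$ or $M=NG_{0}$ to be a proper subgroup of $G$. If I can show that this $M$ is \emph{open}, then $M$ is itself finitely generated profinite, $[M:N]=[G:N]/[G:M]<[G:N]$, and the inductive hypothesis applied to $N\vartriangleleft M$ shows $N$ is open in $M$, hence in $G$.

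To verify openness of $M$, consider the two alternatives. If $M=NG'<G$, then $M/G'$ is a proper finite-index subgroup of the topologically finitely generated abelian profinite group $G/G'$; by the structure theorem for such groups (each is the product of finitely many copies of $\widehat{\mathbb{Z}}_{p}$ and a finite abelian group), every finite-index subgroup is open. If $M=NG_{0}<G$, then $M/G_{0}$ is a proper finite-index subgroup of $G/G_{0}$, which by Proposition \ref{schreier} is an extension of a semisimple profinite group by a soluble group of derived length at most $3$. The soluble quotient is handled by iterating the abelian case along the derived series, and the semisimple kernel by the classical Serre theorem for finitely generated semisimple profinite groups $\prod_{i}S_{i}$: any finite-index normal subgroup $L$ meets each simple factor $S_{i}$ in $1$ or $S_{i}$, and the finite-index hypothesis forces $S_{i}\le L$ for all but finitely many $i$, so $L$ is open.

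The main obstacle is the semisimple case, where one must rule out $L\cap S_{i}=1$ for infinitely many $i$ --- this would produce an injection of the infinite group $\prod_{i\in I}S_{i}$ into the finite group $G/L$. That no such finite abstract quotient exists is the (long-known) content of Serre's problem for semisimple profinite groups, of which Theorem \ref{ssquotients} is a far-reaching strengthening.
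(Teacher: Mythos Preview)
Your plan---reduce via Corollary~\ref{normal_mod_G_0} to the abelian and semisimple special cases---is exactly the paper's strategy. The organisation differs: instead of inducting on $[G:N]$ and applying the corollary to $G$, the paper applies it to $M=\overline N$ (automatically open, hence finitely generated) and derives a contradiction by showing $NM'=M$ and $NM_0=M$ directly. The key device you are missing is the exponent $q=|M/N|$: since $M^q\le N$, the group $M/M'M^q$ is finitely generated abelian of finite exponent, hence finite, and density of $N$ forces $NM'=M$ at once. For $NM_0=M$ one passes to $M_0=1$, uses the first step to get $NT=M$ (where $T$ is the semisimple part), and then invokes the Martinez--Zelmanov/Saxl--Wilson theorem that $x^q$ has bounded width in finite simple groups; this makes $T^q$ closed, and factoring it out leaves $T$ finite, after which $T=[T,\overline N]\le N$ is immediate. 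Working inside $\overline N$ rather than $G$ also sidesteps the need to glue the semisimple and soluble pieces of $G/G_0$, which your outline leaves unaddressed.

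Your sketch of the semisimple case contains an error and misidentifies the obstacle. From $L\cap S_i=1$ you correctly get $S_i\hookrightarrow T/L$, so $|S_i|\le[T:L]$ and hence $S_i\le L$ for cofinitely many $i$---that part is easy. But there is no injection of $\prod_{i\in I_0}S_i$ into $T/L$; only each factor individually embeds. The real difficulty is the step ``so $L$ is open'': knowing $S_i\le L$ for cofinitely many $i$ gives only $L\supseteq\bigoplus S_i$, the restricted product, and there \emph{are} proper normal subgroups of $\prod S_i$ containing every $S_i$ (the subgroups $K_{\mathcal U}$ of Theorem~\ref{main2}, for instance). Closing this gap is precisely where Martinez--Zelmanov/Saxl--Wilson enters, as in the paper's argument; Theorem~\ref{ssquotients} concerns \emph{infinite} quotients and does not speak to finite ones. (Minor: your structure theorem for finitely generated abelian profinite groups is misstated---$\widehat{\mathbb Z}$ is $1$-generated but not a finite product of $\mathbb Z_p$'s---though the conclusion you need does hold.)
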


\begin{proof}
Let $H$ be a subgroup of finite index in $G$. Then $H$ contains a normal
subgroup $N$ of finite index in $G$. The closure $M=\overline{N}$ of $N$ is
open in $G$, so $M$ is again a finitely generated profinite group. If $N=M$
then $N$ is open and so $H$ is open.

Suppose that $N<M$. Then Corollary \ref{normal_mod_G_0} shows that at least
one of%
\begin{align*}
NM^{\prime}  &  <M,\\
NM_{0}  &  <M
\end{align*}
holds. Put $q=\left\vert M/N\right\vert $, so we have $M^{q}\leq N$. Note that
$M^{\prime}$ is closed, by Theorem \ref{ThB-prof}.

Now $M/M^{\prime}M^{q}$ is a finitely generated abelian profinite group of
finite exponent, so it is finite, hence discrete; as $NM^{\prime}/M^{\prime
}M^{q}$ is a dense subgroup it follows that $NM^{\prime}=M$.

To derive a contradiction it remains to show that $NM_{0}=M$; to this end we
may as well replace $G$ by $G/M_{0}$, and so assume that $M_{0}=1$. Then $M$
has a closed semisimple normal subgroup $T$ such that $M/T$ is soluble. It
follows from the preceding paragraph that $NT=M$.

A theorem of Martinez-Zelmanov \cite{MZ} and Saxl-Wilson \cite{SW} shows that
$T^{q}$ is closed in $T$ (because the word $x^{q}$ has bounded width in all
finite simple groups). As $T^{q}\leq N$ we may factor it out and assume
further that $T^{q}=1$. Now the definition of $M_{0}$ ensures that in fact $T$
is a product of finite simple groups each of which is normal in $M$; and these
simple groups have bounded orders \cite{J}. Therefore $M/\mathrm{C}_{M}(T)$ is
finite, and so $T$ is finite. Hence $N\cap T$ is closed. Thus%
\[
T=[T,M]=[T,\overline{N}]\leq\overline{[T,N]}\leq T\cap N
\]
whence $M=NT=N$, as required.
\end{proof}

\subsection{Finite to profinite\label{routine}}

Here we recall some standard compactness arguments. We refer to subsection
\ref{introfgpg} for the statements of the following theorems, concerning a
finitely generated profinite group $G$ with closed normal subgroups $K$ and
$H$.

\bigskip

\noindent\textbf{Proof of Theorem \ref{prof-gen}. }Write $I=\{(i,j)\mid1\leq
i\leq r,~1\leq j\leq f_{0}\}$. For each open normal subgroup $N$ of $G$ let%
\[
X(N)=\left\{  \mathbf{x}=(x_{ij})\in K^{(rf_{0})}\mid G=N\left\langle
y_{i}^{x_{ij}}\mid(i,j)\in I\right\rangle \right\}  .
\]
Theorem \ref{ThmA}, applied to the finite group $G/N$, shows that each set
$X(N)$ is non-empty. Also $X(N)$ is closed in $K^{(rf_{0})}$, being a union of
cosets of $(N\cap K)^{(rf_{0})}$, and if $N>M$ then $X(N)\supseteq X(M)$. It
follows by compactness that $\bigcap_{N}X(N)$ is non-empty, taking the
intersection over all open normal subgroups $N$ of $G$. Let $\mathbf{x}$ be in
this intersection. Then%
\[
G=\bigcap_{N}N\left\langle y_{i}^{x_{ij}}\mid(i,j)\in I\right\rangle
=\overline{\left\langle y_{i}^{x_{ij}}\mid(i,j)\in I\right\rangle }.
\]

$\medskip$

\noindent\textbf{Proof of Theorems \ref{ThB-prof} and \ref{ThmC-prof}.} Let
$R$ denote the right-hand side of equation (\ref{rhsB}) or equation
(\ref{rhsC}) (see Subsection \ref{introfgpg}). Then $R$ is a closed subset of
$G$. Now let $N$ be an open normal subgroup of $G$. Then Theorem \ref{ThmB},
respectively Theorem \ref{ThmC}, applied to the finite group $G/N$ shows that
$[H,G]N=RN$. As $R$ is closed, intersecting over all open normal subgroups $N$
of $G$ we get%
\[
R=\bigcap_{N}RN\supseteq\lbrack H,G],
\]
and the results follow since $R\subseteq\lbrack H,G]$.

\subsection{Verbal subgroups\label{vbl}}

Here we show how the main results of \cite{NSP} may be quickly derived from
Theorems \ref{ThmA} and \ref{ThmB}.

Let $w$ be a group word in $k$ variables, and $G$ a group. The corresponding
\emph{verbal subgroup} is $w(G)=\left\langle G_{w}\right\rangle $, where%
\[
G_{w}=\left\{  w(\mathbf{g})^{\pm1}\mid\mathbf{g}\in G^{(k)}\right\}
\]
denotes the (symmetrized) set of $w$-values in $G$. We say that $w$ has
\emph{width} $m$ in $G$ if%
\[
w(G)=G_{w}^{\ast m}\text{;}%
\]
if this holds for some finite $m$ we denote the least such $m$ by $m_{w}(G)$,
and say that $w$ has finite width in $G$.

The following elementary result is Proposition 2.1.2 of \cite{S2}:

\begin{lemma}
\label{AF}If $G$ is abelian-by-finite then $m_{w}(G)$ is finite.
\end{lemma}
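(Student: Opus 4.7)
The plan is to exploit the abelian-by-finite structure of $G$: fix an abelian normal subgroup $A \trianglelefteq G$ with $[G:A] = n$ finite and a transversal $T \subseteq G$, and reduce the problem of bounding $m_w(G)$ to two pieces, a ``quotient'' part living in the finite group $G/A$ and an ``intersection'' part living inside $A$.

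First I would establish that $G_w$ is a finite, symmetric union of cosets of subgroups of $A$. For fixed $\mathbf{t} = (t_1,\dots,t_k) \in T^k$ and varying $\mathbf{a} = (a_1,\dots,a_k) \in A^k$, expand $w(t_1 a_1,\dots,t_k a_k)$ by pushing each $a_i$ past the intervening $t_j$'s, using normality of $A$ to convert each commutation into a conjugation; commutativity of $A$ then allows the collected $a_i$-contributions to combine into a single element of $A$, yielding
\[
w(\mathbf{g}) = w(\mathbf{t}) \cdot \phi_{\mathbf{t}}(\mathbf{a}),
\]
where $\phi_{\mathbf{t}} \colon A^k \to A$ is a group homomorphism; viewing $A$ additively as a $\mathbb{Z}[G/A]$-module, $\phi_{\mathbf{t}}(\mathbf{a})$ is a $\mathbb{Z}[G/A]$-linear combination of $a_1,\dots,a_k$. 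Hence $G_w$ is the union of at most $2n^k$ cosets $c_{\mathbf{t}} B_{\mathbf{t}}$ and their inverses, where $c_{\mathbf{t}} = w(\mathbf{t})$ and $B_{\mathbf{t}} = \phi_{\mathbf{t}}(A^k) \leq A$.

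Modulo $A$, the set $G_w$ maps to a subset of $G/A$ generating the finite subgroup $\overline{w(G)} := w(G)A/A$, which therefore has finite width $m_1 \leq |\overline{w(G)}|$ in its generators. So every $x \in w(G)$ factors as $x = h a$ with $h \in G_w^{*m_1}$ and $a \in w(G) \cap A$. The core remaining task is to bound the $G_w$-width of $w(G) \cap A$. Expanding an arbitrary product $\prod_{i=1}^m (c_{\sigma_i} b_i)$ with $b_i \in B_{\sigma_i}$ in the same way, if the product lies in $A$ it equals $\bigl(\prod_i c_{\sigma_i}\bigr)\cdot \sum_{i=1}^m b_i^{d_i}$ with $d_i = c_{\sigma_{i+1}}\cdots c_{\sigma_m}$; hence $w(G) \cap A \subseteq H := \sum_{\mathbf{t}} \sum_{c \in \overline{w(G)}} B_{\mathbf{t}}^{c}$ (note $B_{\mathbf{t}}^c$ depends only on the coset $cA$, since $B_{\mathbf{t}} \leq A$ and $A$ is abelian). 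Conversely, $B_{\mathbf{t}} \subseteq G_w^{*2}$, because for $b \in B_{\mathbf{t}}$ one has $b = c_{\mathbf{t}}^{-1} \cdot (c_{\mathbf{t}} b)$, a product of two elements of $G_w$. For each $c \in \overline{w(G)}$, choose a lift $u \in w(G) \cap G_w^{*m_1}$; then $B_{\mathbf{t}}^c = u^{-1} B_{\mathbf{t}} u \subseteq G_w^{*(2m_1+2)}$. Summing over the at most $n^{k+1}$ pairs $(\mathbf{t}, c)$ gives $w(G) \cap A = H \subseteq G_w^{*m_2}$ for some $m_2 \leq n^{k+1}(2m_1+2)$, whence $m_w(G) \leq m_1 + m_2 < \infty$.

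The main obstacle is the rearrangement step: one must verify carefully that pushing the $a_i$'s past the $t_j$'s produces the $\mathbb{Z}[G/A]$-linear expression $\phi_{\mathbf{t}}(\mathbf{a})$, and more generally that $\prod_i (c_{\sigma_i} b_i)$ collapses to $\bigl(\prod c_{\sigma_i}\bigr)\cdot \sum b_i^{d_i}$ with the stated exponents $d_i$. This is a straightforward commutator-calculus exercise in the semidirect product $G/A \ltimes A$, using only that $A$ is abelian, but the bookkeeping is the substantive content; once this expansion is in hand, the rest of the argument is routine.
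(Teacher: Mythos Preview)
There is a genuine gap in your argument: the inclusion $w(G)\cap A\subseteq H$ does not follow from your expansion. You correctly obtain
\[
\prod_{i=1}^{m}(c_{\sigma_i}b_i)=\Bigl(\prod_i c_{\sigma_i}\Bigr)\cdot\sum_{i=1}^{m}b_i^{d_i},
\]
and the second factor lies in $H$. But the first factor $\prod_i c_{\sigma_i}$, while it lies in $A$ (since the whole product does and the second factor already lies in $A$), need not lie in $H$; you have simply dropped it. A concrete counterexample: take $G=\mathbb{Z}$, $A=4\mathbb{Z}$, $T=\{0,1,2,3\}$, $w(x)=x^{2}$. Then $c_t=2t$ and $B_t=2A=8\mathbb{Z}$ for every $t\in T$, so $H=8\mathbb{Z}$; however $4=w(2)=c_{2}\in G_w\cap A$ and $4\notin H$. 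Thus $w(G)\cap A=4\mathbb{Z}\not\subseteq H=8\mathbb{Z}$, and your equality $w(G)\cap A=H$ fails.

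The overall strategy --- write $G_w$ as a finite union of cosets $c_{\mathbf t}B_{\mathbf t}$, bound width in $G/A$, and then control $w(G)\cap A$ --- is the right one and is essentially how the cited result in \cite{S2} is proved. What is missing is one more step: you must show that $w(G)/H$ is \emph{finite}. Your argument does give $H\leq w(G)$ and $H\subseteq G_w^{*m_2}$; once you know $|w(G)/H|<\infty$, each of the finitely many cosets of $H$ in $w(G)$ is represented by a bounded product of $w$-values, and the width bound follows. One way to see finiteness: in $\overline G:=G/H$ the subgroups $\overline B_{\mathbf t}$ all vanish, so $\overline G_w=\{\overline c_{\mathbf t}\}$ is finite; moreover the identity $w(\mathbf t\cdot\mathbf a)=w(\mathbf t)$ in $\overline G$ forces each $\overline c_{\mathbf t}$ to centralize $\overline A$. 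Hence $w(\overline G)\leq C_{\overline G}(\overline A)$, and one can enlarge $A$ (or iterate) to reduce to the case where $G_w$ itself is finite. As written, though, the step ``hence $w(G)\cap A\subseteq H$'' is a non-sequitur and the proof is incomplete. (The paper does not give its own proof of this lemma; it cites \cite{S2}, Proposition~2.1.2.)
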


Now define%
\[
\beta(w,G)=\left\vert G:w(G)\right\vert .
\]
Let us call the word $w$ $d$-\emph{bounded} if there exists $\beta_{w}%
=\beta_{w}(d)\in\mathbb{N}$ such that $\beta(w,G)\leq\beta_{w}(d)$ whenever
$G$ is a $d$-generator finite group. The positive solution of the Restricted
Burnside Problem \cite{Z} asserts that the word $w=x^{q}$ is $d$-bounded, for
all natural numbers $d$ and $q$. This implies that every \emph{non-commutator}
word $w$ is $d$-bounded, since for any group $G$ we have $w(G)\geq G^{q}$
where $q=\left\vert \mathbb{Z}/w(\mathbb{Z})\right\vert $. (In fact it is easy
to see that, conversely, every $d$-bounded word is a non-commutator word.)

\begin{proposition}
\label{simplew}Suppose that $w$ is $d$-bounded (for some $d\geq1$). Then there
exists $m_{0}=m_{0}(w)$ such that $w$ has with $m_{0}$ in every finite
semisimple group.
\end{proposition}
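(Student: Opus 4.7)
The plan is to reduce to finite simple groups and then to the pure-power word $x^q$, where a cited result supplies uniform control. A finite semisimple group has the form $G = \prod_{i \in I} S_i$, a direct product of non-abelian finite simple groups; since $w$-values are computed coordinate-wise, $G_w = \prod_i (S_i)_w$ as sets, hence $w(G) = \prod_i w(S_i)$ and $m_w(G) = \sup_i m_w(S_i)$. It therefore suffices to bound $m_w(S)$ uniformly as $S$ ranges over finite non-abelian simple groups. Any such $S$ is $2$-generated by Proposition \ref{2-gen}, and $w(S)$ is normal in $S$, so $w(S) \in \{1, S\}$. In the first case the $d$-boundedness of $w$ (applied with $d = 2$) forces $|S| = |S\colon w(S)| \leq \beta_w(2)$, leaving only finitely many isomorphism types to consider, on each of which the width is trivially bounded.

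The main case $w(S) = S$ is handled by reducing to a pure-power word. Since $w$ is a non-commutator word, its image in $F_k^{\mathrm{ab}} = \mathbb{Z}^k$ is $x_1^{a_1} \cdots x_k^{a_k}$ with $q := \gcd(a_1, \ldots, a_k) \geq 1$. Setting every variable except the $i$-th equal to $1$ in $w$ gives $w(1, \ldots, 1, x, 1, \ldots, 1) = x^{a_i}$ in any group, so every $a_i$-th power is a $w$-value. Choosing integers $c_i$ with $\sum_i c_i a_i = q$ and using that all powers of a single element commute, the identity $g^q = \prod_i g^{c_i a_i}$ writes $g^q$ as a product of $N := \sum_i |c_i|$ $w$-values in any group, with $N = N(w)$ depending only on $w$. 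In particular $\{g^q : g \in S\} \subseteq S_w^{*N}$, so $S^q \leq w(S) = S$.

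Since $S^q$ is normal in $S$, the argument splits once more: either $S^q = 1$, in which case $S$ has exponent dividing $q$ and so, by the classification of finite simple groups, $|S|$ is bounded in terms of $q$ alone, leaving only finitely many further exceptions; or $S^q = S$, in which case I invoke the Martinez--Zelmanov theorem already used in the proof of Theorem \ref{serre}, which guarantees a uniform bound $M = M(q)$ on the width of $x^q$ in every finite simple group. Each element of $S$ is then a product of at most $M$ $q$-th powers, each of which is itself a product of $N$ $w$-values, giving $m_w(S) \leq NM$. Taking $m_0(w)$ to be the maximum of $NM$ and the finitely many exceptional widths completes the proof. The main conceptual obstacle is recognising the Bezout reduction from an arbitrary non-commutator word to the pure power $x^q$; once that is in place, the Martinez--Zelmanov bound (a prerequisite already in the surrounding theory) supplies the uniform estimate.
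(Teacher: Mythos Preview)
Your proof is correct and follows essentially the same route as the paper: reduce to a single simple group, dispose of the case $w(S)=1$, and for $w(S)=S$ invoke Martinez--Zelmanov/Saxl--Wilson on $q$th powers together with the fact that $q$th powers are (products of) $w$-values. Two cosmetic simplifications are available: when $w(S)=1$ one has $m_w(S)\leq 1$ trivially, so the appeal to $d$-boundedness is unnecessary; and instead of substituting one variable at a time and using B\'ezout, you can set $x_i=g^{c_i}$ for all $i$ simultaneously---since the $g^{c_i}$ commute, $w(g^{c_1},\ldots,g^{c_k})=g^{\sum c_ia_i}=g^q$, so each $q$th power is already a single $w$-value (the paper's $h(q)$ bound then suffices without the extra factor $N$).
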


\begin{proof}
It suffices to prove this for a simple group $G$. Take $q=\left\vert
\mathbb{Z}/w(\mathbb{Z})\right\vert $. We consider three cases.

(i) Where $w(G)=1$. Then $m_{w}(G)=1$.

(ii) Where $w(G)\neq1$ but $G^{q}=1$. There are only finitely many
possibilities for $G$ in this case \cite{J}. Since $G_{w}$ generates $G$ it
follows that $G=G_{w}^{\ast n}$ where $n=n(q)$ is the maximal order of any
such group $G$.

(iii) Where $G^{q}\neq1$. In this case, the theorem of Martinez-Zelmanov
\cite{MZ} and Saxl-Wilson \cite{SW} shows that every element of $G$ is a
product of $h(q)$ $q$th powers; as each $q$th power is a $w$-value it follows
that $m_{w}(G)\leq h(q)$.
\end{proof}

\bigskip

The main result is now

\begin{theorem}
\label{resthm}Let $w$ be a $d$-bounded word and $G$ a finite $d$-generator
group. Then $m_{w}(G)\leq f(w,d)$ where $f(w,d)$ depends only on $w$ and $d$.
\end{theorem}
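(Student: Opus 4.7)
The plan is to combine the Restricted Burnside Problem (Zelmanov~\cite{Z}) with Theorems~\ref{ThmA}--\ref{ThmC}, Proposition~\ref{simplew}, and Lemma~\ref{AF}. The pivotal identity is
\[
[h,y]=y^{-h}\cdot y\in G_w^{\ast 2}\qquad(h\in G,\ y\in G_w),
\]
valid because $G_w$ is symmetric and $G$-invariant: a single commutator with a $w$-value is automatically a product of two $w$-values. This is the only place where commutator identities are translated back into $w$-values.

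First, invoke RBP to deduce $|G:H|\le\beta:=\beta_w(d)$, where $H:=w(G)$. Then analyse the quotient $\overline G:=G/G_0$: by Proposition~\ref{schreier} this is an extension of a semisimple group $T$ by a soluble group of derived length at most three. Proposition~\ref{simplew} bounds $m_w(T)\le m_0(w)$; for the soluble quotient $\overline G/T$, an induction on derived length based on Lemma~\ref{AF} and Theorem~\ref{ThmA} (the latter used to control the number of generators appearing at each layer of the derived series) gives a uniform bound. Together these produce $m_w(\overline G)\le m_1(w,d)$; lifting shows every $h\in H$ can be written $h=u\cdot h_0$ with $u\in G_w^{\ast m_1}$ and $h_0\in H_0:=H\cap G_0$, so the problem reduces to bounding $m_w$ on $H_0$.

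For $h_0\in H_0\le G_0$, apply Theorem~\ref{ThmC} with the symmetric generating set $Y=\{g_1^{\pm1},\ldots,g_d^{\pm1}\}$ of $G$, obtaining
\[
[H_0,G]=\Bigg(\prod_{i=1}^{2d}[H_0,g_i^{\pm1}]\Bigg)^{\ast f_2}.
\]
The $G$-central abelian quotient $H_0/[H_0,G]$ is handled by Lemma~\ref{AF} applied to the abelian-by-(bounded complexity) group $G/[H_0,G]$, yielding a bounded contribution. Each set $[H_0,g_i^{\pm1}]$ is converted to a bounded product of $w$-values by decomposing a general $h\in H_0$ as in the previous paragraph, absorbing the resulting $w$-value prefix via the key identity, and iterating on the residual term, which lies in the strictly smaller $G$-normal subgroup $[H_0,G]\cap G_0$.

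The main obstacle is closing this iterative descent with a uniform bound. A naive expansion of the commutators $[h,g_i]$ (with $g_i$ not a $w$-value) produces an expression whose length scales with the a priori unbounded $w$-width of $h$; the descent circumvents this by pushing the residual element one level deeper in the lower central series of $H_0$ relative to $G$ at each iteration, and invoking the key identity only once the residue itself has acquired sufficient structure. The total number of iterations is controlled by Lemma~\ref{basiccomm} (which bounds the $G$-lower central depth of $H_0$), the semisimple-by-soluble complexity of $G/G_0$, and the index $\beta$; all three quantities are bounded in terms of $w$ and $d$, yielding the required uniform bound $m_w(G)\le f(w,d)$.
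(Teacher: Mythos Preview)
Your proposal contains a genuine gap at the crucial step. You apply Theorem~\ref{ThmC} with the \emph{original} generators $g_1,\ldots,g_d$ of $G$, obtaining elements of $[H_0,g_i]$; but since $g_i$ is not a $w$-value, the key identity $[h,y]=y^{-h}y\in G_w^{\ast 2}$ does not apply. You try to rescue this by writing $h=u\cdot h'$ with $u$ a short $w$-word and $h'$ in a deeper term of the $G$-lower central series of $H_0$, then iterating. The fatal problem is termination: Lemma~\ref{basiccomm} does \emph{not} bound the depth of the series $[H_0,_nG]$ --- it only says $[H,G]=[H,_nG]\prod_i[H,g_i]$ for every $n$, which is a statement about the quotient modulo $[H,_nG]$, not about when $[H,_nG]$ vanishes. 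For a general finite group this depth is unbounded in terms of $d$ and $w$, so your descent produces an expression whose length depends on $G$, not just on $(w,d)$.

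The paper avoids this entirely by reversing the roles. Rather than committing to generators of $G$ and struggling to express $[H_0,g_i]$ in $w$-values, it first uses Theorem~\ref{ThmA} (together with Proposition~\ref{simplew} and Lemma~\ref{Gasch}) to produce a generating set $Y$ for $W=w(G)$, of size bounded in terms of $(w,d)$, each of whose elements is itself a bounded product of $w$-values. Then Theorem~\ref{ThmB} gives $W'\subseteq\bigl(\prod_{y\in\widetilde Y}[W,y]\bigr)^{\ast f_1}$, and now the key identity applies directly because each $y$ is (a bounded product of) $w$-values: $[W,y]\subseteq G_w^{\ast 2\ell(y)}$ with $\ell(y)$ bounded. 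No iteration is required. The work is front-loaded into manufacturing generators of $w(G)$ that are short $w$-words, which is exactly what Theorem~\ref{ThmA} is designed for. Your approach could be repaired by adopting this same idea: use Theorem~\ref{ThmA} on $W$ (not on $G$) to get generators in $G_w^{\ast m}$, then invoke Theorem~\ref{ThmB} with those.
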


\begin{proof}
Let $\mathcal{M}$ denote the (finite) set of (non-abelian) simple groups $M$
such that $w(M)=1$. For $n\in\mathbb{N}$ set%
\[
\mu(n)=\left\vert F_{n}:K(w)\right\vert
\]
where $F_{n}$ is free of rank $n$ and $K(w)$ is the intersection of all
$\ker\theta$ where $\theta$ ranges over homomorphisms $F_{n}\rightarrow
\mathrm{Aut}(M)$ with $M\in\mathcal{M}$.

Put $W=w(G)$ and set $\beta=\left\vert G:W\right\vert $, so that $\beta
\leq\beta_{w}(d)$. By Schreier's formula we then have%
\[
\mathrm{d}(W)\leq d_{0}:=1+\beta(d-1).
\]
Set%
\[
H=\bigcap\mathrm{C}_{W}(M)
\]
where $M$ ranges over all chief factors of $W$ that belong to $\mathcal{M}$.
Then $W/H$ is an image of $F_{d_{0}}/K(w)$, so $\left\vert W:H\right\vert
\leq\mu(d_{0})$ and $\left\vert G:H\right\vert \leq\beta_{1}:=\beta\mu(d_{0}%
)$. It follows that $\mathrm{d}(H)\leq d_{1}:=1+\beta_{1}(d-1)$.

Now let $K$ be the intersection of the kernels of all homomorphisms
$F_{d}\rightarrow\mathrm{Sym}(\beta_{1})$. Lemma \ref{AF} shows that $w$ has
finite width $m_{1}=m_{1}(d,\beta)$ in the group $F_{d}/K^{\prime}$. As
$G/H^{\prime}$ is an image of $F_{d}/K^{\prime}$ it follows that
$m_{w}(G/H^{\prime})\leq m_{1}$, so%
\begin{equation}
W=H^{\prime}\cdot G_{w}^{\ast m_{1}}. \label{moduloW'}%
\end{equation}
Put $X_{1}=G_{w}^{\ast m_{1}}$, so $W=H^{\prime}X_{1}$. Then $H=H^{\prime
}X_{2}$ where $X_{2}=H\cap X_{1}$. There exist $Y_{0}\subseteq X_{1}$ and
$Y_{1}\subseteq X_{2}$ such that%
\begin{align*}
W  &  =H^{\prime}\left\langle Y_{0}\right\rangle ,~~\left\vert Y_{0}%
\right\vert \leq d_{0},\\
H  &  =H^{\prime}\left\langle Y_{1}\right\rangle ,~~\left\vert Y_{1}%
\right\vert \leq d_{1}.
\end{align*}

Now recall Theorem \ref{ThmA}: this associates to $W$ a characteristic
subgroup $W_{0}$, contained in $H$, such that $W^{(3)}W_{0}/W_{0}$ is
semisimple. Put $D=W^{(3)}W_{0}\cap H$. Applying Theorem \ref{ThmA} to the
soluble group $H/D$, we find a set $Y_{2}\subseteq Y_{1}^{H}$ such that%
\[
H=D\left\langle Y_{2}\right\rangle ,~~\left\vert Y_{2}\right\vert \leq
h_{1}=f_{0}(d_{1},d_{1}).
\]
By the definition of $H$, the semisimple group $D/W_{0}$ is a product of
simple groups $S$ such that $w(S)=S$. Hence by Proposition \ref{simplew} we
have $D=W_{0}D_{w}^{\ast m_{0}}$. Using Lemma \ref{Gasch}, we may therefore
lift $Y_{2}$ to a set $Y_{3}\subseteq D_{w}^{\ast m_{0}}Y_{2}$ so that%
\[
H=W_{0}\left\langle Y_{3}\right\rangle ,~~\left\vert Y_{3}\right\vert \leq
h_{1}.
\]
Now put $Y_{4}=Y_{3}\cup Y_{0}$. Then $W=W^{\prime}\left\langle Y_{4}%
\right\rangle =W_{0}\left\langle Y_{4}\right\rangle $ and $\left\vert
Y_{4}\right\vert \leq h_{1}+d_{0}$; and $Y_{4}\subseteq G_{w}^{\ast
(m_{1}+m_{0})}$.

A further application of Theorem \ref{ThmA} now provides a set $Y$ such that
$W=\left\langle Y\right\rangle $, $\left\vert Y\right\vert \leq h_{2}%
:=(h_{1}+d_{0})f_{0}(h_{1}+d_{0},d_{0})$, and each element of $Y$ is conjugate
to one of $Y_{4}$.

Put $\widetilde{Y}=Y\cup Y^{-1}$. It then follows by Theorem \ref{ThmB} that%
\[
W^{\prime}=\left(
{\displaystyle\prod\limits_{y\in\widetilde{Y}}}
[W,y]\right)  ^{\ast f_{1}(2h_{2},d_{0})}\subseteq G_{w}^{\ast m_{2}},
\]
where $m_{2}=4h_{2}f_{1}(2h_{2},d_{0})(m_{1}+m_{0})$. With (\ref{moduloW'})
this shows that $w$ has width $f(w,d):=m_{2}+m_{1}$ in $G$.
\end{proof}

\bigskip

Now let $G$ be a $d$-generator profinite group. Suppose that $m_{w}(Q)\leq
m<\infty$ for every continuous finite quotient $Q$ of $G$. Then%
\[
w(G)N=G_{w}^{\ast m}\cdot N
\]
for every open normal subgroup $N$ of $G$. But $G_{w}^{\ast m}$ is a closed
subset of $G$, because $w:G^{(k)}\rightarrow G$ is continuous; therefore%
\[
w(G)\subseteq%
{\displaystyle\bigcap\limits_{N}}
G_{w}^{\ast m}\cdot N=G_{w}^{\ast m},
\]
so $w(G)=G_{w}^{\ast m}$ is a closed subgroup of $G$.

If also $w$ is $d$-bounded, then $\beta(w,Q)\leq\beta_{w}(d)=\beta$, say, for
for every continuous finite quotient $Q$ of $G$. Thus%
\[
\left\vert G:w(G)N\right\vert \leq\beta
\]
for each open normal subgroup $N$ of $G$. Choosing an open normal subgroup $M$
for which $\left\vert G:w(G)M\right\vert $ is maximal we infer (given that
$w(G)$ is closed) that%
\[
w(G)=%
{\displaystyle\bigcap\limits_{N}}
w(G)N=w(G)M.
\]
Thus $w(G)$ is an open subgroup of $G$.

Theorem \ref{resthm} now gives

\begin{theorem}
\label{maindresthm}Let $G$ be a $d$-generator profinite group and $w$ a
$d$-bounded word. Then the verbal subgroup $w(G)$ is open in $G$.
\end{theorem}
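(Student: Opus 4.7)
The plan is to combine the finite uniform bound from Theorem \ref{resthm} with two standard compactness arguments, both of which are spelled out informally just above the statement. Let $w$ be a word on $k$ variables and $G$ a $d$-generator profinite group; write $m = f(w,d)$ for the bound provided by Theorem \ref{resthm}. First I would show that $G_w^{\ast m}$ is a \emph{closed subset} of $G$: the word map $w : G^{(k)} \to G$ is continuous, and $G^{(k)}$ is compact, so $G_w$ (together with the inverses of its values) is the continuous image of a compact set, hence closed; the product of finitely many closed subsets of a compact group is again closed, so $G_w^{\ast m}$ is closed.

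Next I would establish that $w(G) = G_w^{\ast m}$. For every open normal subgroup $N \vartriangleleft_o G$, the quotient $G/N$ is a $d$-generator finite group, so by Theorem \ref{resthm} $w(G/N)$ coincides with the set of products of at most $m$ $w$-values, i.e.\ $w(G)N = G_w^{\ast m} \cdot N$. Since $w(G) \subseteq G_w^{\ast m}$ trivially, and since any closed subset $C$ of a profinite group satisfies $C = \bigcap_{N} CN$, intersecting over all $N$ yields
\[
w(G) \subseteq \bigcap_{N \vartriangleleft_o G} G_w^{\ast m} N = G_w^{\ast m},
\]
so equality holds and $w(G)$ is closed.

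Finally, I would upgrade closed to open using $d$-boundedness. Set $\beta = \beta_w(d)$. For every open normal $N$ the finite quotient $G/N$ satisfies $|G/N : w(G/N)| \le \beta$, hence $|G : w(G)N| \le \beta$. Pick an open normal $M$ for which this index is maximal; then for every open normal $N \le M$ we have $w(G)N \subseteq w(G)M$ while $|G : w(G)N| \ge |G:w(G)M|$, forcing $w(G)N = w(G)M$. Using closedness of $w(G)$ once more,
\[
w(G) = \bigcap_{N \vartriangleleft_o G} w(G)N = \bigcap_{N \le M} w(G)N = w(G)M,
\]
so $w(G)$ contains the open subgroup $M$ and is therefore open.

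Since the substantive input — a uniform bound on the width of $w$ across all $d$-generator finite groups — is precisely Theorem \ref{resthm}, which in turn rests on Theorems \ref{ThmA} and \ref{ThmB}, there is no remaining obstacle: the deduction is a routine two-step compactness argument, and the only check worth being careful about is that $G_w^{\ast m}$ is closed (which uses compactness of $G^{(k)}$, not just of $G$).
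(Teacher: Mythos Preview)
Your proof is correct and follows exactly the same two-step compactness argument as the paper: first use Theorem \ref{resthm} to show $w(G)=G_w^{\ast m}$ is closed, then use $d$-boundedness and maximality of $|G:w(G)M|$ to conclude $w(G)=w(G)M$ is open. The only cosmetic difference is that you explicitly restrict the intersection to $N\le M$, whereas the paper leaves this implicit.
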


This shows that $w(G)$ \emph{is open whenever} $G$ \emph{is a finitely
generated profinite group and} $w$ \emph{is any non-commutator word}, the main
result of \cite{NSP}. The point of our clumsier formulation is that the
theorem as stated is independent of the Restricted Burnside Problem.

\subsection{Verbal subgroups in compact groups}

Throughout this subsection, we suppose that $G$ is a compact group and that
\emph{the profinite quotient} $G/G^{0}$ \emph{is finitely generated}. Several
of the preceding results can be generalized.

\begin{corollary}
If $w$ is a non-commutator word then $w(G)$ is open in $G$.
\end{corollary}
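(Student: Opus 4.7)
The plan is to combine the profinite case (Theorem~\ref{maindresthm}) applied to $G/G^0$ with the divisibility of the connected component $G^0$.

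First I would apply Theorem~\ref{maindresthm} to the finitely generated profinite group $G/G^0$: since $w$ is a non-commutator word, the verbal subgroup $w(G/G^0)=w(G)G^0/G^0$ is open in $G/G^0$. Hence $w(G)G^0$ is an open (finite-index) subgroup of $G$.

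It therefore suffices to show $G^0\subseteq w(G)$, for then $w(G)=w(G)G^0$. Write $w=w(x_1,\ldots,x_k)$ and let $(a_1,\ldots,a_k)\in\mathbb{Z}^k$ be its abelianisation (the tuple of exponent sums of the $x_i$ in $w$). Non-commutator means this tuple is non-zero; WLOG $a_1\neq 0$. Substituting $x_2=\cdots=x_k=1$ collapses $w$ to a word in $x_1$ alone, namely $x_1^{a_1}$; hence $w(b,1,\ldots,1)=b^{a_1}$ for every $b\in G^0$. By \cite{HM1} (Theorem~9.35), the compact connected group $G^0$ is divisible, so $b\mapsto b^{a_1}$ is surjective on $G^0$. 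Thus every element of $G^0$ is already a single $w$-value, giving $G^0\subseteq w(G^0)\subseteq w(G)$.

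Combining the two steps, $w(G)=w(G)G^0$ is open in $G$, as required. There is no serious obstacle: Theorem~\ref{maindresthm} does the work on the profinite quotient $G/G^0$, while divisibility of compact connected groups handles the connected component via the trivial substitution trick which reduces a general non-commutator word to a power map.
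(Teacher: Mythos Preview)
Your proof is correct and follows essentially the same approach as the paper: both arguments show $G^{0}\leq w(G)$ via divisibility of the connected component and then invoke the profinite result (Theorem~\ref{maindresthm}) for $G/G^{0}$. The only cosmetic difference is that the paper uses the exponent $q=\left\vert \mathbb{Z}/w(\mathbb{Z})\right\vert$ together with the general containment $G^{q}\leq w(G)$, whereas you pick a single exponent sum $a_{1}\neq 0$ and observe directly that $w(b,1,\ldots,1)=b^{a_{1}}$; both routes land on the same use of divisibility of $G^{0}$.
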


\begin{proof}
Set $q=\left\vert \mathbb{Z}/w(\mathbb{Z})\right\vert $. Every element of
$G^{0}$ is a $q$th power (\cite{HM1}, Theorem 9.35), so $G^{0}\leq w(G)$, and
so $w(G/G^{0})=w(G)/G^{0}$. The result follows by the remark following Theorem
\ref{maindresthm}.
\end{proof}

The next corollary follows likewise from Theorem \ref{serre}:

\begin{corollary}
\label{cpctserre}Every subgroup of finite index in $G$ is open.
\end{corollary}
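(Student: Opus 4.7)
The plan is to reduce the statement to two ingredients already at hand: Theorem \ref{serre} (Serre's theorem for finitely generated profinite groups, just re-proved in Subsection \ref{Finite-index-sec}) and the divisibility of a connected compact group (\cite{HM1}, Theorem 9.35), which was already invoked in the discussion following (E).

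Let $H \leq G$ have finite index. First I would pass to the normal core $K = \bigcap_{g \in G} H^g$, which is a normal subgroup of $G$ of finite index (at most $[G:H]!$); since an overgroup of an open subgroup is open, it suffices to prove that $K$ is open in $G$.

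Next I would show that $G^0 \leq K$. Indeed, $K \cap G^0$ is normal in $G^0$ and has finite index there, say index $n$. Since $G^0$ is divisible, every element of $G^0$ is an $n$th power, so lies in $K \cap G^0$; thus $K \cap G^0 = G^0$, i.e. $G^0 \leq K$. (This is the standard argument that a divisible group admits no proper normal subgroup of finite index.)

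Finally, $K/G^0$ is a subgroup of finite index in the finitely generated profinite group $G/G^0$, so by Theorem \ref{serre} it is open in $G/G^0$. Pulling back along the quotient map $G \to G/G^0$ (which is continuous), $K$ is open in $G$, and hence so is $H$. There is no real obstacle here: the two ingredients slot together directly, the only thing to check being that one is entitled to reduce to the normal case, which the normal-core trick handles.
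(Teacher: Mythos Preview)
Your proof is correct and follows exactly the approach the paper intends: the paper simply says ``follows likewise from Theorem \ref{serre}'', meaning one reduces to $G/G^{0}$ via divisibility of $G^{0}$ (\cite{HM1}, Theorem 9.35) and then applies Theorem \ref{serre}, which is precisely what you have written out in detail. The normal-core reduction is the natural way to make the divisibility argument go through, since you need $K\cap G^{0}$ normal in $G^{0}$.
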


\begin{lemma}
\label{comm-lemma}Suppose that $A\leq\mathrm{Z}(G^{0})$ is closed and normal
in $G$. Then $[A,G]$ is closed in $G$.
\end{lemma}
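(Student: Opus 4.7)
The key observation is that since $A\le \mathrm{Z}(G^{0})$, the conjugation action of $G^{0}$ on $A$ is trivial, so the action of $G$ on $A$ by conjugation factors through the profinite quotient $H=G/G^{0}$, which by hypothesis is topologically finitely generated. Moreover $A$ is a compact abelian group, so I will switch to additive notation on $A$: for $a\in A$ and $g\in G$, $[a,g]=g^{-1}ag-a=(g^{-1}-1)(a)$, where $g$ acts on $A$ via $H$.

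Choose $g_{1},\ldots,g_{d}\in G$ whose images topologically generate $H$, and set
\[
N=\sum_{i=1}^{d}(g_{i}-1)A\subseteq A.
\]
Each summand $(g_{i}-1)A$ is the image of the continuous homomorphism $A\to A$, $a\mapsto (g_{i}-1)(a)$, so it is a compact, hence closed, subgroup of $A$. Therefore $N$ is the image of the continuous homomorphism $A^{d}\to A$, $(a_{1},\ldots,a_{d})\mapsto \sum(g_{i}-1)(a_{i})$, and so $N$ is closed in $A$, hence in $G$.

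It remains to show $[A,G]=N$. The inclusion $N\subseteq [A,G]$ is immediate. For the reverse, consider the compact abelian quotient $A/N$: by construction each $g_{i}$ acts trivially on $A/N$. The set $\{h\in H\mid h$ acts trivially on $A/N\}$ is a closed subgroup of $H$ (by continuity of the action), and it contains the topological generators $\bar g_{1},\ldots,\bar g_{d}$, so it equals $H$. Thus $G$ centralises $A/N$, i.e.\ $[A,G]\subseteq N$. Hence $[A,G]=N$ is closed.

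The only place where $G/G^{0}$ being finitely generated is used is to ensure that $N$ is a \emph{finite} sum of closed subgroups; the rest is a standard density argument. There is no serious obstacle, the essential point being that for an abelian normal target $A$, commutators are values of continuous group homomorphisms and so behave well under closure.
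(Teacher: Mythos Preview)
Your proof is correct and follows essentially the same line as the paper's: set $N=\sum_{i}(g_i-1)A$, observe it is closed by compactness, then show that the centraliser of $A/N$ in $H=G/G^{0}$ is a closed subgroup containing the topological generators $\bar g_i$, hence equals $H$, so $[A,G]=N$. The paper is marginally more careful at one point: before forming the quotient $A/N$ it first notes that $N$ is an $X$-submodule for $X=\langle g_1,\ldots,g_d\rangle$, and hence (being closed, with $X$ dense) an $H$-submodule, so that the action of $H$ on $A/N$ is well defined --- you tacitly assume this, but it is an easy check.
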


\begin{proof}
Let us consider $A$ as an additively-written $\Gamma=G/G^{0}$-module. By
hypothesis, $\Gamma$ has a dense finitely generated (abstract) subgroup
$X=\left\langle x_{1},\ldots,x_{d}\right\rangle $. Now%
\[
\lbrack A,X]=A(x_{1}-1)+\cdots+A(x_{d}-1);
\]
this is an $X$-submodule of $A$, and it is closed in $A$ because $A$ is
compact. Therefore $[A,X]$ is a $\Gamma$-submodule, because $X$ is dense in
$\Gamma$. Therefore $C:=\mathrm{C}_{\Gamma}(A/[A,X])$ is closed in $\Gamma$,
and as $X\leq C$ it follows that $C=\Gamma$. Hence $[A,G]=[A,\Gamma]=[A,X]$ is closed.
\end{proof}

\begin{corollary}
\label{derived-cpct}The derived group $G^{\prime}$ is closed in $G$.
\end{corollary}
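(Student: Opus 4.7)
The strategy is to make two structural reductions so that $G^0$ becomes central and abelian in $G$, and then transport the bounded commutator-width information from the finitely generated profinite quotient $\bar G := G/G^0$ back to $G$ via Theorem~\ref{ThB-prof}.

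\emph{Reductions.} By the Levi--Mal'cev theorem for compact connected groups (see \cite{HM1}), the derived subgroup $(G^0)' = [G^0,G^0]$ is closed in $G^0$; it is characteristic in $G$ and lies in $G'$, so the closedness of $G'$ is preserved upon passing to $G/(G^0)'$. Thus we may assume $G^0$ is abelian. In this case $G^0 = Z(G^0)$, and Lemma~\ref{comm-lemma} applied with $A = G^0$ gives that $[G^0,G]$ is closed in $G$. Since $[G^0,G] \leq G'$, a further quotient brings us to the situation where $G^0 \leq Z(G)$ is compact abelian and $\bar G$ is still finitely generated profinite.

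\emph{Lifting the bounded-width structure.} Fix a symmetric topological generating set $\bar y_1,\ldots,\bar y_r$ of $\bar G$ and lifts $y_i \in G$. Because $G^0$ is central, the commutator $[g,y_i]$ depends only on the coset $\bar g = gG^0$; hence the continuous map $\bar g \mapsto [g,y_i]$ descends to a continuous injection $\bar G/\mathrm{C}_{\bar G}(\bar y_i) \hookrightarrow G$ with image $[G,y_i]$, which is therefore compact and in natural bijection with $[\bar G,\bar y_i]$. Applying Theorem~\ref{ThB-prof} to the finitely generated profinite group $\bar G$ yields
\[
\bar G' \;=\; \Bigl(\prod_{i=1}^r [\bar G,\bar y_i]\Bigr)^{*f_1},\qquad f_1 = f_1(r,\mathrm d(\bar G)),
\]
so the set $D := \bigl(\prod_{i=1}^r [G,y_i]\bigr)^{*f_1}$ is a compact subset of $G$ satisfying $\pi(D) = \bar G'$, where $\pi:G \to \bar G$ is the quotient map.

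\emph{Conclusion and the main obstacle.} The inclusion $D \subseteq G'$ is immediate; the aim is to prove $G' \subseteq D$, whence $G' = D$ is compact, hence closed. For $x \in G'$, lifting the expression of $\pi(x) \in \bar G' = \pi(D)$ as an $rf_1$-product of commutators $[\bar g,\bar y_i]$ produces $d \in D$ with $\pi(d) = \pi(x)$, so $x = d\cdot z$ with $z \in A := G' \cap G^0 \leq Z(G)$. Thus $G' = D\cdot A$, and the problem reduces to showing $A \subseteq D$. This is the principal obstacle: the subgroup $A$ is precisely the image of the transgression map $H_2(\bar G) \to G^0$ arising from the five-term sequence of the central extension $1 \to G^0 \to G \to \bar G \to 1$, and one must verify that every such ``relation element'' in $G^0$ is realized by a length-$rf_1$ product of commutators of the restricted form $[g,y_i]$. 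The expected route is to show that every trivial commutator-word in $\bar G$ can be rewritten, via Theorem~\ref{ThB-prof}, as a trivial word of length $rf_1$ in the $\bar y_i$-commutators, and to combine this with the homeomorphism $[G,y_i] \cong [\bar G,\bar y_i]$ and compactness of the parameter space to conclude that the lifted ``relation values'' fill out all of $A$. Once $A \subseteq D$ is established, $G' = D$ follows and the corollary is proved.
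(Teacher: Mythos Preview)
Your first two reductions match the paper's. The divergence---and the gap---is in the final step.

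First, a side error: the claim that $[G,y_i]$ is in bijection with $[\bar G,\bar y_i]$ via $\pi$ is false. If $\bar g_1^{-1}\bar g_2\in\mathrm C_{\bar G}(\bar y_i)$ then $[\bar g_1,\bar y_i]=[\bar g_2,\bar y_i]$, but $[g_1,y_i][g_2,y_i]^{-1}$ may well be a non-trivial element of $G^0$; such elements are precisely how $A=G'\cap G^0$ arises. Compactness of $[G,y_i]$ survives (it is the continuous image of $\bar G$ under $\bar g\mapsto[g,y_i]$), but the relationship between $D$ and $\bar G'$ is looser than you assert.

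More seriously, the ``principal obstacle'' you isolate is real, and your sketch does not resolve it. Theorem~\ref{ThB-prof} bounds commutator width in $\bar G$; it says nothing about how relations among commutators in $\bar G$ lift to $G$. An element of $A$ is the $G^0$-value of some relation $\prod[a_j,b_j]=1$ holding in $\bar G$, and nothing in your argument forces that value to be realizable by a length-$rf_1$ product of the special commutators $[g,y_i]$. Invoking ``compactness of the parameter space'' does not help: the relevant fibre $(\pi\circ\phi)^{-1}(1)$ is compact, but you need its image under $\phi$ to be \emph{all} of $A$, and that is a surjectivity statement, not a closedness statement.

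The paper sidesteps this entirely. Once $G^0\leq\mathrm Z(G)$, it invokes \cite{HM1}, Theorem~9.41 to produce a closed \emph{profinite} subgroup $D$ with $G=G^0D$. Since $G^0$ is central, $G'=D'$; and as $D/(D\cap G^0)\cong\bar G$ is finitely generated, one finds a finitely generated profinite subgroup $H\leq D$ with $D=(D\cap G^0)H$, whence $G'=D'=H'$ is closed by Theorem~\ref{ThB-prof}. Rather than lifting commutator width through the central extension, one locates a profinite subgroup already containing all the commutators and applies the profinite result there.
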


\begin{proof}
Let $P=(G^{0})^{\prime}$ denote the derived group of $G^{0}$. Then $P$ is
closed, by \cite{HM1}, Theorem 9.2. So replacing $G$ by $G/P$ we may suppose
that \thinspace$G^{0}$ is abelian. Then $[G^{0},G]$ is closed by the preceding
Lemma, so we may factor it out and reduce to the case where $G^{0}$ is central
in $G$. Now according to \cite{HM1}, theorem 9.41, we have $G=G^{0}D$ for some
closed profinite subgroup $D$. Since $D/(D\cap G^{0})\cong G/G^{0}$ is
finitely generated, $D=(D\cap G^{0})H$ for some finitely generated profinite
group $H$. Then $G^{\prime}=H^{\prime}$ is closed by the remark following
Theorem \ref{ThB-prof}.
\end{proof}

$\medskip$

\noindent\emph{Remark. }More generally, we can show that $[H,G]$ \emph{is
closed for every closed normal subgroup} $H$ \emph{of} $G$. When $G^{0}=1$
this follows from Theorem \ref{ThB-prof}, and when $G=G^{0}$ it follows from
the known structure of connected compact groups. The general case depends on a
modified form of the `Key Theorem', Theorem \ref{KT}, in which $d=\mathrm{d}%
(G)$ is replaced by $\mathrm{d}(G/\mathrm{C}_{G}(H))$; the proof will appear elsewhere.

\subsection{Quotients of semisimple compact groups\label{simplequot}}

In this subsection we consider a topological group
\begin{equation}
G=\prod_{i\in I}S_{i}, \label{semis-prod-def}%
\end{equation}
where $I$ is an index set, and \emph{either: }

\begin{description}
\item[(a)] each $S_{i}$ is a nonabelian finite simple group, and for each $n$
the set%
\[
I(n)=\left\{  i\in I\mid\left\vert S_{i}\right\vert \leq n\right\}
\]
is finite; \emph{or}

\item[(b)] each $S_{i}$ is a compact connected simple Lie group
\end{description}

\noindent(here, by a `simple Lie group' we mean the analogue of a
\emph{quasisimple} finite group: i.e. it may have a non-trivial centre, but is
simple modulo the centre and perfect).$\medskip$

\noindent\emph{Remarks:} \textbf{i.} (a) holds in particular when $G$ is a
semisimple finitely generated profinite group.

\textbf{ii.} Hofmann and Morris \cite{HM1} call a compact connected group $G$
`semisimple' if it is perfect, i.e. if $G=\overline{G^{\prime}}$, equivalently
if $G=G^{\prime}$ (\emph{loc. cit.} Theorem 9.2). However, this holds if and
only if $G=\widetilde{G}/C$ where $\widetilde{G}$ is a product of compact
connected simply-connected simple Lie groups and $C$ is a totally disconnected
normal subgroup (\emph{loc. cit.} Theorem 9.19); thus any quotient of $G$ is
also a quotient of a group of the form (\ref{semis-prod-def}).

\begin{theorem}
\label{unctble}Let $Q$ be an infinite quotient of (the underlying abstract
group) $G$. Then $\left\vert Q\right\vert \geq2^{\aleph_{{0}}}$.
\end{theorem}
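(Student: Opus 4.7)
My plan is to associate to the normal subgroup $N \trianglelefteq G$ underlying $Q = G/N$ (when $|Q| \geq \aleph_{0}$) a non-principal ultrafilter $\mathcal{U}$ on $I$ such that $N \leq N_{\mathcal{U}} := \{g \in G : \{i : g_{i} = 1\} \in \mathcal{U}\}$; then $Q$ surjects onto $G/N_{\mathcal{U}} \cong \prod_{\mathcal{U}} S_{i}$, which has cardinality at least $2^{\aleph_{0}}$. Indeed, in case (a) the hypothesis $|I(n)| < \infty$ forces $|S_{i}| \to \infty$ along $\mathcal{U}$ (and makes $I$ countable), so by the classical Frayne--Morel--Scott theorem on countably incomplete ultraproducts $|\prod_{\mathcal{U}} S_{i}| = 2^{\aleph_{0}}$; in case (b) each $S_{i}$ is already of cardinality $2^{\aleph_{0}}$, so the ultraproduct is a fortiori uncountable.

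First I would reduce to the case where $N$ is dense in $G$. If $Q$ is countable, then $G/\overline{N}$ is a countable compact Hausdorff topological group, and hence finite (a countable compact Hausdorff space has an isolated point by Baire, and by homogeneity is then discrete, so finite). In case (a) the closed normal subgroups of $G$ are products $\prod_{i \in J} S_{i}$ for $J \subseteq I$, so $\overline{N}$ has this form over some cofinite $J$; replace $G$ by $\overline{N}$. In case (b), $G$ is connected so any subgroup of finite index equals $G$, and $N$ is automatically dense. Next I would show $S_{i} \leq N$ for every $i \in I$: the dense normal subgroup $\pi_{i}(N) \leq S_{i}$ cannot lie in the finite center $Z(S_{i})$, so $[\pi_{i}(N), S_{i}]$ is a nontrivial normal subgroup of the almost-simple $S_{i}$, hence equals $S_{i}$ by perfectness; since $[\pi_{i}(N), S_{i}] \leq \pi_{i}(N)$ by normality of $N$, we get $\pi_{i}(N) = S_{i}$. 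Picking $g \in N$ with $g_{i} \notin Z(S_{i})$, the commutators $[g,h]$ for $h \in S_{i}$ (embedded in $G$) lie in $S_{i} \cap N$ and generate $S_{i}$ as a normal subgroup of $S_{i}$; thus $S_{i} \leq N$, and $\bigoplus_{i \in I} S_{i} \leq N$.

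The ultrafilter $\mathcal{U}$ would then be extracted from the collection
\[
\mathcal{C} = \{I \setminus \mathrm{supp}(g) : g \in N\} \cup \{\text{cofinite subsets of } I\};
\]
any ultrafilter extension of $\mathcal{C}$ is automatically non-principal and satisfies $N \leq N_{\mathcal{U}}$ by construction. The existence of such an extension reduces to the finite intersection property: for every $g_{1}, \ldots, g_{k} \in N$ and finite $F \subseteq I$, $F \cup \bigcup_{j} \mathrm{supp}(g_{j}) \neq I$.

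The crux -- and the main obstacle -- is the verification of this FIP. Assuming toward a contradiction that $F \cup \bigcup_{j} \mathrm{supp}(g_{j}) = I$, one first combines the $g_{j}$ into a single element $g \in N$ with $\mathrm{supp}(g) \supseteq I \setminus F$: at each $i \notin F$ some $g_{j,i}$ is nontrivial, and a componentwise-independent choice of conjugators $\alpha_{j} \in G$ arranges $\prod_{j} g_{j}^{\alpha_{j}}$ to have a nontrivial $i$-th component, exploiting the non-abelian structure of $S_{i}$. The normal closure of $g$ in $G$ then contains $g_{i}^{s}$ for every $s \in S_{i}$ and $i \notin F$; by the uniform bounded-conjugate-product estimates of Proposition \ref{conjclass} (or more sharply by Proposition \ref{Ore}), suitably many products of such conjugates recover all of $S_{i}$, and combined with $\bigoplus_{i \in F} S_{i} \leq N$ this forces $N = G$, contradicting $|Q| \geq 2$.
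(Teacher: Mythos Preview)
Your verification of the finite intersection property contains a fatal gap: the appeal to ``uniform bounded-conjugate-product estimates'' is unjustified. Proposition~\ref{conjclass} gives $Y^{*c'n} = S$ only when $|Y|^n \geq |S|$, so the number of factors needed is of order $\log|S|/\log|Y|$, which is \emph{not} uniformly bounded over all nontrivial conjugacy classes (and Proposition~\ref{Ore} concerns commutator width, not conjugacy-class covering, so it does not help). A concrete counterexample: take $G = \prod_{i} \mathrm{Alt}(n_i)$ with $n_i \to \infty$ and let $g = (g_i)$ with each $g_i$ a $3$-cycle. Then $\mathrm{supp}(g) = I$, so $\varnothing = I \setminus \mathrm{supp}(g)$ already lies in your collection $\mathcal{C}$ and kills the FIP with $k=1$, $F = \varnothing$. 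Yet $N := \langle g^G \rangle$ is a proper dense normal subgroup of infinite index: any element of $(g^G \cup g^{-G})^{*m}$ has $i$th component moving at most $3m$ points of $\{1,\ldots,n_i\}$, so no single $m$ produces an $n_i$-cycle for all $i$. Thus for this $N$ there is no ultrafilter $\mathcal{U}$ with $N \leq N_{\mathcal{U}}$, and your scheme collapses.

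This obstruction is precisely what the paper's Proposition~\ref{lamda-function} is built to overcome. In place of the ultraproduct kernel $N_{\mathcal{U}}$ the paper uses the larger subgroup $K_{\mathcal{U}} = \{\mathbf{g} : \lim_{\mathcal{U}} \lambda_{S_i}(g_i) = 0\}$, where $\lambda_L(s) = \log|s^L|/\log|L|$ in the finite case. The FIP for the sets $\{i : \lambda_{S_i}(t_i) < \epsilon\}$ \emph{does} hold, because property~(iii) gives a covering bound depending only on a lower bound for $\lambda$; this yields a non-principal $\mathcal{U}$ with $H \leq K_{\mathcal{U}}$. The price is that $G/K_{\mathcal{U}}$ is no longer an ultraproduct, so showing $|G/K_{\mathcal{U}}| \geq 2^{\aleph_0}$ requires separate work: when the ranks are bounded along $\mathcal{U}$ one recovers your Frayne--Morel--Scott argument, but when ranks are unbounded (or in the Lie case) one must instead show that $h_{\mathcal{U}}$ takes continuum many values, via the ``dial'' properties (va)/(vb) of $\lambda$.
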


This depends on the following technical device:

\begin{proposition}
\label{lamda-function}Let $L$ be \emph{(a)} a nonabelian finite simple group
or \emph{(b)} a compact connected simple Lie group. In Case (b), let $T$ be a
maximal torus of $L$, in Case (a) let $T=L$. There is a function
$\lambda=\lambda_{L}:T\rightarrow\lbrack0,1]$ with the following properties:

\begin{description}
\item[(i)] $\lambda(s)=0\Longleftrightarrow s\in\mathrm{Z}(L)$;

\item[(ii)] $\lambda(s^{-1})=\lambda(s^{t})=\lambda(s)$ and $\lambda
(st)\leq\lambda(s)+\lambda(t)$ for all $s,~t\in T$;

\item[(iii)] if $t\in T$ and $\lambda(t)\geq\varepsilon>0$ then%
\[
L=(t^{L}\cup t^{-L})^{\ast f(\varepsilon)}%
\]
where $f(\varepsilon)\in\mathbb{N}$ depends only on $\varepsilon$;

\item[(iv)] in \emph{Case (a)}, $1\neq s\in L$ implies $\lambda_{L}%
(s)\geq\varepsilon(r)$ where $\varepsilon(r)>0$ depends only on
$r=\mathrm{rank}(L)$;

\item[(va)] in \emph{Case (a)}: given $\beta,\varepsilon\in(0,1)$, there
exists $s\in L$ with%
\[
\left\vert \lambda_{L}(s)-\beta\right\vert <\varepsilon,
\]
provided that $\mathrm{rank}(L)\geq n(\varepsilon)$, where $n(\varepsilon)$
depends only on $\varepsilon$;

\item[(vb)] in \emph{Case (b)}: for each $\beta\in\lbrack0,1]$ there exists
$s\in T$ with $\lambda_{L}(s)=\beta$.
\end{description}
\end{proposition}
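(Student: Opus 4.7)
The plan is to define $\lambda_L$ separately in the two cases: as a normalised logarithm of conjugacy-class size in Case (a), and as a normalised Riemannian distance to the centre in Case (b). Properties (i)--(iv) and (vb) are then essentially routine; the two genuinely nontrivial steps are property (iii) in Case (b) and property (va) in Case (a).

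For Case (a) I would put $\lambda_L(s) = \log|s^L|/\log|L|$. The inclusion $(st)^L \subseteq s^L\cdot t^L$, arising from $(st)^g = s^g t^g$, gives $|(st)^L|\leq |s^L|\cdot|t^L|$ and hence the subadditivity in (ii); inversion and conjugation invariance, and (i), are immediate. Property (iv) is literally Proposition \ref{bcpbound} applied to $C = C_L(s)$. Property (iii) follows from Proposition \ref{conjclass}: if $\lambda_L(t) \geq \varepsilon$ then the symmetric normal subset $Y = t^L \cup t^{-L}$ has $|Y|^{\lceil 1/\varepsilon\rceil} \geq |L|$, whence $Y^{\ast c'\lceil 1/\varepsilon\rceil} = L$, so we set $f(\varepsilon) = c'\lceil 1/\varepsilon\rceil$.

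For Case (b), fix an $L$-invariant inner product on $\mathfrak{l}$, restrict to $\mathfrak{t} = \mathrm{Lie}(T)$, and define
\[
\lambda_L(t) = \frac{d(t,Z(L))}{D_L},\qquad d(t,Z(L)) = \inf\bigl\{\|X\| : \exp(X)\in tZ(L)\bigr\},
\]
normalised by $D_L = \max_{t\in T} d(t,Z(L))$. Here (i) and (ii) follow from the triangle inequality on $\mathfrak{t}$ together with the triviality of the conjugation action of $T$ on itself; (vb) is the intermediate value theorem for the continuous function $\lambda_L$ on the connected torus $T$, which by construction attains both $0$ and $1$. The \emph{main obstacle} is (iii). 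The plan is first to show that whenever $t\in T$ satisfies $d(t, Z(L)) \geq \varepsilon D_L$, the Haar measure of $t^L$ in $L$ is bounded below by a function of $\varepsilon$ alone, using the Weyl integration formula and the fact that the density $\prod_{\alpha}|e^{i\alpha(X)/2}-e^{-i\alpha(X)/2}|^2$ is bounded away from zero when $X$ is uniformly far from the root hyperplanes; and then to invoke a compact-group analogue of the Gowers trick — products of symmetric normal subsets of sufficiently large measure exhaust the group — to conclude that $Y^{\ast f(\varepsilon)} = L$ for $Y = t^L\cup t^{-L}$ with $f(\varepsilon)$ independent of $L$. Uniformity across simple Lie types is the delicate point and requires a case-by-case analysis along the nine Dynkin families.

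Finally, (va) is a combinatorial construction: given $\beta\in(0,1)$ and $\varepsilon>0$ produce explicit elements with $\lambda_L(s)$ within $\varepsilon$ of $\beta$ provided the rank is large. For $L\cong\mathrm{Alt}(n)$ I would use a partition of the form $\lfloor n/k\rfloor$ cycles of length $k$ (plus a tail of fixed points) and tune $k = k(\beta)$ via Stirling's formula, refining with mixed cycle lengths to hit the target window; the resulting centralizer order is $\prod_i i^{n_i} n_i!$, which is easily adjusted. For classical groups of Lie type of large rank the analogous family is furnished by semisimple elements in block-diagonal form, whose centralizer order is governed by the block sizes and can likewise be tuned to realise any $\beta$. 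Sporadic groups and bounded-rank exceptional Lie-type groups do not enter (va), since the hypothesis $r\geq n(\varepsilon)$ excludes them.
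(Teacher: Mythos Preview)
Your treatment of Case (a) matches the paper's: the definition $\lambda_L(s)=\log|s^L|/\log|L|$ and the derivations of (i)--(iv) are identical, and your construction for (va) differs only in detail from the paper's (which uses a single cycle of length $\approx\beta n$ in $\mathrm{Alt}(n)$, and for classical groups an element that is the identity on a subspace of dimension $\approx\sqrt{1-\beta}\,\dim V$ and cyclic on a complement).

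In Case (b), however, your plan for (iii) has a genuine gap. The conjugacy class $t^L$ of any element of a positive-dimensional compact connected Lie group has Haar measure \emph{zero}: it is diffeomorphic to $L/\mathrm{C}_L(t)$, a proper submanifold. The Weyl integration formula tells you that the density $\prod_\alpha|e^{i\alpha(X)/2}-e^{-i\alpha(X)/2}|^2$ is large when $t$ is regular, which means that a small \emph{neighbourhood} of the $W$-orbit of $t$ in $T$ accounts for a lot of the mass of $L$; it assigns no positive mass to the single class $t^L$ itself. So there is nothing to feed into a measure-based Gowers-type argument, and your route to (iii) does not start.

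The paper sidesteps this completely. It defines $\lambda(g)=\frac{1}{\pi r}\sum_{i=1}^{r}l(\beta_i(g))$, the average angle of $g$ under the fundamental root characters $\beta_1,\ldots,\beta_r$, rather than a Riemannian distance. Property (iii) is then proved \emph{constructively}: if $\lambda(g)\geq\varepsilon$ then some fundamental root $\beta$ has $l(\beta(g))\geq\varepsilon\pi$; inside the corresponding root-$\mathrm{SU}(2)$ subgroup $S_\beta$ one sees $g$ as an element of angle at least $\varepsilon\pi$, and an elementary trace calculation in $\mathrm{SU}(2)$ (Lemma \ref{su}) gives $S_\beta\subseteq[L,g]^{\ast\lceil 2/\varepsilon\rceil}$, hence $H_\beta\subseteq (g^L\cup g^{-L})^{\ast 2\lceil 2/\varepsilon\rceil}$. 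The Weyl group then transports $H_\beta$ to the other root tori, and since $T=\prod_i H_{\beta_i}$ one obtains $T\subseteq (g^L\cup g^{-L})^{\ast M}$ with $M=O(\varepsilon^{-2})$ (a separate large-rank argument handles the case where $r$ is large compared to $1/\varepsilon$). As every class meets $T$, this gives $L$. No measure and no representation theory are used.
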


\noindent Recall that $\mathrm{rank}(L)$ means the (untwisted) Lie rank of $L$
if $L$ is of Lie type, $n$ if $L\cong\mathrm{Alt}(n)$, and $0$ otherwise. The
proof is postponed to the following subsections.

Given an ultrafilter $\mathcal{U}$ on $I$, one defines the \emph{ultralimit}
of a bounded family $(a_{i})_{i\in I}$ of real numbers to be the unique number
$\alpha=\lim_{\mathcal{U}}a_{i}$ such that%
\[
\epsilon>0\Longrightarrow\left\{  i\in I\ |\ |a_{i}-\alpha|<\epsilon\right\}
\in\mathcal{U}%
\]
(cf. \cite{KL}, Section 3.1)\textbf{.} We remark that if $\mathcal{U}$ is the
principal ultrafilter $\mathcal{U}(j)$ over some element $j\in I$, then
$\lim_{\mathcal{U}}a_{i}=a_{j}$.

In Case (a), set $T_{i}=S_{i}$ for each $i$; in Case (b), we choose a maximal
torus $T_{i}$ in $S_{i}$. In either case, let $G_{\bullet}=\prod_{i\in I}%
T_{i}$. Now define a function $h_{\mathcal{U}}:G_{\bullet}\rightarrow
\lbrack0,1]$ by
\[
h_{\mathcal{U}}(\mathbf{g})=\lim\nolimits_{\mathcal{U}}\lambda_{S_{i}}%
(g_{i})\text{ for }\mathbf{g}=(g_{i})_{i}.
\]
The analogue of property (ii) obviously holds for the function $h_{\mathcal{U}%
}$. This implies that the set
\begin{equation}
K_{\mathcal{U}}:=h_{\mathcal{U}}^{-1}(0) \label{KU}%
\end{equation}
is a normal subgroup of $G_{\bullet}$, and that $h_{\mathcal{U}}$ is constant
on the cosets of $K_{\mathcal{U}}$.

For a subset $J$ of $I$ we set%
\[
N(J)=\prod_{i\in J}Z_{j}\times\prod_{i\in I\smallsetminus J}S_{i},
\]
the kernel of the projection $G\rightarrow\prod_{j\in J}S_{j}/Z_{j},$ where
$Z_{j}=\mathrm{Z}(S_{j})$. Each $N(J)$ is a closed normal subgroup of $G$.

\bigskip

Now we can prove Theorem \ref{unctble}. Let $Q=G/H$ where $H$ is a normal
subgroup of infinite index in $G$. Suppose we are in Case (b) (Lie groups); if
$H\leq N(j)$ then $Q$ maps onto $S_{j}/Z_{j}$ and the result is clear. Suppose
we are in Case (a), and let $J$ be the set of indices $j$ such that $H\leq
N(j)$. Then $\left\vert Q\right\vert =\left\vert G/N(J)\right\vert \left\vert
G_{1}/H_{1}\right\vert $ where $G_{1}=\prod_{i\in I\smallsetminus J}S_{j}$ and
$H_{1}$ denotes the projection of $H$ into $G_{1}$. If $J$ is infinite, then
$G/N(J)$ is an infinite profinite group and again the result is clear. If $J$
is finite, then $H_{1}$ has infinite index in $G_{1}$, and we can replace $G$
by $G_{1}$.

Thus in any case, we may assume that $H\nleqq N(j)$ for every $j\in I$. We
shall show that in this case,

\begin{description}
\item[(*)] \emph{There exists a non-principal ultrafilter }$\mathcal{U}$
\emph{on} $I$ \emph{such that }$H_{\bullet}:=H\cap G_{\bullet}\leq
K_{\mathcal{U}}$;

\item[(**)] $\left\vert G_{\bullet}/K_{\mathcal{U}}\right\vert \geq
2^{\aleph_{{0}}}$.
\end{description}

\noindent(Recall that $G_{\bullet}=G$ in Case (a).)$\medskip$

\textbf{Proof of (**)}.$\medskip$

\emph{Case 1.} The $S_{i}$ are finite simple groups, and for some
$m\in\mathbb{N}$, the set%
\[
D=D(m)=\{i\ |\ \mathrm{rank}(S_{i})\leq m\}
\]
belongs to $\mathcal{U}$. Then $N(D)\leq K_{\mathcal{U}}$, so
$G/K_{\mathcal{U}}\cong G_{1}/K_{\mathcal{U}_{1}}$ where $G_{1}=\prod_{i\in
D}S_{i}$ and $\mathcal{U}_{1}$ is the restriction of $\mathcal{U}$ to $D$. Now
property (iv) of the functions $\lambda_{S_{i}}$ implies that $\mathbf{g\in
}K_{\mathcal{U}_{1}}$ precisely when the set $\{i\in D\mid g_{i}=1\}$ belongs
to $\mathcal{U}_{1}$. Therefore the quotient $G_{1}/K_{\mathcal{U}_{1}}$
coincides with the ultraproduct $\prod_{i\in D}S_{i}/\mathcal{U}_{1}$. But an
ultraproduct of finite sets is either finite or has cardinality at least
$2^{\aleph_{{0}}}$ (\cite{FMS}, Theorem 1.31). The first possibility is
excluded since each of the sets $I(n)$ is finite, hence cannot belong to
$\mathcal{U}_{1}$, and (**) follows.$\medskip$

\emph{Case 2.} The $S_{i}$ are finite simple groups and $D(m)\notin
\mathcal{U}$ for each $m\in\mathbb{N}$. Let $\beta\in(0,1)$. For each $i\in I$
we choose $g_{i}\in S_{i}$ so as to minimize%
\[
\left\vert \lambda_{S_{i}}(g_{i})-\beta\right\vert =\varepsilon_{i},
\]
say. Property (va) ensures that for any $\varepsilon>0$, we have
$\varepsilon_{i}<\varepsilon$ whenever $\mathrm{rank}(S_{i})\geq
n(\varepsilon)$. We claim that $h_{\mathcal{U}}(\mathbf{g})=\beta$. Indeed,
suppose that $h_{\mathcal{U}}(\mathbf{g})=\beta^{\prime}\neq\beta$, and put
$\varepsilon=\left\vert \beta^{\prime}-\beta\right\vert $. Then%
\[
\left\vert \lambda_{S_{i}}(g_{i})-\beta^{\prime}\right\vert <\varepsilon
/2\Longrightarrow\varepsilon_{i}=\left\vert \lambda_{S_{i}}(g_{i}%
)-\beta\right\vert >\varepsilon/2\Longrightarrow i\in D(m)
\]
where $m=n(\varepsilon/2)$; thus $D(m)$ contains a member of $\mathcal{U}$ and
so $D(m)\in\mathcal{U}$, a contradiction.

It follows that $h_{\mathcal{U}}(G)=[0,1]$. Since $h_{\mathcal{U}}$ is
constant on cosets of $K_{\mathcal{U}}$ this now implies that
$G/K_{\mathcal{U}}$ has the cardinality of $[0,1]$, and (**) follows.$\medskip
$

\emph{Case 3.} The $S_{i}$ are connected simple Lie groups. Let $\beta
\in(0,1)$. Using Property (vb), choose $g_{i}\in T_{i}$ with $\lambda_{S_{i}%
}(g_{i})=\beta$ for each $i$. Then $\mathbf{g}=(g_{i})\in G_{\bullet}$ and
$h_{\mathcal{U}}(\mathbf{g})=\beta$; and (**) follows as in the preceding
case.$\medskip$

\textbf{Proof of (*)}.$\medskip$

$H$ is a normal subgroup of infinite index in $G$, and $H\nleqq N(j)$ for any
$j\in I$. For $\mathbf{t}=(t_{i})_{i}\in H_{\bullet}$ and $\epsilon>0$ put%
\[
A(\mathbf{t},\epsilon)=\left\{  i\in I\ |\ \lambda_{S_{i}}(t_{i}%
)<\epsilon\right\}  ,
\]
and let $U$ be the collection of all subsets $A(\mathbf{t},\epsilon)$ with
$\mathbf{t}\in H_{\bullet}$ and $\epsilon>0$.

We claim that \emph{every finite subset of} $U$ \emph{has nonempty
intersection}. Indeed, suppose that
\[
A(\mathbf{t}_{1},\epsilon_{1})\cap A(\mathbf{t}_{2},\epsilon_{2})\cap
\ldots\cap A(\mathbf{t}_{k},\epsilon_{k})=\varnothing.
\]
Put $\epsilon=\min_{i}\{\epsilon_{i}\}$ and suppose that $\mathbf{t}%
_{i}=(t_{i,j})_{j}$ with $t_{i,j}\in T_{j}$.

Then for each index $j\in I$ there is some $i\leq k$ such that $j\not \in
A(\mathbf{t}_{i},\epsilon)$, so $\lambda_{S_{j}}(t_{i,_{j}})\geq\epsilon$. Now
(iii) gives%
\[
S_{j}=\left(  t_{i,j}^{S_{j}}\cup t_{i,j}^{-S_{j}}\right)  ^{\ast n},
\]
where $n=f(\epsilon)$. Considering independently each coordinate $j\in I$ we
see that%
\[
G=\prod_{i=1}^{k}\left(  \mathbf{t}_{i}^{G}\cup\mathbf{t}_{i}^{-G}\right)
^{\ast n}\subseteq H,
\]
a contradiction.

On the other hand, \emph{the intersection of the collection }$U$\emph{ is
empty}. Let $T_{j}^{\ast}$ denote the projection of $H_{\bullet}$ into $S_{j}%
$. If $j$ belongs to every member of $U$ then $\lambda_{S_{j}}(t)=0$ for every
$t\in T_{j}^{\ast}$, whence $T_{j}^{\ast}\leq\mathrm{Z}(S_{j})$ by property
(i). Since the conjugates of $T_{j}^{\ast}$ generate the projection of $H$
into $S_{j}$, this implies that $H\leq N(j)$, contrary to hypothesis.

Now a standard application of Zorn's lemma establishes the existence of a
non-principal ultrafilter $\mathcal{U}$ on $I$ containing $U$. From the
definition of $U$ it follows that $h_{\mathcal{U}}(\mathbf{t})=0$ for all
$\mathbf{t}\in H_{\bullet}$, and (*) follows.

\subsubsection{The profinite case}

In Case (a) we can say rather more:

\begin{theorem}
\label{main2}Suppose that $G=\prod_{i\in I}S_{i}$ where each $S_{i}$ is a
finite (non-abelian) simple group and $\left\{  i\in I\mid\left\vert
S_{i}\right\vert \leq n\right\}  $ is finite for each $n$. Then

\begin{itemize}
\item every proper normal subgroup of $G$ is contained in a maximal one;

\item the maximal proper normal subgroups of $G$ are precisely the subsets
$K_{\mathcal{U}}$ for ultrafilters $\mathcal{U}$ on $I$;

\item the normal subgroup $K_{\mathcal{U}}$ is closed in $G$ if and only if
$\mathcal{U}$ is principal.
\end{itemize}
\end{theorem}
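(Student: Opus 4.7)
The necessary machinery is already constructed in the proof of Theorem \ref{unctble}: the function $h_{\mathcal{U}}:G\to[0,1]$ (here $G_{\bullet}=G$) and its kernel $K_{\mathcal{U}}$, which is a normal subgroup of $G$ by properties (i) and (ii) of Proposition \ref{lamda-function}. Since each $S_i$ is nonabelian simple and hence has trivial centre, property (i) gives $\lambda_{S_i}(s)=0$ iff $s=1$; so for a principal ultrafilter $\mathcal{U}(j)$ we have $K_{\mathcal{U}(j)}=\{\mathbf{g}\mid g_j=1\}=N(\{j\})$, the open kernel of the projection $G\to S_j$. Thus the principal ultrafilters already furnish the open maximal normal subgroups; the theorem asserts that the only other maximal normal subgroups are the $K_{\mathcal{U}}$ for non-principal $\mathcal{U}$, and that these are dense.

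\emph{Maximality of $K_{\mathcal{U}}$.} Let $g\in G\smallsetminus K_{\mathcal{U}}$ and set $\beta=h_{\mathcal{U}}(g)>0$, so that $A=\{i\mid\lambda_{S_i}(g_i)\geq\beta/2\}\in\mathcal{U}$. Property (iii) gives, for each $i\in A$ and each prescribed $h_i\in S_i$, an expression $h_i=\prod_{l=1}^{n}(g_i^{\varepsilon_{l,i}})^{x_{l,i}}$ with $n=f(\beta/2)$, $\varepsilon_{l,i}\in\{\pm1\}$, $x_{l,i}\in S_i$. There are only $2^n$ sign patterns, so pigeonholing inside the ultrafilter lets us assume a single pattern $(\varepsilon_1,\ldots,\varepsilon_n)$ works for all $i$ in a subset of $A$ that still lies in $\mathcal{U}$; rename this subset $A$. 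Setting $x_{l,i}=1$ off $A$ and assembling the $x_{l,i}$ coordinate-wise into $x_l\in G$, the element $w=\prod_{l=1}^{n}(g^{x_l})^{\varepsilon_l}\in\langle g^{G}\rangle$ agrees with $h$ on $A$, so $hw^{-1}\in N(A)\subseteq K_{\mathcal{U}}$. Hence $\langle g\rangle^{G}\cdot K_{\mathcal{U}}=G$, and $K_{\mathcal{U}}$ is maximal.

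\emph{Every proper normal subgroup lies in some $K_{\mathcal{U}}$.} Let $H\vartriangleleft G$ be proper. If $H\leq N(\{j\})$ for some $j$, take $\mathcal{U}=\mathcal{U}(j)$. Otherwise $H$ projects onto a nontrivial subgroup of every $S_j$, and the construction (*) from the proof of Theorem \ref{unctble} goes through verbatim: the family $\{A(\mathbf{t},\varepsilon)\mid\mathbf{t}\in H,\ \varepsilon>0\}$ has the finite intersection property (by property (iii) together with normality of $H$) and empty total intersection, so it extends to a non-principal ultrafilter $\mathcal{U}$ with $h_{\mathcal{U}}|_H\equiv0$, i.e.\ $H\leq K_{\mathcal{U}}$. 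Combined with the preceding paragraph, this shows both that every proper normal subgroup lies in a maximal one and that the maximal proper normal subgroups are exactly the $K_{\mathcal{U}}$.

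\emph{Closedness dichotomy.} If $\mathcal{U}=\mathcal{U}(j)$ is principal, $K_{\mathcal{U}}=N(\{j\})$ is open, hence closed. Conversely, suppose $K_{\mathcal{U}}$ is closed. Then $G/K_{\mathcal{U}}$ is a compact Hausdorff group, and being a continuous quotient of a profinite group it is itself profinite. It is also abstractly simple by the maximality of $K_{\mathcal{U}}$; but any infinite profinite group has proper nontrivial open normal subgroups, so $G/K_{\mathcal{U}}$ must be finite. However the proof of Theorem \ref{unctble} shows that $|G/K_{\mathcal{U}}|\geq 2^{\aleph_{0}}$ whenever $\mathcal{U}$ is non-principal (via the ultraproduct argument of Case 1 when some $D(m)\in\mathcal{U}$, or the interval-filling argument of Case 2 when no $D(m)\in\mathcal{U}$). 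Hence $\mathcal{U}$ must be principal. The main technical obstacle is the bookkeeping in the maximality step, where coordinate-dependent signs and conjugators coming from property (iii) must be welded into one global product of conjugates of $g$; the $2^{n}$-pigeonhole inside $\mathcal{U}$ resolves it cleanly.
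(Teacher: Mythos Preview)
Your proof is correct and follows essentially the same route as the paper's: maximality of $K_{\mathcal{U}}$ via property (iii), containment of every proper normal subgroup via the construction (*), and the principal/non-principal dichotomy. The only differences are in details: the paper asserts $G=N(A)\cdot(\mathbf{g}^G\cup\mathbf{g}^{-G})^{*n}$ directly (your ultrafilter pigeonhole on the $2^n$ sign patterns is in fact the more careful way to justify this step from (iii) as stated), and for the closedness dichotomy the paper simply observes that for non-principal $\mathcal{U}$ the subgroup $K_{\mathcal{U}}$ contains the restricted direct product of the $S_i$, which is dense --- shorter than your detour through simplicity of compact quotients.
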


\begin{proof}
If $\mathcal{U}=\mathcal{U}(j)$ is principal then $K_{\mathcal{U}}=N(j)$ is a
closed maximal normal subgroup. If $\mathcal{U}$ is non-principal, then
$K_{\mathcal{U}}$ has infinite index in $G$, by (**). We claim that in this
case too, $K_{\mathcal{U}}$ is a maximal normal subgroup. Suppose that
$\mathbf{g}=(g_{i})_{i}\in G$ is not in $K=K_{\mathcal{U}}$. This means that
$h_{\mathcal{U}}(\mathbf{g})>0$, which in turn implies that for some
$\alpha>0$ the set
\[
A=\{i\in I\mid\lambda_{S_{i}}(g_{i})>\alpha\}
\]
belongs to $\mathcal{U}$.

Now if $i\in A$, we see from (iii) in Proposition \ref{lamda-function} that%
\[
S_{i}=(g_{i}^{S_{i}}\cup g_{i}^{-S_{i}})^{\ast n}%
\]
where $n=f(\alpha)$. It follows that%
\[
G=N(A)\cdot\left(  \mathbf{g}^{G}\cup\mathbf{g}^{-G}\right)  ^{\ast n}.
\]
As $\mathcal{U}$ is a filter and $A\in\mathcal{U}$ it is easy to see that
$N(A)\leq K$, and so
\[
G=K\left(  \mathbf{g}^{G}\cup\mathbf{g}^{-G}\right)  ^{\ast n}\subseteq
K\left\langle \mathbf{g}^{G}\cup\mathbf{g}^{-G}\right\rangle .
\]
Since $\mathbf{g}$ was an arbitrary element of $G\smallsetminus K$ it follows
that $G/K$ is simple.

Now suppose that $H$ is any proper normal subgroup of $G$. Then either $H\leq
N(j)=K_{\mathcal{U}(j)}$ for some $j\in I$, or (*) provides a non-principal
ultrafilter $\mathcal{U}$ such that $H\leq K_{\mathcal{U}}$.

It remains only to observe that if $\mathcal{U}$ is a non-principal
ultrafilter then $K_{\mathcal{U}}$ contains the restricted direct product of
the $S_{i}$, which is dense in $G$, and so $K_{\mathcal{U}}$ cannot be closed.
\end{proof}

\subsubsection{The connected case: automorphisms}

The material in this subsection will only be needed for the proof of Theorem
\ref{vdthm} in Subsection \ref{dense}. We consider $G=\prod_{i\in I}S_{i}$
where $I$ is an infinite set and each $S_{i}$ is a compact connected simple
Lie group. In this case, our functions $h_{\mathcal{U}}$ were only defined on
$G_{\bullet}=\prod_{i\in I}T_{i},$ which depends on a choice of maximal torus
$T_{i}$ in each $S_{i}$. Suppose that in each $S_{i}$ we choose maximal tori
$T_{i}^{(l)}$, $l=1,\ldots,d$. Let $\lambda_{S_{i}}^{(l)}:T_{i}^{(l)}%
\rightarrow\lbrack0,1]$ be as in Proposition \ref{lamda-function}, put
$\mathbf{T}^{(l)}=\prod_{i\in I}T_{i}^{(l)}$, and define $h_{\mathcal{U}%
}^{(l)}:\mathbf{T}^{(l)}\rightarrow\lbrack0,1]$ and $K_{\mathcal{U}}%
^{(l)}:=h_{\mathcal{U}}^{-1}(0)\leq\mathbf{T}^{(l)}$ as before, using the maps
$\lambda_{S_{i}}^{(l)}$. A subgroup of the form $\mathbf{T}^{(l)}$ will be
called a `maximal pro-torus' of $G$ (cf. \cite{HM1}). We will write
$\lambda_{i}$ for $\lambda_{S_{i}}$ where the meaning is clear.

\begin{lemma}
\label{several tori}Let $H$ be a proper normal subgroup of $G$ with $H\nleqq
N(j)$ for all $j\in I$. Then there exists a non-principal ultrafilter
$\mathcal{U}$ on $I$ such that $H_{(l)}:=H\cap\mathbf{T}^{(l)}\leq
K_{\mathcal{U}}^{(l)}$ for $l=1,\ldots,d$.
\end{lemma}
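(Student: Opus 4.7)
The plan is to generalize the construction of the ultrafilter in the proof of (*) above, now tracking all $d$ maximal pro-tori simultaneously. For $l \in \{1, \ldots, d\}$, $\mathbf{t} \in H_{(l)}$ and $\epsilon > 0$, define
$$A^{(l)}(\mathbf{t}, \epsilon) = \{i \in I : \lambda_{S_i}^{(l)}(t_i) < \epsilon\},$$
and let $U$ be the collection of all such subsets of $I$. I aim to show that $U$ together with the Fr\'echet filter of cofinite subsets of $I$ has the finite intersection property; Zorn's lemma then furnishes a non-principal ultrafilter $\mathcal{U}$ containing $U$, and for every $l$ and every $\mathbf{t} \in H_{(l)}$ the membership $A^{(l)}(\mathbf{t}, \epsilon) \in \mathcal{U}$ for all $\epsilon > 0$ forces $h_{\mathcal{U}}^{(l)}(\mathbf{t}) = 0$, i.e., $\mathbf{t} \in K_{\mathcal{U}}^{(l)}$.

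The plain FIP of $U$ is established as in the proof of (*): if the sets $A^{(l_1)}(\mathbf{t}_1, \epsilon_1), \ldots, A^{(l_k)}(\mathbf{t}_k, \epsilon_k)$ had empty intersection then, setting $\epsilon = \min_i \epsilon_i$ and picking for each $j \in I$ an index $i = i(j)$ with $\lambda_{S_j}^{(l_i)}(t_{i,j}) \geq \epsilon$, property (iii) of Proposition \ref{lamda-function} would give $S_j = (t_{i,j}^{S_j} \cup t_{i,j}^{-S_j})^{*f(\epsilon)}$, and working coordinatewise one would obtain $G \subseteq \prod_{i=1}^{k}(\mathbf{t}_i^G \cup \mathbf{t}_i^{-G})^{*f(\epsilon)} \subseteq H$, a contradiction.

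The main obstacle is upgrading this to cofinite FIP. Suppose $A^{(l_1)}(\mathbf{t}_1, \epsilon_1) \cap \cdots \cap A^{(l_k)}(\mathbf{t}_k, \epsilon_k) \subseteq F$ for some finite $F \subseteq I$. Running the preceding coordinatewise argument on $I \setminus F$ in place of $I$ yields $N^* := \prod_{j \in I \setminus F} S_j \subseteq H$. Writing $G = G^F \times N^*$ with $G^F = \prod_{j \in F} S_j$, a direct check (using $N^* \leq H$) gives $H = H^F \times N^*$ where $H^F := \pi_{G^F}(H) \triangleleft G^F$. The hypothesis $H \nleq N(j)$ for $j \in F$ translates, via the classical fact that $S_j/Z_j$ is abstractly simple and non-abelian, into $\pi_j(H^F) Z_j = S_j$ for each $j \in F$. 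Passing to $\overline{G^F} = \prod_{j \in F} S_j/Z_j$ and invoking the standard structure theorem for normal subgroups of a finite direct product of simple non-abelian groups (each $[\overline{H^F}, S_j/Z_j]$ lies in $\{\{e\}, S_j/Z_j\}$, and the former is incompatible with $\pi_j(\overline{H^F}) = S_j/Z_j$) forces $\overline{H^F} = \overline{G^F}$, whence $H^F \cdot Z(G^F) = G^F$ and $[G:H] \leq |Z(G^F)| < \infty$. But $G$ is compact and connected, so Corollary \ref{cpctserre} makes every finite-index subgroup open, and connectedness then gives $H = G$, contradicting $H < G$.

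With cofinite FIP in hand, Zorn's lemma yields a non-principal ultrafilter $\mathcal{U}$ containing $U$, and the desired containment $H_{(l)} \leq K_{\mathcal{U}}^{(l)}$ follows as indicated at the start.
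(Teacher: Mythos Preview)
Your proof is correct, and the finite intersection property for $U$ is established exactly as in the paper. The difference lies in how you secure non-principality of the ultrafilter.

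You prove the stronger statement that $U$ together with the Fr\'echet filter has the FIP, via a structural argument: if some finite intersection of the $A^{(l_i)}(\mathbf{t}_i,\epsilon_i)$ were contained in a finite set $F$, then $N^{\ast}=\prod_{j\notin F}S_j\le H$, and you then analyse $H^F\vartriangleleft G^F=\prod_{j\in F}S_j$ using abstract simplicity of $S_j/Z_j$ and the classification of normal subgroups of a finite direct product of non-abelian simple groups, forcing $[G:H]<\infty$ and hence $H=G$ by connectedness. This is sound.

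The paper takes a shorter route. It simply invokes the argument already given in the proof of~$(\ast)$ to show that $\bigcap U=\varnothing$: if some $j$ lay in every $A^{(l)}(\mathbf{t},\epsilon)$ with $\mathbf{t}\in H_{(l)}$, then in particular the projection of $H_{(1)}=H\cap\mathbf{T}^{(1)}$ to $T_j^{(1)}$ would lie in $Z_j$; since every element of $S_j$ is conjugate into $T_j^{(1)}$ and $H$ is normal, the projection of $H$ into $S_j$ would then lie in $Z_j$, contradicting $H\nleqq N(j)$. Emptiness of $\bigcap U$ immediately forces any ultrafilter containing $U$ to be non-principal. This avoids the finite-product structure theorem, the finite-index step, and the appeal to connectedness; so while your argument buys a slightly stronger intermediate conclusion, the paper's approach is more economical for the purpose at hand.
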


\begin{proof}
For $\mathbf{t}\in H_{(l)}$ and $\epsilon>0$ define $A^{(l)}(\mathbf{t}%
,\epsilon)$ as in the proof of (*), above, using $\lambda^{(l)}$ in place of
$\lambda$. Let $U^{(l)}$ be the collection of all subsets $A^{(l)}%
(\mathbf{t},\epsilon)$ with $\mathbf{t}\in H_{(l)}$ and $\epsilon>0$. As
above, it will suffice to show that every finite subcollection of $U^{(1)}%
\cup\ldots\cup U^{(d)}$ has non-empty intersection. Arguing as before, we see
that if%
\[
\bigcap_{l=1}^{d}\left(  A^{(l)}(\mathbf{t}_{1}^{(l)},\epsilon_{l1})\cap
A^{(l)}(\mathbf{t}_{2}^{(l)},\epsilon_{l2})\cap\ldots\cap A^{(l)}%
(\mathbf{t}_{k}^{(l)},\epsilon_{lk})\right)  =\varnothing,
\]
then for each $j\in I$ there exist $l\leq d$ and $i\leq k$ such that
$\lambda_{S_{j}}^{(l)}(t_{i,_{j}}^{(l)})\geq\epsilon$ where $\epsilon
=\min\epsilon_{l^{\prime}i^{\prime}}$. As before this yields the contradiction%
\[
G=\prod_{l=1}^{d}\prod_{i=1}^{k}\left(  \mathbf{t}_{i}^{(l)G}\cup
\mathbf{t}_{i}^{(l)-G}\right)  ^{\ast n}\subseteq H.
\]

\end{proof}

\bigskip

Now let $y$ be a continuous automorphism of $G$. The action of $y$ induces a
permutation $y^{\cdot}$ on the index set $I$, so that $S_{i}^{y}=S_{iy^{\cdot
}}$ for each $i$. Let $\mathcal{C}$ denote the set of orbits of $\left\langle
y^{\cdot}\right\rangle $ on $I,$ and for each $J\in\mathcal{C}$ pick $i(J)\in
J$. Then%
\[
\prod_{i\in J}S_{i}=\left\{
\begin{array}
[c]{ccc}%
\prod_{n\in\mathbb{Z}}S^{y^{n}} &  & (J\text{ infinite})\\
&  & \\
\prod_{n=0}^{e-1}S^{y^{n}} &  & (\left\vert J\right\vert =e<\infty)
\end{array}
\right.
\]
where $S=S_{i(J)}$. Choose a maximal torus $T_{i(J)}$ in $S_{i(J)}$, and for
$i=i(J)y^{\cdot n}$ (where $0\leq n<e$ if $\left\vert J\right\vert =e<\infty$)
set $T_{i}=T_{i(J)}^{y^{n}}$. Thus $\mathbf{T}=\prod_{i\in I}T_{i}$ becomes a
maximal pro-torus in $G$, and $\mathbf{T}$ is `almost' $y$-invariant, in the
following sense. For each $J\in\mathcal{C}$ with $\left\vert J\right\vert
<\infty$ put $l(J)=i(J)y^{\cdot-1}$, and set%
\begin{align*}
Z  &  =\left\{  l(J)\mid J\in\mathcal{C},~~\left\vert J\right\vert
<\infty\right\}  ,\\
\mathbf{T}(Z)  &  =\left\{  \mathbf{t}=(t_{i})\in\mathbf{T}\mid t_{i}%
=1~\forall i\in Z\right\}  ;
\end{align*}
for $i\notin Z$ we may identify $S_{iy^{\cdot}}$ with $S_{i}$ via the action
of $y$, and then for $\mathbf{t}=(t_{i})\in\mathbf{T}(Z)$ we have%
\begin{equation}
(\mathbf{t}^{y})_{iy^{\cdot}}=t_{i}~\forall i\in I\text{,} \label{ydot}%
\end{equation}
so $\mathbf{T}(Z)^{y}\leq\mathbf{T}$.

Set $Z^{c}=I\smallsetminus Z$. For $\alpha\in\lbrack0,1]$ and $\epsilon>0$
define%
\[
A(\mathbf{t},\alpha,\epsilon)=\left\{  i\in I\ |\ \left\vert \lambda_{S_{i}%
}(t_{i})-\alpha\right\vert <\epsilon\right\}  .
\]

\begin{lemma}
Let $\mathcal{U}$ be a non-principal ultrafilter on $I$ with $Z^{c}%
\in\mathcal{U}$, and put $\mathcal{U}^{\prime}=\mathcal{U}\left\vert _{Z^{c}%
}\right.  $ Then%
\[
\mathcal{U}^{\prime}=\left\{  A(\mathbf{t},1/2,1/4)\mid\mathbf{t}\in
\mathbf{T}(Z),~h_{\mathcal{U}}(\mathbf{t})=1/2\right\}  .
\]

\end{lemma}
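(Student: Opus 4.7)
The plan is to prove the two inclusions of the set equality separately, using property (vb) of the functions $\lambda_{S_i}$ and the defining property of ultralimits.

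For the inclusion $\supseteq$, suppose $\mathbf{t} \in \mathbf{T}(Z)$ satisfies $h_{\mathcal{U}}(\mathbf{t}) = 1/2$. By definition of the ultralimit with $\epsilon = 1/4$, the set $A(\mathbf{t},1/2,1/4) = \{i \in I \mid |\lambda_{S_i}(t_i) - 1/2| < 1/4\}$ lies in $\mathcal{U}$. On the other hand, for $i \in Z$ we have $t_i = 1$, so $\lambda_{S_i}(t_i) = 0$ by property (i) of Proposition \ref{lamda-function}, and therefore $|\lambda_{S_i}(t_i) - 1/2| = 1/2 \not< 1/4$, so $i \notin A(\mathbf{t},1/2,1/4)$. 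Hence $A(\mathbf{t},1/2,1/4) \subseteq Z^c$, and consequently $A(\mathbf{t},1/2,1/4) \in \mathcal{U}|_{Z^c} = \mathcal{U}'$.

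For the inclusion $\subseteq$, let $B \in \mathcal{U}'$, i.e.\ $B \subseteq Z^c$ and $B \in \mathcal{U}$. We construct $\mathbf{t} = (t_i) \in \mathbf{T}(Z)$ by setting $t_i = 1$ for $i \notin B$ (in particular for $i \in Z$, since $Z \cap B = \varnothing$), and choosing $t_i \in T_i$ with $\lambda_{S_i}(t_i) = 1/2$ for $i \in B$, which is possible by property (vb) of Proposition \ref{lamda-function}. Then $t_i = 1$ on $Z$, so $\mathbf{t} \in \mathbf{T}(Z)$. By construction $\lambda_{S_i}(t_i) = 1/2$ exactly for $i \in B$ and $\lambda_{S_i}(t_i) = 0$ otherwise, so $A(\mathbf{t},1/2,1/4) = B$. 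Since $B \in \mathcal{U}$, the ultralimit $h_{\mathcal{U}}(\mathbf{t}) = \lim_{\mathcal{U}}\lambda_{S_i}(t_i)$ equals $1/2$, and so $B$ lies in the right-hand set.

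There is no real obstacle here: the forward direction is immediate from the definition of ultralimit, and the converse direction relies on the existence of elements of arbitrary $\lambda$-value in the connected case (property (vb)), which is precisely what distinguishes this argument from the finite/profinite case and is the reason the value $1/2$ can be hit exactly rather than merely approximated.
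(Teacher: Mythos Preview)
Your proof is correct and essentially identical to the paper's. The only cosmetic difference is that for $i\in Z^{c}\setminus B$ you set $t_{i}=1$ (so $\lambda_{S_i}(t_i)=0$), whereas the paper chooses $t_i$ with $\lambda_{S_i}(t_i)=1$; both choices keep $|\lambda_{S_i}(t_i)-1/2|=1/2\not<1/4$, so the argument goes through either way.
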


\begin{proof}
Let $\mathcal{V}$ denote the family of sets on the right-hand side of the
equation. Then $\mathcal{V}\subseteq\mathcal{U}^{\prime}$ by the definition of
$h_{\mathcal{U}}(\mathbf{t})$.

Now suppose that $Y\subseteq Z^{c}$ and $Y\in\mathcal{U}$. Choose $t_{i}\in
T_{i}$ so that%
\begin{align*}
t_{i}  &  =1~\text{for }i\in Z\\
\lambda_{i}(t_{i})  &  =1/2~\text{for }i\in Y\\
\lambda_{i}(t_{i})  &  =1~\text{for }i\notin Y\cup Z.
\end{align*}
Then $\mathbf{t}=(t_{i})\in\mathbf{T}(Z)$ and $A(\mathbf{t},\frac{1}%
{2},\epsilon)=Y$ for every $\epsilon\in(0,\frac{1}{2}]$, so $h_{\mathcal{U}%
}(\mathbf{t})=\frac{1}{2}$. Therefore $Y\in\mathcal{V}$. Thus $\mathcal{U}%
^{\prime}\subseteq\mathcal{V}$.
\end{proof}

\begin{lemma}
Suppose that $Z^{c}\in\mathcal{U}$ and that $\mathbf{t}^{-1}\mathbf{t}^{y}\in
K_{\mathcal{U}}$ for all $\mathbf{t}\in\mathbf{T}(Z)$. Then $\mathcal{U}%
^{y^{\cdot}}=\mathcal{U}$.
\end{lemma}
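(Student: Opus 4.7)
My plan is to exploit the characterization of $\mathcal{U}\vert_{Z^c}$ proved in the preceding lemma, together with the translation-invariance of $h_{\mathcal{U}}$ on cosets of $K_{\mathcal{U}}$.

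First I would pick an arbitrary $\mathbf{t} \in \mathbf{T}(Z)$ with $h_{\mathcal{U}}(\mathbf{t}) = 1/2$ and show that $h_{\mathcal{U}}(\mathbf{t}^y) = 1/2$ as well. Since $\mathbf{t}^{-1}\mathbf{t}^y \in K_{\mathcal{U}}$ by hypothesis, the analogue of property (ii) of $\lambda$ for $h_{\mathcal{U}}$ (subadditivity and symmetry, noted just before the definition of $K_{\mathcal{U}}$) gives
\[
h_{\mathcal{U}}(\mathbf{t}^y) \leq h_{\mathcal{U}}(\mathbf{t}) + h_{\mathcal{U}}(\mathbf{t}^{-1}\mathbf{t}^y) = h_{\mathcal{U}}(\mathbf{t}),
\]
and symmetrically $h_{\mathcal{U}}(\mathbf{t}) \leq h_{\mathcal{U}}(\mathbf{t}^y)$; hence equality.

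Next I would compute the set $A(\mathbf{t}^y,1/2,1/4)$ explicitly. Writing $\mathbf{s}=\mathbf{t}^y$, the relation \eqref{ydot} and the fact that $t_i=1$ for $i\in Z$ yield $s_j=t_{jy^{\cdot-1}}$ for all $j\in I$, with the identification between $S_{jy^{\cdot-1}}$ and $S_j$ induced by $y$. Because $\lambda_L$ depends only on the abstract group structure of $L$ (it is the same function whether computed before or after the isomorphism $y$), we get $\lambda_j(s_j)=\lambda_{jy^{\cdot-1}}(t_{jy^{\cdot-1}})$, so
\[
A(\mathbf{t}^y,1/2,1/4) = A(\mathbf{t},1/2,1/4)\,y^{\cdot}.
\]
Since $h_{\mathcal{U}}(\mathbf{t}^y)=1/2$, the left-hand side lies in $\mathcal{U}$ by the very definition of ultralimit.

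Now I invoke the preceding lemma: every $Y \in \mathcal{U}\vert_{Z^c}$ has the form $A(\mathbf{t},1/2,1/4)$ for some $\mathbf{t} \in \mathbf{T}(Z)$ with $h_{\mathcal{U}}(\mathbf{t})=1/2$, so the computation just made shows $Yy^{\cdot} \in \mathcal{U}$. Any $B \in \mathcal{U}$ satisfies $B \cap Z^c \in \mathcal{U}\vert_{Z^c}$, so $(B\cap Z^c)y^{\cdot} \in \mathcal{U}$, and hence $By^{\cdot} \in \mathcal{U}$. Thus the pushforward $\mathcal{U}^{y^{\cdot}} = \{By^{\cdot} : B \in \mathcal{U}\}$ is contained in $\mathcal{U}$; since both are ultrafilters on $I$, maximality forces $\mathcal{U}^{y^{\cdot}} = \mathcal{U}$.

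The only slightly subtle point is the translation between the indexing in $\mathbf{t}^y$ and the indexing through $y^{\cdot}$, and I expect this (together with checking that $\lambda$ really is preserved by the identifications) to be the only place where one needs to slow down; everything else is immediate from the preceding lemma and the subadditivity of $h_{\mathcal{U}}$.
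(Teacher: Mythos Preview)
Your proposal is correct and follows essentially the same route as the paper: use the preceding lemma's description of $\mathcal{U}\vert_{Z^c}$, show $h_{\mathcal{U}}(\mathbf{t}^{y})=h_{\mathcal{U}}(\mathbf{t})$ by subadditivity, verify $A(\mathbf{t},1/2,1/4)\,y^{\cdot}=A(\mathbf{t}^{y},1/2,1/4)$ from the identification in (\ref{ydot}), and conclude $\mathcal{U}^{y^{\cdot}}\subseteq\mathcal{U}$, hence equality by maximality of ultrafilters. Your flagged ``subtle point'' about the compatibility of $\lambda$ with the $y$-identifications is exactly what the paper sweeps under the phrase ``it follows from (\ref{ydot})'', and it is indeed the only place requiring care.
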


\begin{proof}
Let $X\in\mathcal{U}$. Then $X\supseteq X\cap Z^{c}=A(\mathbf{t},\frac{1}%
{2},\frac{1}{4})$ for some $\mathbf{t}\in\mathbf{T}(Z)$ with $h_{\mathcal{U}%
}(\mathbf{t})=\frac{1}{2}$. Now%
\begin{align*}
h_{\mathcal{U}}(\mathbf{t}^{y})  &  =h_{\mathcal{U}}(\mathbf{t}.\mathbf{t}%
^{-1}\mathbf{t}^{y})\leq h_{\mathcal{U}}(\mathbf{t})+h_{\mathcal{U}%
}(\mathbf{t}^{-1}\mathbf{t}^{y})=h_{\mathcal{U}}(\mathbf{t}),\\
h_{\mathcal{U}}(\mathbf{t})  &  =h_{\mathcal{U}}(\mathbf{t}^{-1}%
)=h_{\mathcal{U}}(\mathbf{t}^{-1}\mathbf{t}^{y}.\mathbf{t}^{-y})\leq
h_{\mathcal{U}}(\mathbf{t}^{-1}\mathbf{t}^{y})+h_{\mathcal{U}}(\mathbf{t}%
^{-y})=h_{\mathcal{U}}(\mathbf{t}^{y}),
\end{align*}
so $h_{\mathcal{U}}(\mathbf{t}^{y})=\frac{1}{2}$. Now it follows from
(\ref{ydot}) that%
\[
A(\mathbf{t},1/2,1/4)^{y^{\cdot}}=A(\mathbf{t}^{y},1/2,1/4)=B,
\]
say, and $B\in\mathcal{U}$ since $h_{\mathcal{U}}(\mathbf{t}^{y})=\frac{1}{2}%
$. Therefore $X^{y^{\cdot}}\supseteq B\in\mathcal{U}$ and so $X^{y^{\cdot}}%
\in\mathcal{U}$. Thus $\mathcal{U}^{y^{\cdot}}\subseteq\mathcal{U},$ and the
result follows since $\mathcal{U}^{y^{\cdot}}$ is an ultrafilter.
\end{proof}

\begin{lemma}
If $\mathcal{U}^{y^{\cdot}}=\mathcal{U}$ then $\mathrm{fix}(y^{\cdot}%
)\in\mathcal{U}$.
\end{lemma}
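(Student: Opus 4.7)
The plan is to argue by contradiction. Suppose that $F:=\mathrm{fix}(y^{\cdot})$ does \emph{not} belong to $\mathcal{U}$. Since $\mathcal{U}$ is an ultrafilter, the complement $F^{c}:=I\smallsetminus F$ lies in $\mathcal{U}$, and the restriction of $y^{\cdot}$ to $F^{c}$ is a fixed-point-free permutation. I would then invoke the classical fact that any fixed-point-free permutation $\sigma$ of a set $X$ admits a partition $X=A_{1}\sqcup A_{2}\sqcup A_{3}$ with $A_{i}\cap A_{i}^{\sigma}=\varnothing$ for $i=1,2,3$. Applied to $\sigma=y^{\cdot}\!\!\restriction_{F^{c}}$, this yields a partition $F^{c}=A_{1}\sqcup A_{2}\sqcup A_{3}$ with $A_{i}\cap A_{i}^{y^{\cdot}}=\varnothing$.

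Now the ultrafilter step is immediate. Since $A_{1}\cup A_{2}\cup A_{3}=F^{c}\in\mathcal{U}$, the ultrafilter property forces at least one $A=A_{i}$ to lie in $\mathcal{U}$. By hypothesis $\mathcal{U}^{y^{\cdot}}=\mathcal{U}$, so also $A^{y^{\cdot}}\in\mathcal{U}$, whence $A\cap A^{y^{\cdot}}\in\mathcal{U}$. But $A\cap A^{y^{\cdot}}=\varnothing$, contradicting $\varnothing\notin\mathcal{U}$. Hence $F\in\mathcal{U}$, as required.

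The only substantive point is the existence of the $3$-colouring, which is the obstacle (such as it is). To produce it, decompose $F^{c}$ into the orbits of $\langle y^{\cdot}\rangle$ and colour each orbit independently: an infinite orbit is order-isomorphic to $\mathbb{Z}$ (via $n\mapsto x_{0}y^{\cdot n}$) and can be $2$-coloured by the parity of $n$; a finite orbit of even length can be $2$-coloured alternately around the cycle; a finite orbit of odd length $\geq 3$ can be $2$-coloured along all but one edge and the remaining vertex given a third colour. Collecting all points receiving the same colour across all orbits gives the three sets $A_{1},A_{2},A_{3}$, none of which meets its own $y^{\cdot}$-image. With this combinatorial lemma in hand, the contradiction outlined above completes the proof.
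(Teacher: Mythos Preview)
Your proof is correct and follows essentially the same approach as the paper: both arguments rest on the same $3$-colouring of the non-fixed part of $I$ (constructed orbit-by-orbit exactly as you describe), and then use that no colour class can lie in a $y^{\cdot}$-invariant ultrafilter since it is disjoint from its own image. The only cosmetic difference is that the paper argues directly (showing each $A_i\notin\mathcal{U}$, hence $\mathrm{fix}(y^{\cdot})=A_1^c\cap A_2^c\cap A_3^c\in\mathcal{U}$) while you phrase it as a contradiction.
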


\begin{proof}
Here $\mathrm{fix}(y^{\cdot})$ denotes the set of fixed points of $y^{\cdot}$.
We can partition $I$ as
\[
I=A_{1}\overset{\cdot}{\cup}A_{2}\overset{\cdot}{\cup}A_{3}\overset{\cdot
}{\cup}\mathrm{fix}(y^{\cdot})
\]
where $A_{i}^{y^{\cdot}}\cap A_{i}=\varnothing$ for $i=1,2,3$. To see this, it
suffices to partition each $\left\langle y^{\cdot}\right\rangle $-orbit $J$ of
length at least $2$ into three pieces $J_{i}$ such that $J_{i}^{y^{\cdot}}\cap
J_{i}=\varnothing$. Identifying $J$ with $\mathbb{Z}$ or with $(1,2,\ldots,e)$
where $y^{\cdot}$ takes $i$ to $i+1$ $(\operatorname{mod}e)$, let%
\begin{align*}
J_{1}  &  =2\mathbb{Z},~J_{2}=2\mathbb{Z}+1,~J_{3}=\varnothing\text{ if
}\left\vert J\right\vert =\infty;\\
J_{1}  &  =2\mathbb{Z}\cap J,~J_{2}=(2\mathbb{Z}+1)\cap J,~J_{3}%
=\varnothing\text{ if }\left\vert J\right\vert \text{ is even;}\\
J_{1}  &  =\{2,\ldots,2n\},~J_{2}=\{1,\ldots,2n-1\},~J_{3}=\{2n+1\}\text{ if
}\left\vert J\right\vert =2n+1.
\end{align*}
Then set $A_{i}=\cup_{J\in\mathcal{C}}J_{i}$ for $i=1,2,3$.

If $\mathcal{U}^{y^{\cdot}}=\mathcal{U}$ then $A_{i}\notin\mathcal{U}$ for
each $i$, since $\varnothing\notin\mathcal{U}$. Therefore $A_{i}^{c}%
\in\mathcal{U}$ for each $i$, whence%
\[
\mathrm{fix}(y^{\cdot})=A_{1}^{c}\cap A_{2}^{c}\cap A_{3}^{c}\in
\mathcal{U}\text{.}%
\]
(We are grateful to Martin Kassabov for pointing us to this lemma, which
suggested the possibility of Proposition \ref{ultrafilterautos}, below.)
\end{proof}

\begin{lemma}
Suppose that $Z\in\mathcal{U}$ and that $\mathbf{t}^{-1}\mathbf{t}^{y}\in
K_{\mathcal{U}}$ for all $\mathbf{t}\in\mathbf{T}(Z)$. Then $\mathrm{fix}%
(y^{\cdot})\in\mathcal{U}$.
\end{lemma}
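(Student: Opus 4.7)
The plan is to split $Z = Z_{1} \sqcup Z_{>1}$, where $Z_{1} = \{l(J) : |J| = 1\} \subseteq \mathrm{fix}(y^{\cdot})$ and $Z_{>1} = \{l(J) : 2 \leq |J| < \infty\}$. Since $Z \in \mathcal{U}$ and $\mathcal{U}$ is an ultrafilter, it suffices to show $Z_{>1} \notin \mathcal{U}$. The strategy is to exhibit a single $\mathbf{t} \in \mathbf{T}(Z)$ whose discrepancy $\mathbf{t}^{-1}\mathbf{t}^{y}$ takes $\lambda$-value $1/2$ at every coordinate $i \in Z_{>1}$; the hypothesis $\mathbf{t}^{-1}\mathbf{t}^{y} \in K_{\mathcal{U}}$ then gives $h_{\mathcal{U}}(\mathbf{t}^{-1}\mathbf{t}^{y}) = 0$, so the set where that $\lambda$-value is $\geq 1/4$ lies outside $\mathcal{U}$, forcing $Z_{>1} \notin \mathcal{U}$.

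To build $\mathbf{t}$, set $A = Z_{>1} y^{\cdot -1}$ and, using Proposition \ref{lamda-function}(vb), pick $t_{j} \in T_{j}$ with $\lambda_{j}(t_{j}) = 1/2$ for $j \in A$, and $t_{j} = 1$ elsewhere. The one non-trivial verification is that $A \cap Z = \varnothing$ (so that $\mathbf{t} \in \mathbf{T}(Z)$): for $i = l(J) \in Z_{>1}$ with $|J| = e \geq 2$, we have $i y^{\cdot -1} = i(J) y^{\cdot (e-2)}$, which is an element of $J$ distinct from $l(J) = i(J) y^{\cdot (e-1)}$; and since $J \cap Z = \{l(J)\}$, this forces $i y^{\cdot -1} \notin Z$.

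With this $\mathbf{t}$ in hand, for every $i \in Z_{>1}$ we have $(\mathbf{t}^{-1}\mathbf{t}^{y})_{i} = t_{i}^{-1} t_{i y^{\cdot -1}} = t_{i y^{\cdot -1}}$ (because $t_{i} = 1$ for $i \in Z$), and $i y^{\cdot -1} \in A$ by construction, so $\lambda_{i}((\mathbf{t}^{-1}\mathbf{t}^{y})_{i}) = 1/2$ (using that the identification $S_{i y^{\cdot -1}} \cong S_{i}$ coming from $y$ preserves $\lambda$). Hence $Z_{>1}$ sits inside the set $\{i : \lambda_{i}((\mathbf{t}^{-1}\mathbf{t}^{y})_{i}) \geq 1/4\}$, which fails to lie in $\mathcal{U}$ by hypothesis. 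I do not foresee any genuine obstacle; the only slightly delicate point is choosing $A$ so that it simultaneously avoids $Z$ and exhausts the preimages $i y^{\cdot -1}$ for $i \in Z_{>1}$, and the orbit arithmetic above handles this in one line.
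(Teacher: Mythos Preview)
Your proof is correct and follows essentially the same approach as the paper: construct a specific $\mathbf{t}\in\mathbf{T}(Z)$ so that $(\mathbf{t}^{-1}\mathbf{t}^{y})_{l(J)}$ has $\lambda$-value bounded away from $0$ for every finite orbit $J$ of length $\geq 2$, then use $h_{\mathcal{U}}(\mathbf{t}^{-1}\mathbf{t}^{y})=0$ to force $Z_{>1}\notin\mathcal{U}$. The paper's $\mathbf{t}$ is built by propagating an element with $\lambda$-value $1$ along each orbit (setting $t_{l(J)}=1$), whereas you place a single element with $\lambda$-value $1/2$ at the position $l(J)y^{\cdot -1}$ and zero elsewhere; both constructions yield the same conclusion, and both rely (as the paper implicitly does) on the $\lambda_{S_i}$ being transported compatibly along the orbit via $y$.
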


\begin{proof}
If $J$ is an orbit of $\left\langle y^{\cdot}\right\rangle $ of length at
least $2$, choose $t_{J}\in T_{i(J)}$ with $\lambda_{i(J)}(t_{J})=1$. Then set%
\begin{align*}
t_{i(J)y^{\cdot n}}  &  =t_{J}^{y^{n}}\text{ }\forall n\in\mathbb{Z}\text{ if
}J\text{ is infinite,}\\
t_{i(J)y^{\cdot n}}  &  =t_{J}^{y^{n}}~(0\leq n\leq e-2),~t_{l(J)}=1\text{ if
}\left\vert J\right\vert =e<\infty;
\end{align*}
and set $t_{i}=1$ for each $i\in\mathrm{fix}(y^{\cdot})$ (recall that
$l(J)=i(J)y^{\cdot(e-1)}$). Then $\mathbf{t}=(t_{i})\in\mathbf{T}(Z)$, and
whenever $\infty>\left\vert J\right\vert \geq2$ we have%
\[
(\mathbf{t}^{-1}\mathbf{t}^{y})_{l(J)}=t_{J}.
\]
Now $\mathbf{t}^{-1}\mathbf{t}^{y}\in K_{\mathcal{U}}$ implies that
$A(\mathbf{t}^{-1}\mathbf{t}^{y},0,\frac{1}{2})\in\mathcal{U}$; consequently
$A(\mathbf{t}^{-1}\mathbf{t}^{y},0,\frac{1}{2})\cap Z\in\mathcal{U}$. As
$Z=\{l(J)\mid2\leq\left\vert J\right\vert <\infty\}\cup\mathrm{fix}(y^{\cdot
})$, we see that $A(\mathbf{t}^{-1}\mathbf{t}^{y},0,\frac{1}{2})\cap
Z=\mathrm{fix}(y^{\cdot})$.
\end{proof}

\begin{proposition}
\label{ultrafilterautos}Let $y_{1},\ldots,y_{d}$ be continuous automorphisms
of $G$ and let $H$ be a proper normal subgroup of $G$ with $[G,y_{l}]\subseteq
H$ for each $l$. Suppose that $H\nleqq N(j)$ for all $j\in I$. Then there
exists a non-principal ultrafilter $\mathcal{U}$ on $I$ such that%
\[
\bigcap_{l=1}^{d}\mathrm{fix}(y_{l}^{\cdot})\in\mathcal{U}\text{.}%
\]
Hence $\bigcap_{l=1}^{d}\mathrm{fix}(y_{l}^{\cdot})$ is infinite.
\end{proposition}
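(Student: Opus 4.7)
The plan is to apply the preceding cluster of four lemmas to all $d$ automorphisms simultaneously, producing a single non-principal ultrafilter $\mathcal{U}$ for which $\mathrm{fix}(y_l^\cdot)\in\mathcal{U}$ holds for every $l$. The intersection property then comes for free from the filter axioms, and infiniteness from non-principality.

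First, for each $l\in\{1,\ldots,d\}$ I would carry out the construction described in the paragraph preceding Lemma \ref{several tori}, applied to $y_l$: choose an orbit representative $i(J)$ for every $\langle y_l^\cdot\rangle$-orbit $J$ on $I$ together with a maximal torus of $S_{i(J)}$, transport it by appropriate powers of $y_l$ along the orbit, and assemble the result into a maximal pro-torus $\mathbf{T}^{(l)}=\prod_{i\in I}T_i^{(l)}$ of $G$. Let $Z_l\subseteq I$ be the associated exceptional set (the predecessors under $y_l^\cdot$ of the chosen representatives of the finite orbits), so that the subgroup $\mathbf{T}^{(l)}(Z_l)=\{\mathbf{t}\in\mathbf{T}^{(l)}\mid t_i=1\,\forall i\in Z_l\}$ satisfies the almost-invariance property $\mathbf{T}^{(l)}(Z_l)^{y_l}\leq\mathbf{T}^{(l)}$.

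Next, because $H$ is a proper normal subgroup of $G$ with $H\nleqq N(j)$ for every $j$, Lemma \ref{several tori} applies to the tuple $\mathbf{T}^{(1)},\ldots,\mathbf{T}^{(d)}$ and produces a single non-principal ultrafilter $\mathcal{U}$ on $I$ with $H\cap\mathbf{T}^{(l)}\leq K_{\mathcal{U}}^{(l)}$ for every $l$. Now fix $l$. For any $\mathbf{t}\in\mathbf{T}^{(l)}(Z_l)$ the element $\mathbf{t}^{-1}\mathbf{t}^{y_l}$ lies in $\mathbf{T}^{(l)}$ by almost-invariance and simultaneously in $[G,y_l]\subseteq H$, hence in $H\cap\mathbf{T}^{(l)}\leq K_{\mathcal{U}}^{(l)}$. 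Since $\mathcal{U}$ is an ultrafilter, exactly one of $Z_l$ or $I\setminus Z_l$ belongs to $\mathcal{U}$: in the first case the last of the three lemmas preceding the proposition yields $\mathrm{fix}(y_l^\cdot)\in\mathcal{U}$ directly, while in the second case the analogous lemma (with $Z^c\in\mathcal{U}$) gives $\mathcal{U}^{y_l^\cdot}=\mathcal{U}$, and the intermediate lemma then yields $\mathrm{fix}(y_l^\cdot)\in\mathcal{U}$. Either way $\mathrm{fix}(y_l^\cdot)\in\mathcal{U}$; closure under finite intersection and non-principality conclude the proof.

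The only delicate point is that both the pro-torus $\mathbf{T}^{(l)}$ and the exceptional set $Z_l$ depend on $l$, yet one ultrafilter must serve all $y_l$ simultaneously. Lemma \ref{several tori} is tailor-made for that simultaneous demand, and the subsequent three lemmas are formulated so that the case analysis for each individual $l$ runs independently; consequently, once the correct tori are assembled there is no further genuine obstacle, only bookkeeping.
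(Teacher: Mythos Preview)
Your proof is correct and follows essentially the same route as the paper's own argument: construct the adapted pro-tori $\mathbf{T}^{(l)}$ for each $y_l$, invoke Lemma~\ref{several tori} for a single ultrafilter $\mathcal{U}$, and then run the three subsequent lemmas (via the $Z_l$ versus $Z_l^c$ dichotomy) to get $\mathrm{fix}(y_l^\cdot)\in\mathcal{U}$ for each $l$. One small slip: the pro-torus construction you describe actually appears in the paragraph \emph{following} Lemma~\ref{several tori}, not preceding it.
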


\begin{proof}
For each $l$ choose a maximal pro-torus $\mathbf{T}^{(l)}$ corresponding to
$y_{l}$ as above, and apply Lemma \ref{several tori} to find a non-principal
ultrafilter $\mathcal{U}$ such that $H\cap\mathbf{T}^{(l)}\leq K_{\mathcal{U}%
}^{(l)}$ for $l=1,\ldots,d$. Now the last three lemmas show that
$\mathrm{fix}(y_{l}^{\cdot})\in\mathcal{U}$ for each $l$, and the result follows.
\end{proof}

\subsubsection{Proposition \ref{lamda-function}, finite case}

Now $L$ is a finite simple group. We define%
\[
\lambda(s)=\frac{\log\left\vert s^{L}\right\vert }{\log\left\vert L\right\vert
}.
\]
Properties (i) and (ii) are clear, and (iii) follows from Proposition
\ref{conjclass}. (iv) follows from Proposition \ref{bcpbound}.

It remains to establish property (v). Given $\beta,\varepsilon\in(0,1)$, we
have to show that provided $\mathrm{rank}(L)$ is sufficiently large, there
exists $g\in L$ such that%
\[
\frac{\log\left\vert \mathrm{C}_{L}(g)\right\vert }{\log\left\vert
L\right\vert }\in(\alpha-\varepsilon,\alpha+\varepsilon)
\]
where $\alpha=1-\beta$. As we only need to consider groups of large rank, we
may suppose that $L$ is either alternating or a classical group.$\medskip$

If $L=\mathrm{Alt}(n)$, take $g$ to be an even cycle of length $l\thicksim
\beta n$ in $\mathrm{Alt}(n)$. Note that $\left\vert \mathrm{C}_{L}%
(g)\right\vert $ is roughly\textbf{ }$l\cdot\overline{l}!/2$ where
$\overline{l}\thicksim\alpha n$. By Stirling's formula, $\log(n!)\sim n\log n$
and hence $\log(l\cdot\overline{l}!/2)\sim\alpha\log(n!/2)$ as $n\rightarrow
\infty$.\medskip

If $L$ is a simple classical group, consider the corresponding universal
quasisimple classical group $\widetilde{L}$ acting on its natural module $V$
over a finite field of size $q$ equipped with a bilinear form $f$ (symmetric,
sesquilinear, alternating or just equal to $0$ in case $L$ has type
$\mathrm{PSL}_{n}$). Note that $\dim(V)\rightarrow\infty$ as $\mathrm{rank}%
(L)\rightarrow\infty$. We have $L=\widetilde{L}/Z$ where $Z$ is the centre of
$\widetilde{L}$; and if $g=\tilde{g}Z\in L$ with $\tilde{g}\in\widetilde{L}$ then%

\[
|g^{L}|\leq|\tilde{g}^{\widetilde{L}}|\leq|Z||g^{L}|.
\]
Since $Z$ has asymptotically negligible size compared to $L$ it is enough to
find an element $\tilde{g}\in\widetilde{L}$ with $\log\left\vert
\mathrm{C}_{\widetilde{L}}(\tilde{g})\right\vert \sim\alpha\log|\widetilde
{L}|$.

We can decompose $V$ as $V_{0}\oplus V_{1}\oplus V_{2}$ so that:

\begin{itemize}
\item $\dim V_{0}$ is about $\sqrt{\alpha} \dim V$, and $\dim V_{1} = \dim
V_{2}$,

\item $V_{1} \oplus V_{2}$ is orthogonal to $V_{0}$, and

\item The form $f$ is nondegenerate on both $V_{0}$ and $V_{1} \oplus V_{2}$
and is isotropic on $V_{1}$ and on $V_{2}$
\end{itemize}

Let $\tilde{g}\in\widetilde{L}$ be equal to the identity on $V_{0}$ and act on
each of $V_{1}$ and $V_{2}$ as a cyclic transformation without fixed vectors.
In other words there is a vector $v_{i}\in V_{i}$, $(i=1,2)$ such that
$v_{i},\tilde{g}v_{i},\tilde{g}^{2}v_{i},\ldots$ is a basis for $V_{i}$.

Now $\mathrm{C}_{\widetilde{L}}(\tilde{g})$ contains the classical group $H$
on $V_{0}$ preserving $f$, and by the choice of $\dim V_{0}$ we have
$\log|H|/\log|\widetilde{L}|\sim(\dim V_{0}/\dim V)^{2}$ which tends to
$\alpha$ as $\dim V\rightarrow\infty$.

On the other hand if $s\in\widetilde{L}$ commutes with $\tilde{g}$ then $s$
must stabilize $V_{0}$, the fixed space of $\tilde{g}$. Since $V_{1}$ and
$V_{2}$ are cyclic modules for $\tilde{g}$, the action of $s$ on $V_{1}$ and
$V_{2}$ is determined by $s\cdot v_{1}$ and $s\cdot v_{2}$. Hence $s$ is
completely known from its restriction to $V_{0}$ and from the two vectors
$sv_{1},sv_{2}\in V$. Denote by $\mathrm{Gf}(V_{0})$ the subgroup of
$\mathrm{GL}(V_{0})$ which preserves $f$. We have $|\mathrm{Gf}(V_{0})|\leq
q\left\vert H\right\vert $.

Therefore
\[
\left\vert H\right\vert \leq\mathrm{C}_{\widetilde{L}}(\tilde{g}%
)\leq|\mathrm{Gf}(V_{0})||V|^{2}\leq q^{1+2\dim V}\left\vert H\right\vert
\]
which gives
\[
\log|\mathrm{C}_{\widetilde{L}}(\tilde{g})|/\log\left\vert \widetilde
{L}\right\vert \sim\log\left\vert H\right\vert /\log\left\vert \widetilde
{L}\right\vert \rightarrow\alpha
\]
as $\dim V$ tends to infinity.

\subsubsection{Proposition \ref{lamda-function}, connected case}

We shall need some information about the tori and roots of compact simple Lie
groups; see for example \cite{Bump}, Chapter 19, \cite{HM1}, Chapter 6. By
$S^{1}$ we shall denote the group of complex numbers of absolute value $1$
under multiplication. It is a compact torus of dimension $1$.

Let $L$ be a compact simple Lie group with centre $Z$ (possibly nontrivial).
Let $T$ be a maximal torus of $L$ (this is unique up to conjugacy). Every
element of $L$ is conjugate to an element of $T$. Let $\Phi$ be a set of roots
with respect to $T$. We choose and fix a set of fundamental roots $\Pi
=\{\beta_{1},\ldots,\beta_{r}\}$; $r$ is the rank of $L$. Every root
$\alpha\in\Phi$ corresponds to a character $T\rightarrow S^{1}$ which we will
also denote by $\alpha$. We have%
\[
\bigcap_{i=1}^{r}\ker\beta_{i}=Z.
\]
There is also a cocharacter $h_{\alpha}:S^{1}\rightarrow T$ such that
$\alpha(h_{\alpha}(\mu))=\mu^{2}$ fo all $\mu\in S^{1}$. For every pair
$(\pm\alpha)$ of opposite roots of $\Phi$ there is a homomorphism $f_{\alpha
}:\mathrm{SU}(2)\rightarrow L$ such that $h_{\alpha}$ is the restriction of
$f_{\alpha}$ to the diagonal subgroup $\mathrm{diag}(\mu,\mu^{-1})$ of
$\mathrm{SU}(2)$ (and $h_{-\alpha}=h_{\alpha}^{-1}$). Let $S_{\alpha
}=S_{-\alpha}$ be the image of $\mathrm{SU}(2)$ in $L$ under $f_{\alpha}$.
Then $S_{\alpha}$ is either $\mathrm{SU}(2)$ or $\mathrm{PSU}(2)\cong%
\mathrm{SO}(3)$. Moreover $S_{\alpha}$ commutes elementwise with the closed
subgroup $T_{\alpha}:=\{g\in T\ |\ \alpha(g)=1\}$ of $T$, and the central
product $S_{\alpha}T_{\alpha}$ contains $T$.

Now we have to define $\lambda:T\rightarrow\lbrack0,1]$ so that properties (i)
-- (iii) and (v) of Proposition \ref{lamda-function} hold.

We can write a complex number $\mu\in S^{1}$ in a unique way as $\mu
=e^{i\theta}$ with $\theta\in(-\pi,\pi]$. Set $l(\mu):=|\theta|$. We shall
refer to $l(\mu)$ as the \emph{angle} of $\mu$.$\medskip$

\noindent\textbf{Definition.}\label{ang} For an element $g\in T$ define%
\[
\lambda(g)=\frac{1}{\pi r}\sum_{i=1}^{r}l(\beta_{i}(g))
\]

$\medskip$

Clearly $\lambda(g)$ is the same as $\lambda(\bar{g})$ for $\bar{g}=gZ$, if
$\lambda$ is defined taken with respect to the torus $T/Z$ of $L/Z$.

It is also clear that (i) $\lambda(g)=0$ if and only if $g\in Z$, and (ii)
$\lambda(h_{1})=\lambda(h_{1}^{-1})$ and $\lambda(h_{1}h_{2})\leq\lambda
(h_{1})+\lambda(h_{2})$ for any $h_{1},h_{2}\in T$. Since $l(\mu)$ takes all
values in $[0,\pi]$ and $T$ is a torus, we see that $\lambda(T)=[0,1]$, which
is property (v).$\medskip$

\emph{Example}: If $L=\mathrm{SU}(2)$ and $g$ is an element of the diagonal
subgroup of $L$ with eigenvalues $\mu$ and $\mu^{-1}$ then $l(g)$ is the angle
of $\mu^{2}$. From here and the isomorphism $\mathrm{PSU}(2)\cong%
\mathrm{SO}(3)$ we see that if $g\in\mathrm{SU}(2)$ then $\lambda(g)$ is
$\left\vert \theta\right\vert /\pi$ where $\theta$ is the angle of the image
$\bar{g}\in\mathrm{PSU}(2)=\mathrm{SO}(3)$ considered as a rotation of
$\mathbb{R}^{3}$.$\medskip$

Property (iii) follows from

\begin{lemma}
\label{lprod} There is an absolute constant $C>0$ such that if $g\in T$ and
$C/(\lambda(g))^{2}<M\in\mathbb{N}$ then $K^{\ast M}=L$, where $K=g^{L}\cup
g^{-L}$.
\end{lemma}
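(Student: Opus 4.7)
My plan has three stages: a pigeonhole reduction to a single fundamental-root subgroup $S_\beta$, an explicit $\mathrm{SU}(2)$ commutator calculation, and a global covering argument to extend from $S_\beta$ to $L$.

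First, by pigeonhole applied to the definition $\lambda(g) = (\pi r)^{-1}\sum_{i=1}^{r} l(\beta_i(g))$, there is a fundamental root $\beta$ with $l(\beta(g)) \geq \pi\lambda(g) =: \theta$. Replacing $g$ by an $L$-conjugate I may assume $g \in T$. Using the near-decomposition $T = T_\beta \cdot h_\beta(S^1)$ with $T_\beta := \ker\beta$ centralizing $S_\beta$ pointwise, I write $g = t' h_\beta(\mu_0)$ with $\mu_0^2 = \beta(g)$. Then for any $s \in S_\beta$, the commutator $[s, g] = (g^{-1})^s g$ lies in $K^{*2}$ and agrees with the commutator $[s, h_\beta(\mu_0)]$ computed inside $S_\beta \cong \mathrm{SU}(2)$.

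Second, a direct $2 \times 2$ trace calculation shows that as $s$ varies over $S_\beta$, the image of $[s, h_\beta(\mu_0)]$ in $\mathrm{SO}(3) = S_\beta/\mathrm{Z}(S_\beta)$ realizes every rotation of angle in $[0, 2\theta]$. I therefore obtain $k \in K^{*2} \cap S_\beta$ with $\lambda_{S_\beta}(k) \geq c\lambda(g)$ for an absolute $c > 0$. The $S_\beta$-conjugacy class of $k$ consists of all rotations of a fixed angle $\psi \geq c\pi\lambda(g)$ in $\mathrm{SO}(3)$, and an elementary spherical-geometry induction (the product of two rotations of angle $\psi$, with relative axis varied, covers every rotation of angle in $[0, 2\psi]$) gives $S_\beta \subseteq (k^{S_\beta})^{*N_1} \subseteq K^{*2N_1}$ with $N_1 = O(1/\lambda(g))$.

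Third, to pass from $S_\beta$ to $L$ I extract from $K^{*O(1/\lambda(g))}$ an element $y$ whose $\lambda_L$-value is at least a constant times $\lambda(g)$---for instance, by multiplying the original element $g \in K$ by a suitably chosen element of $S_\beta$ that partially undoes the $t'$-factor and preserves the bound on fundamental-root angles---and then invoke the compact-Lie-group analog of Proposition \ref{conjclass}: a normal subset containing an element of $\lambda_L$-value at least $\eta$ fills $L$ after $O(1/\eta)$ multiplications. Combining this $O(1/\lambda(g))$ factor with the $O(1/\lambda(g))$ factor from the rank-one covering yields $L = K^{*M}$ with $M = O(1/\lambda(g)^2)$, as required.

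The main obstacle is the third step, namely the rank-uniform absolute constant in the compact analog of Proposition \ref{conjclass}. The route I would take is via character-theoretic Liebeck--Shalev-type bounds transferred to the compact setting using the Weyl character formula and Peter--Weyl decomposition, which replaces finite counting by Plancherel-weighted estimates on the unitary dual of $L$. The quadratic exponent in $\lambda(g)$ is an artifact of this reduction strategy (composing two $1/\lambda(g)$ factors) rather than a sharp bound; for our applications in Theorem \ref{unctble} only the qualitative dependence matters.
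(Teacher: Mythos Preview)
Your first two steps match the paper's approach and are correct: pigeonhole gives a fundamental root $\beta$ with $l(\beta(g))\geq\pi\lambda(g)$, and the $\mathrm{SU}(2)$ computation inside $S_\beta$ (the paper's Lemma~\ref{su}) yields $S_\beta\subseteq K^{\ast O(1/\epsilon)}$, where $\epsilon=\lambda(g)$.

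The gap is entirely in your step~3. The ``compact-Lie-group analog of Proposition~\ref{conjclass}'' that you propose to invoke---a normal subset containing an element of $\lambda_L$-value $\geq\eta$ fills $L$ after $O(1/\eta)$ products---is \emph{stronger} than the lemma you are trying to prove (which only claims $O(1/\eta^2)$). So the argument is circular: you have deferred the whole content of the lemma to an unproved black box, and the character-theoretic route you sketch is neither carried out nor obviously rank-uniform with the required constant. Moreover, your ``extraction'' of a good element $y$ is unnecessary---$g$ itself already has $\lambda_L(g)=\epsilon$---which makes the circularity plain.

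The paper avoids this entirely by an elementary torus-covering argument. From $S_\beta\subseteq K^{\ast O(1/\epsilon)}$ one extracts the one-parameter subgroup $H_\beta=h_\beta(S^1)$. The Weyl group carries $H_\beta$ to $H_{\beta'}$ for any root $\beta'$ of the same length (and at most two are needed for a root of different length), so each $H_{\beta_i}\subseteq K^{\ast O(1/\epsilon)}$, whence $T=\prod_{i=1}^r H_{\beta_i}\subseteq K^{\ast O(r/\epsilon)}$. Since every conjugacy class meets $T$, this gives $L=K^{\ast O(r/\epsilon)}$. If $r\leq O(1/\epsilon)$ this is already $O(1/\epsilon^2)$. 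If $r$ is large (so $L$ is classical of type $A,B,C,D$), the paper instead observes that at least $\Omega(\epsilon r)$ \emph{pairwise orthogonal} fundamental roots $\beta$ satisfy $l(\beta(g))\geq\epsilon\pi/2$; the corresponding $S_\beta$ commute, so one obtains the product of all their $H_\beta$ inside $K^{\ast O(1/\epsilon)}$ in one stroke. Transitivity of the Weyl group of $A_{r-1}$ on tuples of orthogonal roots (Lemma~\ref{An}) then covers $\prod_{i<r}H_{\beta_i}$ with $O(1/\epsilon)$ Weyl-conjugates of this product, and one more factor handles $H_{\beta_r}$. This yields $T\subseteq K^{\ast O(1/\epsilon^2)}$ uniformly in the rank, with no character theory required. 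The idea you are missing is precisely this use of the Weyl group and the small-rank/large-rank dichotomy to pass from a single $S_\beta$ to all of $T$.
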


First we consider a special case:

\begin{lemma}
\label{su}If $L=\mathrm{SU}(2)$ and $g\in L$ with $\lambda(g)=\epsilon>0$ then
every element of $L$ is a product of $N=\left\lceil 2/\epsilon\right\rceil $
conjugates of $g$. Moreover $L=[L,g]^{\ast N}$.
\end{lemma}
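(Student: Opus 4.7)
The plan is to parameterise $L = \mathrm{SU}(2)$ by the eigenvalue half-angle $\psi \in [0,\pi]$ of each element (so the element has eigenvalues $e^{\pm i\psi}$), and to track the set $M_k$ of half-angles appearing in $(g^{L})^{\ast k}$ via the quaternion product law. The example following Definition~\ref{ang} identifies $\lambda(g)$ with $|\theta|/\pi$, where $\theta$ is the $\mathrm{SO}(3)$-rotation angle of $\bar g$; combined with $\theta = \min(2\psi,\,2\pi-2\psi)$ from the double cover, this forces $\psi \in \{\pi\epsilon/2,\ \pi-\pi\epsilon/2\}$ when $\lambda(g)=\epsilon$. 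A second observation, driving the commutator half of the lemma, is that $g$ and $g^{-1}$ have equal trace and are therefore conjugate in $\mathrm{SU}(2)$, so $g^{L} = g^{-L}$.

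The core computation uses the quaternion multiplication law: if $a,b \in L$ have half-angles $\phi_1,\phi_2$ and axes $\mathbf{u}_a,\mathbf{u}_b$, the half-angle $\phi$ of $ab$ satisfies $\cos\phi = \cos\phi_1\cos\phi_2 - \sin\phi_1\sin\phi_2\,\cos\beta$, where $\cos\beta = \mathbf{u}_a\cdot\mathbf{u}_b$. Fixing $a$ with half-angle $\phi_1$ and letting $b$ range over $g^L$ (of half-angle $\psi$), the axis $\mathbf{u}_b$ ranges over the unit sphere, so $\cos\beta$ sweeps $[-1,1]$ and $\phi$ attains every value in $I(\phi_1,\psi) := [\,|\phi_1-\psi|,\ \min(\phi_1+\psi,\,2\pi-\phi_1-\psi)\,]$. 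Because $(g^L)^{\ast k}$ is a union of $\mathrm{SU}(2)$-conjugacy classes, it is determined by $M_k$, and the recursion $M_{k+1} = \bigcup_{\phi_1 \in M_k} I(\phi_1,\psi)$ always yields an interval by continuity of $\phi_1 \mapsto I(\phi_1,\psi)$.

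A two-case induction then pins down $M_k$. If $\psi \le \pi/2$, one gets $M_k = [0,\min(k\psi,\pi)]$, which equals $[0,\pi]$ as soon as $k \ge \lceil\pi/\psi\rceil = \lceil 2/\epsilon\rceil = N$. If instead $\psi = \pi-\eta > \pi/2$ (with $\eta = \pi\epsilon/2$), the sets oscillate: $M_{2j} = [0,\min(2j\eta,\pi)]$ while $M_{2j+1} = [\max(0,\pi-(2j+1)\eta),\,\pi]$, and in either parity $M_k = [0,\pi]$ precisely when $k\eta \ge \pi$, i.e.\ $k \ge N$. Either way $(g^L)^{\ast N} = L$, proving the first assertion. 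For the second, set $u_i = (g^{-1})^{x_i}$, so $[x_i,g] = u_i g$, and use $gu = u^{g^{-1}}g$ to shuttle all of the $g$'s to the right, yielding
\[
\prod_{i=1}^N [x_i,g] = v_1 v_2 \cdots v_N \cdot g^N, \qquad v_i := u_i^{g^{-(i-1)}} \in g^{-L}.
\]
As $(x_1,\ldots,x_N)$ varies, $(v_1,\ldots,v_N)$ sweeps out all of $(g^{-L})^{(N)}$. Since $g^{-L} = g^L$, the first part gives $(g^{-L})^{\ast N} = L$, so $[L,g]^{\ast N} = L \cdot g^N = L$.

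The main obstacle is the subcase $\psi > \pi/2$: there the sequence $(M_k)$ is not monotone but alternates between intervals anchored at $0$ and at $\pi$, so the inductive bookkeeping must verify both the even and odd formulas and show that both halves saturate at the common threshold $k = \lceil 2/\epsilon\rceil$. The other ingredients---the quaternion law, the $g \sim g^{-1}$ remark, and the rewriting of $\prod[x_i,g]$---are routine computations.
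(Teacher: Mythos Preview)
Your proof is correct and follows essentially the same line as the paper's: both parametrise conjugacy classes by the half-angle/trace-angle, establish the key fact that multiplying by a conjugate of $g$ shifts the angle by at most $\psi$ in either direction, and then iterate to cover $[0,\pi]$ in $N$ steps; the commutator statement is deduced in both by rewriting $[L,g]=(g^{-1})^{L}g=g^{L}g$ (using $g\sim g^{-1}$) and pulling the $g$'s to the right.

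The one substantive difference is that the paper sidesteps your Case~2 entirely: since $\lambda(-g)=\lambda(g)$ and $((-g)^{L})^{\ast N}=L$ iff $(g^{L})^{\ast N}=L$, one may replace $g$ by $-g$ to force $\psi=\gamma\le\pi/2$, after which only your monotone recursion $M_k=[0,\min(k\psi,\pi)]$ is needed. Your alternating analysis for $\psi>\pi/2$ is correct (note, though, that your displayed formula for $M_{2j+1}$ fails at $j=0$, where $M_1=\{\pi-\eta\}$ is a singleton; the induction really starts at $M_2$), but the $-g$ trick gives the same bound with less bookkeeping.
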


\begin{proof}
Consider the realization of $\mathrm{SU}(2)<\mathrm{GL}_{2}(\mathbb{C)}$ by
unitary matrices:
\[
\mathrm{SU}(2)=\left\{  \left(
\begin{array}
[c]{cc}%
a & b\\
-\bar{b} & \bar{a}%
\end{array}
\right)  \mid|a|^{2}+|b|^{2}=1\right\}  .
\]

The conjugacy class of an element $h\in\mathrm{SU}(2)$ is uniquely determined
by its trace $\mathrm{tr}(h)\in\lbrack-2,2]$. Write $\mathrm{tr}%
(g)=2\cos\gamma$ with $\gamma\in\lbrack0,\pi]$; then for fundamental roots
$\alpha$ we have $\alpha(g)=e^{\pm2i\gamma}$, and so $\lambda(g)=2\gamma/\pi$
if $\gamma\in\lbrack0,\pi/2]$, $\lambda(g)=2(\pi-\gamma)/\pi$ otherwise. Of
course $\lambda(g)=\lambda(-g)$ and $(g^{L})^{\ast N}=L$ is equivalent to
$((-g)^{L})^{\ast N}=L$. So by replacing $g$ with $-g$ if necessary we may
assume that $\lambda(g)=2\gamma/\pi=\epsilon>0$ and $\gamma=\pi\epsilon
/2\in(0,\pi/2]$. Now a direct computation shows that if $h\in L$ is a diagonal
element with $\mathrm{tr}(h)=2\cos\theta$ then for any $\theta_{1}\in
\lbrack\theta-\gamma,\theta+\gamma]$ we may find a matrix $g^{\prime}\in
g^{L}$. (i.e. such that $\mathrm{tr}(g^{\prime})=2\cos\gamma)$) with
$\mathrm{tr}(hg^{\prime})=2\cos\theta_{1}$. This shows that for any integer
$m>1$, any element of $L$ with trace $2\cos\theta_{2}$ with $\theta_{2}%
\in\lbrack0,m\gamma]$ is a product of $m$ conjugates of $g$. Taking
$N=\left\lceil 2/\epsilon\right\rceil $ we have $N\gamma\geq\pi$ and so
$(g^{L})^{\ast N}=L$.

This proves the first claim of the Lemma. The second claim follows since
$[L,g]=(g^{-1})^{L}g=g^{L}\cdot g$ and
\[
\lbrack L,g]^{\ast N}=(g^{L}\cdot g)^{\ast N}=(g^{L})^{\ast N}g^{N}=Lg^{N}=L.
\]

\end{proof}

\bigskip

We now consider the general case of Lemma \ref{lprod}. It is enough to prove
it when $L$ is simply connected, since the definition of $\lambda$ was the
same for $L$ and $L/\mathrm{Z}(L)$. We shall assume this from now on.

Let us write $H_{\alpha}$ for the one-parameter torus $\{h_{\alpha
}(t)\ |\ t\in S_{1}\}$ given by the image of the cocharacter $h_{\alpha}$.
Thus we have $T=H_{\beta_{1}}\times\cdots\times H_{\beta_{r}}$. Take an
element $g\in L$ with $\lambda(g)=\epsilon>0$. Then for at least one
fundamental root $\beta_{j}$ we have $l(\beta_{j}(g))\geq\epsilon\pi$. Fix
$N=\left\lceil 2/\epsilon\right\rceil $ as above.\medskip

\emph{Case 1:} Assume that the rank $r$ of $L$ satisfies $r\leq\max
\{10,4/\epsilon\}$.$\medskip$

In the central product $S_{\beta}T_{\beta}$ we can write $g$ as $g=g_{1}g_{2}$
where $g_{1}\in S_{\beta}$ and $g_{2}\in T_{\beta}$. Now $S_{\beta}$ is a copy
of $\mathrm{SU}(2)$ (not $\mathrm{PSU}(2)$ since $L$ is simply connected), and
by Lemma \ref{su} we can express any $h\in S_{\beta}$ as $h=\prod_{i=1}%
^{N}[s_{i},g_{1}]$ for some $s_{i}\in S_{\beta}$. Then
\[
h=\prod_{i=1}^{N}[s_{i},g]
\]
and in particular the subgroup $H_{\beta}\leq S_{\beta}$ is contained in
$K^{\ast2N}$. Recall that the Weyl group $W$ acts on $H$. For a pair of roots
$\gamma_{1},\gamma_{2}$ of $\Phi$ of the same length there is some element
$v\in W$ such that $\gamma_{1}^{v}=\gamma_{2}$ and consequently $H_{\gamma
_{1}}^{v}=H_{\gamma_{2}}$. Moreover, if $\gamma,\delta$ are two roots of
different lengths in $\Phi$ then $\gamma$ is in the linear span of roots
$\delta_{1}$ and $\delta_{2}$ in the orbit of $\delta$ under $W$, and then%
\[
H_{\gamma}\leq H_{\delta_{1}}H_{\delta_{2}}=H_{\delta}^{u_{1}}H_{\delta
}^{u_{2}}\quad\text{ for some }u_{1},u_{2}\in W.
\]

Therefore each of the groups $H_{\beta_{i}}$ is contained in $K^{\ast4N}$. But
$T$ is a product of all the $H_{\beta_{i}}$ for $i=1,\ldots,r$ and hence
\[
T\subseteq K^{\ast4rN}.
\]
Now the right-hand side is a union of conjugacy classes of $L$; since every
conjugacy class intersects $T$ we have $L=K^{\ast4rN}=K^{\ast M}$ as long as
$M\geq4rN=O(\epsilon^{-2})$, since $r\leq\max\{10,4/\epsilon\}$.\medskip

\emph{Case 2:} The Lie rank of $L$ exceeds both $10$ and $4/\epsilon$. This
means that $L$ is a classical Lie group of type $A_{r},B_{r},C_{r}$ or $D_{r}%
$. In all these cases we can label the fundamental roots in $\Pi$ so that
$\beta_{1},\ldots,\beta_{r-1}$ span a root system of type $A_{r-1}$ and the
angle between $\beta_{i}$ and $\beta_{i+1}$ is $2\pi/3$ for $i=1,\ldots,r-2$.
(This is the labelling on the vertices of the Dynkin diagram of $L$ where we
number the vertices on the $A_{r-1}$ part of the diagram consecutively.) The
last root $\beta_{r}$ may have different length from the others.

Put $\eta=\epsilon/8$. It is immediate that for a subset $\Delta\subseteq\Pi$
of size at least $4\eta r$ we must have $l(\beta_{i}(g))\geq\epsilon\pi/2$ for
all $i\in\Delta$: otherwise, the average on $\Pi$ could not be $\epsilon\pi$
since each $l(\beta_{i}(g))\leq\pi$. Define
\[
\Pi_{1}=\{\beta_{i}|\ 1\leq i\leq r-1\text{ and }i\text{ even}\},\quad\Pi
_{2}=\Pi\smallsetminus(\Pi_{1}\cup\{\beta_{r}\}).
\]
Then each $\Pi_{i}$ consists of pairwise orthogonal roots and their union is
$\Pi\smallsetminus\{\beta_{r}\}$.

Observe that $\left\vert \Delta\right\vert \geq\epsilon r/2\geq2$ since
$r\geq4/\epsilon$. Put $\Delta_{i}=\Pi_{i}\cap\Delta$. Since $\left\vert
\Delta_{1}\right\vert +\left\vert \Delta_{2}\right\vert \geq\left\vert
\Delta\right\vert -1\geq\left\vert \Delta\right\vert /2\geq2\eta r$ we have
either $\left\vert \Delta_{1}\right\vert \geq\eta r$ or $\left\vert \Delta
_{2}\right\vert \geq\eta r$. Without loss of generality assume that
$\left\vert \Delta_{1}\right\vert \geq\eta r$.

The roots in $\Delta_{1}$ are pairwise orthogonal. The group $Q:=\langle
T,S_{\beta}\ |\ \beta\in\Delta_{1}\rangle$ is therefore isomorphic to the
central product
\[
\left(  \prod_{\beta\in\Delta_{1}}S_{\beta}\right)  \circ T_{\Delta_{1}},
\]
where $T_{\Delta_{1}}=\{h\in T\ |\ \beta(h)=1\ \forall\beta\in\Delta_{1}\}$
and $\prod_{\beta\in\Delta_{1}}S_{\beta}$ is the direct product of the
$S_{\beta}$.

Now if $\beta\in\Delta_{1}$ we have $l(\beta(g))\geq\epsilon\pi/2$. Just as in
Case 1, working independently in each $S_{\beta}$ and using Lemma \ref{su} we
deduce that
\begin{equation}
\prod_{\beta\in\Delta_{1}}H_{\beta}\subseteq\lbrack Q,g]^{\ast N_{1}}\subseteq
K^{2N_{1}} \label{delta}%
\end{equation}
where $N_{1}=\left\lceil 4/\epsilon\right\rceil $. We now refer to the
following straightforward

\begin{lemma}
\label{An} Let $\Psi$ be the set of roots in the root system of type $A_{n}$.
For an integer $m\leq n/2$ let $X,Y\in\Psi^{(m)}$ be two $m$-tuples of
elements of $\Psi$ each consisting of pairwise orthogonal roots. Then
$X=Y^{w}$ for an element $w$ in the Weyl group of $\Psi$.
\end{lemma}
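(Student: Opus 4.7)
The plan is to work in an explicit model. Realize $\Psi$ of type $A_n$ as $\{e_i-e_j : 1\leq i\neq j\leq n+1\}\subset\mathbb{R}^{n+1}$, with Weyl group $W=\mathrm{Sym}(n+1)$ acting by coordinate permutation: $\sigma\cdot(e_i-e_j)=e_{\sigma(i)}-e_{\sigma(j)}$. The one observation that drives everything is that two roots $e_i-e_j$ and $e_k-e_l$ are orthogonal precisely when $\{i,j\}\cap\{k,l\}=\varnothing$; this is immediate from the inner product.

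Write the tuples as $\alpha_k=\epsilon_k(e_{i_k}-e_{j_k})$ and $\beta_k=\delta_k(e_{p_k}-e_{q_k})$ with signs $\epsilon_k,\delta_k\in\{\pm1\}$. By pairwise orthogonality the unordered pairs $\{i_k,j_k\}$ are disjoint as $k$ varies, and likewise for $\{p_k,q_k\}$, so each tuple occupies exactly $2m$ coordinates out of the $n+1$ available. The hypothesis $m\leq n/2$ gives $2m\leq n<n+1$, so at least one index is free on each side.

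Now I would construct $w\in W$ directly. For each $k$, define $w$ on the pair $\{p_k,q_k\}$ by $w(p_k)=i_k,\ w(q_k)=j_k$ if $\epsilon_k=\delta_k$, and $w(p_k)=j_k,\ w(q_k)=i_k$ if $\epsilon_k=-\delta_k$. Disjointness of the $\{p_k,q_k\}$ (and of their designated images) ensures this is a well-defined injection on $\bigcup_k\{p_k,q_k\}$; extend $w$ arbitrarily to a bijection on the remaining $n+1-2m\geq 1$ indices. A one-line check, handling the two sign cases, confirms $w\cdot\beta_k=\alpha_k$, hence $Y^w=X$.

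There is no serious obstacle: the proof amounts to the combinatorics of disjoint pairs on a finite set, and the only role of the bound $m\leq n/2$ is to leave enough room to extend $w$ to a permutation of $\{1,\ldots,n+1\}$. The sign freedom $\epsilon_k,\delta_k\in\{\pm1\}$ is absorbed harmlessly by choosing which of the two possible identifications of $\{p_k,q_k\}$ with $\{i_k,j_k\}$ to use.
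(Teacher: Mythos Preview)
Your proof is correct and is precisely the direct approach the paper's own proof alludes to in its first sentence (realize $\Psi$ explicitly and use $W=\mathrm{Sym}(n+1)$); the paper also mentions an inductive alternative via the fact that $\Psi\cap\alpha^{\perp}$ is of type $A_{n-2}$. One small remark: your construction actually goes through even when $2m=n+1$, so the hypothesis $m\le n/2$ is not genuinely needed for the direct route---it only matters for the inductive alternative.
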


\begin{proof}
This can be done directly from the realization of $\Psi$ and the fact that
$W=\mathrm{Sym}(n+1)$. Alternatively it follows by induction on $m$ and using
that for any root $\alpha\in\Psi$, the orthogonal complement $\Psi\cap
\alpha^{\perp}$ is a root system of type $A_{n-2}$.
\end{proof}

$\medskip$

Now the set $\Pi_{1}$ is a union of at most $\left\vert \Pi_{1}\right\vert
/\eta r+1$ subsets of size $\left\vert \Delta_{1}\right\vert $ and the same
holds for $\Pi_{2}$. Altogether $\Pi_{1}\cup\Pi_{2}$ is a union of at most
$r/\eta r+2=1/\eta+2$ subsets of size $\left\vert \Delta_{1}\right\vert $.
Using Lemma \ref{An} and (\ref{delta}) we see that
\[
\prod_{i=1}^{r-1}H_{\beta_{i}}\subseteq K^{\ast N_{2}}%
\]
where $N_{2}=2\left\lceil 1/\eta+2\right\rceil N_{1}$. Finally $H_{\beta_{r}%
}\subseteq K^{\ast4N}$, and hence $T\subseteq K^{N_{2}+4N}$. Again, it follows
that $L=K^{\ast M}$ as long as $M\geq N_{2}+4N=O(\epsilon^{-2})$.

\subsection{Countable quotients of compact groups\label{fgquot}}

In this subsection, by a \emph{quotient} of a topological group $G$ we mean a
quotient of the underlying abstract group, unless stated otherwise. We will be
interested in countable quotients: in this subsection, one can always replace
`countable' with `of cardinality strictly less than $2^{\aleph_{{0}}}$'.

Until further notice, we assume that $G$ is \emph{a compact group such that
the profinite quotient} $G/G^{0}$ \emph{is finitely generated (topologically)}%
. Recall (Corollary \ref{derived-cpct}) that the derived group $G^{\prime}$ of
$G$ is closed; this applies likewise if $G$ is replaced by any open subgroup
of $G$.

The following observation is an immediate consequence of Corollary
\ref{cpctserre}:

\begin{corollary}
\label{resfinq}If $M$ is a normal subgroup of $G$ and $G/M$ is residually
finite then $M$ is closed.
\end{corollary}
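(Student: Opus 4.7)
The plan is short. Since $G/M$ is residually finite, the intersection of all finite-index normal subgroups of $G/M$ is trivial, so pulling back via the projection $G \to G/M$ we obtain
\[
M = \bigcap_{N \in \mathcal{N}} N,
\]
where $\mathcal{N}$ denotes the collection of all normal subgroups $N$ of the abstract group $G$ with $M \leq N$ and $G/N$ finite.

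Each $N \in \mathcal{N}$ is a subgroup of $G$ of finite index. Corollary \ref{cpctserre} (every finite-index subgroup of the compact group $G$, with $G/G^0$ finitely generated, is open) therefore applies: each such $N$ is open, and in particular closed in $G$. Hence $M$ is realized as an intersection of closed subgroups of $G$, and is therefore closed. No further ingredients are needed, and there is no real obstacle — the whole content of the corollary is repackaging Corollary \ref{cpctserre} via the standard residual-finiteness trick.
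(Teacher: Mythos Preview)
Your proof is correct and is essentially identical to the paper's own argument: write $M$ as the intersection of the finite-index normal subgroups above it, observe that each such subgroup is open by Corollary~\ref{cpctserre}, and conclude that $M$ is closed.
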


\noindent Indeed, $M$ is an intersection of normal subgroups of finite index,
each of which is open.

$\medskip$

Suppose to begin with that $G$ is infinite and \emph{abelian}. If $G/G^{0}$
has $\mathbb{Z}_{p}$ as a quotient for some prime $p$ then, as observed in the
introduction, we obtain a homomorphism%
\[
G\rightarrow\mathbb{Z}_{p}\rightarrow\mathbb{Q}_{p}\rightarrow\mathbb{Q}%
\]
with countably infinite image. If $G/G^{0}$ is infinite but does not have any
quotient of type $\mathbb{Z}_{p}$, then $G/G^{0}$ must have infinitely many
Sylow subgroups, and so has a quotient $Q=\prod_{p\in\pi}C_{p}$ where $\pi$ is
an infinite set of primes. We may identify $Q$ with the additive group of
$S=\prod_{p\in\pi}\mathbb{F}_{p}$, which maps onto a non-principal
ultraproduct $\widetilde{S}$ of the $\mathbb{F}_{p}$. Now $\widetilde{S}$ is a
field of characteristic zero, hence admits an additive epimorphism to
$\mathbb{Q}$; thus $G$ admits an epimorphism to $\mathbb{Q}$. (We are indebted
to J. Kiehlmann for pointing out a gap in our original argument.)

If $G/G^{0}$ is finite then $G^{0}\neq1$, and then $G^{0}$ maps onto a torus
$T$. Let $D$ be the torsion subgroup of $T$. Then $T/D$ is a divisible
torsion-free abelian group, so a vector space over $\mathbb{Q}$; choosing an
epimorphism $T/D\rightarrow\mathbb{Q}$ we obtain an epimorphism (of abstract
groups) $G^{0}\rightarrow\mathbb{Q}$.

Now suppose that $G$ has an open normal subgroup $K$ such that $K/K^{\prime}$
is infinite. The preceding remarks shows that $K$, and therefore also $G$, has
a countably infinite quotient.

A group $Q$ is said to be \emph{FAb} if every virtually-abelian quotient of
$Q$ is finite; when $Q$ is a topological group, this refers to
\emph{continuous} quotients.

\begin{theorem}
\label{FAb}Let $G$ be a compact group such that $G/G^{0}$ is (topologically)
finitely generated. Then every countable FAb quotient of $G$ is finite.
\end{theorem}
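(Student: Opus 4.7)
Suppose $Q = G/N$ is a countable FAb quotient; we show $Q$ is finite. First, $G/\overline{N}$ (where $\overline{N}$ is the closure of $N$) is a countable compact Hausdorff group, hence finite: by Baire category it has an isolated point, by homogeneity it is therefore discrete, and a compact discrete group is finite. So $\overline{N}$ is open in $G$, and, replacing $G$ by $\overline{N}$ (which contains $G^0$ and still satisfies $G/G^0$ topologically finitely generated), we may assume $N$ is dense in $G$. Next, the derived subgroup $L = (G^0)'$ of the identity component is closed, connected, compact and semisimple (\cite{HM1}, Theorem 9.2 and 9.19), and characteristic in $G^0$, hence normal in $G$. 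Since $L/(L \cap N)$ embeds into $Q$ it is countable; Theorem \ref{ssquotients} forces it to be finite, while Theorem 9.35 of \cite{HM1} tells us $L$, being connected compact, is divisible and so has no nontrivial finite quotient. Hence $L \leq N$, and passing to $G/L$ we may assume additionally that $G^0$ is abelian.

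Now set $\widetilde{G} = G/G^0$, a finitely generated profinite group, and $\widetilde{N} = NG^0/G^0$, a dense normal (abstract) subgroup of $\widetilde{G}$. If $\widetilde{N} = \widetilde{G}$, then $G = NG^0$ and $Q \cong G^0/(G^0 \cap N)$ is abelian; the FAb hypothesis then forces $Q$ finite, as required. Otherwise Corollary \ref{normal_mod_G_0} implies that at least one of $\widetilde{N}\widetilde{G}'$, $\widetilde{N}\widetilde{G}_0$ is a proper subgroup of $\widetilde{G}$, and the plan is to derive a contradiction in either situation, ruling out this possibility.

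If $\widetilde{N}\widetilde{G}' \subsetneq \widetilde{G}$, then $\widetilde{G}/\widetilde{N}\widetilde{G}'$ is a nontrivial abelian quotient of $Q$, so is finite by FAb; Serre's theorem (Theorem \ref{serre}) then forces $\widetilde{N}\widetilde{G}'$ to be open in $\widetilde{G}$, contradicting density of $\widetilde{N}$. Otherwise $\widetilde{N}\widetilde{G}' = \widetilde{G}$ while $\widetilde{N}\widetilde{G}_0 \subsetneq \widetilde{G}$; then $\widetilde{G}/\widetilde{N}$ is perfect, as is its further quotient $\widetilde{G}/\widetilde{N}\widetilde{G}_0$. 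Since $\widetilde{G}/\widetilde{G}_0$ is an extension of the semisimple profinite group $\widetilde{S} = \widetilde{G}^{(3)}\widetilde{G}_0/\widetilde{G}_0$ by a soluble group of derived length at most $3$, every perfect quotient of $\widetilde{G}/\widetilde{G}_0$ factors through $\widetilde{S}$; thus $\widetilde{G}/\widetilde{N}\widetilde{G}_0$ is a countable quotient of $\widetilde{S}$. Applying Theorem \ref{ssquotients} to $\widetilde{S}$ yields that $\widetilde{G}/\widetilde{N}\widetilde{G}_0$ is finite, and Serre's theorem again makes $\widetilde{N}\widetilde{G}_0$ open, contradicting density.

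The main technical point is the invocation of Theorem \ref{ssquotients} for $\widetilde{S}$, which requires that $\widetilde{S}$ be a finitely generated profinite semisimple group, equivalently that its Cartesian-product index set $\{S_i\}$ satisfies $|\{i : |S_i|\leq n\}|<\infty$ for each $n$. This is a standard consequence of the fact that a finitely generated profinite group has only finitely many open normal subgroups of any given bounded index, which bounds the multiplicity of non-abelian simple chief factors of bounded order.
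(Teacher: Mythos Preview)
Your proof is correct and follows essentially the same route as the paper: reduce to the profinite case by killing $(G^{0})^{\prime}$ via Theorem \ref{ssquotients}, then combine Corollary \ref{normal_mod_G_0} with the FAb hypothesis (plus Theorem \ref{serre}) for the abelian direction and Theorem \ref{ssquotients} for the semisimple direction. The only organisational difference is that you first pass to $\overline{N}$ and then invoke Corollary \ref{normal_mod_G_0} contrapositively (deriving a contradiction from each of $\widetilde{N}\widetilde{G}^{\prime}\neq\widetilde{G}$ and $\widetilde{N}\widetilde{G}_{0}\neq\widetilde{G}$), whereas the paper works with $K=\overline{H}$ and verifies $HK^{\prime}=K$ and $HK_{0}=K$ directly before applying the corollary; the underlying argument is the same.
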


Before giving the proof, let us deduce

\begin{corollary}
$G$ has a countably infinite quotient if and only if $G$ is not FAb.
\end{corollary}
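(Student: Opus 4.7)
The plan is to deduce the corollary directly from Theorem \ref{FAb}, using Corollaries \ref{cpctserre} and \ref{derived-cpct} together with the preparatory analysis of infinite compact abelian groups made just before the theorem.

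For the reverse direction, suppose $G$ is not FAb, i.e.\ $G$ has an infinite virtually-abelian continuous quotient $Q$. Then $Q$ contains an open normal abelian subgroup $A$, which must be infinite since $Q/A$ is finite while $Q$ is infinite. Let $K$ be the preimage of $A$ in $G$: it is a closed normal subgroup of finite index in $G$, hence open by Corollary \ref{cpctserre}, and its abelianization $K/K'$ surjects onto the infinite group $A$. The remarks preceding Theorem \ref{FAb} (handling the case $A^{0}\neq 1$ by a torus$\to\mathbb{Q}$ epimorphism, and the case of infinite $A/A^{0}$ either through a $\mathbb{Z}_{p}\to\mathbb{Q}_{p}\to\mathbb{Q}$ map or via the ultraproduct construction) then show that $G$ itself has a countably infinite abstract quotient.

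For the forward direction, assume $G$ has a countably infinite abstract quotient $G/N$. By Theorem \ref{FAb} the group $G/N$ cannot be FAb as an abstract group, so $G$ admits an infinite virtually-abelian abstract quotient $G/M$. Choose $M\le M_{1}\le G$ with $M_{1}/M$ a normal abelian subgroup of finite index in $G/M$, so that $M_{1}\triangleleft G$ has finite index. Corollary \ref{cpctserre} then guarantees that $M_{1}$ is open in $G$. The crucial step is now to upgrade $M$ to a closed subgroup: since $M_{1}/G^{0}$ has finite index in the topologically finitely generated profinite group $G/G^{0}$, it is itself topologically finitely generated, and so Corollary \ref{derived-cpct} applies to $M_{1}$ to give that $M_{1}'$ is closed in $M_{1}$. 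Moreover $M_{1}'$ is normal in $G$ (because $M_{1}$ is) and $M_{1}'\le M$ (because $M_{1}/M$ is abelian). Hence $G/M_{1}'$ is a continuous Hausdorff quotient of $G$ containing the open normal abelian subgroup $M_{1}/M_{1}'$, which surjects onto the infinite group $M_{1}/M$ and is therefore itself infinite. Thus $G/M_{1}'$ is an infinite virtually-abelian continuous quotient of $G$, contradicting the FAb property and completing the proof.

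The only non-routine point is in the forward implication: promoting the abstract virtually-abelian quotient furnished by Theorem \ref{FAb} to a genuinely continuous quotient. This rests entirely on Corollary \ref{derived-cpct} applied to the open normal subgroup $M_{1}$, which is what allows one to close up the derived subgroup and obtain a closed normal subgroup $M_{1}'\le M$ of $G$ with the correct quotient structure.
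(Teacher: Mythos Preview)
Your proof is correct and follows essentially the same route as the paper's. In the forward direction the paper also passes from the countable quotient $G/N$ (not FAb, by Theorem~\ref{FAb}) to a finite-index normal subgroup $K$ with $K'\le N$, uses Corollary~\ref{cpctserre} to get $K$ open and then the closedness of $K'$ (your Corollary~\ref{derived-cpct}) to exhibit $G/K'$ as an infinite virtually-abelian \emph{continuous} quotient; your write-up simply makes the verification that Corollary~\ref{derived-cpct} applies to the open subgroup $M_1$ more explicit.
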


\noindent We remark that many familiar compact groups are FAb: among connected
groups, these are just the semisimple ones; among profinite groups, examples
include $\mathfrak{G}(\mathbb{Z}_{p})$ for Chevalley groups $\mathfrak{G}$.

$\medskip$

\begin{proof}
The remarks above show that if $G$ is not FAb then $G$ has a countably
infinite quotient. Suppose conversely that $G$ has a countably infinite
quotient $G/N$. By Theorem \ref{FAb}, we may suppose that $G/N$ is virtually
abelian, so $G$ has a normal subgroup $K$ of finite index with $K^{\prime}\leq
N\leq K$. Now $K$ is open by Corollary \ref{cpctserre} and so $K^{\prime}$ is
closed. Thus $G/K^{\prime}$ is an infinite virtually-abelian continuous
quotient of $G$, so $G$ is not FAb.
\end{proof}

$\medskip$

\begin{proof}
[Proof of Theorem \ref{FAb}]Let $H$ be a normal subgroup of $G$ such that
$G/H$ is countable and FAb, and suppose that $G/H$ is infinite.

Set $P=(G^{0})^{\prime}$. Then $P$ is closed in $G$ and $P$ is a semisimple
connected compact group, hence has no proper countable quotient, by Theorem
\ref{unctble} (and the \emph{remark} preceding it). So $H\geq P$, and
replacing $G$ by $G/P$ we may suppose that $G^{0}$ is abelian.

Since $G^{0}H/H$ is abelian, $G/G^{0}H$ must be infinite. Replacing $G$ by
$G/G^{0}$ and $H$ by $G^{0}H/G^{0}$, we may suppose that $G$ is a finitely
generated profinite group. Put $K=\overline{H}$; then $K$ is open in $G$, so
$K$ is again a finitely generated profinite group. Now $G/K^{\prime}H$ is
virtually abelian and therefore finite. Thus $K^{\prime}H$ is open by Theorem
\ref{serre}, and so $K^{\prime}H=K$.

Now recall the definition of $K_{0}$ (see the Introduction). This is a
characteristic closed subgroup of $K$ such that $K^{(3)}K_{0}/K_{0}$ is
semisimple, where $K/K^{(3)}$ is soluble of derived length at most $3$. Since
any soluble FAb group is finite, we infer that $G/K^{(3)}K_{0}H$ is finite,
and as before conclude that $K^{(3)}K_{0}H=K$. Thus $K/HK_{0}$ is a countable
image of the finitely generated semisimple group $K^{(3)}K_{0}/K_{0}$; so
$K/HK_{0}$ is finite by Theorem \ref{unctble}, and as above it follows that
$HK_{0}=K$.

Now Corollary \ref{normal_mod_G_0} shows that $H=K$. Hence $G/H$ is finite, a contradiction.
\end{proof}

\bigskip

Now we consider arbitrary compact groups:

\begin{theorem}
\label{fgabs}Let $G$ be a compact group and $N$ a normal subgroup of (the
underlying abstract group) $G$. If $G/N$ is finitely generated then $G/N$ is finite.
\end{theorem}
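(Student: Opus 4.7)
The plan is to combine Theorem \ref{countablefabthm} with Corollary \ref{cpctserre}, after first reducing from the arbitrary compact group $G$ to a closed topologically finitely generated subgroup $H$ that still maps onto $G/N$.

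Concretely, I would begin by choosing $g_{1},\ldots,g_{d}\in G$ whose images generate $G/N$, and defining $H=\overline{\langle g_{1},\ldots,g_{d}\rangle}$, the closure in $G$ of the abstract subgroup they generate. Then $H$ is a closed (hence compact) subgroup of $G$, it is topologically finitely generated, and hence $H/H^{0}$ is (topologically) finitely generated. The abstract subgroup $\langle g_1,\ldots,g_d\rangle$ satisfies $\langle g_1,\ldots,g_d\rangle N=G$ by construction, so $HN=G$, and the second isomorphism theorem (for abstract groups) gives $H/(H\cap N)\cong G/N$. In particular $G/N$ is countable.

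Suppose for contradiction that $G/N$ is infinite. Then $H/(H\cap N)$ is countably infinite, and applying Theorem \ref{countablefabthm} to the compact group $H$ with the abstract normal subgroup $H\cap N$ produces a normal subgroup $M$ of $H$ with $H\cap N\leq M$ such that $Q:=H/M$ is infinite and virtually abelian. Because $H/(H\cap N)\cong G/N$ is finitely generated, so is its quotient $Q$; any finitely generated virtually-abelian group is residually finite, so $Q$ is residually finite. Now Corollary \ref{cpctserre} says that every finite-index subgroup of $H$ is open, so $M$, being an intersection of cofinite normal subgroups of $H$ (the preimages of the finite-index subgroups of $Q$), is an intersection of open subgroups and therefore closed. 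Hence $Q=H/M$ is a compact Hausdorff topological group that is abstractly countable.

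The only remaining step — and the real ``obstacle,'' insofar as there is one — is to conclude that a countable compact Hausdorff group must be finite. This is a short Baire category argument: $Q$ is the union of countably many singletons, each of which is closed by Hausdorffness, so the Baire category theorem (valid in locally compact Hausdorff spaces) forces some singleton to have nonempty interior. By homogeneity every point of $Q$ is isolated, so $Q$ is discrete; and a compact discrete group is finite. This contradicts the infiniteness of $Q$, so $G/N$ is finite.
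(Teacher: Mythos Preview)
Your proof is correct and follows essentially the same route as the paper's own argument: reduce to a topologically finitely generated compact subgroup $H$ via the closure of a finite generating set, apply the countable/FAb theorem to obtain an infinite virtually-abelian quotient, use residual finiteness plus Corollary \ref{cpctserre} to see that this quotient is a continuous (hence compact) image, and derive a contradiction from countability. The only cosmetic difference is that the paper cites Corollary \ref{resfinq} (itself an immediate consequence of Corollary \ref{cpctserre}) and leaves the ``compact countable implies finite'' step implicit, whereas you spell out the Baire category argument.
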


\begin{proof}
Suppose that $G/N$ is finitely generated and infinite. Then $G=N\left\langle
X\right\rangle $ for some finite subset $X$. Let $K=\overline{\left\langle
X\right\rangle }$ be the subgroup topologically generated by $X$. Then
$G/N\cong K/(K\cap N)$, so replacing $G$ by $K$ we may suppose that $G$ is
topologically finitely generated. Now $G/N$ is countable, hence by Theorem
\ref{FAb} there exists $M\vartriangleleft G$ with $M\geq N$ such that $G/M$ is
infinite and virtually abelian. But a finitely generated virtually abelian
group is residually finite; hence $M$ is closed in $G$, by Corollary
\ref{resfinq}. Thus $G/M$ is both countably infinite and compact, a contradiction.
\end{proof}

\subsection{Dense normal subgroups\label{dense}}

Let $G$ be a compact group such that $G/G^{0}$ is (topologically) finitely
generated. If $N\vartriangleleft G$ and $G/N$ is countable then the closure
$\overline{N}$ of $N$ is open in $G$; in this case, we say that $N$ is
\emph{virtually dense}. Generalizing the preceding subsection, we can ask:
under what conditions does $G$ have a virtually dense normal subgroup $N$ of
infinite index? Note that $N$ has infinite index if and only $N$ is not
closed, in view of Corollary \ref{cpctserre}.

Suppose that $G$ is abelian. If $G/G^{0}$ is infinite, then $G/G^{0}$ contains
a dense (abstractly) finitely generated subgroup. If $G^{0}$ is infinite, then
$G^{0}$ has a dense proper subgroup (necessarily of infinite index), because
it maps onto a torus.

A group of the form $\prod_{i\in I}S_{i}$ is said to be \emph{strictly
infinite semisimple} if the index set $I$ is infinite and \emph{either} each
$S_{i}$ is a finite (non-abelian) simple group \emph{or} each $S_{i}$ is a
connected compact simple Lie group. Such a group has a \emph{characteristic}
dense subgroup of infinite index, namely the restricted direct product $N$ of
the $S_{i}$. Note that $N$ is \emph{countable} if $I$ is countable and the
$S_{i}$ are finite groups.

It turns out that these examples essentially account for all possibilities:

\begin{theorem}
\label{vdthm}Let $G$ be a compact group such that $G/G^{0}$ is (topologically)
finitely generated. Then $G$ has a virtually dense normal subgroup of infinite
index if and only if $G$ has an open normal subgroup $H$ and a closed normal
subgroup $K<H$ such that $H/K$ is either infinite and abelian or strictly
infinite semisimple.
\end{theorem}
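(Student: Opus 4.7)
The plan is to prove the two implications separately, combining Corollary \ref{normal_mod_G_0} with the semisimple structure theorems of Subsection \ref{simplequot} -- Theorem \ref{main2} in the profinite case and Proposition \ref{ultrafilterautos} in the connected case.

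For the \emph{if} direction, given $H$ open normal in $G$ and $K<H$ closed normal in $G$ with $A:=H/K$ infinite abelian or strictly infinite semisimple, I will exhibit a characteristic dense proper subgroup of $A$ of infinite index. Being characteristic, such a subgroup is automatically $G/H$-invariant, and so lifts to a subgroup of $G$ that is normal in $G$, dense in the open subgroup $H$, and of infinite index in $G$. In the strictly infinite semisimple case $A=\prod_{i\in I}S_{i}$ the restricted direct product $\bigoplus_{i\in I}S_{i}$ works. In the infinite abelian case I decompose $A=A^{0}\cdot(A/A^{0})$: the torsion subgroup of a nontrivial torus $A^{0}$ is characteristic and dense, while the analysis of finitely generated profinite abelian groups at the start of Subsection \ref{fgquot} supplies a characteristic dense proper subgroup when $A/A^{0}$ is infinite. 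A subsidiary $G$-orbit argument handles $G/H$-invariance in case the action of $G/H$ on $A$ is nontrivial.

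For the \emph{only if} direction I set $M=\overline{N}$, an open normal subgroup of $G$ in which $N$ is dense and proper. In the profinite case $M^{0}=1$: if $M/M'$ is infinite, I take $H=M$ and $K=M'$ (closed by Theorem \ref{ThB-prof}); otherwise $M/M'$ is finite, forcing $NM'=M$, and Corollary \ref{normal_mod_G_0} then gives $NM_{0}\neq M$. Let $T$ be the closed normal subgroup of $M$ with $T/M_{0}$ the semisimple part of $M/M_{0}$ and $M/T$ soluble. Since a finitely generated profinite soluble group with finite abelianisation has no proper dense normal subgroup (by Corollary \ref{normal_mod_G_0} applied to $M/T$, noting $(M/T)_{0}=M/T$), one obtains $NT=M$. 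A diagram chase, using that a closed soluble quotient of a semisimple profinite group is trivial, then shows $(N\cap T)M_{0}/M_{0}$ is a proper dense normal subgroup of $T/M_{0}$, and Theorem \ref{main2} forces $T/M_{0}$ to be strictly infinite semisimple. Extracting an actual quotient $H/K$ of this form will require a further step: when $T$ is open in $M$ one takes $H=T$ and $K=M_{0}$, and in general one uses the conjugation action of $M$ on $T/M_{0}$ together with the centralisers of its simple factors to construct a suitable open $H$ and closed $K$. For general compact $G$ the dichotomy splits on whether $NM^{0}\neq M$ (reducing to the profinite argument on $M/M^{0}$) or $NM^{0}=M$ (extracting structure from the connected component $M^{0}$).

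The main obstacle will be the subcase $NM^{0}=M$ of the connected case: density of $N$ in $M$ does not immediately propagate to density of $N\cap M^{0}$ in $M^{0}$. The plan is to pass to the quotient $M/\overline{N\cap M^{0}}$, in which the image of $N$ is a dense subgroup meeting the image of $M^{0}$ trivially and hence isomorphic to a quotient of the finitely generated profinite group $M/M^{0}$. A solubility-passes-through-density argument, paralleling the profinite reasoning above, combined with Proposition \ref{ultrafilterautos} (the connected analogue of Theorem \ref{main2}), will then force either $\overline{N\cap M^{0}}=M^{0}$ (reducing to the case that $N\cap M^{0}$ is dense in $M^{0}$, handled via the Levi-Mal'cev decomposition $M^{0}=Z(M^{0})^{0}\cdot(M^{0})'$) or the semisimple component $(M^{0})'$ of $M^{0}$ to contain infinitely many simple factors, supplying the required strictly infinite semisimple quotient of an open normal subgroup of $G$.
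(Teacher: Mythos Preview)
Your overall strategy (Corollary \ref{normal_mod_G_0} for the profinite part, Proposition \ref{ultrafilterautos} for the connected part) matches the paper, but the execution differs in two places worth noting.

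\textbf{The ``if'' direction, abelian case.} You propose to find a \emph{characteristic} dense proper subgroup of the abelian quotient $A$. For a torus the torsion subgroup works, but for a group like $\mathbb{Z}_p$ there is no obvious characteristic dense proper subgroup (the dense copy of $\mathbb{Z}$ is not preserved by $\mathbb{Z}_p^{\times}$). The paper does not attempt this: it simply picks a countable dense subgroup $M$ of $H$ (or of a torus quotient when $G/G^0$ is finite) and sets $N=M^{g_1}\cdots M^{g_n}$ for coset representatives $g_i$ of $G/H$; normality in $G$ is forced by the construction, and $N$ has infinite index because $N/G^0$ (resp.\ $N/S$) is countable. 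Your fallback ``subsidiary $G$-orbit argument'' is exactly this, and it is the argument that actually works.

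\textbf{The ``only if'' direction, general case $NM^{0}=M$.} You identify the obstacle as ``density of $N$ in $M$ does not immediately propagate to density of $N\cap M^{0}$ in $M^{0}$,'' and propose a detour through $M/\overline{N\cap M^{0}}$. In fact the paper shows that after one preliminary reduction this propagation \emph{is} immediate. The paper first disposes of the case where $L/ZN$ is finite (where $Z=\mathrm{Z}(G^{0})$): then $H:=ZN$ is open and $H/H'$ is infinite abelian, done. Otherwise one may factor out $Z$ and assume $\mathrm{Z}(G^{0})=1$, so $G^{0}=\prod_{i\in I}S_{i}$ is centreless semisimple. Now $D:=G^{0}\cap N$ satisfies $[G^{0},N]\le D$, hence $G^{0}=(G^{0})'\le[G^{0},L]=[G^{0},\overline{N}]\le\overline{D}$, so $D$ is dense in $G^{0}$ for free. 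One then applies Proposition \ref{ultrafilterautos} directly with $H=D$ and $y_{1},\ldots,y_{d}\in N$ topological generators of $L/G^{0}$: the conclusion produces an infinite $J\subseteq I$ on which all $y_{l}^{\cdot}$ act trivially, whence each $S_{i}$ ($i\in J$) is normal in $L$, and a short argument extracts an open $H$ with $H=\mathrm{C}_H(X)\times X$ for $X=\prod_{i\in J}S_i$. Your quotient-by-$\overline{N\cap M^{0}}$ route may be salvageable, but it obscures this one-line density argument and leaves unclear exactly what you feed into Proposition \ref{ultrafilterautos}; the paper's reduction to $\mathrm{Z}(G^{0})=1$ is the missing ingredient that makes the general case tractable.
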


In one direction, this follows quickly from the preceding observations.
Supposing that $H$ and $K$ exist as indicated, we may as well assume that
$K=1$. Necessarily $H\geq G^{0}$. If $H$ is strictly infinite semisimple, then
$H$ has a characteristic dense subgroup $N$ of infinite index, and then $N$ is
normal in $G$.

Now suppose that $H$ is abelian. If $G/G^{0}$ is infinite then $H/G^{0}$ has a
countable dense subgroup $M/G^{0}$. Then $N:=\left\langle M^{G}\right\rangle
=M^{g_{1}}\ldots M^{g_{n}}$ is virtually dense and normal in $G$, where
$\{g_{1},\ldots,g_{n}\}$ is a set of coset representatives for $G/H$, and
$N/G^{0}$ is countable, so $N$ has infinite index in $G$. Suppose finally that
$G/G^{0}$ is finite. As $G^{0}$ is a compact connected abelian group, it has a
subgroup $T$ such that $G^{0}/T$ is a one-dimensional torus. Put $S=T^{g_{1}%
}\cap\ldots\cap T^{g_{n}}$ where $\{g_{1},\ldots,g_{n}\}$ is a set of coset
representatives for $G/G^{0}$. Then $G^{0}/S$ is a torus, so has a countable
dense subgroup $M/S$ (in fact we can choose $M/S$ to be cyclic). Now take
$N=\left\langle M^{G}\right\rangle =M^{g_{1}}\ldots M^{g_{n}}$ as before.

$\medskip$

For the converse, let $N$ be a normal subgroup of infinite index in (the
abstract group) $G$ such that $L=\overline{N}$ is open in $G$. Note that
$L\geq G^{0}$ and that $L/G^{0}$ is a finitely generated profinite group. It
will suffice to find an open normal subgroup $H$ of $G$ and a closed normal
subgroup $K$ of $H$ such that $H/K$ is either infinite and abelian or strictly
infinite semisimple; for if $\{g_{1},\ldots,g_{n}\}$ is a set of coset
representatives for $G/H$ then $K_{\ast}=K^{g_{1}}\cap\ldots\cap K^{g_{n}}$ is
closed and normal in $G$, and $H/K_{\ast}$ is a subdirect product of copies of
$H/K$, hence shares the given property of $H/K$.

Now we separate cases.$\medskip$

\emph{Case 1}: where $G^{0}=1$, i.e. $G$ is profinite.$\medskip$

Recall that $L^{\prime}$ is closed, by Corollary \ref{derived-cpct}. Suppose
that both $L/L^{\prime}$ and $L/L_{0}$ are finite. Then both $L^{\prime}$ and
$L_{0}$ are open in $L$, so $NL^{\prime}=NL_{0}=L$. It follows by Corollary
\ref{normal_mod_G_0} (applied to the finitely generated profinite group $L$)
that $N=L$, a contradiction. Therefore at least one of $L/L^{\prime}$,
$L/L_{0}$ is infinite.

If $L/L^{\prime}$ is infinite we set $H=L$ and $K=L^{\prime}$. Suppose finally
that $L/L_{0}$ is infinite, and put $T=L^{(3)}L_{0}$; recall that $T/L_{0}$ is
semisimple (a consequence of Proposition \ref{schreier}). If $L/T$ is finite,
then $T/L_{0}$ is infinite; in this case, set $H=T$ and $K=L_{0}$. If $L/T$ is
infinite, then some term $S$ of the derived series of $L$ must satisfy: $L/S$
is finite and $S/S^{\prime}$ is infinite. In this case, we take $H=S$ and
$K=S^{\prime}$.$\medskip$

\emph{Case 2}: where $G$ is connected.$\medskip$

In this case, $N$ is dense in $G$. According to \cite{HM1}, Theorem 9.24, $G$
is a quotient $(A\times P)/Z$ where $A$ is a connected compact abelian group,
$P=\prod_{i\in I}S_{i}$ is a connected compact semisimple group, and
$Z\leq\mathrm{Z}(P)$. If we assume that $G$ has no infinite abelian image, it
follows that $G\cong P/(P\cap Z)$. If $G$ has a proper dense normal subgroup,
then so does $P$. Now the claim (*) in Subsection \ref{simplequot}, above,
shows that there exists a non-principal ultrafilter on the index set $I$: but
this implies that $I$ is infinite. Thus $G\cong P/(P\cap Z)$ has a strictly
infinite semisimple quotient $G/K$ isomorphic to the product $\prod_{i\in
I}S_{i}/\mathrm{Z}(S_{i})$.$\medskip$

\emph{The General Case}.$\medskip$

If $NG^{0}<L$ the result follows by Case 1 applied to $G/G^{0}$. So we may
assume that $NG^{0}=L$. Let $Z=\mathrm{Z}(G^{0})$. If $L/ZN$ is finite then
$H:=ZN$ is open in $G$ and $K:=H^{\prime}$ is closed (Corollary
\ref{derived-cpct}); and $K$ has infinite index in $H$ because $K\leq N$.

So replacing $G$ by $G/Z$ and $N$ by $ZN/N$ we may assume that $\mathrm{Z}%
(G^{0})=1$. In this case, $G^{0}=\prod_{i\in I}S_{i}$ where each $S_{i}$ is a
connected (and centreless) simple Lie group (\cite{HM1}, \emph{loc. cit.}).
Put $D=G^{0}\cap N$. Then $[G^{0},N]\leq D$. It follows that%
\[
G^{0}=G^{0\prime}\leq\lbrack G^{0},L]=[G^{0},\overline{N}]\leq\overline{D},
\]
so $D$ is dense in $G^{0}$. In particular, in view of Case 2 above, the index
set $I$ must be infinite.

Since $G/G^{0}$ is finitely generated, so is $L/G^{0}$; thus $L=G^{0}%
\overline{\left\langle y_{1},\ldots,y_{d}\right\rangle }$ for some $y_{l}\in
N$. Then $[G^{0},y_{l}]\subseteq D$ for each $l$. Applying Proposition
\ref{ultrafilterautos} we deduce that there exists an infinite subset $J$ of
$I$ such that each $y_{l}$ normalizes $S_{i}$ for every $i\in J$. As
$\mathrm{N}_{L}(S_{i})$ is closed and contains $G^{0}$, it follows that
$S_{i}$ is normal in $L$ for every $i\in J$. Put $C_{i}=\mathrm{C}_{L}(S_{i}%
)$. Then $L/C_{i}S_{i}$ embeds in the outer automorphism group of $S_{i}$,
which embeds in $\mathrm{Sym}(3)$ (cf. \cite{HM1}, page 256). As the finitely
generated profinite group $L/G^{0}$ admits only finitely many homomorphisms
into $\mathrm{Sym}(3)$ and $C_{i}S_{i}\geq G^{0}$, it follows that $L$ has a
characteristic open subgroup $H\geq G^{0}$ such that $C_{i}S_{i}\geq H$ for
all $i\in J$.

Thus putting $X=\prod_{i\in J}S_{i}$ we have $H=\mathrm{C}_{H}(X)\times X$;
indeed, if $h\in H$ then $h=c_{i}s_{i}$ ($c_{i}\in C_{i},~s_{i}\in S_{i}$) for
each $i\in J$, and if $x=(s_{i})_{i\in J}$ then $[hx^{-1},s_{j}]=1$ for every
$j\in J$, so $hx^{-1}\in\mathrm{C}_{H}(X)$. To complete the proof we may
therefore take $K=\mathrm{C}_{H}(X)$.

\bigskip

\noindent\emph{Remark.} It might be more natural to ask: when does $G$ have a
\emph{virtually normal} virtually dense subgroup? ($N$ is \emph{virtually
normal }if the normalizer $\mathrm{N}_{G}(N)$ has finite index in $G$).

\begin{corollary}
$G$ has a virtually normal virtually dense subgroup of infinite index if and
only if $G$ has a normal virtually dense subgroup of infinite index.
\end{corollary}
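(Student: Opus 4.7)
The $(\Leftarrow)$ direction is immediate: every normal subgroup is virtually normal, with normalizer equal to $G$.

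For $(\Rightarrow)$, suppose $N$ is virtually normal, virtually dense in $G$, and of infinite index. Let $H_0 := \mathrm{N}_G(N)$, which is open in $G$ (being of finite index, by Corollary~\ref{cpctserre}) and contains $G^0$, so $H_0/G^0$ is a finite-index subgroup of $G/G^0$ and hence topologically finitely generated. Since $N$ is a normal, virtually dense subgroup of the compact group $H_0$ of infinite index, Theorem~\ref{vdthm} applied to $H_0$ produces an open normal $H_1 \lhd H_0$ and a closed normal $K_1 \lhd H_0$ with $K_1 < H_1$ such that $H_1/K_1$ is either infinite abelian or strictly infinite semisimple. My plan is to convert $H_1, K_1$ into genuinely $G$-normal subgroups of the same kind by coring, then apply Theorem~\ref{vdthm}$(\Leftarrow)$ to $G$ to produce a normal virtually dense subgroup of infinite index.

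Take $H := \bigcap_{g \in G} H_1^g$; this is a finite intersection (as $H_1$ is open and hence of finite index in $G$, so has finitely many conjugates), hence $H$ is open normal in $G$. In the abelian case, set $K := \bigcap_{g \in G} K_1^g$: since $K \leq K_1$ has infinite index in $G$ while $H$ has finite index, $K < H$ and $H/K$ is infinite; and $H/K$ embeds subdirectly into $\prod_g H_1^g/K_1^g$, a product of abelian groups, so $H/K$ is abelian. The pair $(H, K)$ then satisfies the hypotheses of Theorem~\ref{vdthm}$(\Leftarrow)$ for $G$, yielding the required normal virtually dense subgroup.

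The semisimple case is the technical heart of the argument. Here $H_1/K_1 = \prod_{j \in J} S_j$ with $J$ infinite. The image $HK_1/K_1$ is a closed $H_0$-invariant normal subgroup of finite index in $H_1/K_1$, hence of the form $\prod_{j \in J^*} S_j$ for some cofinite $H_0$-invariant $J^* \subseteq J$; so $H/(H \cap K_1) \cong \prod_{j \in J^*} S_j$ is itself strictly infinite semisimple. The difficulty is that $H \cap K_1$ is only $H_0$-normal, and the naive $G$-core $K := \bigcap_g K_1^g$ realizes $H/K$ as a subdirect product of finitely many strictly infinite semisimple groups, which need not itself be strictly infinite semisimple. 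My remedy is to first enlarge $K_1$ within $H_1$ — staying $H_0$-normal — by absorbing all but a single infinite $H_0$-orbit of simple factors (if one exists), or an appropriately chosen $H_0$-invariant infinite union of finite orbits otherwise. After this modification the simple factors of $H_1/K_1$ have a transparent orbit structure, and on passing to the $G$-core one can decompose $H/K$ as a product indexed by $G$-orbits of simple factors, each orbit infinite, producing a genuine strictly infinite semisimple quotient. I expect this orbit analysis — essentially a Goursat-lemma-style argument tailored to infinite products of compact simple groups, in the spirit of Proposition~\ref{ultrafilterautos} — to be the main obstacle. Once the appropriate pair $(H, K)$ is in hand, Theorem~\ref{vdthm}$(\Leftarrow)$ applied to $G$ completes the proof.
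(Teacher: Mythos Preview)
Your strategy is exactly the paper's: apply Theorem~\ref{vdthm} to $R=\mathrm{N}_G(N)$, obtain $(H_1,K_1)$ there, core both down to $G$-normal subgroups, and invoke Theorem~\ref{vdthm}$(\Leftarrow)$ for $G$. The abelian case is handled correctly and matches the paper.

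The difficulty you anticipate in the semisimple case is illusory, and the orbit-analysis remedy you sketch is unnecessary. With $H=\bigcap_g H_1^{\,g}$ and $K=\bigcap_g K_1^{\,g}$ (the ``naive'' cores), the compact group $H/K$ embeds as a \emph{closed} subdirect product in $\prod_g H/(H\cap K_1^{\,g})$, and each factor is a closed normal finite-index subgroup of $H_1^{\,g}/K_1^{\,g}\cong\prod_i S_i$, hence equal to $\prod_{i\in I_g}S_i$ for some cofinite $I_g$. Now any closed subgroup $D$ of a product $\prod_k T_k$ of centreless compact simple groups that projects onto every $T_k$ is itself such a product: the kernels $M_k$ of the projections are pairwise-comaximal closed maximal normal subgroups of $D$ with trivial intersection, so the Chinese Remainder map $D\to\prod_{C}D/M_C$ (indexed by the \emph{distinct} $M_k$) is injective with dense compact image, hence an isomorphism. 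Since $H/K$ surjects onto each infinite $\prod_{i\in I_g}S_i$, it has infinitely many such maximal normals, and is therefore strictly infinite semisimple. (If the $S_i$ carry centres, first enlarge $K_1$ to absorb $\prod_i\mathrm{Z}(S_i)$; the quotient remains of the required type.) This is precisely what the paper means by saying that $H_\ast/K_\ast$ is ``again \ldots of the same type as $H/K$''. Your proposed workaround, besides being unnecessary, is only a sketch, and it tracks $H_0$-orbits of simple factors where the coring is over coset representatives of $G/H_0$; as written it does not obviously control the $G$-core.
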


\noindent This follows from the theorem: suppose that $R$ is a subgroup of
finite index in $G$, that $H$ is open and normal in $R$, and that $K<H$ is a
closed normal subgroup of $R$. Then as above we can replace $K$ by a closed
normal subgroup $K_{\ast}$ of $G$ such that $H/K_{\ast}$ is a subdirect
product of $\left\vert G:R\right\vert $ copies of $H/K,$ and replace $H$ by
$H_{\ast}\vartriangleleft G$, where $H_{\ast}$ is normal of finite index in
$G$. Then $H_{\ast}$ is open by Corollary \ref{cpctserre}, whence $H_{\ast
}/K_{\ast}$ is again an infinite abelian or semisimple group of the same type
as $H/K$.\bigskip

The conditions for the existence of a \emph{proper dense normal subgroup} are
more delicate, and we merely state the result. The proof, which depends on
Corollary \ref{normal_mod_G_0} and further arguments in the spirit of
Subsection \ref{quasi}, will appear elsewhere.$\medskip$

\noindent\textbf{Definition.} \textbf{(a) }Let $S$ be a finite simple group.
Then $Q(S)$ denotes the following subgroup of $\mathrm{Aut}(S)$:%
\begin{align*}
&  \mathrm{InnDiag}(S)\left\langle \tau\right\rangle \text{ if }%
S=D_{n}(q),~n\geq5\\
&  \mathrm{InnDiag}(S)\left\langle [q]\right\rangle \text{ if }S=^{2}%
\!D_{n}(q)\\
&  \mathrm{InnDiag}(S)\text{ if }S\text{ is of another Lie type}\\
&  \mathrm{Aut}(S)\text{ in all other cases}%
\end{align*}
where $\tau$ is the non-trivial graph automorphism of $D_{n}(q)$ and $[q]$
denotes the field automorphism of order $2$ of $^{2}\!D_{n}(q)$.

\textbf{(b) }Let $S$ be a connected simple Lie group. Then%
\[
Q(S)=\left\{
\begin{array}
[c]{ccc}%
\mathrm{Aut}(S) & \text{if} & S=\mathrm{PSO}(2n),~n\geq3\\
&  & \\
\mathrm{Inn}(S) &  & \text{else}%
\end{array}
\right.  .
\]

\textbf{(c) }A topological group $H$ is \emph{Q-almost-simple} if
$S\vartriangleleft H\leq Q(S)$ where $S$ is a finite simple group or a
connected simple Lie group..\bigskip

\noindent If $H$ is Q-almost-simple as above, the \emph{rank} of $H$ is then
the rank of $S$, namely the (untwisted) Lie rank if $S$ is of Lie type, $n$ if
$S\cong\mathrm{Alt}(n)$, and zero otherwise.

\begin{theorem}
Let $G$ be a compact group with $G/G^{0}$ finitely generated. Then $G$ has a
proper dense normal subgroup if and only if one of the following holds:

\begin{itemize}
\item $G^{\mathrm{ab}}$ is infinite, \emph{or}

\item $G$ has a strictly infinite semisimple quotient, \emph{or}

\item $G$ has \emph{Q}-almost-simple quotients of unbounded ranks.
\end{itemize}
\end{theorem}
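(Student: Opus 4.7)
The plan is to prove both implications separately, taking Corollary \ref{normal_mod_G_0} as the principal reduction tool and the ultrafilter-based analysis of Subsection \ref{simplequot} as the main structural input. In both directions, the connected and profinite contributions should be handled in parallel: the connected part of $G$ is controlled by the product decomposition $G^0 \cong (A\times P)/Z$ used in Case 2 of the proof of Theorem \ref{vdthm}, while the profinite part $G/G^0$ is controlled by Corollary \ref{normal_mod_G_0}.

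For the ``if'' direction, I would construct a proper dense normal subgroup under each hypothesis. If $G^{\mathrm{ab}}$ is infinite, then $G/G'$ is an infinite compact abelian group; depending on whether its profinite quotient or its connected part is infinite, one obtains a proper dense subgroup from either a countable dense subgroup of an infinite profinite abelian group or a one-parameter dense subgroup of a torus, and its preimage in $G$, closed under $G$-conjugation, is a proper dense normal subgroup. If $G$ has a strictly infinite semisimple quotient $P = \prod_{i \in I} S_i$, then the restricted direct product $\bigoplus_{i\in I} S_i$ is characteristic, dense, proper, and normal in $P$; its preimage is the required subgroup. For the $Q$-almost-simple unbounded-rank case, one selects a sequence of $Q$-almost-simple quotients $H_n$ with socles $S_n$ of rank tending to infinity, forms a suitable subdirect product quotient of $G$ inside $\prod_n H_n$, and performs a diagonal construction in the spirit of Subsection \ref{simplequot}, using the definition of $Q(S)$ to guarantee that the images of diagonal elements extend consistently across the factors.

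For the ``only if'' direction, let $N$ be a proper dense normal subgroup of $G$. After reducing through the connected part (where the proof of Theorem \ref{unctble} via Proposition \ref{lamda-function} forces any proper dense normal subgroup of a connected compact semisimple group $P/Z$ to come from a strictly infinite index set), we may assume $G$ is profinite. Applying Corollary \ref{normal_mod_G_0}, either $NG' \neq G$ or $NG_0 \neq G$; density of $N$ then transfers to a proper dense normal subgroup of $G^{\mathrm{ab}}$ (so $G^{\mathrm{ab}}$ is infinite) or of $G/G_0$. Since $G/G_0$ is semisimple-by-soluble and the soluble layer of derived length three is abelian-dominated, the analysis further reduces to a proper dense normal subgroup of the semisimple profinite group $T = G^{(3)}G_0/G_0$. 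Theorem \ref{main2} then classifies the maximal normal subgroups of $T$ via ultrafilters: if the index set of the product decomposition of $T$ is infinite, we get the strictly infinite semisimple condition; if it is finite, we obtain finitely many almost-simple quotients whose outer-automorphism envelopes must be compatible with the existence of a proper dense normal subgroup.

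The principal obstacle will be identifying exactly why the envelope $Q(S)$ is the correct one and why unboundedness of ranks is precisely the necessary and sufficient condition. The envelope $Q(S)$ arises from a careful analysis of which outer automorphisms admit ``free'' commutator behaviour: Proposition \ref{largefieldauto} shows that field automorphisms of order exceeding $50$ already force near-surjectivity of the twisted commutator map and thus rule out proper dense normal subgroups, while a parallel argument handles graph automorphisms outside the listed $D_n$ and $^2\!D_n$ exceptions. For the rank-unboundedness clause, Proposition \ref{bcpbound} combined with the bounded commutator width estimates of Subsection \ref{facts} implies that the class of $Q$-almost-simple groups of rank at most $r$ enjoys uniform commutator and generation bounds; so if all $Q$-almost-simple quotients of $G$ had rank $\leq r$, the argument used in Theorem \ref{FAb} would force any dense normal subgroup to be all of $G$. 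Conversely, unbounded ranks allow the diagonal-ultrafilter construction sketched above to produce a subgroup that is dense yet of infinite index. The detailed implementation of this case analysis, which mirrors but refines the material of Subsections \ref{quasi} and \ref{simplequot}, is the technical heart of the proof and will appear elsewhere.
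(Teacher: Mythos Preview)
The paper does not actually prove this theorem: immediately before the statement it says ``we merely state the result. The proof, which depends on Corollary \ref{normal_mod_G_0} and further arguments in the spirit of Subsection \ref{quasi}, will appear elsewhere.'' So there is no proof in the paper against which to compare your attempt in detail.

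That said, your outline is consistent with the hints the paper gives. You correctly identify Corollary \ref{normal_mod_G_0} as the main reduction tool on the profinite side, and you point to the twisted-commutator material of Subsection \ref{quasi} (in particular the large-field-automorphism Proposition \ref{largefieldauto}) as the source of the precise envelope $Q(S)$ --- exactly the two ingredients the paper names. Your own proposal likewise ends by deferring ``the detailed implementation \ldots will appear elsewhere,'' so at the level of completeness you and the paper are on equal footing. The one part of your sketch that goes beyond what the paper hints at is the ultrafilter/diagonal construction for the ``if'' direction in the unbounded-rank case; this is plausible but not something the paper comments on, so its correctness cannot be checked against the paper.
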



\begin{thebibliography}{99999}                                                                                            %


\bibitem[AG]{AG}M. Aschbacher and R. M. Guralnick, Some applications of the
first cohomology group, \emph{J.Algebra }\textbf{90} (1984), 446-460.

\bibitem[As]{As}M. Aschbacher, \emph{Finite group theory,} Cambridge Univ.
Press, Cambridge, 1988.

\bibitem[B]{B}H. Blau, A fixed-point theorem for central elements in
quasisimple groups, \emph{Proc. AMS} \textbf{122} (1994), 79-84.

\bibitem[BCP]{BCP}L. Babai, P. J. Cameron and P. P\'{a}lfy, On the orders of
primitive groups with restricted non-abelian composition factors, \emph{J.
Algebra }\textbf{79} (1982), 161-168.

\bibitem[BNP]{BNP}L. Babai, N. Nikolov and L. Pyber, Product Growth and Mixing
in Finite Groups, \emph{19th ACM-SIAM Symposium on Discrete Algorithms}, SIAM,
2008, Pages 248-257.





\bibitem[Bu]{Bump}D. Bump, \emph{Lie groups}, Springer-Verlag, New York, 2004.

\bibitem[C]{C}R. W. Carter, \emph{Finite groups of Lie type: conjugacy classes
and complex characters}, Wiley and Sons, London, 1985.

\bibitem[DM]{DM}J. D. Dixon and B. Mortimer, \emph{Permutation groups},
Springer-Verlag, New York, 1996.

\bibitem[FG]{FG}J. Fulman, R. Guralnick, Bounds on the number and sizes of
conjugacy classes in finite Chevalley groups with applications to
derangements, http://arxiv.org/abs/0902.2238

\bibitem[FMS]{FMS}T. Frayne, A. Morel and D. Scott, Reduced direct products,
\emph{Fund. Math.} \textbf{51} (1962),195-228.



\bibitem[FJ]{FJ}M. Fried and M. Jarden, \emph{Field arithmetic, }%
Springer-Verlag, Berlin -- Heidelberg, 1986.

\bibitem[GaSh]{GaSh}S. Garion and A. Shalev, Commutator maps, measure
preservation, and $T$-systems, \emph{Trans. Amer. Math. Soc.} \textbf{361}
(2009), 4631--4651.

\bibitem[Gch]{Gch}W. Gasch\"{u}tz, Zu einem von B. H. und H. Neumann
gestellten Problem, \emph{Math. Nachhrichten} \textbf{14} (1955), 249-252.

\bibitem[GL]{GL}R. M. Guralnick and F. L\"{u}beck, On $p$-singular elements in
Chevalley groups in characteristic $p$, in \emph{Groups and computation III},
169-182, Ohio State Univ. Math. Res. Inst. Publ. \textbf{8}, de Gruyter,
Berlin, 2001.

\bibitem[GLS]{GLS}D. Gorenstein, R. Lyons and R. Solomon, \emph{The
classification of the finite simple groups, no.3}, American Math. Soc.,
Providence, Rhode Island, 1998.

\bibitem[Go]{Go}D. Gorenstein, \emph{Finite Groups, }2nd ed., Chelsea, New
York, 1980.

\bibitem[Gt]{Goto}M. Goto, A theorem on compact semisimple groups. \emph{J.
Math. Soc. Japan} \textbf{1} (1949), 270-272.

\bibitem[GFSG]{GFSG}D. Gorenstein, \emph{Finite simple groups, }Plenum Press,
New York and London, 1982.

\bibitem[GSS]{GSS}D. Gluck, A. Seress and A. Shalev, Bases for primitive
permutation groups and a conjecture of Babai, \emph{J. Algebra} \textbf{199}
(1998), 367--378.

\bibitem[HM]{HM1}K. H. Hofmann and S. A. Morris, \emph{The structure of
compact groups}. 2nd edn., de Gruyter Studies in Mathematics, \textbf{25}.
Walter de Gruyter \& Co., Berlin, 2006.



\bibitem[J]{J}G. A. Jones, Varieties and simple groups, \emph{J. Austral.
Math. Soc}. \textbf{17} (1974), 163--173.

\bibitem[JZ]{JZ}A. Jaikin-Zapirain, On linear just infinite pro-$p$ groups,
\emph{J. Algebra }\textbf{255} (2002), 392-404.

\bibitem[KlL]{KlLi}P. Kleidman and M. Liebeck, \emph{The subgroup structure of
the finite classical groups}, LMS Lect. Notes \textbf{129}, Cambridge Univ.
Press, Cambridge, 1990.

\bibitem[KL]{KL}M. Kapovich and B. Leeb, On asymptotic cones and
quasi-isometry of fundamental groups of 3-manifolds, \emph{GAFA} \textbf{5}
(1995), 582-603.

\bibitem[LaS]{LaS}V. Landazuri and G. M. Seitz, On the minimal degrees of
projective representations of the finite Chevalley groups, \emph{J. Algebra}
\textbf{32} (1974), 418--443.

\bibitem[LiSh]{LiSh}M. W. Liebeck and A. Shalev, Diameters of finite simple
groups: sharp bounds and applications, \emph{Annals of Math.} \textbf{154}
(2001), 383-406.

\bibitem[LiSh2]{LiSh2}M. W. Liebeck and A. Shalev, Fuchsian groups, finite
simple groups and representation varieties. \emph{Invent. Math.} \textbf{159}
(2005), 317--367.

\bibitem[LOST]{LOST}M. Liebeck, E. O'Brien, A. Shalev and P. Tiep, The Ore
conjecture, \emph{J. European Math. Soc.} \textbf{12} (2010), 939-1008.

\bibitem[LOST2]{LOST2}M. Liebeck, E. O'Brien, A. Shalev and P. Tiep,
Commutators in finite quasisimple groups, \emph{to appear}

\bibitem[MZ]{MZ}C. Martinez and E. Zelmanov, Products of powers in finite
simple groups, \emph{Israel J. Math.} \textbf{96} (1996), 469--479.

\bibitem[NS]{NS}N. Nikolov and D. Segal, On finitely generated profinite
groups, I: strong completeness and uniform bounds, \emph{Annals of Math.
}\textbf{165} (2007), 171--238.

\bibitem[NS2]{NS2}N. Nikolov and D. Segal, On finitely generated profinite
groups, II: products in quasisimple groups, \emph{Annals of Math.
}\textbf{165} (2007), 239--273.

\bibitem[NSP]{NSP}N. Nikolov and D. Segal, Powers in finite groups,
\emph{Groups, Geometry and Dynamics}, to appear; arXiv:0909.6439

\bibitem[SW]{SW}J. Saxl and J. S. Wilson, A note on powers in simple groups,
\emph{Math. Proc. Cambridge Philos. Soc.} \textbf{122} (1997), 91--94.

\bibitem[S1]{S1}D. Segal, Closed subgroups of profinite groups, \emph{Proc.
London Math. Soc.} \textbf{81 }(2000), 29--54.

\bibitem[S2]{S2}D. Segal, \emph{Words: notes on verbal width in groups,
}London Math. Soc. Lecture Notes Series \textbf{361}, Cambridge Univ. Press,
Cambridge, 2009.

\bibitem[SGT]{SGT}J.-P.Serre, \emph{Topics in Galois Theory,} Res. Notes
Math\emph{. }\textbf{1}, Jones and Bartlett, Boston -- London, 1992.

\bibitem[W]{W}J. S. Wilson, On simple pseudofinite groups, \emph{J. London
Math. Soc.} \textbf{51} (1995), 471--490.

\bibitem[Z]{Z}E. I. Zelmanov, On the restricted Burnside problem, \emph{Proc.
Internl. Congress Math. Kyoto 1990,} Math. Soc. Japan, Tokyo, 1991, pp. 395-402.
\end{thebibliography}
\end{document}